\newtheorem{theorem}{Theorem}[section]
\newtheorem{lemma}[theorem]{Lemma}
\newtheorem{corollary}[theorem]{Corollary}
\newtheorem{proposition}[theorem]{Proposition}
\newtheorem{observation}[theorem]{Observation}
\theoremstyle{definition}
\newtheorem{definition}[theorem]{Definition}
\newtheorem{example}[theorem]{Example}
\newtheorem{remark}[theorem]{Remark}
\newtheorem{problem}[theorem]{Problem}
\newtheorem{notation}[theorem]{Notation}
\numberwithin{equation}{section}
\numberwithin{equation}{section}
\newcommand{\iLim}{\varprojlim}
\newcommand{\treemon}{\mathcal{T}_\mathcal{M}}
\newcommand{\treet}{\mathcal{T}_{\mathcal{M}\it 3}}
\newcommand{\treecon}{\mathcal{T}_\mathcal{C} }
\newcommand{\treend}{\mathcal{T}_\mathcal{CE} }
\DeclareMathOperator{\diam}{diam}
\DeclareMathOperator{\ord}{ord}
\DeclareMathOperator{\id}{id}
\begin{document}

\title{ Projective Fra\"{\i}ss\'{e} limits of trees}

\author[W. J. Charatonik]{W\l odzimierz J. Charatonik}

\address{W. J. Charatonik and R. P. Roe, Department of Mathematics and Statistics\\
         Missouri University of Science and Technology\\
         400 W 12th St\\
         Rolla MO 65409-0020}
\curraddr{W. J. Charatonik, ul. Orzeszkowej 78 m. 3, 50-311 Wrocław, Poland.}
\email{wjcharat@mst.edu}
\author[R. P. Roe]{Robert P. Roe}
\curraddr{Robert Roe, 3845 Fairway Dr, Florissant, MO 63033, USA}

\email{rroe@mst.edu}

\date{\today}
\subjclass[2020]{03C98, 54D80, 54E40, 54F15, 54F50}

\keywords{Fra\"{\i}ss\'e limit, topological graph, confluent, monotone, light}

\begin{abstract}
The following paper has been withdrawn from consideration for publication because there are mistakes.  In particular, Theorem 3.9 does not hold. Examples were found of finite trees with monotone epimorphisms which do not amalgamate. Further, finite rooted trees with monotone epimorphisms do not amalgamate. A revision, with additional co-authors A. Kwiatkowska and S. Yang, is posted on arXiv ({\it Projective  Fra\"{\i}ss\'{e} limits of trees with confluent epimorphisms} 2312.16915). In that article, it is shown that the family of finite trees having ramification vertices of order at most 3 with monotone epimorphisms does form a projective Fra\"{\i}ss\'e family and the topological realization of its Fra\"{\i}ss\'e limit is the Wa\. zewski dendrite $D_3$.  Further, two families of finite rooted trees with restrictions on what confluent epimorphisms are allowed are also shown to form projective Fra\"{\i}ss\'e families. The topological realization of the Fra\"{\i}ss\'e limit of one of these families is shown to be the Mohler-Nikiel universal dendroid.

We continue study of projective Fra\"{\i}ss\'e limit developed by Irvin, Panagiotopoulos and  Solecki. We modify the ideas of monotone, confluent, and light mappings from continuum theory as well as several properties of continua so as to apply to topological graphs. As the topological realizations of the projective Fra\"{\i}ss\'e limits we obtain  the dendrite $D_3$ as well as quite new, interesting continua for which we do not yet have topological characterizations. 
\end{abstract}

\maketitle

\section{Introduction and Definitions}
In \cite{Pseudo}, T. Irwin and S. Solecki introduced the idea of a projective Fra\"{\i}ss\'e limit as a dualization of the injective Fra\"{\i}ss\'e limit from model theory. In that paper they construct the pseudo arc as the topological realization of a projective Fra\"{\i}ss\'e limit of a certain class of finite graphs and epimorphisms between members of the class.  Subsequently, D. Barto\v sov\' a and A. Kwiatkowska, \cite{B-K}, Kubiś and A. Kwiatkowska, \cite {kubis}  and A. Panagiotopoulos and S. Solecki, \cite {Menger}, extended these ideas to study, repectively, the Lelek fan, the Lelek fan and the Poulsen simplex, and the Menger curve as Fra\"{\i}ss\'e limits.  

In this article we continue this study by considering families of finite trees and epimorphims (mappings) between them that satisfy certain properties. We modify the ideas of monotone, confluent, and light mappings from continuum theory as well as several properties of continua so as to apply to topological graphs. Tools are developed for studying projective Fra\"{\i}ss\'e limits and their topological realizations.
As the topological realization of the projective Fra\"{\i}ss\'e limit of finite trees with monotone epimorphisms we obtain the dendrite $D_3$, (see Theorem~\ref{D3}). In the cases where epimorphisms are restricted to being confluent order-preserving or confluent and end-preserving we show that the families are projective Fra\"{\i}ss\'e families (see Theorems~\ref{confluent-projective-family} and \ref{Thm-confluent-end-preserving}) and that the Fra\"{\i}ss\'e limits have admit topological realizations. These continua have many interesting properties (see Theorems~\ref{summerizing-confluent} and \ref{Thm-CE}) and they may be different from any known continua.

We start with some basic definitions and background about projective Fra\"{\i}ss\'e families and limits.

A {\it graph} is an ordered pair $A=( V(A), E(A))$, where $E(A)\subseteq V(A)^2$ is a reflexive and symmetric relation on $V(A)$. The elements of $V(A)$ are called {\it vertices} of graph $A$ and elements of $E(A)$ are called {\it edges}.

A {\it topological graph} $K$ is a graph $( V(K),E(K))$, whose domain $V(K)$ is a 0-dimensional, compact, second-countable (thus metic) space and $E(K)$ is a closed, reflexive and symmetric  subset of $ V(K)^2$. A topological graph is an example of topological $\mathcal{L}$ structure. For a general definition of a topological $\mathcal{L}$ structure see \cite{Pseudo}.

Given two graphs $A$ and $B$ a function $f\colon V(A)\to V(B)$ is a homomorphism if it maps edges to edges, i.e.  $\langle a,b\rangle \in E(A)$ implies $\langle f(a),f(b)\rangle \in E(B)$.  
If we consider rooted trees, then we have an additional relation, namely an order $\le$ on vertices, and 
homomorphisms have to preserve the order. Details are in Section \ref{rooted-trees}.
A homomorphism $f$ is an epimorphism if it is moreover surjective
on both vertices and edges. An isomorphism is an injective epimorphism.

\begin{definition}\label{definition-order}
A finite graph  $T$ is a {\it tree} if  for every two distinct vertices $a,b \in T$ there is a unique finite sequence
$v_0=a,v_1, \dots , v_n=b$ of vertices in $T$ such that for every $i\in \{0,1,\dots n-1\}$ we have $\langle v_i,v_{i+1}\rangle\in E(T)$
and  $v_i\ne v_{i+1}$. 
Let $n$ be a natural number, a vertex $p\in T$ has order  $n$ ($\ord(p)=n$) if there are $n$ non-degenerate edges in $T$ that contain $p$. If $\ord(p)=1$ then $p$ is an end vertex, $\ord(p)=2$ then $p$ is an ordinary vertex and if $\ord(p)\ge 3$ then $p$ a ramification vertex.
\end{definition}
The theory of projective Fra\"{\i}ss\'e limits were developed in \cite{Pseudo} and further refined in \cite{Menger}. We literally recall their definitions here.

\begin{definition}\label{definition-Fraisse}
Let $\mathcal{F}$ be a class of finite graphs with a fixed family of morphisms among
the structures in $\mathcal{F}$. We assume that each morphism is an epimorphism with respect
to $\mathcal{F}$. We say that $\mathcal{F}$ is a projective Fra\"{\i}ss\'e class if
\begin{enumerate}
\item $\mathcal{F}$ is countable up to isomorphism, that is, any sub-collection of pairwise
non-isomorphic structures of $\mathcal{F}$ is countable;
\item morphisms are closed under composition and each identity map is a morphism;
\item for $B,C \in  \mathcal{F}$ there exist $D\in \mathcal{F}$ and morphisms $f\colon  D \to B$ and $g\colon  D \to C$;
and
\item for every two morphisms $f\colon B \to A$ and $g\colon C \to A$, there exist morphisms
$f_0\colon D \to B$ and $g_0\colon D\to C$ such that $f \circ f_0 = g \circ g_0$, i.e. the diagram (D1) commutes.
\end{enumerate}

\begin{equation}\tag{D1}
\begin{tikzcd}
&B\arrow{ld}[swap]{f}\\
A&&D\arrow[lu,swap,dotted,"f_0"] \arrow[ld,dotted,"g_0"]\\
&C\arrow[lu,"g"]
\end{tikzcd}
\end{equation}
We will refer to the last property above as the projective amalgamation property.
\end{definition}

The class $\mathcal F$ of finite graphs and epimorphisms is enlarged to a class $\mathcal F^\omega$ which includes all topological graphs obtained as inverse limits of graphs in $\mathcal F$ with bonding maps from the family of epimorphisms.  If $G=\iLim\{G_n,\alpha_n\} \in \mathcal F^\omega$ and $a=(a_n)$ and $b=(b_n)$ are elements of $G$ then $\langle a,b\rangle$ is an edge in $G$ if and only if for each $n$, $\langle a_n, b_n\rangle$ is an edge in $G_n$. An epimorphism $h$ between a topological graph $G=\iLim\{G_n,\alpha_n\}$ in $\mathcal F^\omega$ and a finite graph $A\in \mathcal F$ is in the class $\mathcal F^\omega$ if and only if there is an $m$ and an epimorphism  $h'\colon G_m \to A$, $h'\in \mathcal F$, such that $h= h'\circ \alpha_m^\infty$ where $\alpha_m^\infty$ is the canonical projection from the inverse limit space onto the $m$th factor space.  Finally, if $K$ and $L$ are inverse limit spaces in $\mathcal F^\omega$ an epimorphism $h\colon L \to K$ is in the family $\mathcal F^\omega$ if and only if for any finite graph $A \in \mathcal F$ and any epimorphism $g\colon K \to A$ in $\mathcal F^\omega$, $g\circ h \in \mathcal F^\omega$.

In the proof of \cite[Theorem 2.4]{Pseudo} an inverse sequence satisfying certain properties, see definition below, was introduced and used to show the existence and uniqueness of the projective Fra\"{\i}ss\'e limit.

\begin{definition}\label{fund-seq-def}

Given a projective Fra\"{\i}ss\'e family $\mathcal F$ an inverse sequence $\{F_n,\alpha_n\}$ where $F_n\in {\mathcal F}$ and $\alpha_n\colon F_{n+1} \to F_n$ are epimorphisms in $\mathcal F$ is said to be a {\it fundamental sequence} for $\mathcal F$ if the following two conditions hold.

\begin{enumerate}
    \item For any $G\in {\mathcal F}$ there is an $n$ and an epimorphism from $F_n$ onto $G$;
    \item For any $n$, any pair $G, H \in {\mathcal F}$, and any epimorphisms $g\colon H \to G$ and $f\colon F_n \to G$ there exists $m >n$ and an epimorphism $h\colon F_m \to H$ such that $g\circ h = f\circ \alpha_n^m$  i.e the diagram (D2) commutes.
\end{enumerate}

\begin{equation}\tag{D2}
\begin{tikzcd}
F_n\arrow{d}[swap]{f}&F_m\arrow[d,dotted,"h"]\arrow{l}[swap]{\alpha^m_n} \\
G&\arrow[l,"g"]H
\end{tikzcd}
\end{equation}

\end{definition}

Note that the name fundamental sequence has not been standardized. Other names that have been used include generic sequence and  Fra\"{\i}ss\'e sequence.

\begin{theorem}\label{limit}
 Let $\mathcal {F}$ be a projective Fra\"{\i}ss\'e class with a fixed family of epimorphisms among the structures of $\mathcal{F}$.  There exists a unique topological graph  $\mathbb{F}$ such that
\begin{enumerate}
    \item for each $A\in \mathcal{F}$, there exists an epimorphism 
    from $\mathbb{F}$ to $A$;
    \item for $A,B \in \mathcal{F}$ and epimorphisms $f\colon \mathbb{F} \to A$ and $g\colon B\to A$
    there exists an epimorphism $h\colon \mathbb{F}\to B$
    such that $f=g\circ h$.
    \item\label{refinement}
For every $\varepsilon>0$  there is a graph $G\in \mathcal F$ and an epimorphism $f\colon \mathbb{F} \to G$
such that $\diam(f^{-1}(x))<\varepsilon $ for each $x$ in $V(G)$.
\end{enumerate}
This topological graph $\mathbb F$ is called the projective Fra\"{\i}ss\'e limit of $\mathcal F$.
\end{theorem}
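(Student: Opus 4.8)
The plan is to follow the classical Fra\"{\i}ss\'e argument in its projective, dualized form, exactly as in the proof of \cite[Theorem 2.4]{Pseudo}: build a fundamental sequence inside $\mathcal F$, take its inverse limit, verify the three conclusions, and then establish uniqueness by a back-and-forth argument. The first step is to construct a fundamental sequence $\{F_n,\alpha_n\}$ in the sense of Definition~\ref{fund-seq-def}. Since $\mathcal F$ is countable up to isomorphism and there are only finitely many morphisms between any two fixed finite graphs, the collection of ``tasks'' --- on one hand the members of $\mathcal F$ that must eventually be hit, on the other the amalgamation instances $(f\colon F_n\to G,\ g\colon H\to G)$ that must eventually be solved --- is countable, and one fixes an enumeration of it with infinite repetition. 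Recursively define $F_{n+1}$ and $\alpha_n\colon F_{n+1}\to F_n$: at ``projection'' stages invoke condition (3) of Definition~\ref{definition-Fraisse} to arrange that every member of $\mathcal F$ eventually admits an epimorphism from some $F_n$, yielding condition (1) of Definition~\ref{fund-seq-def}; at ``amalgamation'' stages apply the projective amalgamation property, condition (4) of Definition~\ref{definition-Fraisse}, to the scheduled $g\colon H\to G$ and the already-built $f\colon F_n\to G$ to produce $F_{n+1}$, $\alpha_n$ and $h\colon F_{n+1}\to H$ with $g\circ h=f\circ\alpha_n$, yielding condition (2) of Definition~\ref{fund-seq-def}.

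Now set $\mathbb F=\iLim\{F_n,\alpha_n\}$, where $V(\mathbb F)$ is the thread space inside $\prod_n V(F_n)$ and $\langle a,b\rangle\in E(\mathbb F)$ exactly when $\langle a_n,b_n\rangle\in E(F_n)$ for all $n$. Then $V(\mathbb F)$ is a closed subspace of a product of finite discrete spaces, hence compact, $0$-dimensional and metrizable, and $E(\mathbb F)$ is closed, reflexive and symmetric, so $\mathbb F$ is a topological graph lying in $\mathcal F^\omega$. For conclusion (1): given $A\in\mathcal F$, pick $n$ and an epimorphism $\varphi\colon F_n\to A$ in $\mathcal F$ (condition (1) of Definition~\ref{fund-seq-def}); then $\varphi\circ\alpha_n^\infty\colon\mathbb F\to A$ works, once one checks that the canonical projection $\alpha_n^\infty$ is an epimorphism of topological graphs --- surjectivity on vertices is the standard nested-nonempty-closed-fibers argument, and surjectivity on edges is the same argument applied to the inverse limit of the edge relations, using that each $\alpha_k$ is surjective on edges. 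For conclusion (2): given epimorphisms $f\colon\mathbb F\to A$ in $\mathcal F^\omega$ and $g\colon B\to A$ in $\mathcal F$, factor $f=f'\circ\alpha_m^\infty$ with $f'\colon F_m\to A$ in $\mathcal F$ (definition of $\mathcal F^\omega$-epimorphism), apply condition (2) of Definition~\ref{fund-seq-def} to $f'$ and $g$ to get $m'>m$ and $h'\colon F_{m'}\to B$ with $g\circ h'=f'\circ\alpha_m^{m'}$, and put $h=h'\circ\alpha_{m'}^\infty$; then $g\circ h=f$ and $h$ is an epimorphism in $\mathcal F^\omega$. For conclusion (3): with the product metric the mesh of the partition of $V(\mathbb F)$ into fibers of $\alpha_n^\infty$ tends to $0$, so $G=F_n$ and $f=\alpha_n^\infty$ work for $n$ large.

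For uniqueness, suppose $\mathbb F$ and $\mathbb F'$ both satisfy (1)--(3). One builds, by a back-and-forth that alternately uses conclusion (2) for $\mathbb F$ and for $\mathbb F'$, a commuting ladder of finite graphs and epimorphisms: a tower $\{P_k,\rho_k\}$ in $\mathcal F$ together with epimorphisms $u_k\colon\mathbb F\to P_k$ and $v_k\colon\mathbb F'\to P_k$ satisfying $\rho_k\circ u_{k+1}=u_k$ and $\rho_k\circ v_{k+1}=v_k$, arranged so that the families $(u_k)$ and $(v_k)$ separate points --- at odd stages one refines on the $\mathbb F$-side so that $u_k$ dominates the fine quotient of $\mathbb F$ with fibers of diameter $<1/k$ furnished by conclusion (3), and at even stages one does the same on the $\mathbb F'$-side. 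The induced maps then exhibit both $\mathbb F$ and $\mathbb F'$ as $\iLim\{P_k,\rho_k\}$, and tracing the construction shows that the two resulting isomorphisms are mutually inverse, so $\mathbb F\cong\mathbb F'$.

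The main obstacle is twofold. In the existence part it is the bookkeeping for the fundamental sequence: the amalgamation instances are created dynamically as the $F_n$ are built, so the enumeration must be set up so that every instance that ever appears is eventually revisited and solved. In the uniqueness part it is that conclusion (2) provides only a \emph{one-sided} lifting, so the back-and-forth has to be organized carefully to keep the ladder commuting while simultaneously forcing both $(u_k)$ and $(v_k)$ to separate points; this is precisely where conclusion (3) enters. The remaining verifications --- that $E(\mathbb F)$ is closed, that the canonical projections are edge-surjective, and that compositions of $\mathcal F^\omega$-epimorphisms are again $\mathcal F^\omega$-epimorphisms --- are routine inverse-limit facts. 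Since the statement is essentially \cite[Theorem 2.4]{Pseudo}, we only indicate these points and refer there for complete details.
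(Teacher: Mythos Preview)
Your proposal is correct and follows essentially the same approach as the paper: the paper's proof simply cites \cite[Theorem~3.1]{Menger} for uniqueness and conditions~(1)--(2), observes that $\mathbb F$ is realized there as the inverse limit of a fundamental sequence, and then reads off condition~(3) from the product metric --- exactly the argument you have written out in detail (with reference to \cite{Pseudo} rather than \cite{Menger}). The only difference is expository: you sketch the construction of the fundamental sequence and the back-and-forth for uniqueness, whereas the paper defers both to the literature.
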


\begin{proof}
The uniquness of $\mathbb F$ along with the first two conditions are precisely \cite[Theorem 3.1]{Menger}. For the third condition note that in the proof of \cite[Theorem 3.1]{Menger} it is shown that the projective Fra\"{\i}ss\'e limit $\mathbb F$ is the inverse limit of a fundamental sequence of $\mathcal F$.  Condition~(3) then follows from the definition of a metric on the inverse limit space.  
\end{proof}

\begin{definition}
Given a topological graph $K$, if $E(K)$ is also transitive then it is an equivalence relation and $K$ is known as a {\it prespace}. The quotient space $K/E(K)$ is called the {\it topological realization} of $K$ and is denoted by $|K|$.
\end{definition}

\begin{theorem}
Each compact metric space is a topological realization of a topological graph.
\end{theorem}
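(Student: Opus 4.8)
The plan is to realize an arbitrary compact metric space $X$ as a quotient $|K| = K/E(K)$ of a suitable prespace $K$. The natural candidate for $V(K)$ is the Cantor set: since every compact metric space is a continuous image of the Cantor set $2^\omega$, fix a continuous surjection $\pi \colon 2^\omega \to X$. I would then define $E(K) \subseteq 2^\omega \times 2^\omega$ by $\langle a,b\rangle \in E(K)$ if and only if $\pi(a) = \pi(b)$. This is clearly reflexive, symmetric, and transitive, so once we check it is closed it will make $K = (2^\omega, E(K))$ a prespace in the sense of the paper, and its topological realization $|K| = 2^\omega / E(K)$ is by construction the set-theoretic quotient of $2^\omega$ by the fibers of $\pi$.

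The key steps, in order, are: (1) recall that $2^\omega$ is $0$-dimensional, compact, second-countable, so it is a legitimate vertex space for a topological graph; (2) invoke the classical fact that a compact metric space is a continuous image of the Cantor set to obtain $\pi$; (3) verify that $E(K) = \{\langle a,b\rangle : \pi(a)=\pi(b)\}$ is closed in $2^\omega \times 2^\omega$ — this follows because $\pi$ is continuous and $X$ is Hausdorff, so $E(K)$ is the preimage of the diagonal $\Delta_X \subseteq X \times X$ under the continuous map $\pi \times \pi$, and $\Delta_X$ is closed; (4) conclude that $E(K)$ is a closed equivalence relation, hence $K$ is a prespace; (5) identify the quotient: the map $\pi$ factors as $2^\omega \xrightarrow{q} 2^\omega/E(K) \xrightarrow{\bar\pi} X$ where $q$ is the quotient map, and $\bar\pi$ is a continuous bijection; since $2^\omega/E(K)$ is compact (continuous image of a compact space) and $X$ is Hausdorff, $\bar\pi$ is a homeomorphism. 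Therefore $|K| = 2^\omega/E(K) \cong X$.

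I do not expect any serious obstacle here; the result is essentially a repackaging of the Hausdorff–Alexandroff theorem that every compact metric space is a continuous image of the Cantor set, combined with the standard fact that a continuous bijection from a compact space to a Hausdorff space is a homeomorphism. The only point requiring a line of care is step (3), checking closedness of $E(K)$, but this is immediate from continuity of $\pi \times \pi$ and the Hausdorffness of $X$. One could also remark that $K$ need not arise as a projective Fra\"{\i}ss\'e limit of any natural class — the theorem only asserts that every compact metric space is \emph{some} topological realization — so no amalgamation-type argument is needed.
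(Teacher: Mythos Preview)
Your proof is correct and follows essentially the same approach as the paper: take a continuous surjection from the Cantor set onto $X$, declare two vertices edge-related exactly when they have the same image, and observe that the resulting quotient is $X$. You supply more detail than the paper does---in particular the verification that $E(K)$ is closed and the compact--Hausdorff argument for the homeomorphism---but the underlying idea is identical.
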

\begin{proof}
Let $X$ be a compact metric space and let $f\colon C\to X$ be a surjective mapping from the Cantor set $C$. Define a topological graph $K$ by putting $V(K)=C$ and $\langle a,b\rangle\in E(K)$ if and only if $f(a)=f(b)$. Then $K$ is a compact topological graph, $E(K)$ is transitive, and $|K|$ is homeomorphic to $X$.
\end{proof}

\section{Connectedness properties of topological graphs}

In this section we propose definitions of some connectedness properties of topological graphs analogous to
respective definitions for continua. We have decided to keep the terminology original to continuum theory.
Let us start with the definitions of connected and locally connected topological graphs as in \cite{Menger}.

\begin{definition}
  Given a topological graph $G$, a subset $S$ of $V(G)$ is {\it disconnected} if there are two nonempty closed subsets $P$ and $Q$ of $S$ such that $P\cup Q=S$ and if
  $a\in P$ and $b\in Q$, then $\langle a,b\rangle\notin E(G)$. A subset $S$ of $V(G)$ is {\it connected} if it is not disconnected.
  A graph $G$ is connected if $V(G)$ is connected.
\end{definition}

\begin{definition}
A topological graph $G$ is {\it locally connected} if it admits a basis of its topology consisting of connected sets in the above sense.
\end{definition}

\begin{proposition}\label{connected-image}
If $f\colon G\to H$ is an epimorphism between topological graphs and $G$ is connected, then $H$ is connected.
\end{proposition}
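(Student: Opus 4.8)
The plan is to argue by contradiction. Suppose $H$ is disconnected, so there exist nonempty closed sets $P, Q \subseteq V(H)$ with $P \cup Q = V(H)$ and no edge of $E(H)$ joining a point of $P$ to a point of $Q$. The natural move is to pull these sets back along $f$: set $P' = f^{-1}(P)$ and $Q' = f^{-1}(Q)$. Since $f$ is an epimorphism it is in particular a continuous surjection on vertices (morphisms between topological graphs are continuous, as they arise from the inverse-limit setup), so $P'$ and $Q'$ are closed, and they are nonempty because $f$ is surjective. Their union is $f^{-1}(P \cup Q) = f^{-1}(V(H)) = V(G)$.

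The remaining point is that $P'$ and $Q'$ witness a disconnection of $V(G)$, i.e.\ that no edge of $E(G)$ joins $P'$ to $Q'$. Suppose $\langle a, b \rangle \in E(G)$ with $a \in P'$ and $b \in Q'$. Because $f$ is a homomorphism, $\langle f(a), f(b) \rangle \in E(H)$; but $f(a) \in P$ and $f(b) \in Q$, contradicting the assumption that no edge of $E(H)$ runs between $P$ and $Q$. Hence $V(G)$ is disconnected, contradicting the hypothesis that $G$ is connected. Therefore $H$ is connected.

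The only genuine subtlety — and hence the main thing to pin down — is the continuity of $f$, which is needed to conclude that $P'$ and $Q'$ are closed. In the category at hand this is built in: an epimorphism in $\mathcal{F}^\omega$ between topological graphs factors through a projection of an inverse limit composed with a finite-graph epimorphism, so it is automatically continuous; alternatively, one simply includes continuity in the standing definition of morphism between topological graphs. With that in hand every other step is a one-line set-theoretic or edge-chasing verification, so I expect no real obstacle beyond making the continuity point explicitly.
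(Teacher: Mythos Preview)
Your argument is correct and essentially identical to the paper's: both proceed by contradiction, pull back a disconnection $P\cup Q=V(H)$ to $f^{-1}(P)\cup f^{-1}(Q)=V(G)$, and use the homomorphism property to rule out any edge between the two preimages. The only difference is cosmetic---the paper phrases the last step as ``since $G$ is connected there must be an edge between the preimages, and its image contradicts the disconnection of $H$,'' whereas you phrase it contrapositively---and the paper leaves the continuity of $f$ (hence closedness of the preimages) implicit while you spell it out.
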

\begin{proof}
Suppose $H$ is disconnected. Then there are two disjoint nonempty closed subsets $A$ and $B$ of $V(H)$ such that $V(H)=A\cup B$ and there is no edges between $A$ and $B$.
So $V(G)=f^{-1}(A)\cup f^{-1}(B)$ and $f^{-1}(A)\cap f^{-1}(B)=\emptyset$. Since $G$ is connected, there are vertices $a\in f^{-1}(A)$ and $b\in f^{-1}(B)$ such that
$\langle a,b\rangle\in E(G)$, and thus $\langle f(a),f(b)\rangle\in E(H)$, a contradiction.
\end{proof}

\begin{definition}
Given a graph $G$, a subset $S$ of $V(G)$, and a vertex $a\in S$ the {\it component of $S$ containing $a$} is the
largest connected subset $C$ of $S$ that contains $a$; in other words $C=\bigcup\{P\subseteq S: a\in P\ \text{ and }P \text{ is connected} \} $.
\end{definition}

\begin{definition}
 A topological graph $G$ is called {\it hereditarily unicoherent} if for every two closed connected subgraphs $P$ and $Q$ of $G$ the
 intersection $P\cap Q$ is connected. The graph $G$ is {\it unicoherent} if in addition $P \cup Q = G$.
\end{definition}

Notice that if  a finite graph is unicoherent then it is hereditarily unicoherent.

\begin{notation}
If $G$ is a topological graph and $S\subseteq V(G)$ by $G\backslash S$ we mean a topological graph $F$ such that $V(F)=V(G)\backslash S$ and if $e=\langle a,b\rangle \in E(G)$ then $e\in F$ if and only if $\{a,b\}\subseteq V(F)$
\end{notation}

The following theorem is known in continuum theory.  Here we give the analogous theorem for topological graphs together with the proof.

\begin{proposition}\label{intersection-hu}
If $G$ is a hereditarily unicoherent topological graph and $\{G_\alpha: \alpha\in A\}$ is a family of connected subgraphs of $G$, then the intersection
$\bigcap\{G_\alpha: \alpha\in A\}$ is connected.
\end{proposition}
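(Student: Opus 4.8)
The plan is to reduce the arbitrary intersection to the finite case, which is handled by the definition of hereditary unicoherence together with an easy induction. First I would observe that for a \emph{finite} subfamily $\{G_{\alpha_1},\dots,G_{\alpha_k}\}$ of connected subgraphs, the intersection $G_{\alpha_1}\cap\cdots\cap G_{\alpha_k}$ is connected: this is immediate for $k=2$ by hereditary unicoherence, and for the inductive step one notes that $G_{\alpha_1}\cap\cdots\cap G_{\alpha_{k}}=(G_{\alpha_1}\cap\cdots\cap G_{\alpha_{k-1}})\cap G_{\alpha_k}$ is the intersection of two closed connected subgraphs (the first factor being connected by the inductive hypothesis and closed as a finite intersection of closed sets), hence connected again by hereditary unicoherence. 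Here I would also record that a closed subgraph of a topological graph is itself a topological graph, and that ``subgraph'' in this context is understood in the induced sense, so that intersections of subgraphs are subgraphs.

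Next I would pass to the infinite case via a compactness argument. Suppose, for contradiction, that $S:=\bigcap_{\alpha\in A}G_\alpha$ is disconnected, so there are nonempty closed sets $P,Q\subseteq V(S)$ with $P\cup Q=V(S)$, $P\cap Q=\emptyset$, and no edge of $G$ joining a vertex of $P$ to a vertex of $Q$. Since $V(G)$ is compact and metrizable and $P,Q$ are disjoint compacta, there are disjoint open sets $U\supseteq P$ and $W\supseteq Q$ in $V(G)$; moreover, because $E(G)$ is closed in $V(G)^2$ and there are no $E(G)$-edges between $P$ and $Q$, by shrinking $U$ and $W$ we may assume there is no edge between $\overline U$ and $\overline W$ either (the set of pairs in $\overline U\times\overline W$ carrying an edge is a closed set disjoint from $P\times Q$, so standard separation in the compact metric space $V(G)^2$ lets us pick such neighborhoods). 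Let $F:=V(G)\setminus(U\cup W)$, a closed, hence compact, subset of $V(G)$ disjoint from $S$.

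Now each point of $F$ lies outside $S$, so for each $x\in F$ there is an index $\alpha(x)\in A$ with $x\notin V(G_{\alpha(x)})$; since $V(G_{\alpha(x)})$ is closed, $V(G)\setminus V(G_{\alpha(x)})$ is an open neighborhood of $x$. By compactness of $F$, finitely many such neighborhoods cover $F$, say corresponding to $\alpha_1,\dots,\alpha_k$, so that $F\cap V(G_{\alpha_1})\cap\cdots\cap V(G_{\alpha_k})=\emptyset$. Put $G':=G_{\alpha_1}\cap\cdots\cap G_{\alpha_k}$, which is connected by the finite case and satisfies $S\subseteq G'$ and $V(G')\subseteq U\cup W$. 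Then $P':=V(G')\cap U$ and $Q':=V(G')\cap W$ are closed in $V(G')$, nonempty (they contain $P$ and $Q$ respectively), disjoint, cover $V(G')$, and have no edge of $G$ between them because none runs between $\overline U$ and $\overline W$. This exhibits $G'$ as disconnected, contradicting the finite case, and completes the proof.

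The main obstacle I anticipate is the topological separation step: producing neighborhoods $U,W$ of $P,Q$ whose closures are still not connected by any edge. The ingredients are exactly that $V(G)$ is compact metric and $E(G)$ is closed — the same hypotheses packaged in the definition of a topological graph — so this is a routine but essential use of normality of $V(G)^2$ together with closedness of $E(G)$; everything after that is the compactness reduction to the finite case, which is purely formal.
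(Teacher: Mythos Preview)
Your proof is correct and follows essentially the same route as the paper's: assume the intersection is disconnected, separate the two pieces by open sets, and use compactness to pass to a finite subintersection that is connected by hereditary unicoherence (plus a tacit induction), yielding a contradiction. Your extra care in arranging that no edge of $G$ runs between $\overline U$ and $\overline W$---using that $E(G)$ is closed in $V(G)^2$---is warranted, and in fact fills in a step the paper's proof leaves implicit when it passes from $G_{\alpha_1}\cap\cdots\cap G_{\alpha_n}\subseteq H^*\cup K^*$ directly to a contradiction.
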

\begin{proof}
Suppose $J=\bigcap\{G_\alpha: \alpha\in A\}$ is disconnected. Then there are two disjoint closed subsets $H$ and $K$ of $V(J)$
such that $H\cap K=\emptyset$ and $H\cup K=V(J)$.
Let $H^*$ and $K^*$ be disjoint open sets containing $H$ and $K$ repsectively. The collection $\{G\setminus G_\alpha : \alpha \in A\} \cup (H^* \cup K^*)$ is an open cover of $G$. By compactness, there is a finite open subcover $G\setminus G_{\alpha_1}, \dots, G\setminus G_{\alpha_n}, H^* \cup K^*$ of $G$.
Taking the complements we have that
$H \cup K \subseteq G_{\alpha_1}\cap\dots\cap G_{\alpha_n} \subseteq H^* \cup K^*$ and $G_{\alpha_1}\cap\dots\cap G_{\alpha_n}$ is, by hereditary unicoherence, a connected graph, a contradiction.

\end{proof}

\begin{definition}\label{arc-def}
  We say that a topological graph $G$ is an {\it arc} if it is connected and there are two vertices $a,b\in V(G)$ such that for every
  $x\in V(G)\setminus \{a,b\}$ the graph $G\setminus \{x\}$ is not connected. The vertices $a$ and $b$ are called end vertices of the arc and we say
  that $G$ joins $a$ and $b$.
\end{definition}

In topology there is only one, up to homeomorphism, metric arc, while for topological graphs we have finite arcs, countable arcs as well as uncountable arcs.

\begin{example}
  Let $G$ be the topological graph whose set of vertices is the Cantor ternary set, and the edges are defined by $\langle a,b\rangle\in E(G)$
  if and only if $a=b$ or $a$ and $b$ are end vertices of the same deleted interval. Then $G$  is an uncountable arc, whose topological realization is homeomorphic to $[0,1]$.
\end{example}

\begin{definition}
  A topological graph $G$ is called {\it arcwise connected} if for every two vertices $a,b\in V(G)$ there is a subgraph of $G$ that is an arc and contains $a$ and $b$.
\end{definition}

\begin{definition}\label{dendroid-def}
  A hereditarily unicoherent and arcwise connected finite graph is called a {\it dendroid}. A locally connected dendroid is called a {\it dendrite}.
\end{definition}

\begin{proposition}
  For a connected finite graph $G$ the following conditions are equivalent:
  \begin{enumerate}
    \item $G$ is a tree;
    \item  $G$ is unicoherent;
    \item $G$ is a dendroid;
    \item $G$ is a dendrite;
  \end{enumerate}
\end{proposition}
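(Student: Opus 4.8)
The plan is to reduce the four-way equivalence to the single nontrivial equivalence $(1)\Leftrightarrow(2)$ by first dispatching the implications that are essentially formal in the finite setting. For $(3)\Leftrightarrow(4)$: a finite graph is a discrete space, its singletons form a basis of the topology, and each singleton is connected (a one-point set cannot be split into two nonempty pieces, and the vertex carries a reflexive loop), so every finite graph is locally connected; hence ``dendroid'' and ``dendrite'' coincide for finite graphs. For $(3)\Rightarrow(2)$: hereditary unicoherence is by definition the statement that $P\cap Q$ is connected for \emph{all} closed connected subgraphs $P,Q$, which specializes to unicoherence, so a dendroid is unicoherent. For $(2)\Rightarrow(3)$: the remark following the definition of hereditary unicoherence upgrades unicoherence of a finite graph to hereditary unicoherence, while arcwise connectedness is automatic --- any two vertices of a connected finite graph are joined by a simple path $a=v_0,\dots,v_n=b$, and the subgraph on $\{v_0,\dots,v_n\}$ with edges $\langle v_i,v_{i+1}\rangle$ together with the reflexive loops is an arc in the sense of Definition~\ref{arc-def}.

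For $(1)\Rightarrow(2)$, suppose $G$ is a tree and $P\cup Q=G$ with $P,Q$ closed connected subgraphs. If $P\cap Q$ were disconnected, write $V(P\cap Q)=H\cup K$ with no $G$-edge between the parts, pick $a\in H$ and $b\in K$, and join $a$ to $b$ by a simple path inside $P$ and by a simple path inside $Q$. Uniqueness of paths in the tree $G$ forces both to be the unique $a$--$b$ path of $G$, so every edge of that path lies in $E(P)\cap E(Q)=E(P\cap Q)$ and every vertex lies in $V(P\cap Q)$; but traversing it from $a\in H$ to $b\in K$ exhibits an edge between $H$ and $K$, a contradiction. For $(2)\Rightarrow(1)$ I argue contrapositively: if $G$ is connected and not a tree it contains a simple cycle (take a spanning tree $T'$ of $G$; as a proper subgraph it omits some non-degenerate edge $e=\langle u,v\rangle$ of $G$, and adjoining $e$ to $T'$ closes a cycle). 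Deleting $e$ leaves a connected subgraph $P$, since the rest of the cycle is a $u$--$v$ path, while the single-edge subgraph $Q$ on $\{u,v\}$ satisfies $P\cup Q=G$ with $P\cap Q$ the two-point graph $\{u\}\cup\{v\}$ having no connecting edge; so $G$ is not unicoherent.

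Most of this is bookkeeping with the definitions (subgraph versus induced subgraph, tracking reflexive loops, and checking that the path produced inside $P$ really is the tree's unique path). The one step that deserves genuine care is the claim that a connected finite graph failing condition~(1) must contain a simple cycle --- i.e.\ that the ``unique sequence'' requirement in Definition~\ref{definition-order} is equivalent to the usual acyclicity; this is standard, and the spanning-tree argument makes it precise without resorting to edge-counting. An alternative that avoids spanning trees would take two distinct admissible sequences between some pair of vertices and extract a cycle at the first place they diverge and the first place they reconverge, but the spanning-tree route is shorter.
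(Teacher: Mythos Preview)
Your proof is correct and fills in the details that the paper's own argument leaves implicit. The paper runs the cycle $(1)\Rightarrow(2)\Rightarrow(3)\Rightarrow(4)\Rightarrow(1)$, dismissing the first three implications as ``following from definitions'' and closing with the one-line remark that a finite dendrite cannot contain a cycle; your organization into pairwise equivalences covers exactly the same ground, with the only minor difference being that your contrapositive $(2)\Rightarrow(1)$ must produce a decomposition covering all of $G$ (delete one edge from a cycle, keep the rest as $P$), whereas the paper, working from hereditary unicoherence in $(4)\Rightarrow(1)$, could split the cycle locally into two arcs---but this is the same idea in slightly different packaging.
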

\begin{proof}
  The implications $(1)\implies(2)\implies(3)\implies(4)$ follow from definitions. To see $(4)\implies(1)$ note that a finite dendrite cannot contain a cycle, so it is a tree.
\end{proof}

The following observations follow from the fact that a topological graph $G$
is connected if and only if its topological realization $|G|$
is connected.

\begin{observation}
If $G$ is a hereditarily unicoherent topological graph and $E(G)$ is transitive, then its topological realization $|G|$ is a hereditarily unicoherent continuum.
\end{observation}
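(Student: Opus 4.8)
The plan is to push hereditary unicoherence forward along the quotient map $\pi\colon V(G)\to |G|$, using the quoted equivalence ``$H$ is connected if and only if $|H|$ is connected'' not only for $G$ itself but for a whole family of subgraphs.

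First I would dispose of the fact that $|G|$ is a continuum. Since $V(G)$ is compact and metrizable and $E(G)$ is a closed equivalence relation, $\pi$ is a perfect map, so $|G|=V(G)/E(G)$ is compact and metrizable; it is connected by the quoted fact applied to $G$ (here we use that $G$ is connected --- this is genuinely needed, since otherwise $|G|$ need not be a continuum). Hence $|G|$ is a continuum.

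Next I would set up a correspondence between subcontinua of $|G|$ and certain subgraphs of $G$. Given a subcontinuum $P\subseteq |G|$, the set $\pi^{-1}(P)$ is closed in $V(G)$ and is a union of $E(G)$-classes, each such class being a fiber of $\pi$; let $G_P$ denote the subgraph of $G$ induced on $\pi^{-1}(P)$. Then $G_P$ is a topological graph, $E(G_P)=E(G)\cap(\pi^{-1}(P))^2$ is transitive, and because $\pi^{-1}(P)$ is closed and saturated the restriction of $\pi$ to $\pi^{-1}(P)$ is a quotient map onto $P$ whose fibers are exactly the $E(G_P)$-classes; thus $|G_P|$ is homeomorphic to $P$. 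Applying the quoted fact to $G_P$, connectedness of $P$ gives connectedness of the (clearly closed) subgraph $G_P$. The point to check is that this assignment respects intersections: $V(G_P)\cap V(G_Q)=\pi^{-1}(P)\cap\pi^{-1}(Q)=\pi^{-1}(P\cap Q)$, so $G_P\cap G_Q=G_{P\cap Q}$ and its topological realization is $P\cap Q$.

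Finally, given two subcontinua $P$ and $Q$ of $|G|$, the subgraphs $G_P$ and $G_Q$ are closed and connected, so hereditary unicoherence of $G$ makes $G_P\cap G_Q$ connected; since $G_P\cap G_Q=G_{P\cap Q}$ has realization $P\cap Q$, the quoted fact (in the direction ``$H$ connected implies $|H|$ connected'') shows $P\cap Q$ is connected, and it is a subcontinuum of $|G|$. Therefore $|G|$ is hereditarily unicoherent. I expect the only delicate step to be the verification that the induced subgraph on the saturated closed set $\pi^{-1}(P)$ has topological realization homeomorphic to $P$ --- that is, that restricting the quotient map to a saturated closed set again yields a quotient map with the same fibers --- together with confirming that ``closed connected subgraph'' in the definition of hereditary unicoherence is exactly this induced-subgraph notion. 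Everything else is a direct application of the quoted connectedness equivalence.
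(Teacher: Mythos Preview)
Your argument is correct and is exactly the approach the paper intends: the paper offers no proof beyond the prefatory sentence that the observation ``follows from the fact that a topological graph $G$ is connected if and only if its topological realization $|G|$ is connected,'' and you have spelled out precisely how it follows, via saturated preimages $G_P=\pi^{-1}(P)$ and the identity $G_P\cap G_Q=G_{P\cap Q}$. Your flag that connectedness of $G$ is tacitly needed for $|G|$ to be a continuum is a fair observation the paper leaves implicit.
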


\begin{observation}\label{arcs}
If a topological graph $G$ is an arc and $E(G)$ is transitive, then its topological realization $|G|$ is an arc or a point.
\end{observation}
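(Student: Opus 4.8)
The plan is to realize $|G|$ as a metric continuum and then show that it has exactly two non-cut points; by the classical characterization of the arc (a metric continuum is an arc if and only if it has exactly two non-cut points) together with the classical fact that every nondegenerate metric continuum has at least two non-cut points, this will finish the proof. Since $E(G)$ is a closed equivalence relation on the compact metric space $V(G)$, the realization $|G|=V(G)/E(G)$ is again a compact metric space; write $q\colon V(G)\to|G|$ for the quotient map. As $G$ is an arc it is connected, so by the observation that a topological graph is connected exactly when its realization is, $|G|$ is connected, hence a continuum or a one-point space. In the one-point case we are done, so assume $|G|$ is nondegenerate, and let $a,b$ be the two vertices witnessing that $G$ is an arc. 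Because $\{a,b\}$ is contained in at most two fibers of $q$, it suffices to prove that every point $\xi\in|G|$ with $q^{-1}(\xi)\cap\{a,b\}=\emptyset$ is a cut point of $|G|$.

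Fix such a $\xi$ and set $F:=q^{-1}(\xi)$, a closed $E(G)$-equivalence class disjoint from $\{a,b\}$. Since $V(G)\setminus F$ is open and $E(G)$-saturated, $q$ restricts to a quotient (in fact closed) map of it onto $|G|\setminus\{\xi\}$, under which the subspace topology on $|G|\setminus\{\xi\}$ coincides with the realization of the graph $G\setminus F$; arguing as for the fact that connectedness of a graph agrees with connectedness of its realization, it follows that $\xi$ is a cut point of $|G|$ if and only if $G\setminus F$ is disconnected. If $|F|=1$, say $F=\{x\}$, then $x\notin\{a,b\}$, so $G\setminus\{x\}$ is disconnected by the definition of an arc and we are done. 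Assume now $|F|\ge2$. For $y\in F$ the graph $G\setminus\{y\}$ is disconnected, say $V(G)\setminus\{y\}=P\sqcup Q$ with $P,Q$ nonempty, clopen, and with no edge between them; then $P$ and $Q$ are unions of equivalence classes of $G\setminus\{y\}$, so the single class $F\setminus\{y\}$ lies entirely in one of them, say $F\setminus\{y\}\subseteq P$. If $P\ne F\setminus\{y\}$, then $V(G)\setminus F=(P\setminus F)\sqcup Q$ exhibits $G\setminus F$ as disconnected (both pieces are nonempty and clopen in $V(G)\setminus F$, using that $F$ is closed, and no edge joins them), so $\xi$ is a cut point. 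The only alternative is that for every $y\in F$ each disconnection of $G\setminus\{y\}$ has $F\setminus\{y\}$ as one of its two pieces, so $F\setminus\{y\}$ is clopen in $V(G)\setminus\{y\}$; then each $F\setminus\{y\}$ is contained in an open subset of $V(G)$ contained in $F$, and the union of two such sets, for distinct $y_1,y_2\in F$, equals $F$. So $F$ is clopen in $V(G)$; but $F$ is an equivalence class, so no edge joins $F$ to its nonempty complement, and $V(G)=F\sqcup(V(G)\setminus F)$ disconnects $G$, contradicting that $G$ is an arc. Thus $\xi$ is always a cut point, and $|G|$ is an arc.

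The step I expect to be the main obstacle is the passage, carried out at the end of the second paragraph, from disconnectedness of $G\setminus\{y\}$ for a single vertex $y\in F$ to disconnectedness of $G\setminus F$. Deleting one vertex of the ``clique'' $F$ may separate $G$ only because $F$ itself falls apart, which carries no information about $G\setminus F$; the dichotomy --- either some single-vertex deletion already separates $G\setminus F$ nontrivially, or $F$ is clopen and $G$ is itself disconnected --- is the crux, and it is exactly here that transitivity of $E(G)$, which makes $F$ an equivalence class with no edges leaving it, is essential.
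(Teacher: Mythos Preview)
Your proof is correct and follows the route the paper has in mind: the paper records this observation as an immediate consequence of the fact that a topological graph is connected if and only if its topological realization is, leaving the details implicit, and you have supplied exactly those details via the non-cut-point characterization of the arc. The dichotomy you flag as the main obstacle---either deleting some $y\in F$ already separates $G\setminus F$, or $F$ is clopen and $G$ itself is disconnected---is handled correctly; your phrase ``each disconnection'' is slightly stronger than needed (one chosen disconnection per $y$ suffices), but this does not affect the argument.
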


\begin{observation}
If a topological graph $G$ is arcwise connected and $E(G)$ is transitive, then its topological realization $|G|$ is arcwise connected.
\end{observation}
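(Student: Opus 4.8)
The plan is to use the quotient map $q\colon V(G)\to|G|$ and to show that every arc subgraph of $G$ realizes to a genuine arc (or a single point) that sits inside $|G|$ as a topological subspace; arcwise connectedness of $|G|$ then follows by lifting a given pair of points, joining their lifts by an arc subgraph, and projecting down.

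First I would fix two distinct points $p,p'\in|G|$ and choose vertices $a\in q^{-1}(p)$ and $b\in q^{-1}(p')$. Since $G$ is arcwise connected there is a subgraph $A\subseteq G$ which is an arc and contains $a$ and $b$; being an arc, $A$ is in particular a topological graph, so $V(A)$ is compact. The edge relation $E(A)=E(G)\cap V(A)^2$ is the restriction of the equivalence relation $E(G)$, hence is again reflexive, symmetric, and transitive; so by Observation~\ref{arcs} the realization $|A|$ is an arc or a point.

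Next I would compare $|A|$ with its image in $|G|$. The inclusion $V(A)\hookrightarrow V(G)$ induces a continuous map $\iota\colon|A|\to|G|$. For $x,y\in V(A)$ we have $[x]=[y]$ in $|A|$ iff $\langle x,y\rangle\in E(A)$, and since $x,y\in V(A)$ this is equivalent to $\langle x,y\rangle\in E(G)$, i.e. to $\iota([x])=\iota([y])$; hence $\iota$ is injective. As $|A|$ is compact (a continuous image of the compact $A$) and $|G|$ is metrizable, $\iota$ is a homeomorphism onto its image, so $\iota(|A|)$ is a subspace of $|G|$ homeomorphic to $|A|$. Since $p\ne p'$ we have $\langle a,b\rangle\notin E(G)$, so $[a]\ne[b]$ in $|A|$ and $|A|$ is not a point; thus $|A|$ is an arc, and $\iota(|A|)$ is an arc in $|G|$ containing $p=\iota([a])$ and $p'=\iota([b])$. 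As $p,p'$ were arbitrary, $|G|$ is arcwise connected. (One should also note in passing that $G$ arcwise connected forces $G$ to be connected — it is a union of connected arc subgraphs through a common vertex — so that $|G|$ is genuinely a continuum by the fact quoted before the observations.)

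The step I expect to be the main obstacle is the identification of $|A|$ with a topological subspace of $|G|$, that is, checking that $\iota$ is an embedding: this rests on the observation that an arc subgraph carries exactly the induced edge relation, together with the usual compact-to-Hausdorff argument. Everything else is a routine application of Observation~\ref{arcs} and of lifting points through the quotient map.
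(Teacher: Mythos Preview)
Your proof is correct and supplies the details the paper omits: there the statement is simply listed among several observations said to ``follow from the fact that a topological graph $G$ is connected if and only if its topological realization $|G|$ is connected,'' with no further argument given. Your route through Observation~\ref{arcs} and the compact--Hausdorff embedding lemma is the natural way to make this precise.

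One point deserves a remark. The equality $E(A)=E(G)\cap V(A)^2$ is the hinge on which both the transitivity of $E(A)$ and the injectivity of $\iota$ turn, and it amounts to reading ``subgraph'' as ``induced subgraph.'' If a subgraph were permitted to carry a strictly smaller edge set, the step would fail: take $V(G)$ to be the Cantor set with its usual arc edges together with one extra edge $\langle 0,1\rangle$; then the ordinary Cantor arc is a non-induced arc subgraph, $|G|$ is a circle, and the induced map $|A|\to|G|$ identifies the two endpoints of $|A|\cong[0,1]$, so is not injective. In the paper's usage, however, connectedness of subsets, the construction $G\setminus S$, and intersections of subgraphs are all phrased for vertex sets carrying the inherited edge relation, so ``subgraph'' is implicitly induced throughout; under that convention your assertion is simply the definition and the argument is complete.
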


\begin{observation}\label{realization-dendroid}
If a topological graph $G$ is a dendroid and $E(G)$ is transitive, then its topological realization $|G|$ is a dendroid.
\end{observation}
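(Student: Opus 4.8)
The plan is to obtain this observation by simply assembling the three observations that precede it, so the argument will be short. First I would unwind the hypothesis via Definition~\ref{dendroid-def}: a dendroid is a hereditarily unicoherent, arcwise connected graph, and arcwise connectedness in particular forces $G$ to be connected. Thus every hypothesis needed to invoke the earlier observations is in place.

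Second, I would check that $|G|$ is a continuum, i.e.\ a compact, connected, metrizable space. Compactness is immediate since $|G|$ is a continuous image of the compact space $V(G)$; metrizability follows from the standard fact that the quotient of a compact metric space by a closed equivalence relation is again metrizable, and here $E(G)$ is a closed equivalence relation because $G$ is a prespace; connectedness is exactly the principle recorded just before these observations, namely that $G$ is connected if and only if $|G|$ is connected, applied to the connected graph $G$. (If $G$ consists of a single $E(G)$-class then $|G|$ is a point, which we regard as a degenerate dendroid, consistent with the phrasing of Observation~\ref{arcs}.)

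Third, I would invoke the observation that a hereditarily unicoherent topological graph with transitive edge relation has hereditarily unicoherent topological realization, which gives that $|G|$ is a hereditarily unicoherent continuum; and then the observation that an arcwise connected topological graph with transitive edge relation has arcwise connected topological realization, which gives that $|G|$ is arcwise connected. A hereditarily unicoherent, arcwise connected continuum is, in the continuum-theory sense, a dendroid, and this completes the proof.

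The only step that is not pure bookkeeping is the verification that the quotient $|G|$ is metrizable; this is the place I would expect a reader to pause, although it is classical. Everything else is a direct combination of results already established, so I do not anticipate any genuine obstacle.
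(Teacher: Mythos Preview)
Your proposal is correct and matches the paper's approach: the paper states this observation without proof, noting only that it (along with the neighboring observations) follows from the equivalence between connectedness of $G$ and of $|G|$, so your assembly of the preceding observations on hereditary unicoherence and arcwise connectedness is exactly what is intended. Your added remark on metrizability of $|G|$ is a reasonable point of care that the paper leaves implicit.
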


\begin{observation}\label{top-dendrite}
If a topological graph $G$ is a dendrite and $E(G)$ is transitive, then its topological realization $|G|$ is a dendrite.
\end{observation}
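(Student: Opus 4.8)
The plan is to invoke Observation~\ref{realization-dendroid} to reduce everything to local connectedness. Since a dendrite is in particular a dendroid, Observation~\ref{realization-dendroid} already gives that $|G|$ is a dendroid; as a locally connected dendroid (continuum) contains no simple closed curve and is hence a dendrite, it suffices to prove that $|G|$ is locally connected. Write $q\colon V(G)\to|G|$ for the quotient map. Because $E(G)$ is a closed equivalence relation on the compact metric space $V(G)$, the space $|G|$ is compact metrizable and $q$ is a continuous quotient map; moreover $q(a)=q(b)$ if and only if $\langle a,b\rangle\in E(G)$, so in particular every fiber $q^{-1}(z)$ is a connected subgraph of $G$ (all of its vertices are pairwise joined by edges).

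First I would record two auxiliary facts that bridge the graph-theoretic notion of connectedness on subsets of $V(G)$ and the ordinary topological notion on subspaces of $|G|$. \emph{Fact 1:} in a locally connected topological graph, every component of an open set is open. This is the usual argument: if $K$ is a component of an open set $U$ and $y\in K$, pick a connected open $B$ with $y\in B\subseteq U$; the union of two connected subgraphs sharing a vertex is connected, so $K\cup B$ is connected, whence $B\subseteq K$ by maximality, and $K$ is open. \emph{Fact 2:} if $S\subseteq V(G)$ is saturated (a union of fibers of $q$) and is a connected subgraph, then $q(S)$ is a connected subspace of $|G|$. Indeed, a disconnection $q(S)=A\cup B$ into disjoint nonempty relatively closed sets pulls back, using saturation of $S$, to a partition $S=q^{-1}(A)\cup q^{-1}(B)$ into nonempty closed subsets; and there is no edge $\langle a,b\rangle$ with $a\in q^{-1}(A)$ and $b\in q^{-1}(B)$, since such an edge would force $q(a)=q(b)\in A\cap B=\emptyset$. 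This contradicts connectedness of $S$.

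With these in hand I would prove local connectedness of $|G|$ directly. Given $x\in U$ with $U\subseteq|G|$ open, the set $q^{-1}(U)$ is open, saturated, and contains the connected fiber $q^{-1}(x)$, so $q^{-1}(x)$ lies in a single component $K$ of $q^{-1}(U)$. By Fact~1, $K$ is open. Also $K$ is saturated: any fiber that meets $K$ is connected and contained in the saturated set $q^{-1}(U)$, hence contained in the component $K$. Therefore $q^{-1}(q(K))=K$ is open, so $q(K)$ is open in $|G|$; by Fact~2, $q(K)$ is connected; and $x\in q(K)\subseteq q(q^{-1}(U))=U$. Thus $q(K)$ is a connected open neighborhood of $x$ inside $U$, so $|G|$ is locally connected, and hence a dendrite.

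The only place real care is needed — and what I would regard as the main obstacle — is keeping the two notions of connectedness straight, which is exactly what Facts~1 and~2 are for. The single mechanism driving both is the equivalence $q(a)=q(b)\Longleftrightarrow\langle a,b\rangle\in E(G)$, available precisely because $E(G)$ is transitive (so $q$ is the quotient map of an equivalence relation); granting that, the remainder is the standard transfer of the continuum-theory fact that a monotone image of a locally connected space is locally connected.
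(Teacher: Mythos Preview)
Your proof is correct. The paper does not actually supply a proof of this Observation; it groups Observations~2.11--2.15 together under the blanket remark that they ``follow from the fact that a topological graph $G$ is connected if and only if its topological realization $|G|$ is connected,'' and leaves the details to the reader. Your argument is precisely the honest verification of that remark for the dendrite case: you reduce to local connectedness via Observation~\ref{realization-dendroid}, and then carry local connectedness across $q$ by showing that components of open saturated sets are open and saturated (your Facts~1 and~2). This is the standard monotone-quotient transfer, and it is exactly the content the paper is gesturing at; you have simply made explicit the saturation bookkeeping that the one-line justification suppresses.
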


The following Theorem provides a sufficient condition in order for a projective Fra\"{\i}ss\'e family  to have transitive set of edges in the projective Fra\"{\i}ss\'e limit.

\begin{theorem}\label{transitive}
Suppose that $\mathcal G$ is a projective Fra\"{\i}ss\'e family of graphs and for every $G\in \mathcal G$, for every $a,b,c\in V(G)$ such that
$\langle a,b\rangle\in E(G)$ and $\langle b,c\rangle\in E(G)$ there is a graph $H$ and an epimorphism $f^H_G\colon  H\to G$ such that for every vertices $p,q,r\in V(H)$ such that $f^H_G(p)=a$, $f^H_G(q)=b$, and $f^H_G(r)=c$ we have $\langle p,q\rangle\notin E(H)$ or $\langle q,r\rangle\notin E(H)$. Then there are not distinct vertices $a,b,c$ in the Fra\"{\i}ss\'e limit $\mathbb G$ such that the edges $\langle a,b\rangle$ and $\langle b,c \rangle$ are in $\mathbb G$ hence $\mathcal G$ has a transitive set of edges.
\end{theorem}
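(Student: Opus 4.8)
The plan is to argue by contradiction and to transport a ``forbidden two-step path'' from a finite graph up to $\mathbb{G}$ using the lifting property of the Fra\"{\i}ss\'e limit. Assume there are distinct vertices $a,b,c$ of $\mathbb{G}$ with $\langle a,b\rangle\in E(\mathbb{G})$ and $\langle b,c\rangle\in E(\mathbb{G})$. By Theorem~\ref{limit}(1) fix any $G\in\mathcal{G}$ together with an epimorphism $\pi\colon\mathbb{G}\to G$, and set $\bar a=\pi(a)$, $\bar b=\pi(b)$, $\bar c=\pi(c)$. Since $\pi$ is a homomorphism it sends edges to edges, so $\langle\bar a,\bar b\rangle\in E(G)$ and $\langle\bar b,\bar c\rangle\in E(G)$. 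We do not need $\bar a,\bar b,\bar c$ to be distinct, which is convenient because $\pi$ may collapse some of them and because the hypothesis is stated for arbitrary, not necessarily distinct, triples.

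Next I would invoke the hypothesis for this $G$ and the triple $\bar a,\bar b,\bar c$: it produces a graph $H\in\mathcal{G}$ and an epimorphism $f^H_G\colon H\to G$ with the property that no $p,q,r\in V(H)$ satisfying $f^H_G(p)=\bar a$, $f^H_G(q)=\bar b$, $f^H_G(r)=\bar c$ can have both $\langle p,q\rangle\in E(H)$ and $\langle q,r\rangle\in E(H)$. Applying Theorem~\ref{limit}(2) to the epimorphisms $\pi\colon\mathbb{G}\to G$ (the map out of the limit) and $f^H_G\colon H\to G$ yields an epimorphism $h\colon\mathbb{G}\to H$ with $\pi=f^H_G\circ h$. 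Put $p=h(a)$, $q=h(b)$, $r=h(c)$. The relation $\pi=f^H_G\circ h$ gives $f^H_G(p)=\bar a$, $f^H_G(q)=\bar b$, $f^H_G(r)=\bar c$, and since $h$ is a homomorphism, $\langle a,b\rangle,\langle b,c\rangle\in E(\mathbb{G})$ forces $\langle p,q\rangle,\langle q,r\rangle\in E(H)$. This contradicts the defining property of $H$, so no such distinct triple $a,b,c$ can exist.

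It remains to derive transitivity of $E(\mathbb{G})$ from the absence of such triples. Suppose $\langle a,b\rangle,\langle b,c\rangle\in E(\mathbb{G})$. By the previous paragraph $a,b,c$ are not pairwise distinct, so $a=b$, or $b=c$, or $a=c$; in each of these three cases $\langle a,c\rangle\in E(\mathbb{G})$ is immediate from reflexivity and symmetry of the edge relation. Hence $E(\mathbb{G})$ is transitive.

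I do not expect a genuine obstacle: the heart of the proof is a single diagram chase against the Fra\"{\i}ss\'e lifting property, and the only points requiring attention are applying Theorem~\ref{limit}(2) with the maps in the correct roles (the map out of $\mathbb{G}$ being $\pi$, the map between the finite graphs being $f^H_G$) and the bookkeeping of which composite sends which vertex where. It is also worth recalling that homomorphisms are by definition edge-preserving, so the implications ``$\langle a,b\rangle\in E(\mathbb{G})\Rightarrow\langle h(a),h(b)\rangle\in E(H)$'' and ``$\langle a,b\rangle\in E(\mathbb{G})\Rightarrow\langle\pi(a),\pi(b)\rangle\in E(G)$'' require no extra argument.
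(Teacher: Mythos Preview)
Your argument follows exactly the same contradiction-and-lifting pattern as the paper's proof. The one substantive difference is that the paper first uses condition~(3) of Theorem~\ref{limit} to choose $G$ and the epimorphism so that the images of $a,b,c$ are already \emph{distinct} vertices of $G$, whereas you take an arbitrary $\pi$ and appeal to the hypothesis for a possibly non-distinct triple $\bar a,\bar b,\bar c$.

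Your literal reading of the hypothesis (``arbitrary, not necessarily distinct, triples'') is problematic: under that reading the hypothesis is never satisfiable for a nonempty $\mathcal G$. Indeed, for any vertex $v\in V(G)$ the reflexive triple $a=b=c=v$ has $\langle a,b\rangle,\langle b,c\rangle\in E(G)$, yet for any $H$ and any $p\in (f^H_G)^{-1}(v)$, taking $p=q=r$ gives $\langle p,q\rangle,\langle q,r\rangle\in E(H)$ by reflexivity, so no such $H$ can exist. The intended reading is therefore that $a,b,c$ are distinct (and this is also how the hypothesis is verified in Theorem~\ref{dense}). Under that reading your proof needs the paper's extra step: use a sufficiently fine $\varepsilon$-epimorphism to separate $a,b,c$ before invoking the hypothesis. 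With that one-line addition your proof coincides with the paper's.
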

\begin{proof}
Let $\mathcal G$ be a family that satisfies the assumptions of the Theorem.
Suppose on the contrary that there are three vertices $a,b,c\in \mathbb G$ such that $\langle a,b\rangle, \langle b,c\rangle\in E(\mathbb G)$. Let a graph $G\in\mathcal G$ and an epimorphism $f_G\colon \mathbb G\to G$ be such that $f_G(a), f_G(b), \text{  and }f_G(c)$ are three distinct vertices of $G$. Then $\langle f_G(a),f_G(b)\rangle, \langle f_G(b),f_G(c)\rangle\in E(G)$, and thus, by our assumption, there is a graph
$H$ and an epimorphism $f^H_G\colon  H\to G$ such that for an epimorphism $f_H\colon \mathbb G\to H$ satisfying $f_G=f^H_G\circ f_H$ we have  $\langle f_H(a),f_H(b)\rangle\notin E(H)$ or $\langle f_H(b),f_H(c)\rangle\notin E(H)$. This contradicts the fact that $f_H$ maps edges to edges.
\end{proof}

\begin{theorem}\label{limit-of-hu}
If $\mathcal T$ is a projective Fra\"{\i}ss\'e family of  trees, and $\mathbb T$ is a projective Fra\"{\i}ss\'e
limit of $\mathcal T$, then $\mathbb T$ is hereditarily unicoherent.
\end{theorem}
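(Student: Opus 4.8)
The plan is to represent $\mathbb{T}$ as the inverse limit of a fundamental sequence of finite trees and to transfer any hypothetical disconnection of an intersection $P\cap Q$ down to one of the finite stages, where it would contradict unicoherence of a finite tree.

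Concretely, I would fix a fundamental sequence $\{T_n,\alpha_n\}$ for $\mathcal T$ and, as in the proof of Theorem~\ref{limit}, take $\mathbb{T}=\varprojlim\{T_n,\alpha_n\}$ with epimorphic projections $\pi_n\colon\mathbb{T}\to T_n$; the feature I shall exploit is that $\diam(\pi_n^{-1}(x))$ tends to $0$ uniformly in $x\in V(T_n)$ as $n\to\infty$, which is immediate from the standard metric on the inverse limit (and is what underlies Theorem~\ref{limit}(3)). Each $T_n$ is a finite tree, hence unicoherent, hence hereditarily unicoherent. Now let $P,Q$ be closed connected subgraphs of $\mathbb{T}$; we may assume $P\cap Q\neq\emptyset$. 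Since $\pi_n$ restricts to an epimorphism of $P$ onto the subgraph $\pi_n(P)$ of $T_n$ — enlarging the edge set of $\pi_n(P)$ to the one induced from $T_n$ only improves connectedness — Proposition~\ref{connected-image} gives that $\pi_n(P)$ and $\pi_n(Q)$ are connected subgraphs of $T_n$, so $W_n:=\pi_n(P)\cap\pi_n(Q)$ is a nonempty connected subgraph of $T_n$ by hereditary unicoherence of $T_n$. Because $P$ and $Q$ are closed we have $P=\bigcap_n\pi_n^{-1}(\pi_n(P))$ and $Q=\bigcap_n\pi_n^{-1}(\pi_n(Q))$, and these (hence also their intersection $\bigcap_n\pi_n^{-1}(W_n)=P\cap Q$) are decreasing intersections of compacta.

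The step where one cannot simply invoke ``a decreasing intersection of connected sets is connected'' is that $\pi_n^{-1}(W_n)$ need not be connected, so I would argue by contradiction. Suppose $P\cap Q=H\cup K$ with $H,K$ disjoint, nonempty, closed and no edge between them. Using that $E(\mathbb{T})$ is closed and $H\times K$ is a compactum disjoint from it, and that $H,K$ themselves are disjoint compacta, I would choose disjoint open sets $U\supseteq H$ and $V\supseteq K$ with no edge between $U$ and $V$ and with $d(U,V)>0$ (take sufficiently small metric neighbourhoods of $H$ and $K$ inside a first such pair). Put $\varepsilon=\tfrac12 d(U,V)$ and choose $m$ so large that $\diam(\pi_m^{-1}(x))<\varepsilon$ for all $x\in V(T_m)$ and, since the decreasing compacta $\pi_n^{-1}(W_n)$ intersect down to $P\cap Q\subseteq U\cup V$, also $\pi_m^{-1}(W_m)\subseteq U\cup V$.

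For $x\in W_m$ the fibre $\pi_m^{-1}(x)$ is nonempty, of diameter $<d(U,V)$, and contained in $U\cup V$, hence lies entirely in $U$ or entirely in $V$; this partitions $W_m$ into two sets $W_m^U$ and $W_m^V$. I would then verify that this is a genuine disconnection of $W_m$: both pieces are closed (being subsets of the finite set $V(T_m)$), both are nonempty since they contain $\pi_m(H)$ and $\pi_m(K)$ respectively, and there is no edge of $T_m$ between them — an edge $\langle x,y\rangle\in E(T_m)$ with $x\in W_m^U$ and $y\in W_m^V$ would, because $\pi_m$ is an epimorphism, lift to an edge $\langle a,b\rangle\in E(\mathbb{T})$ with $a\in\pi_m^{-1}(x)\subseteq U$ and $b\in\pi_m^{-1}(y)\subseteq V$, contradicting the choice of $U$ and $V$. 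Thus $W_m$ is disconnected, contradicting that it is a connected subgraph of the hereditarily unicoherent tree $T_m$. I expect the only real obstacle to be making the separation descend to a finite stage without the fibres straddling $U$ and $V$; this is precisely what the uniform shrinking of the fibres furnishes, and the edge-lifting step is where it is essential that the $\pi_m$ are epimorphisms rather than merely continuous surjections.
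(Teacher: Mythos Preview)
Your argument is correct and follows the same strategy as the paper's proof: push a hypothetical disconnection of $P\cap Q$ down to a finite tree $T_m$ (equivalently, to some $G\in\mathcal T$ via an $\varepsilon$-epimorphism) and contradict hereditary unicoherence there. The paper's version is terse --- it simply asserts that for $\varepsilon$ small enough one obtains $f_G(P)\cap f_G(Q)$ disconnected in $G$ --- whereas you supply precisely the details that make this assertion work: the decreasing-intersection argument giving $\pi_m^{-1}(W_m)\subseteq U\cup V$, the small-fibre argument showing each fibre over $W_m$ lies in one side, and the edge-lifting step (using that $\pi_m$ is surjective on edges) showing no edge of $T_m$ crosses between $W_m^U$ and $W_m^V$. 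Your extra bookkeeping with $p\in P\setminus Q$, $q\in Q\setminus P$ from the paper is unnecessary for the contradiction and you rightly omit it.
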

\begin{proof}
Suppose $\mathbb T$ is not hereditarily unicoherent.  Then there exist closed  connected subsets $P$ and $Q$ of $V(\mathbb T)$ such that $P \cap Q$ is not connected. Let $p \in P\backslash Q$ and $q \in Q\backslash P$.  By Condition \ref{refinement} of Theorem \ref{limit}, choosing $\varepsilon$ small enough, there exists a tree $G$ and a epimorphism $f_G\colon  \mathbb T \to G $ such that $f_G(p) \not \in f_G(Q)$,  $f_G(q)\not \in f_G(P)$, and $f_G(P) \cap f_G(Q)$ is not connected.    This contradicts the fact that $G$ is a tree.
\end{proof}

\section{Monotone epimorphisms}

In this section we investigate that the family of finite trees with monotone epimorphisms. We show that this family, which we denote as $\mathcal T_{\mathcal M}$, is a projective Fra\"{\i}ss\'e family  (Theorem~\ref{amalgamation-monotone}) and that the topolotical realization of $\mathcal T_{\mathcal M}$ is homeomorphic to $D_3$, the standard universal denrite of order 3, also known as a Wa\. zewski denrite of order 3  (Theorem~\ref{D3}). Note that Kwiatkowska \cite{K-D3} uses an injective Fra\"{\i}ss\'e construction of $D_3$ to study the group of homeomorphisms of Wa\. zewski denrites.

First, we want to show an example that usually epimorphisms between trees do not have amalgamations that are connected graphs.
That is why we need to restrict considered classes of epimorphisms.

\begin{example}

There exist a triod $T$, arcs $I$ and $J$ for which there is no connected graph $G$ and epimorphisms $f_0$ and $g_0$ such that the diagram below commutes.

\begin{equation}\tag{D2}
\begin{tikzcd}
&I\arrow{ld}[swap]{f}\\
T&&G\arrow[lu,swap,dotted,"f_0"] \arrow[ld,dotted,"g_0"]\\
&J\arrow[lu,"g"]
\end{tikzcd}
\end{equation}

The triod $T$ has the center $b$ and the end vertices $a$, $c$, and $d$; the arc $I$ has vertices $p_1,p_2,p_3,p_4$, and $p_5$, similarly
the arc  $J$ has vertices $q_1,q_2,q_3,q_4$, and $q_5$. The epimorphisms $f$ and $g$ are pictured below. Precisely, we have $f(p_1)=d$,
$f(p_2)=b$, $f(p_3)=c$, $f(p_4)=b$, and $f(p_5)=a$; similarly,  $g(q_1)=d$,
$g(q_2)=b$, $g(q_3)=a$, $g(q_4)=b$, and $g(q_5)=c$.

\begin{center}
\begin{tikzpicture}[scale=0.75]

  \draw (3,0) -- (3,3);
   \draw (3.7,-0.7) -- (3.7,2.3) -- (6,2.3);
   \draw (3.7,3.7) -- (6,3.7) -- (0,3.7);
  \draw (0,3) -- (3,3);
  \draw (6,3) -- (3,3);
\draw (6,2.3) arc (-90:90:0.7);

  \node at (3,-0.3) {$d$};

  \draw (6,3) -- (3,3);
  \node at (0,2.7) {$a$};
\node at (2.7,2.7) {$b$};
\node at (6,2.7) {$c$};
\node at (4,-0.7) {$p_1$};
\node at (4,2) {$p_2$};
\node at (7,3) {$p_3$};
\node at (3,4) {$p_4$};
\node at (0,4) {$p_5$};
\draw (3,3) circle (0.03);
\draw (3,3) circle (0.015);

\draw (3,0) circle (0.03);
\draw (3,0) circle (0.015);

\draw (0,3) circle (0.03);
\draw (0,3) circle (0.015);

\draw (6,3) circle (0.03);
\draw (6,3) circle (0.015);

\draw (3,3.7) circle (0.03);
\draw (3,3.7) circle (0.015);

\draw (0,3.7) circle (0.03);
\draw (0,3.7) circle (0.015);

\draw (6.7,3) circle (0.03);
\draw (6.7,3) circle (0.015);

\draw (3.7,2.3) circle (0.03);
\draw (3.7,2.3) circle (0.015);

\draw (3.7,-0.7) circle (0.03);
\draw (3.7,-0.7) circle (0.015);

  \draw (12,0) -- (12,3);
    \draw (9,3) -- (12,3);
  \draw (15,3) -- (12,3);
    \draw (9,3.7) -- (15,3.7);

    \node at (12,-0.3) {$d$};

  \node at (9,2.7) {$a$};
\node at (12.3,2.7) {$b$};
\node at (15,2.7) {$c$};

\draw (9,3.7) arc (90:270:0.7);

  \draw (9,2.3) -- (11.3,2.3) -- (11.3,-0.7);

  \node at (11,-0.7) {$q_1$};
\node at (11,2) {$q_2$};
\node at (8,3) {$q_3$};
\node at (12,4) {$q_4$};
\node at (15,4) {$q_5$};

\draw (12,3) circle (0.03);
\draw (12,3) circle (0.015);

\draw (12,0) circle (0.03);
\draw (12,0) circle (0.015);

\draw (9,3) circle (0.03);
\draw (9,3) circle (0.015);

\draw (15,3) circle (0.03);
\draw (15,3) circle (0.015);

\draw (15,3.7) circle (0.03);
\draw (15,3.7) circle (0.015);

\draw (12,3.7) circle (0.03);
\draw (12,3.7) circle (0.015);

\draw (8.3,3) circle (0.03);
\draw (8.3,3) circle (0.015);

\draw (11.3,2.3) circle (0.03);
\draw (11.3,2.3) circle (0.015);

\draw (11.3,-0.7) circle (0.03);
\draw (11.3,-0.7) circle (0.015);

\node at (3,-2) {$f\colon I\to T$};
\node at (12,-2) {$g\colon J\to T$};

\end{tikzpicture}
\end{center}

Suppose that there is a connected graph $G$ and epimorphisms $f_0\colon G\to I$ and $g_0\colon G\to J$ such that the diagram (D2) commutes. Let $x_0\in (f_0)^{-1}(p_1)$ and let
$x_0,x_1,\dots x_n$ be a sequence of vertices of $G$ such that:
\begin{enumerate}
    \item for each $i \in \{0,1,\dots, n-1\}$ we have $\langle x_i,x_{i+1}\rangle\in E(G)$;
    \item $x_0,x_1,\dots x_{n-1}\in (f\circ f_0)^{-1}(\{b,d\})$;
    \item $x_{n}\notin (f\circ f_0)^{-1}(\{b,d\})$.
\end{enumerate}
Then we have
\begin{enumerate}
    \item $f_0(x_0)=p_1$,
    \item $f_0(x_0), f_0(x_1), \dots f_0(x_{n-1})\in \{p_1,p_2\}$;
    \item $f_0(x_n)=p_3$;
    \item $g_0(x_0)=q_1$,
    \item $g_0(x_0), g_0(x_1), \dots g_0(x_{n-1})\in \{q_1,q_2\}$;
    \item $g_0(x_n)=q_3$.
    \end{enumerate}
By conditions (3) and (6) we have that $f(f_0(x_n))=c$, while $g(g_0(x_n))=a$, so the diagram (D2) does not commute, a contradiction.

\end{example}

In this section we consider the category $\treemon$ of trees with morphisms being monotone epimorphisms. We start with necessary definitions.

\begin{definition}
Given two topological graphs $G$ and $H$ an epimorphism $f\colon G\to H$  is called {\it monotone} if the preimage of a connected
subset of $V(H)$ is a connected subset of $V(G)$, or, equivalently (see \cite [Lemma 1.1] {Pseudo}) if the preimage of every
vertex in $V(H)$ is connected.
\end{definition}
The following example shows that the concept of monotone epimorphism for graphs is not exactly how continuum theory sees monotone maps between continua.
\begin{example}
 There is a monotone epimorphism from a cyclic graph onto an arc.

 Let $G$ be a complete graph with three vertices $a,b$, and $c$, i.e. a graph where
 $E(G)=V(G)^2$, and let $H$ be an arc with two vertices $p$ and $q$, and $E(H)=V(H)^2$. Define $f\colon G\to H$ by $f(a)=p$ and $f(b)=f(c)=q$. The reader can
 verify that $f$ is a monotone epimorphism.
\end{example}

The following result, which is well known in continuum theory, also holds in the setting of topological graphs and is used in Theorem~\ref{D3}.

\begin{lemma}\label{images-of-arcs}
If $f\colon G\to H$ is a monotone epimorphism between topological graphs and $G$ is an arc, then $H$ is an arc and the images of end vertices of $G$ are end vertices of  $H$.
\end{lemma}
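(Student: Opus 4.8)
The plan is to work directly from the definitions of arc (Definition~\ref{arc-def}) and monotone epimorphism, exploiting that $f$ maps connected sets to connected sets (it is an epimorphism, so Proposition~\ref{connected-image} applies to restrictions) and that preimages of connected sets are connected. Let $a,b$ be the end vertices of the arc $G$, so that for every $x\in V(G)\setminus\{a,b\}$ the graph $G\setminus\{x\}$ is disconnected. I claim $f(a)$ and $f(b)$ are end vertices of $H$ and that $H$ is an arc.

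First I would show $H$ is connected: this is immediate from Proposition~\ref{connected-image} since $G$ is connected. Next, the core step: take any vertex $y\in V(H)\setminus\{f(a),f(b)\}$ and show $H\setminus\{y\}$ is disconnected. Consider $f^{-1}(y)$; by monotonicity it is connected. Since $y\neq f(a),f(b)$, the set $f^{-1}(y)$ misses $a$ and $b$. The key sub-claim is that $G\setminus f^{-1}(y)$ is disconnected, with $a$ and $b$ lying in different components. To see this, I would use that a connected subgraph of an arc which contains neither end vertex and which, together with the two ``halves'' on either side, covers $V(G)$, must separate the two ends — more carefully, pick any $x\in f^{-1}(y)$; then $G\setminus\{x\}$ is disconnected into pieces, and since $f^{-1}(y)$ is connected and avoids $a,b$ one shows $f^{-1}(y)$ is contained in the ``interior'' of the arc between $a$ and $b$ and that removing all of $f^{-1}(y)$ leaves $a$ and $b$ in distinct closed pieces $P\ni a$, $Q\ni b$ with no edge between them. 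Then $f(P)$ and $f(Q)$ are connected subsets of $V(H)\setminus\{y\}$ (no edge of $H$ joins $f(P)$ to $f(Q)$, else pulling back such an edge and using that $f$ is an epimorphism on edges would produce an edge of $G$ between $P$ and $Q$), they cover $V(H)\setminus\{y\}$ since $f$ is surjective, and they are closed by compactness. Hence $H\setminus\{y\}$ is disconnected, so $H$ is an arc with end vertices among $f(a),f(b)$; since the two pieces are nonempty and $f(a)\in f(P)$, $f(b)\in f(Q)$, both $f(a)$ and $f(b)$ are genuinely end vertices (in particular $f(a)\neq f(b)$, unless $H$ is a single vertex, which is excluded once $H$ has more than one vertex — and if $H$ has one vertex the statement is vacuous or one regards it as a degenerate arc).

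The step I expect to be the main obstacle is establishing that $G\setminus f^{-1}(y)$ separates $a$ from $b$, i.e.\ that removing the \emph{whole} connected preimage (not just a single point) still disconnects the arc with $a,b$ on opposite sides. The subtlety is that $f^{-1}(y)$ could be a large connected subgraph, and one must rule out that $a$ and $b$ end up in the same component of the complement. I would handle this by fixing $x_0\in f^{-1}(y)$, using that $G\setminus\{x_0\}=A\sqcup B$ with $a\in A$, $b\in B$ (up to relabelling), and then arguing that $f^{-1}(y)\subseteq \{x_0\}$ cannot be improved directly but that $f^{-1}(y)$ being connected forces it to lie ``across'' the cut — concretely, any vertex $z\in f^{-1}(y)$ is connected to $x_0$ within $f^{-1}(y)$, and a path from $z$ to $x_0$ inside the arc must pass through the cut structure in a controlled way; combining this with hereditary unicoherence of $G$ (every finite arc is a tree, hence unicoherent, so Proposition~\ref{intersection-hu} applies) lets me conclude $A\setminus f^{-1}(y)$ and $B\setminus f^{-1}(y)$ are each connected, nonempty (they contain $a$, resp.\ $b$), cover $V(G)\setminus f^{-1}(y)$, and have no edge between them. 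Once that separation is in hand, pushing it forward through $f$ as above is routine.
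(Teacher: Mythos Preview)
Your approach is essentially the paper's: for $y\in V(H)\setminus\{f(a),f(b)\}$, observe that $G\setminus f^{-1}(y)$ is disconnected (any single $x\in f^{-1}(y)$ already separates $G$, removing further points cannot reconnect, and $a,b$ survive on opposite sides since $a,b\notin f^{-1}(y)$), then push the separation $U\cup V$ forward to $V(H)\setminus\{y\}=f(U)\cup f(V)$. Your detour through hereditary unicoherence and connectedness of the pieces $P,Q$ is unnecessary---for a disconnection you only need two nonempty closed sets with no edge between them, and monotonicity of $f$ is exactly what forces $f(U)\cap f(V)=\emptyset$ (a point in the intersection would give a connected fiber meeting both $U$ and $V$); also note that $G$ here is a general topological graph, not a finite one, so the ``every finite arc is a tree'' remark is misplaced, though ultimately harmless since you do not actually need it.
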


\begin{proof}
Denote the end vertices of $G$ by $a$ and $b$. We need to show that every vertex in $V(H)\setminus \{f(a),f(b)\}$ disconnects $H$. Let $y\in V(H)\setminus \{f(a),f(b)\}$; then, since
$G$ is an arc the graph $G\setminus f^{-1}(y)$ is disconnected. Let $G\setminus f^{-1}(y)$ be the union of two disjoint graphs $G\setminus f^{-1}(y)=U\cup V$. Thus $V(H)\setminus \{y\}=f(U)\cup f(V)$, so it is disconnected as needed.
\end{proof}

\begin{proposition}\label{monotone-projections}
If $\mathcal G$ is a projective Fra\"{\i}ss\'e family of graphs with monotone epimorphisms and $\mathbb G$ is a projective Fra\"{\i}ss\'e 
limit of $\mathcal G$, then for every $G \in \mathcal G$ any epimorphism $f_G\colon \mathbb G\to G$ is monotone.
\end{proposition}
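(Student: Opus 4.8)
The plan is to reduce the statement to two facts: that the canonical projections of $\mathbb G$ are monotone, and that a composition of monotone epimorphisms is monotone. By (the proof of) Theorem~\ref{limit}, $\mathbb G$ is the inverse limit $\iLim\{F_n,\alpha_n\}$ of a fundamental sequence of $\mathcal G$, so every $F_n$ lies in $\mathcal G$ and every bonding map $\alpha_n\colon F_{n+1}\to F_n$ is a monotone epimorphism; write $\alpha_m^\infty\colon\mathbb G\to F_m$ for the canonical projections and $\alpha_m^n\colon F_n\to F_m$ ($n\ge m$) for the composed bonding maps, each of which is again monotone. Since $f_G$ is an epimorphism onto the finite graph $G\in\mathcal G$, the description of the morphisms of $\mathcal F^\omega$ onto a finite structure recalled above yields an $m$ and an epimorphism $h\colon F_m\to G$ lying in $\treemon$ — hence monotone — with $f_G=h\circ\alpha_m^\infty$. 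As monotonicity is obviously preserved under composition (if $p,q$ are monotone then $(q\circ p)^{-1}(C)=p^{-1}(q^{-1}(C))$ is connected for every connected $C$), it suffices to prove that each projection $\alpha_m^\infty$ is monotone.

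To that end, fix $x\in V(F_m)$. The fiber $(\alpha_m^\infty)^{-1}(x)$ is naturally the inverse limit, over $n\ge m$, of the subgraphs $(\alpha_m^n)^{-1}(x)\subseteq V(F_n)$, with bonding maps the restrictions of the $\alpha_n$; these restrictions are again epimorphisms onto the corresponding fibers (routine lifting of edges), and each $(\alpha_m^n)^{-1}(x)$ is connected because $\alpha_m^n$ is monotone. Hence $\alpha_m^\infty$ is monotone as soon as we know the following: an inverse limit of connected compact topological graphs with epimorphisms as bonding maps is connected.

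This last assertion is the only non-formal point, and I would prove it by the usual clopen-separation argument. Suppose $K=\iLim\{K_n,\beta_n\}$ with each $K_n$ connected, and suppose $V(K)=P\cup Q$ with $P,Q$ nonempty, closed, disjoint and no edge of $K$ between them. Since $V(K)$ is compact and zero-dimensional, $P$ and $Q$ are separated by a clopen set, which has the form $(\beta_n^\infty)^{-1}(U_n)$ for some $n$ and some clopen $U_n$ with $\emptyset\ne U_n\ne V(K_n)$. For $n'\ge n$ put $U_{n'}=(\beta_n^{n'})^{-1}(U_n)$: a nonempty clopen set with nonempty complement, so by connectedness of $K_{n'}$ there is an edge of $K_{n'}$ joining $U_{n'}$ to $V(K_{n'})\setminus U_{n'}$. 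The sets of such edges form an inverse system of nonempty compact spaces under the $\beta_{n'}$ — the $\beta_{n'}$-image of such an edge is again one, and is nondegenerate since $U_{n'}$ and its complement are disjoint — so its inverse limit is nonempty, and any thread in it is an edge of $K$ with one endpoint lying over $U_n$, hence in $P$, and the other over $V(K_n)\setminus U_n$, hence in $Q$; this contradicts the choice of $P,Q$. I expect this inverse-limit lemma to be where the real work lies; should a form of it, or the monotonicity of the projections $\alpha_m^\infty$, already be available from \cite{Pseudo} or \cite{Menger}, the proof collapses to the two factorizations above.
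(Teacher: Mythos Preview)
Your proof is correct (modulo the slip of writing $\treemon$ where you mean the given family $\mathcal G$), but it takes a genuinely different route from the paper's argument. The paper argues directly by contradiction: if some fiber $f_G^{-1}(a)$ splits as $A\sqcup B$, choose $\varepsilon$ less than the distance between $A$ and $B$ and, using conditions~(2) and~(3) of Theorem~\ref{limit} together with amalgamation, factor $f_G$ as $f_G^H\circ f_H$ with $f_H$ an $\varepsilon$-epimorphism; then $f_H(A)$ and $f_H(B)$ are disjoint and their union is $(f_G^H)^{-1}(a)$, contradicting monotonicity of the finite morphism $f_G^H\in\mathcal G$. By contrast, you work structurally: factor $f_G$ through a canonical projection $\alpha_m^\infty$ via the fundamental sequence, and prove once and for all that $\alpha_m^\infty$ is monotone by showing that an inverse limit of connected (finite) graphs with epimorphisms is connected. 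The paper's argument is shorter and stays at the level of the abstract characterization of the limit; your argument isolates a reusable lemma---indeed, the paper later adopts exactly your strategy for the confluent analogue in Proposition~\ref{confluent-projections}, outsourcing the inverse-limit step to~\cite{Macias}. Your inverse-limit connectedness proof is fine in this context since the factors are finite, so every clopen set in the limit is the preimage of a subset at some level and the edge-thread compactness argument goes through.
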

\begin{proof}
Suppose the contrary; then there is a graph $G\in \mathcal G$, a vertex $a\in G$ and an epimorphism $f_G\colon \mathbb G\to G$ such that
$f_G^{-1}(a)$ is the disjoint union of two nonempty closed subsets $A$ and $B$. Choose $\varepsilon>0$ such that $d(a,b)>\varepsilon$
for every $a\in A$ and $b\in B$. By conditions (2) and (3) of Theorem \ref{limit} and amalgamation we may obtain a graph $H\in \mathcal G$, an epimorphism $f^H_G\colon H\to G$, and an epimorphism
$f_H\colon \mathbb G\to H$ be such that  $f_G=f^H_G\circ f_H $ and $f_H$ is an $\varepsilon$ epimorphism.  Then $f_H(A)$ and $f_H(B)$ are two disjoint nonempty subsets of $H$ and $(f_G^H)^{-1}(a)=f_H(A)\cup f_H(B)$ contrary to monotonicity of $f_G^H$.
\end{proof}

\begin{proposition}\label{monotone-limit-of-arcs}
If $\mathcal T$ is a projective Fra\"{\i}ss\'e family of finite arcs with monotone epimorphisms, and $\mathbb T$ is a projective Fra\"{\i}ss\'e 
limit of $\mathcal T$, then $\mathbb T$ is an arc.
\end{proposition}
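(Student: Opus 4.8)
The plan is to use the description of $\mathbb T$ from the proof of Theorem~\ref{limit} as the inverse limit of a fundamental sequence $\{F_n,\alpha_n\}$ for $\mathcal T$, so that each $F_n$ is a finite arc and each $\alpha_n\colon F_{n+1}\to F_n$ is a monotone epimorphism. Since the $\alpha_n$ are surjective the cardinalities $|V(F_n)|$ are non-decreasing, so unless $\mathcal T$ reduces to the one-point graph (in which case $\mathbb T$ is a point) we may, after deleting finitely many terms, assume every $F_n$ has at least two vertices. By Lemma~\ref{images-of-arcs}, $\alpha_n$ maps the two end vertices of $F_{n+1}$ bijectively onto the two end vertices of $F_n$; choosing labels inductively we may write the ends of $F_n$ as $a_n,b_n$ with $\alpha_n(a_{n+1})=a_n$ and $\alpha_n(b_{n+1})=b_n$. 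Set $a=(a_n)$, $b=(b_n)\in V(\mathbb T)$; these are distinct, and I claim they are the end vertices of an arc structure on $\mathbb T$. First one records that $\mathbb T$ is connected: a partition of $\mathbb T$ into disjoint nonempty closed sets $A,B$ with no edge between them would, via an epimorphism $f\colon\mathbb T\to G$ with $G\in\mathcal T$ and $\operatorname{diam} f^{-1}(x)<\operatorname{dist}(A,B)$ supplied by Condition~\ref{refinement} of Theorem~\ref{limit}, push down to a partition of $G$ with no edge between the parts, contradicting that $G$ is an arc.

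Now fix $x=(x_n)\in V(\mathbb T)\setminus\{a,b\}$; the task is to disconnect $\mathbb T\setminus\{x\}$. A coordinate chase shows there is $N$ with $x_n\notin\{a_n,b_n\}$ for all $n\ge N$: if $x_n$ were an end vertex of $F_n$ for even one $n$, applying the bonding maps in both directions would force $x=a$ or $x=b$. Reindexing, assume $x_n$ is an ordinary vertex of $F_n$ for every $n$, so $F_n\setminus\{x_n\}=U_n\sqcup V_n$ where $U_n$ is the component of $a_n$ and $V_n$ the component of $b_n$, and there is no edge between $U_n$ and $V_n$. The heart of the argument is the claim that $\alpha_n(U_{n+1})\subseteq U_n\cup\{x_n\}$ and, symmetrically, $\alpha_n(V_{n+1})\subseteq V_n\cup\{x_n\}$: the monotone map $\alpha_n$ cannot carry a vertex on the $a$-side of $x_{n+1}$ to a vertex strictly on the $b$-side of $x_n$. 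Indeed, if $u\in U_{n+1}$ had $\alpha_n(u)\in V_n$, then the image under $\alpha_n$ of the subarc $[a_{n+1},u]$ (which misses $x_{n+1}$) is connected and meets both $U_n$ and $V_n$, hence contains $x_n$, so $\alpha_n(u')=x_n$ for some $u'$ between $a_{n+1}$ and $u$; but $\alpha_n^{-1}(x_n)$ is connected and contains $u'$ and $x_{n+1}$, hence contains $u$, giving $\alpha_n(u)=x_n\notin V_n$ --- a contradiction. A short consequence of the two inclusions is $\alpha_n^{-1}(U_n)\subseteq U_{n+1}$ and $\alpha_n^{-1}(V_n)\subseteq V_{n+1}$.

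With the projections $\pi_n\colon\mathbb T\to F_n$, put $\widehat U_n=\pi_n^{-1}(U_n)$ and $\widehat V_n=\pi_n^{-1}(V_n)$, clopen subsets of $\mathbb T$ since $F_n$ is finite. Because $\pi_n=\alpha_n\circ\pi_{n+1}$, the last inclusions give $\widehat U_n\subseteq\widehat U_{n+1}$ and $\widehat V_n\subseteq\widehat V_{n+1}$. Let $A'=\bigcup_n\widehat U_n$ and $B'=\bigcup_n\widehat V_n$. Then $A'\cup B'=\bigcup_n(\mathbb T\setminus\pi_n^{-1}(x_n))=\mathbb T\setminus\bigcap_n\pi_n^{-1}(x_n)=\mathbb T\setminus\{x\}$; the nestedness together with $\widehat U_n\cap\widehat V_n=\emptyset$ gives $A'\cap B'=\emptyset$; $a\in A'$ and $b\in B'$ make both nonempty; and no edge of $\mathbb T$ joins $A'$ to $B'$, since at a sufficiently high level such an edge would project to an edge of $F_n$ between $U_n$ and $V_n$. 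Hence $A'$ and $B'$ disconnect $\mathbb T\setminus\{x\}$. As $x$ was an arbitrary vertex other than $a,b$ and $\mathbb T$ is connected, $\mathbb T$ is an arc with end vertices $a,b$. I expect the main obstacle to be the crux claim of the second paragraph: it is precisely where the purely combinatorial notion of a monotone epimorphism must be converted into the geometric statement that $\alpha_n$ respects the two sides of the cut vertex, and it is what makes the clopen sets $\widehat U_n,\widehat V_n$ nested and the final separation possible.
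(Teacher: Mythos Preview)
Your proof is correct and follows essentially the same route as the paper: locate the two end-threads $a,b$ via a fundamental sequence, and for any other $x$ separate $\mathbb T\setminus\{x\}$ by pulling back the two components of $F_n\setminus\{x_n\}$, using monotonicity of the bonding maps to show these pullbacks are nested (your detailed verification of the ``crux claim'' is exactly the step the paper records as $(f^{T}_{S})^{-1}(U_{f_S})\subseteq U_{f_T}$ without proof). One small slip: the parenthetical ``if $x_n$ were an end vertex of $F_n$ for even one $n$ \dots'' is too strong---what you actually need, and what the downward direction alone gives, is that $x\ne a$ forces $x_{n_0}\ne a_{n_0}$ for some $n_0$, whence $x_n\ne a_n$ for all $n\ge n_0$ since $\alpha_n(a_{n+1})=a_n$; similarly for $b$.
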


\begin{proof}

First we claim there are two vertices $a$ and $b$ in $\mathbb T$ such that for any $T\in \mathcal {T}$ and any monotone epimorphism $f_T\colon \mathbb T \to T$, $f_T(a)$ and $f_T(b)$ are end vertices of $T$. Let $\{T_n,\alpha_n\}$ be a fundamental sequence for $\mathcal T$. The monotone epimorphisms $\alpha_n$ take end vertices to end vertices so there are exactly two vertices $a=(a_n)$ and $b=(b_n)$ in $\mathbb T$ such that for each $n$, $a_n$, and $b_n$, is an end vertex for $T_n$. For  $T \in \mathcal T$ and $f_T\colon \mathbb T \to T$ there is an $n$ and  monotone epimorphisms $g\colon T_n \to T$ and $f_{T_n}\colon \mathbb T \to T_n$ such that $f_T = g \circ f_{T_n} $. Thus $f_T$ maps the vertices $a$ and $b$ to end vertices of $T$ as claimed.

We need to show that any vertex $x\in\mathbb T\setminus \{a,b\}$ disconnects $\mathbb T$. For an arc $T\in \mathcal T$ and a given monotone epimorphism  $f_T\colon \mathbb T\to T$  define $U_{f_T}$ and $V_{f_T}$ as components of $T\setminus \{f_T(x)\}$ containing the vertices $f_T(a)$ and $f_T(b)$ respectively.
Note that for $T,S\in \mathcal{T}$ and a monotone epimorphism $f^{T}_{S}\colon T\to S$ such that $f_S=f^{T}_{S} \circ f_T$ we have $(f^{T}_{S})^{-1}(U_{f_S})\subseteq U_{f_T}$, and thus $f_{S}^{-1}(U_{f_S}) \subseteq f_{T}^{-1}(U_{f_T})$. Letting $U=\bigcup\{U_{f_T}:f_T \text{ is an epimorphism between trees in }\mathcal{T}\}$ and $V=\bigcup\{V_{f_T}:f_T \text{ is an epimorphism between trees in }\mathcal{T}\}$ we have $\mathbb{T}\setminus \{x\}=U\cup V$ and
$U\cap V=\emptyset$, so $\mathbb{T}\setminus \{x\}$ is not connected as required.
\end{proof}

\begin{proposition}\label{arwise-connected-limits}
If $\mathcal T$ is a projective Fra\"{\i}ss\'e family of finite trees with monotone epimorphisms, and $\mathbb T$ is a projective Fra\"{\i}ss\'e 
limit of $\mathcal T$, then $\mathbb T$ is arcwise connected.
\end{proposition}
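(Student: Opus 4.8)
The plan is to fix two arbitrary vertices $x = (x_n)$ and $y = (y_n)$ of $\mathbb{T}$ and build an arc in $\mathbb{T}$ joining them as an inverse limit of arcs sitting inside the $T_n$. First I would choose a fundamental sequence $\{T_n, \alpha_n\}$ for $\mathcal{T}$. For each $n$, let $A_n \subseteq T_n$ be the unique arc in the tree $T_n$ joining $x_n$ and $y_n$ (here I use that a finite tree is arcwise connected and hereditarily unicoherent, so such an arc exists and is unique). The key point to verify is that $\alpha_n$ restricted to $A_{n+1}$ surjects onto $A_n$: since $\alpha_n$ is monotone, $\alpha_n^{-1}(A_n)$ is a connected subgraph of the tree $T_{n+1}$ containing $x_{n+1}$ and $y_{n+1}$, hence it contains $A_{n+1}$, so $A_n = \alpha_n(A_{n+1}) \subseteq \alpha_n(\alpha_n^{-1}(A_n)) = A_n$ forces $\alpha_n(A_{n+1}) = A_n$; moreover $\alpha_n|_{A_{n+1}} \colon A_{n+1} \to A_n$ is monotone because for $z \in A_n$ the fiber $(\alpha_n|_{A_{n+1}})^{-1}(z) = \alpha_n^{-1}(z) \cap A_{n+1}$ is the intersection of two connected subgraphs of a tree, hence connected by hereditary unicoherence.

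Next I would set $A = \varprojlim\{A_n, \alpha_n|_{A_{n+1}}\}$, which is a closed subgraph of $\mathbb{T}$ containing both $x$ and $y$. By Proposition~\ref{monotone-limit-of-arcs} applied to the projective Fra\"{\i}ss\'e family generated by the arcs $A_n$ with the bonding maps $\alpha_n|_{A_{n+1}}$ — or more directly by the same argument as in that proof — $A$ is an arc. One subtlety: $\{A_n, \alpha_n|_{A_{n+1}}\}$ need not itself be a fundamental sequence of any Fra\"{\i}ss\'e family, so rather than invoke Proposition~\ref{monotone-limit-of-arcs} as a black box I would reprove its conclusion for this inverse sequence. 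The end vertices $x$ and $y$ of each $A_n$ are preserved by the monotone bonding maps, so there are exactly two coordinates, namely $x$ and $y$, that are end vertices at every level; and for any vertex $w = (w_n) \in A \setminus \{x, y\}$ there is some $N$ with $w_N \notin \{x_N, y_N\}$, so $w_n$ disconnects $A_n$ for all $n \geq N$, and taking the union over $n$ of the components of $A_n \setminus \{w_n\}$ on each side (as in the proof of Proposition~\ref{monotone-limit-of-arcs}) exhibits $A \setminus \{w\}$ as disconnected. Hence $A$ is an arc joining $x$ and $y$, and since $x,y$ were arbitrary, $\mathbb{T}$ is arcwise connected.

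The main obstacle I anticipate is the bookkeeping in the last step: one must be careful that the components of $A_n \setminus \{w_n\}$ on the $x$-side are genuinely compatible under the bonding maps (so that their union is open and closed in $A \setminus \{w\}$), exactly as in Proposition~\ref{monotone-limit-of-arcs}, and that $w_n \neq x_n, y_n$ for all large $n$ rather than just one $n$ — this uses that $w \neq x$ and $w \neq y$ as points of the inverse limit, so the coordinates eventually separate. A secondary point to get right is the claim that $\alpha_n^{-1}(A_n) \supseteq A_{n+1}$: this is where hereditary unicoherence (equivalently, uniqueness of arcs in a tree) of $T_{n+1}$ is essential, since a connected subgraph of a tree containing two vertices must contain the arc between them. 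Everything else — compactness of the inverse limit, that $A$ is a subgraph, that edges in $A$ are edges in $\mathbb{T}$ — is routine from the definitions in Section~1.
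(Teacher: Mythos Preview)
Your proposal is correct and follows essentially the same route as the paper: fix two vertices, take the unique arc between their images in each finite tree, check that the monotone bonding maps restrict to monotone epimorphisms between these arcs, and identify the resulting inverse limit as an arc via the argument of Proposition~\ref{monotone-limit-of-arcs}. The paper phrases this by asserting that the collection $\{J_T : T \in \mathcal{T}\}$ is itself a projective Fra\"{\i}ss\'e family and then invoking Proposition~\ref{monotone-limit-of-arcs} directly, whereas you work along a fixed fundamental sequence and, noting that the restricted sequence need not be fundamental for any Fra\"{\i}ss\'e class, reprove the arc conclusion by hand; this extra care is warranted and the underlying argument is the same. One small presentational slip: the displayed chain ``$A_n = \alpha_n(A_{n+1}) \subseteq \alpha_n(\alpha_n^{-1}(A_n)) = A_n$'' asserts the equality before justifying it; what you want is that $\alpha_n(A_{n+1})$ is a connected subset of $T_n$ containing $x_n$ and $y_n$, hence contains $A_n$, while $A_{n+1} \subseteq \alpha_n^{-1}(A_n)$ gives the reverse inclusion.
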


\begin{proof}
Let $a,b$ be two vertices of $\mathbb T$. We will construct an arc joining $a$ and $b$. Let us use notations as is the proof of Proposition~\ref{monotone-limit-of-arcs}: that is, for trees $T,T_1,T_2 \in \mathcal T$, $f_T$ is a monotone epimorphism that maps the limit $\mathbb T$ onto the tree $T$, while $f^{T_1}_{T_2}$ is a monotone epimorphism between trees in $\mathcal T$.  For a given $T\in \mathcal T$, let $J_T$ be the arc between $f_T(a)$ and $f_T(b)$ and observe that $f^{T_1}_{T_2}(J_{T_1})\subseteq J_{T_2}$, and thus $\{J_T:T\in\mathcal T\}$ is a projective Fra\"{\i}ss\'e family, whose limit $\mathbb J$ is, by Proposition \ref{monotone-limit-of-arcs}, an arc joining $a$ and $b$.
\end{proof}

\begin{corollary}\label{dendrites}
If $\mathcal T$ is a projective Fra\"{\i}ss\'e family of trees with monotone epimorphisms, and $\mathbb T$ is a projective Fra\"{\i}ss\'e 
limit of $\mathcal T$, then $\mathbb T$ is a dendrite.
\end{corollary}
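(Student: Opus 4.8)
The goal is to show that $\mathbb{T}$ is a dendrite, that is (by Definition~\ref{dendroid-def} together with the preceding observations), a hereditarily unicoherent, arcwise connected, locally connected topological graph whose topological realization is then a dendrite. The plan is to assemble this from the pieces already established in this section and the previous one, the only genuinely new work being local connectedness.

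First I would note that hereditary unicoherence of $\mathbb{T}$ is immediate from Theorem~\ref{limit-of-hu}, since $\mathcal{T}$ is a projective Fra\"{\i}ss\'e family of trees, and arcwise connectedness is exactly Proposition~\ref{arwise-connected-limits}. So by Observation~\ref{realization-dendroid} the topological realization $|\mathbb{T}|$ is a dendroid (once we know $E(\mathbb{T})$ is transitive, which follows since any monotone Fra\"{\i}ss\'e family of trees should satisfy the hypothesis of Theorem~\ref{transitive} — one can split an edge path $a,b,c$ in any tree by pulling back along a monotone epimorphism that inserts a new ramification or end vertex so that $p,q,r$ no longer lie on a common edge path; alternatively, and more cleanly, one argues transitivity directly from hereditary unicoherence plus arcwise connectedness as in continuum theory). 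The remaining, and main, step is local connectedness.

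For local connectedness I would use the refinement property, Condition~\ref{refinement} of Theorem~\ref{limit}, together with Proposition~\ref{monotone-projections}, which guarantees that every projection $f_G\colon \mathbb{T}\to G$ is monotone. Fix a vertex $x\in\mathbb{T}$ and $\varepsilon>0$; choose $G\in\mathcal{T}$ and a monotone epimorphism $f_G\colon\mathbb{T}\to G$ with $\diam f_G^{-1}(v)<\varepsilon$ for every $v\in V(G)$. Let $v_0=f_G(x)$ and let $N$ be the union of $f_G^{-1}(v_0)$ together with $f_G^{-1}(e)$ over all edges $e$ of $G$ containing $v_0$; since $G$ is a tree, the union of those edges is a connected (``star'') subgraph of $G$ containing $v_0$, so by monotonicity of $f_G$ its preimage $N$ is connected, and $N$ is a neighborhood of $x$. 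Shrinking $\varepsilon$ and iterating (or taking $G$ fine enough that the closed star of $v_0$ has small preimage) gives arbitrarily small connected neighborhoods of $x$, so $\mathbb{T}$ has a basis of connected sets. Hence $\mathbb{T}$ is a locally connected dendroid, i.e. a dendrite, and by Observation~\ref{top-dendrite} its topological realization is a dendrite as well.

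The main obstacle I anticipate is the local connectedness argument: one must be careful that the ``small connected neighborhood'' produced really is open and really has diameter controlled by $\varepsilon$ — the naive closed star $f_G^{-1}(\overline{\mathrm{st}}(v_0))$ is closed, not open, so one should instead take the preimage of an open connected neighborhood of $v_0$ in $G$ (for finite graphs, e.g., $G\setminus\{w : w\notin \mathrm{st}(v_0)\}$ restricted appropriately), verify it is open in $\mathbb{T}$ by continuity of $f_G$, connected by monotonicity, and of diameter at most a bounded multiple of $\varepsilon$ because it is covered by finitely many fibers lying along edges at $v_0$. Everything else reduces to citing Theorem~\ref{limit-of-hu}, Proposition~\ref{arwise-connected-limits}, Proposition~\ref{monotone-projections}, and Observations~\ref{realization-dendroid} and \ref{top-dendrite}.
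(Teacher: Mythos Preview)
Your proposal is correct and hits the same three ingredients the paper assembles: hereditary unicoherence from Theorem~\ref{limit-of-hu}, arcwise connectedness from Proposition~\ref{arwise-connected-limits}, and local connectedness. The only real difference is that the paper dispatches local connectedness in one line by citing \cite[Theorem~2.1]{Menger}, whereas you supply a direct argument via monotone projections with small fibers. Your argument works and is in fact simpler than you make it: since $G$ is a finite graph with the discrete topology, the single fiber $f_G^{-1}(v_0)$ is already open (as the preimage of the open set $\{v_0\}$), connected (by monotonicity, Proposition~\ref{monotone-projections}), contains $x$, and has diameter $<\varepsilon$ by choice of $f_G$. There is no need to pass to the star or to worry about open versus closed neighborhoods. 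What your route buys is self-containment; what the paper's route buys is brevity.

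Your discussion of transitivity of $E(\mathbb T)$ and of Observations~\ref{realization-dendroid} and~\ref{top-dendrite} goes beyond what is asked: the corollary asserts only that the topological graph $\mathbb T$ is a dendrite in the sense of Definition~\ref{dendroid-def}, not that its topological realization is a dendrite. Transitivity and the passage to $|\mathbb T|$ are handled separately later (Theorem~\ref{dense}).
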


\begin{proof}
The limit $\mathbb T$ is arcwise connected by Proposition~\ref{arwise-connected-limits}, it is hereditarily unicoherent by Theorem \ref{limit-of-hu}, and it is locally connected by \cite[Theorem 2.1]{Menger}.
\end{proof}

\begin{theorem}\label{amalgamation-monotone}
The category $\treemon$ is a projective Fra\"{\i}ss\'e family.
\end{theorem}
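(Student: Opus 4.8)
The plan is to verify the four clauses of Definition~\ref{definition-Fraisse} for $\treemon$. Clauses (1) and (2) are immediate: there are only countably many finite trees up to isomorphism; the identity is monotone; and the composition of two monotone epimorphisms is a monotone epimorphism, since the preimage of a connected set under $g\circ f$ is $f^{-1}\big(g^{-1}(\text{connected})\big)$, again connected. Clause (3) has a short direct proof as well: given trees $B$ and $C$, fix vertices $b_{0}\in V(B)$ and $c_{0}\in V(C)$ and let $D$ be the disjoint union of $B$ and $C$ with one new edge $\langle b_{0},c_{0}\rangle$; then $D$ is a tree, the map $D\to B$ which is the identity on $B$ and sends all of $C$ to $b_{0}$ is a monotone epimorphism, and symmetrically for $D\to C$. (Alternatively, (3) follows from clause (4) applied to the one-vertex tree, which is terminal in $\treemon$.)

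The substance is the projective amalgamation property (4): from monotone epimorphisms $f\colon B\to A$ and $g\colon C\to A$ we must produce a tree $D$ and monotone epimorphisms $f_{0}\colon D\to B$, $g_{0}\colon D\to C$ with $f\circ f_{0}=g\circ g_{0}$. I would construct $D$ fibrewise over $A$. Because $f$ is monotone, each fibre $B_{a}:=f^{-1}(a)$ is a subtree of $B$; because $B$ is a tree and $f$ is onto edges, for each edge $\langle a,a'\rangle$ of $A$ there is exactly one edge of $B$ joining $B_{a}$ to $B_{a'}$, with endpoints $b(a,a')\in B_{a}$ and $b(a',a)\in B_{a'}$. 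So $B$ is exactly the union of the subtrees $B_{a}$ glued along these ``bridge'' edges, and the data $C_{a}$, $c(a,a')$ describe $C$ in the same way.

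Suppose now that for every $a\in V(A)$ we have a tree $D_{a}$ with monotone epimorphisms $\phi_{a}\colon D_{a}\to B_{a}$ and $\psi_{a}\colon D_{a}\to C_{a}$, and, for each edge $\langle a,a'\rangle$ of $A$, a vertex $d(a,a')\in V(D_{a})$ with $\phi_{a}(d(a,a'))=b(a,a')$ and $\psi_{a}(d(a,a'))=c(a,a')$. Let $D$ be the disjoint union of the $D_{a}$ together with the edges $\langle d(a,a'),d(a',a)\rangle$ for $\langle a,a'\rangle\in E(A)$. Then $D$ is connected and (counting edges, using that $A$ and each $D_{a}$ are trees) has one fewer edge than it has vertices, hence is a tree; and $f_{0}:=\bigcup_{a}\phi_{a}$, $g_{0}:=\bigcup_{a}\psi_{a}$ are monotone (each $f_{0}^{-1}(b)=\phi_{a}^{-1}(b)$ is connected), are epimorphisms (the bridge edge of $B$ over $\langle a,a'\rangle$ is the image of $\langle d(a,a'),d(a',a)\rangle$), and satisfy $f\circ f_{0}=g\circ g_{0}$ since both composites collapse $D_{a}$ to $a$.

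The whole theorem thus reduces to the following pointed amalgamation statement, which I expect to be the main obstacle: given finite trees $P$ and $Q$ and finitely many pairs $(p_{1},q_{1}),\dots,(p_{m},q_{m})\in V(P)\times V(Q)$, there are a finite tree $R$, monotone epimorphisms $\phi\colon R\to P$ and $\psi\colon R\to Q$, and vertices $r_{1},\dots,r_{m}\in V(R)$ with $\phi(r_{j})=p_{j}$ and $\psi(r_{j})=q_{j}$. For $m=0$ take $R$ to be a copy of $P$ joined by one edge to a copy of $Q$, with $\phi$ the identity on the $P$-part and collapsing the $Q$-part, and dually for $\psi$; for $m=1$ take $R$ to be a copy of $P$ and a copy of $Q$ wedged at $r_{1}$ (identified with $p_{1}$ in the first and with $q_{1}$ in the second), $\phi$ collapsing the $Q$-part to $p_{1}$ and $\psi$ collapsing the $P$-part to $q_{1}$. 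The general case is genuinely delicate, because monotonicity forces \emph{every} fibre of $\phi$ and of $\psi$ to be a connected subtree of $R$ — so $R$ is at the same time a ``blow-up'' of $P$ and of $Q$ — and a naive attempt to add the $(m+1)$st pair by attaching a new branch to a tree already realizing the first $m$ destroys connectedness of some fibre. I would instead build $R$ in one step as a blow-up of $P$ in which the block over each vertex is itself a suitably chosen small tree, defining $\psi$ block by block so as to sweep out all of $Q$ and hit every prescribed $q_{j}$ while keeping each level set $\psi^{-1}(q)$ connected; keeping that bookkeeping consistent is the crux of the argument.
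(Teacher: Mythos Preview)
Your reduction is sound and genuinely different from the paper's argument. Decomposing $B$ and $C$ into the monotone fibres $B_{a}$, $C_{a}$ together with the unique bridge edges over each edge of $A$ is correct (uniqueness of the bridge follows from $B$ being a tree), and your reassembly of $D$ from the pieces $D_{a}$ is clean. The paper, by contrast, runs a direct induction on the total number of edges of $B$ and $C$: it deletes an end edge $\langle a,b\rangle$ of $B$, amalgamates the smaller instance, and reattaches; two cases arise according to whether $f|_{B'}$ is still surjective onto $A$. That argument is entirely self-contained and avoids any auxiliary lemma.

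The gap in your proposal is exactly where you flag it: the pointed amalgamation statement for general $m$ is asserted but not proved. You handle $m=0$ and $m=1$, and then write that you ``would instead build $R$ in one step as a blow-up of $P$\ldots keeping that bookkeeping consistent is the crux of the argument.'' That crux is never carried out. Since every vertex of $A$ can have arbitrarily large degree, the lemma is needed in full generality, and the vague description of sweeping out $Q$ block by block does not explain how to keep \emph{all} fibres $\psi^{-1}(q)$ connected while simultaneously hitting every prescribed pair. This is not a triviality you can wave through: it is essentially equivalent in strength to the amalgamation property you set out to prove (indeed, full amalgamation over an $m$-star with fibre $P$ at the centre and one-point fibres at the leaves immediately yields your pointed lemma, so you cannot hope for it to be much easier). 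One way to close the gap is to prove the pointed lemma by the same kind of edge-removal induction the paper uses --- delete an end vertex of $P$, track which prescribed pairs involve it, and extend --- but as written your proof is incomplete at precisely the step you yourself identify as decisive.
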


\begin{proof}
Conditions (1) and (2) are obvious.

For condition (4) we will use induction on the sum of the number of edges in $B$ and $C$.  If $B$ and $C$ each have just one degenerate edge, that is, $B$ and $C$ are singletons, then $A$ has only one vertex and we can take $D$ to be just a single vertex. 

Now assume there is a natural number $N$ and such that for any trees $A$, $B$, and $C$ with the sum of the number of edges in $B$ and  $C$ is less than or equal to $N$ and $f\colon B\to A$ and $g\colon C\to A$ are monotone epimorphisms then there exists a tree $D$ and monotone epimorphisms $\alpha \colon D\to B$ and $\beta \colon D\to C$ such that $f\circ \alpha = g\circ \beta$.

Suppose $A$, $B$ and $C$ are  trees such that the sum of the number of edges in $B$ and $C$ is $N+1$ and there are monotone epimorphisms $f\colon B\to A$ and $g\colon C\to A$. Let $\langle a, b\rangle$ be an edge in $B$ such that $b$ is an end vertex of $B$.  Let $B'$ be the finite subtree of $B$ without the edge $\langle a,b\rangle$ (still containing the vertex $a$).  There are two possibilities, $f|_{B'}$ is surjective or it is not.

Consider the case that $f|_{B'}$ is surjective.  By the inductive hypothesis there exist a tree $D'$ and monotone epimorphisms $\alpha'\colon  D'\to B'$ and $\beta'\colon  D'\to C$ such that $f|_{B'}\circ \alpha' = g\circ \beta'$ (see the diagram below).

\begin{equation*}
\begin{tikzcd}
&B\arrow{ldd}[swap]{f}\\
&B'\arrow{ld}{f|_{B'}}\arrow[hook,u]\\
A&&D'\arrow[lu,dotted,"\alpha'"] \arrow[ld,dotted,swap,"\beta'"]\arrow[r,dotted,hook]&
D\arrow[lluu,dotted,swap,"\alpha"]\arrow[lld,dotted,"\beta"]\\
&C\arrow[lu,swap,"g"]
\end{tikzcd}
\end{equation*}

Note that $f(b)=f(a)$ otherwise if $f(b) \not = f(a)$ then there exist $c\in B'$ such that $f(c)=f(b)$.  But then $\{b,c\}\in f^{-1}(f(b))$ and $a\not \in f^{-1}(f(b))$ so $f^{-1}(f(b))$ is not connected, contradicting $f$ being monotone.

Chose $c\in (\alpha')^{-1}(a)$ and let $D=D'\cup \langle c,d\rangle$ where $d$ is a vertex not in $D'$.  Let $\alpha\colon D\to B$ be the extension of $\alpha'$ with $\alpha(d) = b$ and $\beta$ be the extension of $\beta'$ with $\beta(d) = \beta'(c)$. Then $D$ is the desired amalgamation.

Now consider the case that $f|_{B'}=A'$, a proper subtree of $A$.  So $E(A)=E(A') \cup\{\langle f(a), f(b)\rangle\}$.  Let $C'=g^{-1}(A')$ and $F=g^{-1}(\{f(b)\})$.  Then $A'$, $B'$ and $C'$ together with $f|_{B'}\colon B' \to A'$ and $g|_{C'}\colon  C'\to A'$ satisfy the inductive hypothesis.  So there exist a  tree $D'$ and monotone epimorphisms $\alpha'\colon D'\to B'$ and $\beta'\colon D' \to C'$ such that $f|_{B'}\circ \alpha' = g|_{C'}\circ \beta'$ (see the diagram below).  

\begin{equation*}
\begin{tikzcd}
&&B\arrow{lldd}[swap]{f}\\
&&B'\arrow{ld}{f|_{B'}}\arrow[hook,u]\\
A&A'\arrow[l,hook']&&D'\arrow[lu,dotted,"\alpha'"] \arrow[ld,dotted,swap,"\beta'"]\arrow[r,dotted,hook]&
D\arrow[lluu,dotted,swap,"\alpha"]\arrow[lldd,dotted,"\beta"]\\
&&C'\arrow[lu,swap,"g|_{C'}"]\arrow[d,hook']\\
&&C\arrow[lluu,"g"]
\end{tikzcd}
\end{equation*}

Let $\langle p,q\rangle$ be an edge in $C$ with $p\in g'^{-1}(f(a))$ and $q\in F$.  Next let $F'$ be an isomorphic copy of $F$ and $D= D' \cup F' \cup \langle p', q'\rangle $ where $p' \in \beta'^{-1}(p)$ and $q'$ is the image of $q$ under the isomorphism.  If $\alpha$ is the extension of $\alpha'$ with $\alpha(c)=b$ for all $c \in F'$ and $\beta$ is the extension of $\beta'$ such that for all $x \in F'$, $\beta(x)$ is the image of $x$ under the isomorphism between $F$ and $F'$, then the tree $D$ together with $\alpha$ and $\beta$ is the desired amalgamation.

Finally, to prove condition (3) we may let $A$ be the tree  consisting of a single vertex and $f$ and $g$ be constant maps, then the tree $D$ and maps $\alpha$ and $\beta$ from the previous part satisfy condition (3).
\end{proof}

Recall (see \cite[(6), p. 490]{JJC-Universal}) that the Wa\. zewski denrite $D_3$ can be characterized by the following conditions:
\begin{enumerate}
    \item each ramification point is of order 3;
    \item the set of ramification points is dense.
\end{enumerate}

We want to show that the topological realization of the projective Fra\"{\i}ss\'e limit of the family $\treemon$ is homeomorphic to the dendrite $D_3$. We divide the proof into two steps.

\begin{theorem}\label{dense}
The projective Fra\"{\i}ss\'e limit of $\treemon$ has transitive set of edges and the topological realization of the Fra\"{\i}ss\'e limit of $\treemon$ is a dendrite with a dense set of ramification points.
\end{theorem}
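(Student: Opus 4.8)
The argument has two essentially independent parts: that the edge relation of the Fra\"{\i}ss\'e limit $\mathbb T$ of $\treemon$ is transitive, and that $|\mathbb T|$ is a dendrite with a dense set of ramification points. For transitivity I plan to verify the hypothesis of Theorem~\ref{transitive}. Given $G\in\treemon$ and distinct vertices $a,b,c\in V(G)$ with $\langle a,b\rangle,\langle b,c\rangle\in E(G)$, split $b$ into two vertices $b_1,b_2$ joined by an edge, letting $b_1$ inherit the edge to $a$ together with every edge at $b$ other than the one to $c$, and letting $b_2$ inherit the edge to $c$; call the resulting graph $H$ and let $f^H_G\colon H\to G$ collapse $\{b_1,b_2\}$ to $b$ and fix every other vertex. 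A routine check shows $H$ is a tree, $f^H_G$ is a monotone epimorphism, and $H$ contains neither the edge $\langle a,b_2\rangle$ nor the edge $\langle b_1,c\rangle$. Since $(f^H_G)^{-1}(a)=\{a\}$, $(f^H_G)^{-1}(c)=\{c\}$ and $(f^H_G)^{-1}(b)=\{b_1,b_2\}$, for every choice of a vertex over $b$ one of the two required edges is absent, so the hypothesis of Theorem~\ref{transitive} holds; hence $\mathbb T$ has no path of length two on three distinct vertices, and, $E(\mathbb T)$ being reflexive and symmetric, it is transitive.

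By Theorem~\ref{amalgamation-monotone}, $\treemon$ is a projective Fra\"{\i}ss\'e family of trees with monotone epimorphisms, so $\mathbb T$ is a dendrite by Corollary~\ref{dendrites}, and since $E(\mathbb T)$ is transitive, $|\mathbb T|$ is a dendrite by Observation~\ref{top-dendrite}. For the density statement, fix a fundamental sequence $\{T_n,\alpha_n\}$ with $\mathbb T=\iLim\{T_n,\alpha_n\}$; let $\pi\colon\mathbb T\to|\mathbb T|$ be the quotient map and $f_{T_n}\colon\mathbb T\to T_n$ the projections, monotone by Proposition~\ref{monotone-projections}. Let $U\subseteq|\mathbb T|$ be nonempty and open. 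The sets $f_{T_n}^{-1}(v)$, $v\in V(T_n)$, form a basis of clopen subsets of $\mathbb T$, so there are $n$ and $v\in V(T_n)$ with $\emptyset\neq f_{T_n}^{-1}(v)\subseteq\pi^{-1}(U)$; passing to a larger index we may assume $G:=T_n$ is nondegenerate, so $v$ is an endpoint of a non-degenerate edge $\langle v,w\rangle$ of $G$. Let $G_v,G_w$ be the two subtrees into which $G$ is split by this edge, $v\in G_v$, $w\in G_w$. Now replace $\langle v,w\rangle$ by new vertices $m,m'$ and edges $\langle v,m\rangle,\langle m,w\rangle,\langle m,m'\rangle$, obtaining a tree $H$ in which $m$ is a ramification vertex, and let $\psi\colon H\to G$ collapse $m,m'$ to $v$ and fix all other vertices, a monotone epimorphism. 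By condition (2) of Theorem~\ref{limit} there is an epimorphism $f_H\colon\mathbb T\to H$ with $f_G=\psi\circ f_H$, monotone by Proposition~\ref{monotone-projections}. Since $f_H^{-1}(m)\subseteq(\psi\circ f_H)^{-1}(v)=f_{T_n}^{-1}(v)\subseteq\pi^{-1}(U)$, the set $K:=\pi(f_H^{-1}(m))$ is contained in $U$, and it is a subcontinuum of $|\mathbb T|$ because $f_H$ is monotone.

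The key step is that the three branches of $H$ at $m$ --- namely $G_v$, $G_w$, and $\{m'\}$ --- are not collapsed by $\pi$, which rests on the following lemma: for every epimorphism $g\colon\mathbb T\to L$ onto a tree $L$ and every $u\in V(L)$, there is an $E(\mathbb T)$-equivalence class contained in $g^{-1}(u)$. To prove it, attach a new pendant leaf $u'$ at $u$, obtaining a tree $L'\in\treemon$ and the monotone epimorphism $\rho\colon L'\to L$ collapsing $\{u,u'\}$ to $u$; by condition (2) of Theorem~\ref{limit} lift $g$ through $\rho$ to an epimorphism $g'\colon\mathbb T\to L'$ with $g=\rho\circ g'$ and choose $x\in (g')^{-1}(u')$. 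Since $E(\mathbb T)$ is an equivalence relation and $g'$ maps edges to edges, $g'$ sends the class of $x$ onto a set of pairwise equal-or-adjacent vertices of $L'$, i.e.\ onto a clique of the tree $L'$ containing the leaf $u'$; such a clique lies in $\{u',u\}=\rho^{-1}(u)$, so the class of $x$ is contained in $(\rho\circ g')^{-1}(u)=g^{-1}(u)$.

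Applying the lemma to $f_H$ at $v$, $w$, and $m'$ gives $E(\mathbb T)$-classes inside $C_1:=f_H^{-1}(G_v)$, $C_2:=f_H^{-1}(G_w)$, and $C_3:=f_H^{-1}(m')$. These three sets are clopen in $\mathbb T$ and connected (monotone preimages of connected subgraphs), no edge of $\mathbb T$ joins two of them, and together they form $\mathbb T\setminus f_H^{-1}(m)$. Setting $K_i':=\{p\in|\mathbb T|:\pi^{-1}(p)\subseteq C_i\}$ and using that every $E(\mathbb T)$-class is connected (by transitivity), one checks that $\pi^{-1}(K_i')=C_i\setminus\pi^{-1}(K)$ is open in $\mathbb T$, that the $K_i'$ are pairwise disjoint, that $|\mathbb T|\setminus K=K_1'\cup K_2'\cup K_3'$, and that each $K_i'$ is nonempty by the lemma; hence $|\mathbb T|\setminus K$ is a union of three nonempty pairwise disjoint open sets and so has at least three components. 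Since $K$ is a subcontinuum of the dendrite $|\mathbb T|$, a standard argument with dendrites (in each of three components pick a point, pass to the first point of $K$ on the arc joining it to $K$, and take the median in $K$ of the three points so obtained) yields a point of $|\mathbb T|$ of order at least $3$ lying in $K$, hence in $U$; as $U$ was arbitrary, ramification points are dense in $|\mathbb T|$. The main obstacle is exactly the lemma that the projection fibers survive under $\pi$ --- without it there is no reason for the branch $\{m'\}$ to contribute a component of $|\mathbb T|\setminus K$; the rest is a direct use of amalgamation and the refinement property of Theorem~\ref{limit}, the vertex-splitting and edge-blowing-up tricks, and routine point-set bookkeeping in transferring a partition of $\mathbb T\setminus f_H^{-1}(m)$ to one of $|\mathbb T|\setminus K$ by open sets.
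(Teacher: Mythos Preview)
Your proof is correct. The transitivity part follows the paper's strategy exactly (verify the hypothesis of Theorem~\ref{transitive} by refining $G$ so that the path $a\text{--}b\text{--}c$ cannot be lifted); your vertex-splitting construction is a minor variant of the paper's edge-subdivision $a,a',b',b$, and both work for the same reason.

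For the density of ramification points the two arguments share the basic move---insert a new order-$3$ vertex into a refinement of a tree over a point of $\pi^{-1}(U)$---but diverge afterwards. The paper argues by contradiction: assuming $U$ contains no ramification point, it is contained in an arc, and then the three continua $\varphi(f_H^{-1}(\{a,b\}))$, $\varphi(f_H^{-1}(\{b,c\}))$, $\varphi(f_H^{-1}(\{b,d\}))$ inside $U$ violate this. You instead argue directly: you isolate as a separate lemma the statement that every fiber $g^{-1}(u)$ contains a full $E(\mathbb T)$-equivalence class, use it to show that $|\mathbb T|\setminus K$ has at least three components, and then invoke the first-point/median machinery of dendrites to locate a ramification point in $K\subseteq U$. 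Your lemma is precisely what is needed to justify the non-containment claims $A\not\subseteq B\cup C$, etc., which the paper asserts without proof; in that sense your argument is more self-contained, at the price of a longer endgame (three components of the complement plus a median argument, rather than an immediate contradiction with ``three continua in an interval''). Either route is fine; yours trades brevity for explicitness about why the inserted branch survives passage to $|\mathbb T|$.
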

\begin{proof}

 First we show that the family $\mathcal{T_M}$ satisfies the hypothesis of Theorem \ref{transitive} and thus has a transitive set of edges. Let $G\in \mathcal{T_C}$ and $a,b,c\in V(G)$ such that $\langle a,b\rangle, \langle b,c\rangle \in E(G)$. Define $H\in \mathcal{T_C}$ by $V(H)=V(G)\cup \{a',b'\}$, $E(H)=E(G)\cup\{\langle a,a'\rangle, \langle a',b'\rangle, \langle b',b\rangle\}\setminus\{\langle a,b\rangle\}$, and the epimorphism $f^{H}_G$ where $f^{H}_G(a')=a$, $f^{H}_G(b')=b$ and $f^{H}_G$ is the identity otherwise.  Then, for  $p\in\{a,a'\}$, $q\in \{b,b'\}$, and $r=c$ we have either $\langle p,q\rangle\not \in E(H) $ or $\langle q,r\rangle \not\in E(H)$.

The projective Fra\"{\i}ss\'e limit
$\mathbb D$ of $\treemon$ is a dendrite by Corollary \ref{dendrites}, so its topological realization $|\mathbb D|$ is a (topological) dendrite by Corollary \ref{top-dendrite}. It remains to show that the set of ramification points of $|\mathbb D|$ is dense.

Let us introduce the necessary notation. Let $\varphi\colon \mathbb D\to |\mathbb D|$ be the quotient mapping. To prove the density of the set of ramification points of $|\mathbb D|$ suppose on the contrary that $U$ is an open connected subset of $|\mathbb D|$ that contains no ramification point, i.e $U$ is homeomorphic to $(0,1)$. Then $\varphi^{-1}(U)$
is an open subset of $\mathbb D$. By Property (3) of Theorem \ref{limit}, there is a tree $G$, a monotone epimorphism $f_G\colon \mathbb D\to G$, and $a\in V(G)$ such that
$f_G^{-1}(a)\subseteq \varphi^{-1}(U)$. Define a graph $H$ by putting $V(H)=V(G)\cup \{b,c,d\}$ and $E(H)=E(G)\cup \{\langle a,b\rangle,
\langle b,c\rangle, \langle b,d\rangle\}$. Finally, define the monotone epimorphism $f^H_G\colon H\to G$ by
$$
f^H_G(x)=\begin{cases}  x \text{ if } x\in G\\
a \text{ if } x\in \{b,c,d\}.
\end{cases}
$$
Let $f_H\colon \mathbb D\to H$ be an epimorphism such that $f_G=f^H_G\circ f_H$; note that by Proposition~\ref{monotone-projections} the epimorphisms $f_G$ and $f_H$ are monotone.
Thus, the three sets $A=\varphi(f_H^{-1}(\{a,b\}))$,
 $B=\varphi(f_H^{-1}(\{b,c\}))$, and  $C=\varphi(f_H^{-1}(\{b,d\}))$ are continua that satisfy $ A\cup B\cup C\subseteq U$, $A\not\subseteq B\cup C$, $B\not\subseteq A\cup C$, $C\not\subseteq A\cup B$, and $A \cap B \cap C \not = \emptyset$. This contradicts the fact that $U$ is homeomorphic to $(0,1)$.

\end{proof}

\begin{definition}
Denote by $\treet$ the subfamily of $\treemon$ such that each member of $\treet$ satisfies:
\begin{itemize}
    \item each vertex is of order at most 3;
    \item no two vertices of order 3 are connected by an edge.
\end{itemize}
\end{definition}

\begin{proposition}
The family $\treet$ is cofinal in $\treemon$ and thus the projective Fra\"{\i}ss\'e limits of $\treet$ and $\treemon$ are isomorphic.
\end{proposition}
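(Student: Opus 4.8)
The plan is to prove cofinality of $\treet$ in $\treemon$ by showing that every finite tree $G \in \treemon$ admits a monotone epimorphism $f \colon T \to G$ from some $T \in \treet$. Once this is established, the conclusion about isomorphic Fra\"{\i}ss\'e limits is immediate from the abstract theory: a cofinal subfamily of a projective Fra\"{\i}ss\'e family is itself a projective Fra\"{\i}ss\'e family with the same limit, since any fundamental sequence can be interleaved with members of the cofinal subfamily. (More carefully, one checks that $\treet$ satisfies the conditions of Definition~\ref{definition-Fraisse} by pulling amalgamations of $\treemon$ back along cofinality maps, and then observes that the fundamental sequences coincide; by the uniqueness in Theorem~\ref{limit} the limits are isomorphic.)

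For the cofinality claim itself, I would proceed in two reduction steps, each realized by an explicit monotone epimorphism, and then compose. \emph{Step 1: reduce ramification order to $3$.} Given a vertex $p \in V(G)$ of order $k \geq 4$, replace $p$ by a short path $p_1, p_2, \ldots, p_{k-2}$ of new ordinary vertices, distributing the $k$ edges formerly incident to $p$ among the $p_i$ so that each $p_i$ has order exactly $3$ (the two ``internal'' path edges plus one original edge for the interior $p_i$, and two path-neighbours plus original edges at the ends). The map sending every $p_i$ back to $p$ and fixing everything else is a monotone epimorphism, since the preimage of $p$ is the connected path $p_1 \cdots p_{k-2}$ and all other point-preimages are singletons. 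Iterating over all high-order vertices produces a tree $G'$ with all orders $\le 3$ and a monotone epimorphism $G' \to G$. \emph{Step 2: separate adjacent order-$3$ vertices.} If $G'$ has an edge $\langle u, v\rangle$ with $\ord(u) = \ord(v) = 3$, subdivide that edge by inserting a new ordinary vertex $w$; the map collapsing $w$ onto $u$ (say) and fixing everything else is monotone, and now $u$ and $w$ are not both of order $3$. Doing this for every such edge yields $T \in \treet$ with a monotone epimorphism $T \to G'$; composing with $G' \to G$ finishes the argument.

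The main obstacle I anticipate is not the construction of the maps — each individual replacement is routine — but the bookkeeping needed to confirm that the composites remain \emph{monotone} (point-preimages stay connected under composition, which is automatic, but one must make sure no replacement accidentally raises an order back above $3$ or recreates an adjacent pair of order-$3$ vertices) and, more subtly, making the reduction to ``isomorphic limits'' fully rigorous. For the latter the cleanest route is: take any fundamental sequence $\{F_n, \alpha_n\}$ for $\treemon$; by cofinality each $F_n$ receives a monotone epimorphism $g_n \colon T_n \to F_n$ with $T_n \in \treet$; using the amalgamation property of $\treemon$ repeatedly, build a subsequence-like inverse system whose terms are the $T_n$ and whose bonding maps are monotone epimorphisms, arranged so that it is a fundamental sequence for $\treet$ and its inverse limit is homeomorphic (as a topological graph) to $\iLim\{F_n,\alpha_n\} = \mathbb{D}$. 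Then uniqueness in Theorem~\ref{limit}, applied within $\treet$, gives the claimed isomorphism. I would present Steps 1 and 2 with small figures and leave the fundamental-sequence interleaving as the one place warranting a careful diagram chase.
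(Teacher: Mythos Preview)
Your proposal is correct and follows essentially the same approach as the paper: both arguments establish cofinality by replacing each high-order ramification vertex by a small subtree whose vertices have order at most $3$ (with order-$2$ vertices separating the order-$3$ ones), and observing that the map collapsing this subtree back to the original vertex is monotone. The only difference is cosmetic: you factor the construction into two passes (first cap orders at $3$, then subdivide bad edges), whereas the paper performs both at once via a single pictorial replacement and leaves the details to the reader; your extra discussion of why cofinality yields isomorphic limits is more explicit than the paper's one-line appeal, but not a different idea.
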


\begin{proof}
It is enough to construct, for any graph $G$ in $\treemon$, a graph $H$ in $\treet$ and a monotone epimorphism $f\colon H\to G$. The idea is pictured below: we replace each ramification vertex with a set of vertices of orders 2 and 3 as pictured. The epimorphism $f$ shrinks the subgraph of $H$ pictured by dotted lines to the central vertex of $G$.
The details are left to the reader.
\begin{center}
\begin{tikzpicture}[scale=0.5]

  \draw (2,0) -- (2,3);
  \draw (-1,3) -- (2,3);
  \draw (5,3) -- (2,3);
  \draw (0,6) -- (2,3);
  \draw (4,6) -- (2,3);
 \node at (7,3.25) {$\xleftarrow{\text{\ \ }f \text{\ \  }}$};
 \node at (2,-3) {${G}$};
 \node at (14,-3) {$H$};
\filldraw[black] (2,3) circle (2pt);
\filldraw[black] (2,0) circle (2pt) ;
\filldraw[black] (-1,3) circle (2pt);
\filldraw[black] (5,3) circle (2pt);
\filldraw[black] (0,6) circle (2pt);
\filldraw[black] (4,6) circle (2pt);

\draw (14,-2) -- (14,1);
\draw [dotted] (14,1) -- (16,3);
  \draw (19,3) -- (16,3);
  \draw (9,3) -- (12,3);
  \draw [dotted] (12,3) -- (12,5);
\draw [dotted] (16,3) -- (16,5);
  \draw (16,5) -- (18,8);
  \draw (12,5) --(10,8);
  \draw [dotted] (14,1) -- (12,3);

 \filldraw[black] (14,-2) circle (2pt);
\filldraw[black] (14,1) circle (2pt);
 \filldraw[black] (19,3) circle (2pt);
\filldraw[black] (16,3) circle (2pt);
\filldraw[black] (18,8) circle (2pt);
\filldraw[black] (12,5) circle (2pt);
\filldraw[black] (12,3) circle (2pt);
\filldraw[black] (10,8) circle (2pt);
\filldraw[black] (16,5) circle (2pt);
\filldraw[black] (9,3) circle (2pt);
\filldraw[black] (13,2) circle (2pt);
\filldraw[black] (15,2) circle (2pt);
\filldraw[black] (16,4) circle (2pt);
\filldraw[black] (12,4) circle (2pt);
\end{tikzpicture}
\end{center}

\end{proof}

\begin{theorem}\label{D3}
 The topological realization of the projective Fra\"{\i}ss\'e limit of $\treemon$ is homeomorphic to the  dendrite $D_3$ .
\end{theorem}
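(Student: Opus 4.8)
The plan is to combine Theorem~\ref{dense} with the characterization of $D_3$ recalled just above: we already know that $|\mathbb D|$ is a dendrite whose ramification points are dense, so it remains only to show that $|\mathbb D|$ has no point of order $\ge 4$, that is, that every ramification point has order exactly $3$. To get hold of orders I would pass to the cofinal subfamily $\treet$ and write $\mathbb D=\iLim\{T_n,\alpha_n\}$ with each $T_n\in\treet$ and each $\alpha_n$ a monotone epimorphism (such a fundamental sequence exists as in the proof of Theorem~\ref{limit}). Let $\pi_n\colon\mathbb D\to T_n$ be the projections and $\varphi\colon\mathbb D\to|\mathbb D|$ the quotient map, which is legitimate and closed since $E(\mathbb D)$ is transitive by Theorem~\ref{dense}. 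For each $n$ I consider the cover of $|\mathbb D|$ by the subcontinua $K_v:=\varphi(\pi_n^{-1}(v))$, $v\in V(T_n)$; since $\pi_n^{-1}(v)$ is clopen and $\varphi$ is closed, each $K_v$ is closed, and the mesh of this cover tends to $0$ by condition~(3) of Theorem~\ref{limit}.

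First I would record how this cover looks near a point $\xi$. Since $\pi_n$ is a homomorphism and the $E(\mathbb D)$-class $C_\xi=\varphi^{-1}(\xi)$ is a complete subgraph, $\pi_n(C_\xi)$ is a clique of the tree $T_n$, hence a single vertex $v$ or a single edge $\langle v,w\rangle$; thus $\xi$ lies in at most two members of the cover, and in the two-vertex case $v$ and $w$ are adjacent in $T_n$, whence — and this is the one place the extra conditions defining $\treet$ enter — $v$ and $w$ are not both of order $3$. Writing $K:=K_v$ (resp.\ $K:=K_v\cup K_w$), the set $K$ is a neighbourhood of $\xi$, because its complement is $\varphi$ of the clopen set of points whose fiber avoids $\{v\}$ (resp.\ $\{v,w\}$) and $\xi$ is not in that image; moreover $K$, being a subcontinuum of a dendrite, is a subdendrite. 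Using that order is a local invariant and hereditary unicoherence of $|\mathbb D|$ (Proposition~\ref{intersection-hu}), so that distinct components of $K\setminus\{\xi\}$ lie in distinct components of $|\mathbb D|\setminus\{\xi\}$ while every component of $|\mathbb D|\setminus\{\xi\}$ meets the neighbourhood $K$, one obtains that the number of components of $K\setminus\{\xi\}$ equals $\ord_{|\mathbb D|}(\xi)$.

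The heart of the argument is then a count of the components of $K\setminus\{\xi\}$. Such a component $Z$ either meets some $K_u$ with $u\notin\{v,w\}$, or it does not. In the first case the relevant vertices $u$ range over the neighbours in $T_n$ of $v$ or of $w$ other than $v,w$ themselves, of which there are at most $(\ord_{T_n}(v)-1)+(\ord_{T_n}(w)-1)\le 3$ by the non-adjacency of order-$3$ vertices (and at most $\ord_{T_n}(v)\le 3$ in the single-vertex case); and by hereditary unicoherence $K_u\cap K$ is connected and misses $\xi$, hence lies in a single component of $K\setminus\{\xi\}$, so at most one $Z$ reaches the boundary of $K$ through a given $u$. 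In the second case one checks that $Z$ is in fact a whole component of $|\mathbb D|\setminus\{\xi\}$ and that $Z\cup\{\xi\}$ is therefore a fixed non-degenerate subcontinuum of $|\mathbb D|$ lying in an open set of diameter at most the mesh of the cover; choosing $n$ with mesh smaller than the least diameter among the finitely many components of $|\mathbb D|\setminus\{\xi\}$ then rules the second case out. Hence $\ord_{|\mathbb D|}(\xi)\le 3$ for such $n$. Since $\xi$ was arbitrary, $|\mathbb D|$ has no point of order $\ge 4$, and together with Theorem~\ref{dense} and the characterization of $D_3$ this yields $|\mathbb D|\cong D_3$.

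I expect the main obstacle to be the bookkeeping of the last two paragraphs: faithfully matching up components of $|\mathbb D|\setminus\{\xi\}$, components of $K\setminus\{\xi\}$, and the local structure of the tree $T_n$ around $v$ (or the edge $\langle v,w\rangle$), and making precise that $K$ is a genuine neighbourhood and that the order of $\xi$ is read off correctly. Everything there is standard continuum theory transported to topological graphs — repeated use of hereditary unicoherence of dendrites and of the local nature of the order function — and the single genuinely new input is that no two order-$3$ vertices of any $T_n$ are adjacent, which is exactly what caps the boundary count at $3$; without that feature of $\treet$ the limit could accumulate higher-order points, much as Theorem~\ref{dense} already exhibits branching accumulating everywhere.
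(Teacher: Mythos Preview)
Your strategy is sound and does prove the theorem, but it is genuinely different from the paper's argument, and there is one small logical slip you should repair.

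\textbf{The slip.} In the last step you write ``choosing $n$ with mesh smaller than the least diameter among the \emph{finitely many} components of $|\mathbb D|\setminus\{\xi\}$''. Finiteness is exactly what you are proving, so as phrased this is circular; a point of order $\omega$ with branches of diameters tending to $0$ would slip through. The fix is easy: argue by contradiction. If $\ord_{|\mathbb D|}(\xi)\ge 4$, pick four distinct components $C_1,\dots,C_4$ of $|\mathbb D|\setminus\{\xi\}$; each has positive diameter, so for $n$ large enough (mesh $<\min_i\diam C_i$) none of them is of your ``second kind'', hence all four are first-kind, contradicting your bound of $3$. With this adjustment your argument is complete; the remaining verifications (that $K$ is a genuine neighbourhood, that components of $K\setminus\{\xi\}$ biject with components of $|\mathbb D|\setminus\{\xi\}$ meeting $K$, that $K_u\cap K$ is connected and misses $\xi$, and the nerve-type fact that a second-kind $Z$ is already an entire component of $|\mathbb D|\setminus\{\xi\}$) are all routine applications of hereditary unicoherence, as you anticipate.

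\textbf{Comparison with the paper.} The paper's proof is shorter and works entirely on the graph side. Assuming a point $p\in|\mathbb D|$ of order $\ge 4$, it lifts four arcs $A,B,C,D\subseteq|\mathbb D|$ meeting only at $p$ to arcs $\alpha,\beta,\gamma,\delta\subseteq\mathbb D$ meeting only in $\varphi^{-1}(p)=\{p_1,p_2\}$ (an edge or a vertex), then chooses a single monotone epimorphism $f_G\colon\mathbb D\to G$ with $G\in\treet$ making all four images nondegenerate. By Lemma~\ref{images-of-arcs} the images $f_G(\alpha),\dots,f_G(\delta)$ are arcs, and a short hereditary-unicoherence argument in $\mathbb D$ shows any two of them meet only in $\{f_G(p_1),f_G(p_2)\}$. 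That forces $\ord(f_G(p_1))+\ord(f_G(p_2))\ge 6$ at two adjacent vertices, contradicting the defining conditions of $\treet$. Your route instead stays in $|\mathbb D|$, builds the tree-like closed cover $\{K_v\}$, and does a neighbourhood/boundary count; it uses exactly the same $\treet$ input, packaged as the inequality $(\ord_{T_n}(v)-1)+(\ord_{T_n}(w)-1)\le 3$. The paper's argument is more direct and avoids fixing a fundamental sequence; yours gives a more explicit picture of how the local structure of $T_n$ controls the local structure of $|\mathbb D|$.
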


\begin{proof}
By Theorem \ref{dense} the topological realization $|\mathbb D| $ of the projective Fra\"{\i}ss\'e limit of $\treemon$ is a dendrite with a dense set of ramification points; it remains to show that every ramification point of $|\mathbb D| $ has order three. Suppose on the contrary that $p\in |\mathbb D|$ is a point of order four or more. Then there are four arcs $A, B, C, D$ in $|\mathbb D|$ such that the intersection any two of them is $\{p\}$.

Note that the transitivity of the edge relation on $\mathbb D$ implies that $\varphi^{-1}(p)$ is a complete graph; on the other hand, by hereditary unicoherence of $\mathbb D$ the set $\varphi^{-1}(p)$ has at most two points. Denote $\varphi^{-1}(p)=\{p_1,p_2\}$ with a possibility that $p_1=p_2$ and note that the sets $\varphi^{-1}(A)$, $\varphi^{-1}(B)$, $\varphi^{-1}(C)$, and $\varphi^{-1}(D)$ contains arcs $\alpha, \beta, \gamma, \delta$ in $\mathbb D$ and that the intersection of any two of these arcs is $\{p_1,p_2\}$.

Let $G$ be a tree and $f_G\colon \mathbb D\to G$ be a monotone epimorphism such that:
\begin{enumerate}
    \item $f_G(\alpha)$, $f_G(\beta)$, $f_G(\gamma)$, and $f_G(\delta)$ are nondegenerate subsets of $V(G)$;
    \item $G\in \treet$;
\end{enumerate}
By Lemma \ref{images-of-arcs} the sets  $f_G(\alpha)$, $f_G(\beta)$, $f_G(\gamma)$, and $f_G(\delta)$ are arcs.
We claim that the intersection of any two of them is $\{f_G(p_1),f_G(p_2)\}$. Suppose on the contrary that $r\in f_G(\alpha)\cap f_G(\beta)\setminus \{f_G(p_1),f_G(p_2)\}$; then
$f_G^{-1}(r)$ is a connected subset of $V(\mathbb D)$ that intersects both $\alpha\setminus \{p_1,p_2\}$ and  $\beta\setminus \{p_1,p_2\}$ contrary to hereditary unicoherence of $\mathbb D$. The claim contradicts the condition that both  $f_G(p_1)$ and $f_G(p_2)$ are points of order at most 3 and not both of them are of order 3.
\end{proof}

\section{Confluent epimorphisms}\label{confluent}

Motivated by the definition of a confluent map between continua (see \cite{JJC-Confluent}) we give an analogous definition for confluent epimorphisms between graphs and develop various tools which will be used in studying these families.

\begin{definition}
Given two topological graphs $G$ and $H$ an epimorphism $f\colon G\to H$  is called {\it confluent} if for every connected subset $Q$ of $V(H)$ and every component $C$ of $f^{-1}(Q)$ we have $f(C)=Q$. Equivalently, if for every connected subset $Q$ of $V(H)$ and every vertex $a\in V(G)$ such that $f(a)\in Q$ there is a connected set $C$ of $V(G)$ such that $a\in C$ and $f(C)=Q$. Clearly, every monotone epimorphism is confluent.
\end{definition}

\begin{observation}
If $f\colon X \to Y$ and $g\colon Y \to Z$ are confluent epimorphisms between topological graphs, then $g\circ f\colon X \to Z$ is a confluent epimorphism.
\end{observation}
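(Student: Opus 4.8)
The plan is to prove that confluence is preserved under composition by unwinding the definition directly. Suppose $f\colon X\to Y$ and $g\colon Y\to Z$ are confluent epimorphisms between topological graphs. First I would note that the composition $g\circ f$ is certainly an epimorphism, since a composition of surjections on vertices and edges is again such. So the content is entirely in checking the confluence condition.

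Let $Q$ be a connected subset of $V(Z)$ and let $C$ be a component of $(g\circ f)^{-1}(Q) = f^{-1}(g^{-1}(Q))$. I want to show $(g\circ f)(C) = Q$. The key observation is that $f^{-1}(g^{-1}(Q))$ decomposes into components, and $C$ is one of them; applying confluence of $f$ to the set $g^{-1}(Q)$ — which need not be connected, so I will first pass to components of it. Here is the cleaner route: let $C$ be a component of $f^{-1}(g^{-1}(Q))$. Since $f(C)$ is a connected subset of $g^{-1}(Q)$, it lies inside a single component $D$ of $g^{-1}(Q)$. In fact I claim $f(C) = D$: the set $C$ is contained in $f^{-1}(D)$, it is connected, and $f^{-1}(D)\subseteq f^{-1}(g^{-1}(Q))$, so $C$ is contained in a component $C'$ of $f^{-1}(D)$; but components of $f^{-1}(g^{-1}(Q))$ are maximal connected subsets, and $C' $ is a connected subset of $f^{-1}(g^{-1}(Q))$ containing $C$, hence $C' = C$, i.e. $C$ is itself a component of $f^{-1}(D)$. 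Now confluence of $f$ applied to the connected set $D$ gives $f(C) = D$. Then confluence of $g$ applied to the connected set $Q$, with $D$ a component of $g^{-1}(Q)$, gives $g(D) = Q$. Therefore $(g\circ f)(C) = g(f(C)) = g(D) = Q$, as required.

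The main point to be careful about — the only genuine obstacle — is the interchange between ``component of $f^{-1}(g^{-1}(Q))$'' and ``component of $f^{-1}(D)$'' for $D$ a component of $g^{-1}(Q)$. This requires knowing that the components of $f^{-1}(g^{-1}(Q))$ are exactly the union, over components $D$ of $g^{-1}(Q)$, of the components of $f^{-1}(D)$; this follows because $f^{-1}$ of a disjoint union (into components) is a disjoint union, and a connected subset of the whole preimage must map into a single $D$ and hence sit inside $f^{-1}(D)$. Once that bookkeeping is in place, the two applications of the confluence hypothesis finish the argument immediately, with no further calculation needed.
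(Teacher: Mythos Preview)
Your argument is correct. The paper states this result as an Observation without proof; your direct verification via the definition---showing that a component $C$ of $(g\circ f)^{-1}(Q)$ is also a component of $f^{-1}(D)$ for the component $D$ of $g^{-1}(Q)$ containing $f(C)$, and then applying confluence of $f$ and of $g$ in turn---is the standard route and cleanly fills in the omitted details.
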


\begin{proposition}\label{confluent-edges}
 Given two finite graphs $G$ and $H$ the following conditions are equivalent for an epimorphism $f\colon G\to H$:
\begin{enumerate}
  \item $f$ is  confluent;
  \item for every edge $P\in  E(H)$ and for every vertex $a\in V(G)$ such that $f(a)\in P$, there is an edge $E\in E(G)$ and a connected set $R\subseteq V(G)$ such that  $E\cap R\ne \emptyset$, $f(E)=P$, $a\in R$, and $f(R)=\{f(a)\}$.
  \item  for every edge $P\in  E(H)$ and every component $C$ of $f^{-1}(P)$ there is an edge $E$ in $C$ such that $f(E)=P$.
\end{enumerate}
\end{proposition}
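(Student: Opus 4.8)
The plan is to prove the cycle of implications $(1)\implies(3)\implies(2)\implies(1)$, which for finite graphs is clean because connected subgraphs of finite graphs are exactly unions of edges and every component of a finite graph is reachable via edge-paths.

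For $(1)\implies(3)$: fix an edge $P=\langle p,q\rangle\in E(H)$ and a component $C$ of $f^{-1}(P)$. Since $P$ is connected, confluence gives $f(C)=P$, so $C$ meets both $f^{-1}(p)$ and $f^{-1}(q)$; pick $a\in C$ with $f(a)=p$ and $b\in C$ with $f(b)=q$. As $C$ is a connected finite graph there is an edge-path in $C$ from $a$ to $b$, and along that path the $f$-values lie in $\{p,q\}$ (they lie in $P$ and $P$ has only these two vertices), so some consecutive pair $\langle x,y\rangle$ in the path has $f(x)=p$, $f(y)=q$, i.e. an edge $E=\langle x,y\rangle$ in $C$ with $f(E)=P$.

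For $(3)\implies(2)$: given $P\in E(H)$ and $a\in V(G)$ with $f(a)\in P$, let $C$ be the component of $f^{-1}(P)$ containing $a$. By $(3)$ there is an edge $E$ in $C$ with $f(E)=P$. Take $R$ to be the component of $f^{-1}(\{f(a)\})\cap C$ containing $a$ — this is connected, $a\in R$, $f(R)=\{f(a)\}$; and since $C$ is connected, $R$ (a nonempty "level set slice" containing $a$) meets $E$ or can be enlarged along $C$ until it does. The cleanest formulation: inside the connected finite graph $C$, walk from $a$ toward the edge $E$ along an edge-path; let $R$ be the maximal initial segment of that path on which $f$ is constantly $f(a)$ — then $R$ is connected, $a\in R$, $f(R)=\{f(a)\}$, and the next edge of the path leaves $R$ and stays in $C\subseteq f^{-1}(P)$, so its $f$-image is an edge of $H$; continue along the path, and because eventually we reach the edge $E$ mapping onto all of $P$, we in fact obtain an edge adjacent to $R$ whose image is $P$ (if $f(a)$ is an endpoint of $P$ this is immediate from the first edge leaving $R$; if $R$ happens to contain vertices whose $f$-image is the other endpoint of $P$, shrink $R$ first). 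So $E\cap R\neq\emptyset$, $f(E)=P$, $a\in R$, $f(R)=\{f(a)\}$.

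For $(2)\implies(1)$: by the "equivalently" clause in the definition of confluent, it suffices to show that for every connected $Q\subseteq V(H)$ and every $a\in V(G)$ with $f(a)\in Q$ there is a connected $C\ni a$ in $V(G)$ with $f(C)=Q$. Since $H$ is finite, $Q$ is a finite connected graph, so fix an edge-path $f(a)=w_0,w_1,\dots,w_k$ that visits every vertex of $Q$. Build $C$ inductively: start with $C_0=R_0$ from $(2)$ applied to the edge $\langle w_0,w_1\rangle$ and vertex $a$, together with the edge $E_0$ it supplies, so $C_0$ is connected, contains $a$, and $f(C_0)\supseteq\{w_0,w_1\}$. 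Having reached a connected $C_i\ni a$ with $w_i\in f(C_i)$, pick $a_i\in C_i$ with $f(a_i)=w_i$, apply $(2)$ to the edge $\langle w_i,w_{i+1}\rangle$ and $a_i$ to get $R_i\ni a_i$ and $E_i$ with $f(R_i)=\{w_i\}$, $f(E_i)=\langle w_i,w_{i+1}\rangle$, $E_i\cap R_i\neq\emptyset$; set $C_{i+1}=C_i\cup R_i\cup E_i$, which is connected (it glues two connected pieces $C_i$ and $R_i\cup E_i$ along $a_i$, using that $R_i$ is connected and meets $E_i$) and has $w_{i+1}\in f(C_{i+1})$. After $k$ steps $C:=C_k$ is connected, contains $a$, and $f(C)\supseteq Q$; intersecting with $f^{-1}(Q)$ and taking the component of $a$ keeps it connected with image exactly $Q$. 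Hence $f$ is confluent.

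The main obstacle is the bookkeeping in $(3)\implies(2)$: one must extract from "there is an edge $E$ in the component $C$ mapping onto $P$" a set $R$ that is simultaneously connected, contains the prescribed vertex $a$, is collapsed by $f$ to the single point $f(a)$, and actually abuts such an edge $E$. The case split according to whether $f(a)$ is already an endpoint of $P$ or is a vertex that happens to lie "interior" to a long fiber requires care; the remedy is to work along a fixed edge-path in $C$ from $a$ to an endpoint of $E$ and take $R$ to be a maximal $f$-constant initial segment, which forces the very next edge of the path to be a genuine edge of $G$ with image an edge of $H$ inside $P$, and then to continue to the far end if necessary. Everything else is the standard "lift an edge-path one edge at a time" argument that works verbatim in finite graphs.
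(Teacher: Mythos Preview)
Your proof is correct, and the underlying ideas coincide with the paper's: find an edge in a component that maps onto $P$, and lift edge-by-edge along a walk in $Q$. The organizational difference is that the paper runs the cycle as $(1)\Rightarrow(2)\Rightarrow(3)\Rightarrow(1)$, whereas you run $(1)\Rightarrow(3)\Rightarrow(2)\Rightarrow(1)$; this forces you to do real work for $(3)\Rightarrow(2)$, while in the paper's direction $(2)\Rightarrow(3)$ is a one-liner (pick any $a\in C$, apply $(2)$, and the resulting $R\cup E$ is a connected subset of $f^{-1}(P)$ containing $a$, hence lies in $C$). Your path argument for $(3)\Rightarrow(2)$ is sound---walk in $C$ from $a$ toward the endpoint of $E$ lying over $q$, take $R$ to be the maximal initial segment on which $f$ is constant, and the very next edge already has image $P$---but the commentary around it is confused: since $P$ is a single edge, $f(a)$ is \emph{always} one of its two endpoints, $R$ by construction contains no vertex with the other $f$-value, and there is no need to ``continue along the path''. (In fact your first, abandoned idea already works cleanly: with $R$ the component of $f^{-1}(f(a))\cap C$ containing $a$, connectedness of $C$ gives an edge leaving $R$, and its other endpoint must have $f$-value $q$, so that edge maps onto $P$.) In $(2)\Rightarrow(1)$ your construction already yields $f(C_k)=Q$ exactly, so the final intersection with $f^{-1}(Q)$ is harmless but unnecessary.
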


\begin{proof}  (1) $\Rightarrow$ (2): Let $P\in E(H)$ and $a \in V(G)$ such that $f(a) \in P$. Then there exists a connected subset $C\in V(G)$ such that $a \in C$ and $f(C)= P$. Since $C$ is connected there is an edge $E\in C$ such that $f(E)=P$. If $a\in E$ let $R=\{a\}$ otherwise let $R$ be the component of $f^{-1}(f(a))$ containing $a$.  In either case (2) is satisfied.

(2) $\Rightarrow$ (3): This is immediate.

(3) $\Rightarrow$ (1): Given a connected set $Q$ in $H$ start with an edge in $Q$ and use condition (3) edge-by-edge to build a connected set containing one of the end vertices of the initial edge that maps onto $Q$.

\end{proof}

\begin{proposition}\label{confluent-projections}
If $\mathcal F$ is a projective Fra\"{\i}ss\'e family of graphs with confluent epimorphisms and $\mathbb F$ is a projective Fra\"{\i}ss\'e limit of $\mathcal F$, then
for every graph $G\in\mathcal F$ every epimorphism $f_G\colon \mathbb F\to G$ is confluent.
\end{proposition}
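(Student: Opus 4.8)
The plan is to mimic, in the confluent setting, the argument used for \Cref{monotone-projections}, combining the metric refinement property~(3) of \Cref{limit} with the amalgamation / factorization property~(2). Suppose toward a contradiction that some epimorphism $f_G\colon \mathbb F\to G$ is not confluent. By \Cref{confluent-edges} (condition (3)), confluence can be detected edge-by-edge: non-confluence means there is an edge $P=\langle u,v\rangle\in E(G)$ and a component $C$ of $f_G^{-1}(P)$ such that $f_G(E)\neq P$ for every edge $E$ in $C$; equivalently, $C$ maps into a single vertex of $G$, say $C\subseteq f_G^{-1}(u)$ (one of the two endpoints of $P$), while $C$ is a maximal connected piece of $f_G^{-1}(\{u,v\})$. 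In particular there is no connected subset of $V(\mathbb F)$ meeting $C$ that $f_G$ sends onto all of $P$.

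First I would choose $\varepsilon>0$ small enough that the component $C$ is ``isolated'' from the other components of $f_G^{-1}(P)$ at scale $\varepsilon$: since $f_G^{-1}(P)$ is closed in the compact space $V(\mathbb F)$, its components are closed, and the component $C$ that witnesses non-confluence is at positive distance from the union of the other points of $f_G^{-1}(P)$ that are not in $C$ --- here I would use that in a compact metric space the quasicomponent equals the component, so $C$ is the intersection of relatively clopen subsets of $f_G^{-1}(P)$, hence for small enough $\varepsilon$ there is a relatively clopen $C'\supseteq C$ of diameter controlled away from $\mathrm{dist}(C, f_G^{-1}(P)\setminus C')$. Then by properties~(2) and~(3) of \Cref{limit} (exactly as invoked in the proof of \Cref{monotone-projections}) I obtain a graph $H\in\mathcal F$, a confluent epimorphism $f_G^H\colon H\to G$, and a confluent epimorphism $f_H\colon \mathbb F\to H$ with $f_G = f_G^H\circ f_H$ and with $f_H$ an $\varepsilon$-epimorphism (every fibre of diameter $<\varepsilon$). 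Because $f_H$ is confluent, for the edge $P$ and any vertex $a$ with $f_G^H(f_H(\cdot))\in P$, confluence of $f_G^H$ must be what fails: I would show that $f_H$ carries the witnessing component $C$ into a single component $D$ of $(f_G^H)^{-1}(P)$ --- this is where the $\varepsilon$-smallness of fibres is used, to guarantee that $f_H(C)$ stays inside one component of $(f_G^H)^{-1}(P)$ and does not ``bridge'' to a component that surjects onto $P$ --- and that $f_G^H(D)=\{u\}\subsetneq P$, contradicting confluence of $f_G^H\in\mathcal F$.

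The main obstacle, and the step deserving the most care, is the componentwise bookkeeping in the inverse limit: translating ``$f_G^{-1}(P)$ has a component mapping to a single vertex'' into a statement at a finite stage $H$. The subtlety is that a connected set in $\mathbb F$ need not be contained in a connected subset of $V(\mathbb F)$ mapping onto all of $P$, yet at finite stages components can merge; the $\varepsilon$-fine epimorphism $f_H$ must be chosen so that this merging cannot repair the defect --- precisely, so that the image under $f_H$ of the ``bad'' component $C$ lies in a single component $D$ of $(f_G^H)^{-1}(P)$ with $f_G^H(D)$ a proper subset of $P$. I would handle this by first passing to a clopen separation of $f_G^{-1}(P)$ isolating $C$, noting there are finitely many components surjecting onto $P$ is not automatic, so instead I would argue contrapositively: if \emph{every} finite factor $f_G^H$ were confluent, then given a connected $Q\subseteq V(G)$ and $a\in f_G^{-1}(Q)$ one builds, stage by stage along a fundamental sequence, a coherent tower of connected sets mapping onto $Q$, whose inverse limit is a connected subset of $V(\mathbb F)$ containing $a$ and mapping onto $Q$; this directly yields confluence of $f_G$. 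This ``tower'' reformulation sidesteps the delicate $\varepsilon$ estimate and is probably the cleanest route, though it requires invoking that $\mathbb F$ is the inverse limit of a fundamental sequence (established in the proof of \Cref{limit}) and that inverse limits of connected nonempty closed sets are connected nonempty closed.
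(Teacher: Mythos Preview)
Your second, ``tower'' approach is exactly what the paper does. The paper realizes $\mathbb F$ as the inverse limit of a fundamental sequence $\{F_n,\alpha_n\}$, notes that the canonical projections $\alpha_m^\infty\colon \mathbb F\to F_m$ are confluent (this is the tower argument you describe---building a coherent system of connected sets surjecting onto $Q$ and taking its inverse limit; the paper simply cites the continuum-theoretic version, Mac\'{\i}as, Theorem~2.1.23, and says it adapts), and then uses that by the definition of morphisms in $\mathcal F^\omega$ every $f_G\colon\mathbb F\to G$ factors as $f\circ\alpha_m^\infty$ for some $m$ and some confluent $f\colon F_m\to G$. Since compositions of confluent epimorphisms are confluent, this finishes it in three lines.

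Your first, $\varepsilon$-contradiction approach can also be made to work along the lines you indicate---the clopen separation of the bad component $C$ inside $f_G^{-1}(P)$ does survive passage to a sufficiently fine finite stage $H$, and one checks that $f_H(C)$ is a union of components of $(f_G^H)^{-1}(P)$ all mapping to a single vertex---but, as you already suspect, it is more delicate than necessary. One minor caveat: Proposition~\ref{confluent-edges} is stated only for finite graphs, so invoking its condition~(3) for $f_G\colon\mathbb F\to G$ is a slight extrapolation; the equivalence does go through when the codomain is finite, but the tower route avoids needing this.
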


\begin{proof}

Let $\{F_n,\alpha_n\}$ be a fundamental sequence for $\mathcal F$. It is easy to see that the proof of \cite[Theorem 2.1.23]{Macias} can be adapted to the present language to show that the canonical projections $\alpha^\infty_m \colon \iLim\{F_n,\alpha_n\} \to F_m$ are confluent. Because $\{F_n,\alpha_n\}$ is a fundamental sequence there is an $m$ and a confluent epimorphism $f\colon F_m \to G$.  Then $f_G= f\circ \alpha_m$ is confluent. 

\end{proof}

\begin{definition}
An epimorphism $f\colon G\to H$ between graphs is called {\it light} if for any vertex $h\in V(H)$
and for any $a,b\in f^{-1}(h)$ we have $\langle a,b \rangle\notin E(G)$.
\end{definition}

\begin{definition}
Recall the diagram (D1).

\begin{equation}\tag{D1}
\begin{tikzcd}
&B\arrow{ld}[swap]{f}\\
A&&D\arrow[lu,swap,dotted,"f_0"] \arrow[ld,dotted,"g_0"]\\
&C\arrow[lu,"g"]
\end{tikzcd}
\end{equation}
For given graphs $A,B,C$ and epimorphisms $f\colon B\to A$, $g\colon C\to A$ by {\it standard amalgamation procedure} we mean the graph $D$ defined by $V(D)=\{( b,c)\in B\times C:f(b)=g(c)\}$; $E(D)=\{\langle ( b,c), (b',c') \rangle:\langle b,b'\rangle\in E(B)  \text{ and } \langle c,c'\rangle\in E(C)\}$; and
$f_0(( b,c))=b$, $g_0(( b,c))=c$. Note that $f_0$ and $g_0$ are epimorphisms and $f\circ f_0=g\circ g_0$.
\end{definition}

\begin{proposition}\label{light}
Using the standard amalgamation procedure and the notation of diagram {\rm (D1)}, if the epimorphism $f$
is light, then the epimorphism $g_0$ is light.
\end{proposition}

\begin{proof}
Suppose $g_0$ is not light.  Then there is a vertex $h\in V(C)$ such that  $(a_1,h),(a_2,h)\in g_0^{-1}(h)$, and $\langle (a_1,h),(a_2,h) \rangle\in E(D)$.  Then, $\langle a_1,a_2\rangle \in E(B)$ and $a_1, a_2\in f^{-1}(g(h))$, contradicting $f$ being light.
\end{proof}

\begin{proposition}\label{monotone-implies-monotone}
Using the standard amalgamation procedure and the notation of the diagram {\rm (D1)}, if the epimorphism $f$
is monotone, then the epimorphism $g_0$ is monotone.
\end{proposition}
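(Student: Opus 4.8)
The plan is to mimic the proof of Proposition~\ref{light}, but work with preimages of vertices under $g_0$ rather than with single edges. Recall that in the standard amalgamation procedure, $V(D)=\{(b,c)\in B\times C : f(b)=g(c)\}$ and $g_0(b,c)=c$. Thus for a fixed vertex $h\in V(C)$ we have
\[
g_0^{-1}(h)=\{(b,h) : b\in V(B),\ f(b)=g(h)\},
\]
so $g_0^{-1}(h)$ is, under the projection $f_0$, in bijective correspondence with $f^{-1}(g(h))$. Since $f$ is monotone, $f^{-1}(g(h))$ is a connected subgraph of $B$. The goal is to deduce that $g_0^{-1}(h)$ is connected in $D$.

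First I would set up the identification: the map $b\mapsto (b,h)$ is a bijection from $f^{-1}(g(h))$ onto $g_0^{-1}(h)$, with inverse $f_0$ restricted to $g_0^{-1}(h)$. Next I would check that this bijection is in fact an isomorphism of the induced subgraphs. For edges: if $\langle b,b'\rangle\in E(B)$ with $b,b'\in f^{-1}(g(h))$, then since $E(C)$ is reflexive we have $\langle h,h\rangle\in E(C)$, and hence by the definition of $E(D)$ we get $\langle (b,h),(b',h)\rangle\in E(D)$; conversely any edge of $D$ between two vertices of $g_0^{-1}(h)$ has first coordinates forming an edge of $B$. So the induced subgraph on $g_0^{-1}(h)$ is isomorphic (via $f_0$) to the induced subgraph on $f^{-1}(g(h))$. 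Since the latter is connected by monotonicity of $f$, so is the former, which shows $g_0^{-1}(h)$ is connected; as $h$ was arbitrary, $g_0$ is monotone.

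The main obstacle — really the only point requiring care — is making sure the notion of connectedness for subsets of $V(D)$ (Definition after Proposition~\ref{connected-image}) transports correctly across the bijection $f_0|_{g_0^{-1}(h)}$; this reduces to the edge-correspondence just described, crucially using reflexivity of $E(C)$ so that the diagonal pair $\langle h,h\rangle$ lies in $E(C)$. One should also note that $g_0$ is surjective on vertices and edges (already remarked when the standard amalgamation procedure was defined), so it is genuinely an epimorphism, and monotonicity is the only additional property to establish. No compactness or limiting argument is needed here since $B$ and $C$ are finite graphs.
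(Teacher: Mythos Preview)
Your proof is correct and follows essentially the same approach as the paper: both arguments identify $g_0^{-1}(h)$ with $f^{-1}(g(h))$ via $f_0$ and invoke monotonicity of $f$. You give more detail than the paper does, explicitly verifying the edge correspondence (and rightly noting the role of reflexivity of $E(C)$), whereas the paper simply asserts the isomorphism in one line.
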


\begin{proof}
Fix a vertex $c\in V(C)$ and consider $g_0^{-1}(c)=\{\langle x,c\rangle :f(x)=g(c)\}$. This set is isomorphic to
$f^{-1}(g(c))$ which is connected by monotonicity of $f$.
\end{proof}

\begin{proposition}\label{standard-confluent}
Using the standard amalgamation procedure and the notation of diagram {\rm (D1)}, if the epimorphism $f$ is confluent,  then the epimorphism $g_0$ is confluent.
\end{proposition}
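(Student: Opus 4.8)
The plan is to verify the ``equivalently'' form of the definition of confluent for $g_0$: given a connected set $Q\subseteq V(C)$ and a vertex $(b,c)\in V(D)$ with $g_0(b,c)=c\in Q$, I will produce a connected set $R\subseteq V(D)$ with $(b,c)\in R$ and $g_0(R)=Q$. One might hope to take $R=(S\times Q)\cap V(D)$, where $S\subseteq V(B)$ is obtained by applying confluence of $f$ to the connected set $g(Q)\subseteq V(A)$; but this set need not be connected, so $R$ must instead be built up incrementally, pushing confluence of $f$ through $D$ one edge at a time while keeping the second coordinate locally constant.

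\emph{Step 1 (an edge lemma).} First I would prove: for every $(b,c)\in V(D)$ and every edge $\langle c,c'\rangle\in E(C)$ there is a connected set $R'\subseteq V(D)$ with $(b,c)\in R'$, with $(b',c')\in R'$ for some $b'\in V(B)$, and with $g_0(R')\subseteq\{c,c'\}$. If $g(c)=g(c')$ this is immediate: then $f(b)=g(c)=g(c')$, so $(b,c')\in V(D)$, and $\langle (b,c),(b,c')\rangle\in E(D)$ by reflexivity of $E(C)$; take $R'=\{(b,c),(b,c')\}$. If $g(c)\ne g(c')$, apply the second condition of Proposition~\ref{confluent-edges} to the edge $P=\langle g(c),g(c')\rangle\in E(A)$ and the vertex $b$ (note $f(b)=g(c)\in P$): this produces an edge $E=\langle e_1,e_2\rangle\in E(B)$ with $f(e_1)=g(c)$ and $f(e_2)=g(c')$, together with a connected set $R_B\subseteq V(B)$ satisfying $b\in R_B$, $E\cap R_B\ne\emptyset$, and $f(R_B)=\{g(c)\}$. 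Since $f(e_2)=g(c')\notin f(R_B)$, we get $e_2\notin R_B$, hence $e_1\in R_B$. Now set $R'=(R_B\times\{c\})\cup\{(e_2,c')\}$. Because $f(R_B)=\{g(c)\}$, every pair in $R_B\times\{c\}$ lies in $V(D)$, and the map $x\mapsto (x,c)$ identifies the subgraph of $B$ with vertex set $R_B$ with the subgraph of $D$ with vertex set $R_B\times\{c\}$ (using $\langle c,c\rangle\in E(C)$), so $R_B\times\{c\}$ is connected in $D$; finally $(e_1,c)\in R_B\times\{c\}$ and $\langle (e_1,c),(e_2,c')\rangle\in E(D)$ since $\langle e_1,e_2\rangle=E\in E(B)$ and $\langle c,c'\rangle\in E(C)$, so $R'$ is connected. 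Evidently $(b,c)\in R'$, $(e_2,c')\in R'$, and $g_0(R')=\{c,c'\}$.

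\emph{Step 2 (assembling $R$).} Since $Q$ is a connected subset of the finite graph $C$, there is a walk $c=w_0,w_1,\dots,w_m$ with $\langle w_i,w_{i+1}\rangle\in E(C)$ for every $i$, with each $w_i\in Q$, and with $\{w_0,\dots,w_m\}=Q$ (obtained by concatenating, for each vertex of $Q$, a walk within $Q$ from $c$ to that vertex and back). I will build $R$ by induction: put $R_0=\{(b,c)\}$; assuming $R_i$ is connected, contains $(b,c)$ and some vertex of the form $(b_i,w_i)$, and satisfies $g_0(R_i)=\{w_0,\dots,w_i\}$, apply Step~1 to $(b_i,w_i)$ and the edge $\langle w_i,w_{i+1}\rangle$ to obtain $R'$, and set $R_{i+1}=R_i\cup R'$. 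Then $R_{i+1}$ is connected (both $R_i$ and $R'$ contain $(b_i,w_i)$), contains a vertex of the form $(b_{i+1},w_{i+1})$, and $g_0(R_{i+1})=g_0(R_i)\cup g_0(R')=\{w_0,\dots,w_{i+1}\}$. Taking $R=R_m$ produces a connected subset of $V(D)$ with $(b,c)\in R$ and $g_0(R)=\{w_0,\dots,w_m\}=Q$, which is what the definition of confluent requires.

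\emph{Main obstacle.} The substance of the argument is the innocuous-looking remark at the outset — that a product-type set does not work — so the whole proof is organized around circumventing it: Step~1 is the engine, and the delicate point there is tracking which endpoint of the lifted edge $E$ ends up in $R_B$ and checking that $R_B\times\{c\}$ genuinely sits inside $V(D)$ as a connected subgraph. Once Step~1 is available, Step~2 is routine bookkeeping.
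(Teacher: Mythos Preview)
Your proof is correct and your Step~1 is essentially identical to the paper's argument: both use confluence of $f$ via Proposition~\ref{confluent-edges}(2) to lift the edge $\langle g(c),g(c')\rangle$ to an edge $\langle b_1,b_2\rangle$ in $B$ together with a connected set $R_B\subseteq f^{-1}(g(c))$ joining $b$ to $b_1$, and then set $R=R_B\times\{c\}$ and take the edge $\langle(b_1,c),(b_2,c')\rangle$ in $D$. The only difference is that the paper stops right there, because establishing condition~(2) of Proposition~\ref{confluent-edges} for $g_0$ is already equivalent to confluence; your Step~2 is thus unnecessary, since the walk-assembly you carry out is precisely the content of the implication $(2)\Rightarrow(1)$ in Proposition~\ref{confluent-edges}, which you are entitled to cite.
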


\begin{proof}
Let $\langle c,c'\rangle \in E(C)$ and $(b,c)\in V(D)$. By Proposition \ref{confluent-edges} we need to find an edge 
$\langle d_1,d_2\rangle \in D$ that is mapped by $g_0$ onto  $\langle c,c'\rangle$ and a connected set $R$ such that 
$(b,c), d_1\in R$, and $g_0(R)=\{c\}$. Since $f$ is confluent, there are vertices $b_1,b_2\in B$ and a connected set $R_B$ such that $\langle b_1,b_2\rangle\in E(B)$, $b,b_1\in R_B$ and $f(\langle b_1,b_2\rangle)=\langle g(c),g(c')\rangle$. It is enough to put $d_1=( b_1,c)$, $d_2=( b_2,c')$, and $R=R_B\times \{c\}$. 

\end{proof}

\begin{corollary}\label{confluent-components}
Using the standard amalgamation procedure and the notation of diagram {\rm (D1)}, if the graphs $A,B,C$ are connected and the epimorphisms $f$ and $g$ are confluent, then for each component $P$ of the graph $D$ we have $f_0(P)=B$ and $g_0(P)=C$.
\end{corollary}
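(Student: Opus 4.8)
The plan is to deduce the corollary directly from Proposition~\ref{standard-confluent}, applied twice. First I would record that the standard amalgamation procedure is symmetric in the pairs $(B,f)$ and $(C,g)$: the assignment $(b,c)\mapsto(c,b)$ is a graph isomorphism between the graph $D$ built from $f\colon B\to A$ and $g\colon C\to A$ and the graph built from $g\colon C\to A$ and $f\colon B\to A$, and under this isomorphism the projections $f_0$ and $g_0$ are interchanged. Consequently Proposition~\ref{standard-confluent}, which asserts that confluence of $f$ forces confluence of $g_0$, also yields, when applied to the swapped data, that confluence of $g$ forces confluence of $f_0$. So under the hypotheses of the corollary both $f_0\colon D\to B$ and $g_0\colon D\to C$ are confluent epimorphisms.

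Next I would apply the definition of confluence with the largest admissible connected set. Since $C$ is connected, $V(C)$ is itself a connected subset of $V(C)$, and $g_0^{-1}(V(C))=V(D)$; the components of the induced subgraph on $V(D)$ are exactly the components of $D$. By confluence of $g_0$ we therefore get $g_0(P)=V(C)$, i.e.\ $g_0(P)=C$, for every component $P$ of $D$. Symmetrically, connectedness of $B$ together with confluence of $f_0$ gives $f_0(P)=B$ for every component $P$ of $D$. This is precisely the assertion of the corollary.

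I do not expect a genuine obstacle here: the proof is essentially an invocation of Proposition~\ref{standard-confluent} in both directions. The only two points that need a line of justification are the symmetry of the standard amalgamation procedure (so that Proposition~\ref{standard-confluent} may legitimately be used to conclude confluence of $f_0$ as well as of $g_0$) and the identification of the components of $g_0^{-1}(V(C))$, respectively $f_0^{-1}(V(B))$, with the components of $D$. Note that only connectedness of $B$ and $C$ is actually used; connectedness of $A$ merely fixes the setting.
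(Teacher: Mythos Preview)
Your proposal is correct and is precisely the argument the paper has in mind: the corollary is stated without proof immediately after Proposition~\ref{standard-confluent}, and your derivation---using the symmetry of the standard amalgamation to get both $f_0$ and $g_0$ confluent, then applying confluence to the full connected targets $B$ and $C$---is exactly the intended one-line justification.
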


\begin{corollary}\label{confluent-amalgamation}
Using the standard amalgamation procedure and the notation of diagram {\rm (D1)}, if the graphs $A,B,C$ are connected and the epimorphisms $f$ and $g$ are confluent, then there is a connected graph $D$ and confluent epimorphisms $f_0$ and $g_0$ that make the diagram commutative.
\end{corollary}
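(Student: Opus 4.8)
The plan is to run the standard amalgamation procedure and then cut down to a single component of the resulting graph. First I would form $D_0$ exactly as in the definition of the standard amalgamation procedure, with the epimorphisms $f_0\colon D_0\to B$ and $g_0\colon D_0\to C$; by construction $f\circ f_0=g\circ g_0$. Applying Proposition~\ref{standard-confluent} with the roles of $B$ and $C$ (equivalently, of $f$ and $g$) interchanged shows that $f_0$ is confluent, and Proposition~\ref{standard-confluent} as stated shows that $g_0$ is confluent. The graph $D_0$ need not be connected, so I would then choose an arbitrary component $P$ of $D_0$ and propose $D=P$ together with the restrictions $f_0|_P$ and $g_0|_P$ as the desired amalgamation.

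The commutativity $f\circ(f_0|_P)=g\circ(g_0|_P)$ is immediate from the commutativity already established on $D_0$. Surjectivity of $f_0|_P$ onto $B$ and of $g_0|_P$ onto $C$ is exactly the content of Corollary~\ref{confluent-components}. It then remains to verify that $f_0|_P$ and $g_0|_P$ are confluent. For this I would use the elementary observation that if $Q\subseteq V(B)$ is connected, then each component of $f_0^{-1}(Q)$, being connected, lies in a single component of $D_0$; hence the components of $(f_0|_P)^{-1}(Q)=f_0^{-1}(Q)\cap P$ are precisely those components of $f_0^{-1}(Q)$ that happen to be contained in $P$. Since $f_0$ is confluent, each of these maps onto $Q$, so $f_0|_P$ is confluent; the same argument applies to $g_0|_P$. (Surjectivity on edges, needed for $f_0|_P$ and $g_0|_P$ to be epimorphisms, follows as well, e.g.\ from condition~(3) of Proposition~\ref{confluent-edges}.)

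I do not anticipate a serious obstacle: the argument is essentially bookkeeping layered on top of Proposition~\ref{standard-confluent} and Corollary~\ref{confluent-components}. The only point requiring a moment's care is the surjectivity of the restricted maps onto $B$ and $C$ — a component of $D_0$ could a priori map to a proper subgraph — which is exactly why Corollary~\ref{confluent-components}, and hence the connectedness hypotheses on $A$, $B$, and $C$, must be invoked.
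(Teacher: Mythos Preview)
Your proposal is correct and follows the same approach as the paper: form the standard amalgamation and take a single component as $D$. The paper's proof is a one-line appeal to Corollary~\ref{confluent-components}; you have simply filled in the details the paper leaves implicit, namely that the restricted maps $f_0|_P$ and $g_0|_P$ remain confluent epimorphisms.
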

\begin{proof}
This follows from Corollary \ref{confluent-components} by taking one of the components as the graph $D$.
\end{proof}

\begin{proposition}\label{m-l-factorization}
Given an epimorphism $f\colon G\to H$ between graphs there is a graph $M$ and epimorphisms
$m\colon G\to M$, $l\colon M\to H$ such that $f=l\circ m$,  $m$ is monotone, and $l$ is light. 
\end{proposition}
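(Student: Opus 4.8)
The plan is to carry out the standard monotone–light factorization, translated into the language of graphs. First I would define $M$ to be the quotient of $G$ obtained by collapsing each component of each fiber of $f$ to a point. Concretely, let $V(M)$ be the set of all components $C$ of $f^{-1}(h)$, as $h$ ranges over $V(H)$; define $m\colon V(G)\to V(M)$ by letting $m(a)$ be the component of $f^{-1}(f(a))$ that contains $a$; and define $l\colon V(M)\to V(H)$ by $l(C)=h$ whenever $C\subseteq f^{-1}(h)$. Declare $\langle C,C'\rangle\in E(M)$ precisely when there exist $a\in C$ and $a'\in C'$ with $\langle a,a'\rangle\in E(G)$. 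This relation is reflexive and symmetric, so $M$ is a graph, and since $G$ is finite so is $M$. By construction $f=l\circ m$, and $m^{-1}(C)=C$ for every $C\in V(M)$.

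Next I would check that $m$ and $l$ are epimorphisms. For $m$: if $\langle a,a'\rangle\in E(G)$ then $\langle m(a),m(a')\rangle\in E(M)$ directly from the definition of $E(M)$, so $m$ is a homomorphism; every edge of $M$ arises this way, so $m$ is surjective on edges, and surjectivity on vertices is clear. For $l$: if $\langle C,C'\rangle\in E(M)$, choose $a\in C$, $a'\in C'$ with $\langle a,a'\rangle\in E(G)$; then $\langle l(C),l(C')\rangle=\langle f(a),f(a')\rangle\in E(H)$, so $l$ is a homomorphism. Given $\langle h,h'\rangle\in E(H)$, surjectivity of $f$ on edges supplies $\langle a,a'\rangle\in E(G)$ with $f(a)=h$ and $f(a')=h'$; then $m(a),m(a')$ are joined in $M$ and sent by $l$ to $h,h'$, so $l$ is surjective on edges.

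Then I would verify the two structural properties. Monotonicity of $m$ is immediate: for $C\in V(M)$ the preimage $m^{-1}(C)=C$ is connected, being a component of a subset of $V(G)$. For lightness of $l$: suppose $C,C'\in l^{-1}(h)$ with $\langle C,C'\rangle\in E(M)$, and pick $a\in C$, $a'\in C'$ with $\langle a,a'\rangle\in E(G)$. Since $l(C)=l(C')=h$ we have $f(a)=f(a')=h$, so $a$ and $a'$ lie in the same component of $f^{-1}(h)$, forcing $C=C'$. Hence no edge of $M$ joins two distinct vertices in a common fiber of $l$, i.e.\ $l$ is light.

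I do not expect a genuine obstacle here; the one point that requires care is arranging the edge relation on $M$ so that $m$ and $l$ are \emph{simultaneously} homomorphisms and both surjective on edges, and the definition $\langle C,C'\rangle\in E(M)$ iff some edge of $G$ runs from $C$ to $C'$ is exactly what makes this succeed. If one wanted the statement for infinite topological graphs one would additionally need to equip $V(M)$ with the quotient topology and check compactness and zero-dimensionality, but for finite graphs these are automatic and the argument above suffices.
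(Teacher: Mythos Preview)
Your proposal is correct and follows essentially the same construction as the paper: the paper defines $M=V(G)/\!\sim$ where $x\sim y$ iff $f(x)=f(y)$ and $x,y$ lie in the same component of $f^{-1}(f(x))$, with the induced edge relation and the obvious maps $m$ and $l$, which is exactly your quotient by components of fibers. The paper's proof simply writes down this construction and leaves the verifications to the reader, whereas you have carried them out in full; there is no substantive difference in approach.
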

\begin{proof}
Define an equivalence relation $\sim$ on $V(G)$ by $x\sim y$ if and only if $f(x)=f(y)$ and $x$ and $y$ are in the same
component of $f^{-1}(f(x))$. Putting $V(M)=V(G)/\sim$, $m\colon V(G)\to V(M)$ as the projection,
$\langle [x]_\sim ,[y]_{\sim} \rangle\in E(M)$
if and only if $\langle x,y\rangle\in E(G)$, and $l([x]_\sim)=f(x)$ one can verify that the conclusions of the Proposition are satisfied.
\end{proof}

\begin{proposition}\label{composition-confluent}
Consider the epimorphisms $f\colon F\to G$ and $g\colon G\to H$ between topological graphs. If the composition $g\circ f\colon F\to H$ is confluent, then $g$ is confluent.
\end{proposition}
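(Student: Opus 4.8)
The plan is to unwind the definition of confluence directly, transporting a connected set $Q$ in $V(H)$ back through $g$ and then through $f$. Let $Q \subseteq V(H)$ be connected and let $b \in V(G)$ with $g(b) \in Q$; I want a connected set $C \subseteq V(G)$ with $b \in C$ and $g(C) = Q$. First I would pick any $a \in V(F)$ with $f(a) = b$, which exists since $f$ is surjective; then $(g\circ f)(a) = g(b) \in Q$. Applying confluence of $g \circ f$ to the connected set $Q$ and the vertex $a$, I obtain a connected set $C' \subseteq V(F)$ with $a \in C'$ and $(g \circ f)(C') = Q$.

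Now set $C = f(C')$. Since $f$ is a homomorphism, the image of a connected set is connected (this is essentially Proposition~\ref{connected-image} applied to $f|_{C'}\colon C' \to f(C')$, viewing these as subgraphs), so $C$ is connected. Clearly $b = f(a) \in f(C') = C$. Finally, $g(C) = g(f(C')) = (g \circ f)(C') = Q$. This produces exactly the connected set required by the definition of confluence, so $g$ is confluent.

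I should double-check the one nontrivial point: that $f(C')$ is connected in $V(G)$. A subset $S$ of $V(G)$ is connected iff it is not the union of two nonempty relatively closed pieces with no edge between them; if $f(C')$ were so split as $P \cup R$, then $C' = (f|_{C'})^{-1}(P) \cup (f|_{C'})^{-1}(R)$ would be a corresponding split of $C'$ (preimages of closed sets are closed, and since $f$ maps edges to edges, an edge of $C'$ between the two pieces would force an edge of $f(C')$ between $P$ and $R$), contradicting connectedness of $C'$. In the finite-graph setting this is immediate, and for topological graphs it works verbatim since $C'$ is a subgraph. There is essentially no obstacle here — the statement is a soft consequence of the definitions; the only thing to be careful about is phrasing "image of a connected subgraph is connected," which is implicit in the excerpt's treatment of epimorphisms and Proposition~\ref{connected-image}.
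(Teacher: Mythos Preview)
Your proof is correct and follows essentially the same route as the paper's: pick a preimage in $F$ of the given vertex in $G$, apply confluence of $g\circ f$ to obtain a connected set in $F$ mapping onto $Q$, and push it forward by $f$. The only difference is that you spell out why the image of a connected set under a homomorphism is connected, whereas the paper takes this for granted.
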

\begin{proof}
Let $E$ be a closed connected subset of $H$, and let $a\in G$ be a vertex such that $g(a)\in E$. We need to find a closed connected subset of $S$ of $G$ that contains $a$ and such that $g(S)=E$. Since $g\circ f$ is confluent, there is a closed connected subset $C$ of $F$ that contains a vertex in $f^{-1}(a)$ and such that $(g\circ f)(C)=E$. Then $S=f(C)$ satisfies the requirements.
\end{proof}  

\begin{corollary}\label{m-l-factorization-confluent}
Under the hypothesis of Proposition \ref{m-l-factorization} if the epimorphism $f$ is confluent, then both $m$ and $l$ are confluent.
\end{corollary}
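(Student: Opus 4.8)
The plan is to dispatch the two halves of the monotone--light factorization separately, using only results already in hand. For the monotone factor $m\colon G\to M$ there is nothing to do: Proposition~\ref{m-l-factorization} guarantees $m$ is monotone, and it was observed immediately after the definition of confluent that every monotone epimorphism is confluent, so $m$ is confluent automatically. Thus the entire content of the corollary is the confluence of the light factor $l\colon M\to H$.

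For $l$, I would simply invoke Proposition~\ref{composition-confluent}, which states that if a composition $g\circ f$ of epimorphisms between topological graphs is confluent, then the second factor $g$ is confluent. Applying this with $f$ there taken to be our $m$ and $g$ there taken to be our $l$, the hypothesis needed is that $l\circ m$ is confluent --- and $l\circ m=f$, which is confluent by assumption. Hence $l$ is confluent, completing the proof.

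There is essentially no obstacle: the only point worth checking is that Proposition~\ref{composition-confluent} applies directly, but it is already stated for arbitrary topological graphs (in particular for finite graphs), so no adaptation is required. If one preferred a self-contained argument for $l$ instead of citing Proposition~\ref{composition-confluent}, one could reprove it in a line: given a connected $Q\subseteq V(H)$ and $[x]_\sim\in V(M)$ with $l([x]_\sim)\in Q$, use confluence of $f$ to get a connected $C\subseteq V(G)$ with $x\in C$ and $f(C)=Q$, and then note that $m(C)$ is connected, contains $[x]_\sim$, and satisfies $l(m(C))=f(C)=Q$.
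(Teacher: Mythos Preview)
Your proof is correct and follows exactly the same approach as the paper's own proof: $m$ is confluent because it is monotone, and $l$ is confluent by applying Proposition~\ref{composition-confluent} to the factorization $f=l\circ m$. The additional self-contained argument you sketch for $l$ is also fine but unnecessary given that Proposition~\ref{composition-confluent} is already available.
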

\begin{proof}
   The epimorphism $m$ is confluent since it is monotone, while $l$ is confluent by Proposition \ref{composition-confluent}.
\end{proof}

\section{Rooted trees}\label{rooted-trees}

The following example shows that confluent epimorphisms on trees does not behave well.

\begin{example} \label{no-confluent}Let $A$ consist of just one edge $\langle 0,1\rangle$, $B$ be an arc with three vertices $\{a,b,c\}$ and two edges $\langle a,c\rangle$, $\langle c,b\rangle$, and $C$ be again an arc with three vertices
$\{p,q,r\}$ and two edges $\langle p,r\rangle$, $\langle r,q\rangle$. Let $f\colon B\to A$ be such that $f(a)=f(b)=0$
and $f(c)=1$ and let $g\colon C\to A$ be such that $g(p)=g(q)=1$ and $g(r)=0$, see figure below.
It is easy to see that $f$ and $g$ are confluent epimorphisms.

\begin{center}
\begin{tikzpicture}[scale=1]

  \draw (0,1) -- (0,2);
  \filldraw[black] (0,1) circle (1pt);
   \filldraw[black] (0,2) circle (1pt);
  \draw (1.5,2) -- (2,3) -- (2.5,2);
     \filldraw[black] (1.5,2) circle (1pt);
     \filldraw[black] (2,3) circle (1pt);
    \filldraw[black] (2.5,2) circle (1pt);
  \draw (1.5,1) -- (2,0) -- (2.5,1);
     \filldraw[black] (1.5,1) circle (1pt);
    \filldraw[black] (2,0) circle (1pt);
    \filldraw[black] (2.5,1) circle (1pt);
 \node at (-0.3,2) {\scriptsize 1};
 \node at (-0.3,1) {\scriptsize 0};
 \draw (1.5,2.5) --  (0.3,1.7);
 \draw  (0.3,1.7) --(0.4,1.9);
  \draw  (0.3,1.7) --(0.5,1.7);
  \node at (1.5,1.8) {\scriptsize a};
  \node at (2.5,1.8) {\scriptsize b};
\node at (2,3.2) {\scriptsize c};
 \draw (1.5,0.5) --  (0.3,1.3);
 \draw  (0.3,1.3) --(0.5,1.3);
  \draw  (0.3,1.3) --(0.4,1.1);
 \draw [dotted] (3.7,1.8) --  (2.5,2.5);
 \draw  (2.5,2.5) --(2.7,2.5);
  \draw  (2.5,2.5) --(2.6,2.3);
  \node at (4,1.5) { D};

 \draw [dotted] (3.7,1.2) --  (2.5,0.5);
 \draw  (2.5,0.5) --(2.7,0.5);
  \draw  (2.5,0.5) --(2.6,0.7);

 \node at (0.9,2.5) {$f$};
 \node at (0.9,0.5) {$g$};
  \node at (3.3,2.5) {$f_0$};
 \node at (3.3,0.5) {$g_0$};

  \node at (1.5,1.2) {\scriptsize p};
  \node at (2.5,1.2) {\scriptsize q};
\node at (2,-0.2) {\scriptsize r};

\end{tikzpicture}
\end{center}

Note that the standard amalgamation procedure gives the graph $D$ to be the cycle $( c,p)$, $( a,r)$, $( c,q)$, $( b,r)$, $( c,p)$ hence not a tree. 

In fact, no amalgamation of the trees $A$, $B$, and $C$ yields a tree.  To see this assume there is an amalgamation of $A$, $B$, and $C$, that is, there is finite tree $D$ and confluent epimorphisms $f_0\colon D\to B$ and $g_0\colon D \to C$ such that $g\circ g_0 = f\circ f_0$.

We will show there must then exist an infinite sequence of distinct vertices in the finite tree $D$.  To this end, let $\langle a_1,p_1\rangle$ be an edge in $D$ such that $f_0$ maps onto the edge $\langle a,c\rangle$ in $B$ with $f_0(a_1)=a$,  $f_0(p_1)=c$ and $g_0(p_1)=p$.  We can select a vertex $p_1$ that satisfies the last equality since $f(c)=1$. Next let $\langle p_1,b_1\rangle$ be an edge in $D$ that $g_0$ maps onto the edge $\langle p,r\rangle$ in $C$. So $g_0(b_1)=r$ and we may select $b_1$ so that $f_0(b_1)=b$. Note that $a_1\not = b_1$. In the same manner we can select a vertex $q_1$ in $D$ such that $f_0$ maps the edge $\langle b_1,q_1\rangle$ onto the edge $\langle a,c\rangle$ in $B$ with $g_0(q_1)=q$ and $q_1\not = p_1$. Inductively we obtain sequences of vertices $(a_n)$, $(b_n)$, $(p_n)$, and $(q_n)$ where unicoherence of the tree $D$ guarantees that no vertices in  a sequence equals a vertices earlier in the sequence. Since $D$ is a finite tree we have a contradiction to our assumption that there was an amalgamation of $A$, $B$, and $C$.

\end{example}
Because of Example \ref{no-confluent} we will consider rooted trees and whenever we have an epimorphism between rooted trees we will assume that the epimorphism
preserves orders defined by the selected points. Precise definitions are given below.

\begin{definition} 

By a {\it rooted graph} we mean a finite graph $G$ with a distinguished vertex $r(G)$. By a {\it rooted tree} we mean a rooted graph which is a tree.
On a rooted tree
we define an order $\le$ by $x\le y$ if every arc containing $r(T)$ and $y$ contains $x$. 
We require epimorphisms $f\colon T\to S$ between rooted trees to preserve orders, in particular
$f(r(T))=r(S)$.
\end{definition}

Epimorphisms between rooted trees need not be confluent as the following example shows.

\begin{example}
Let $S$ be a triod with vertices $\{A,B,C,D\}$, edges $\{\langle A,B\rangle,\langle B,C\rangle, \langle B,D\rangle\} $, and $A$ is the root of $T$.  
Let $T$ be another triod with vertices $\{a, b_1,b_2,c, d_1,d_2\}$, edges $\{\langle a,b_1\rangle, \langle a,b_2\rangle, \langle b_1,c\rangle, 
\langle b_1,d_1\rangle, \langle b_2,d_2\rangle\}$, and $a$ is the root of $S$.  Let $f\colon T \to S$ be given by $f(a)=A$, $f(b_1)=f(b_2)= B$, $f(c)=C$, 
and $f(d_1)=f(d_2)=D$.  Then $f$ is an order preserving epimorphism on rooted trees but is not confluent.

\begin{center}
\begin{tikzpicture}[scale=0.5]

  \draw (0,0) -- (1,2);
  \filldraw[black] (0,0) circle (1pt);
  \filldraw[black] (1,2) circle (1pt);
  \draw (2,0) -- (1,2);
  \node at (0.4,0) {\scriptsize $c$};
  \node at (2.4,0) {\scriptsize $d_1$};  
  \filldraw[black] (1,4) circle (1pt);
\filldraw[black] (2,0) circle (1pt);
  \node at (1.4,4) {\scriptsize $a$};
    \node at (1.4,2) {\scriptsize $b_1$};
\node at (2.4,2) {\scriptsize $b_2$};
\draw (1,2) -- (1,4); 
\draw (1,4) -- (2,2);  
\draw (3,0) -- (2,2); 
\filldraw[black] (2,2) circle (1pt);  
\filldraw[black] (3,0) circle (1pt); 
\node at (3.4,0) {\scriptsize $d_2$};  
\draw (4,2) -- (6,2) -- (5.6, 2.4); 
 \draw (6,2) -- (5.6, 1.6);  
 
\draw (7,0) -- (8,2) -- (8,4);
\draw (8,2) -- (9,0);  
\filldraw[black] (7,0) circle (1pt); 
\filldraw[black] (8,2) circle (1pt); 
\filldraw[black] (8,4) circle (1pt); 
\filldraw[black] (9,0) circle (1pt); 
\node at (8.4,4) {\scriptsize $A$};
 \node at (8.4,2) {\scriptsize $B$};
 \node at (7.4,0) {\scriptsize $C$};
 \node at (9.4,0) {\scriptsize $D$}; 
 \node at (1,-1) {\scriptsize $T$};
 \node at (8,-1) {\scriptsize $S$}; 
\end{tikzpicture}
\end{center}
\end{example}

\begin{definition}
By an {\it ordered graph} $G$ we mean finite rooted graph with a partial order $\le$ such that:
\begin{itemize}
    \item the root $r(G)$ is the least element in $V(G)$ i.e. $r(G)\le x$ for each $x\in V(G)$;
    \item if $\langle a,b\rangle\in E(G)$, $a\not = b$ then $a \le b$ or $b\le a$ and if 
$c\in V(G)\setminus \{a,b\}$ then $a \le c \le b$ is not true.
\end{itemize}
\end{definition}

The standard amalgamation of rooted trees with order preserving epimorphisms need not be a tree as the following example shows.

\begin{example}\label{rooted-standard-not-tree}
Let $A$ consist of a single vertex, $B$ and $C$ be edges then the standard amalgamation gives a complete graph on four vertices.
\end{example}

\begin{definition} \label{chain-def}

By a {\it chain} in an  ordered graph $G$ we mean sequence $a_1, a_2, \dots a_n$ of vertices of $G$ such that:

\begin{enumerate}
\item $a_1=r(G)$;
\item $a_1\le a_2\le \dots \le a_n$.
\end{enumerate}
A chain is {\it proper} if moreover it satisfies the condition
\begin{enumerate}
\setcounter{enumi}{2}
    \item $a_1\ne a_2\ne \dots \ne a_n$.
\end{enumerate}
\end{definition}

\begin{lemma}\label{amalgamation-light}
Consider epimorphisms $f\colon B\to A$ and $g\colon C\to A$ between rooted trees (in particular preserving the orders) and suppose $f$ is light. Then the standard amalgamation procedure gives a rooted tree $D$, epimorphisms
$f_0\colon D\to B$, $g_0\colon D\to C$, and the epimorphism $g_0$ is light.
\end{lemma}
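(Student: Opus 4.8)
The statement has two halves: first, that the standard amalgamation $D = \{(b,c) \in B\times C : f(b)=g(c)\}$ is actually a rooted tree (not just a graph) when $f$ is light, and second that $g_0$ is light. The lightness of $g_0$ is already handled by Proposition~\ref{light}, which applies verbatim to the standard amalgamation, so the real content here is showing $D$ is a tree. I would first fix the root of $D$ to be $(r(B), r(C))$, which lies in $V(D)$ since $f(r(B)) = r(A) = g(r(C))$, and note that $f_0, g_0$ are order-preserving epimorphisms by the general remarks after the definition of standard amalgamation together with the fact that they project onto the respective coordinates.

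The heart of the argument is to show $D$ contains no cycle, equivalently that between any two vertices of $D$ there is at most one arc. The key structural fact to exploit is that, since $f$ is light, for each vertex $a \in V(B)$ and each edge $\langle f(a), a''\rangle$ in $A$ with $a'' \ne f(a)$, there is \emph{at most one} neighbor $a'$ of $a$ in $B$ with $f(a') = a''$ and $a' \ne a$: indeed if there were two such neighbors $a_1', a_2'$, then either they are comparable to $a$ in a way that forces an immediate-successor clash, or — more to the point — $f^{-1}(a'')$ would be disconnected in a way incompatible with... actually the cleanest route is this: in a rooted tree, lightness of $f$ combined with order-preservation means $f$ restricted to any chain from the root is ``locally injective'' at each step where it is non-constant, and since $B$ is a tree, the preimage of each vertex of $A$ is a union of connected pieces but consecutive comparable vertices in a fiber cannot be edge-adjacent. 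I would make this precise as a preliminary claim: \emph{if $f\colon B \to A$ is a light order-preserving epimorphism of rooted trees, then for every $b \in V(B)$ and every edge $e = \langle f(b), a\rangle \in E(A)$ with $f(b) \le a$, there is exactly one $b' \in V(B)$ with $b \le b'$, $\langle b, b'\rangle \in E(B)$, $b\ne b'$, and $f(b') = a$} — uniqueness coming from lightness, existence from surjectivity of $f$ on edges plus the tree structure of $B$.

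With that claim in hand, the plan is to show $D$ is a tree by a counting/induction argument on the structure of $A$, or more directly by showing that every vertex $(b,c) \in V(D)$ other than the root has a \emph{unique} immediate predecessor, which for a finite rooted graph with an order having the root as least element is equivalent to being a tree. An edge $\langle (b,c),(b',c')\rangle$ of $D$ with $(b',c') \le (b,c)$ forces $b' \le b$, $c' \le c$, $\langle b',b\rangle \in E(B)$, $\langle c',c\rangle \in E(C)$, and $f(b')=g(c')$. Given $(b,c)$, both $b$ has a unique immediate predecessor $b^-$ in $B$ (as $B$ is a rooted tree) and $c$ has a unique immediate predecessor $c^-$ in $C$; the candidate predecessor is $(b^-, c^-)$ when $f(b^-) = g(c^-)$, but one must handle the cases where the $B$-step and $C$-step happen at different ``levels'' relative to $A$ — i.e., where $f(b) = f(b^-)$ but $g(c) \ne g(c^-)$ or vice versa. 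Using the preliminary claim and its mirror for $g$, together with lightness of $f$ to pin down which coordinate must move, I would argue the immediate predecessor in $D$ is uniquely determined, hence no two distinct arcs can join the root to $(b,c)$, hence $D$ is acyclic and, being connected along the component containing the root (or after passing to that component — but here connectedness of $D$ itself should follow since $f$ light still gives $f_0, g_0$ surjective and one traces chains back to the root), $D$ is a tree.

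\textbf{Main obstacle.} I expect the delicate point to be the bookkeeping when $f$ and $g$ ``collapse'' edges asymmetrically: a single edge of $D$ can project to a genuine edge on one side and a degenerate edge (a loop $\langle x,x\rangle$, which is allowed since $E$ is reflexive) on the other, so the naive ``predecessor = pair of predecessors'' fails and one must carefully use that $f$ is light — so $f$ cannot collapse an edge of $B$, meaning the $B$-coordinate genuinely advances at every edge of $D$ that is non-degenerate in $B$ — to force the matching to be functional. Getting the reflexive/degenerate-edge conventions right in the definition of $E(D)$ and in the notion of immediate predecessor is where a careless argument would break, so I would state explicitly at the outset how degenerate edges are treated and check the claim against Example~\ref{rooted-standard-not-tree} (where $f$ is \emph{not} light and $D$ is indeed not a tree) to make sure the use of lightness is doing real work.
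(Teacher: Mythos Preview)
Your high-level strategy --- show that every non-root vertex of $D$ has a \emph{unique immediate predecessor}, hence $D$ has $|V(D)|-1$ edges and is connected (trace back to the root), hence is a tree --- is sound and genuinely different from the paper's argument. The paper instead argues by contradiction: assuming two distinct proper chains in $D$ from the root to a common vertex, it uses the lightness of $g_0$ (not of $f$ directly) to force the $C$-coordinates of the two chains to coincide, and then shows the differing $B$-coordinates must be incomparable, contradicting that $B$ is a tree. Your approach is more structural and arguably more transparent; the paper's is shorter once one observes that lightness of $g_0$ makes it injective on chains.

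However, your ``preliminary claim'' is false and should be discarded. Take $A=\{0,1\}$ with root $0$, and $B$ a tripod with root $b_0$ and two children $b_1,b_2$, with $f(b_0)=0$, $f(b_1)=f(b_2)=1$: then $f$ is light (the two children are not adjacent), yet for $b=b_0$ and $a=1$ there are \emph{two} upward neighbors mapping to $a$, so uniqueness fails. Existence also fails: attach a further child $b_3$ to $b_1$, map it to $2$ in the arc $A=\{0,1,2\}$, and take $b=b_2$, $a=2$; then $b_2$ is a leaf. Fortunately you do not need this claim at all. The unique-predecessor argument only requires the following, which \emph{is} an immediate consequence of lightness of $f$: for any $b\ne r(B)$ with immediate predecessor $b^-$ in $B$, one has $f(b^-)\ne f(b)$. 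From this you get directly that $(b^-,c)\notin V(D)$, and that exactly one of $(b,c^-)$, $(b^-,c^-)$ lies in $V(D)$ according as $g(c^-)=g(c)$ or $g(c^-)$ is the immediate predecessor of $g(c)$ in $A$. You should also record explicitly (it follows from the same lightness observation) that every nondegenerate edge of $D$ joins comparable vertices in the product order, so ``predecessor'' is well defined; this is the content behind your ``main obstacle'' paragraph and it disposes of the asymmetric-collapse worry cleanly.
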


\begin{proof}
The lightness of the epimorphism $g_0$ is a consequence of Proposition \ref{light}. First we show that
$D$ is a rooted tree. We put the root of $D$ at $((r(B),r(C))$. Suppose on the contrary, that the graph $D$ is not a tree. Then there are two proper chains $(b_1,c_1), (b_2,c_2)\dots (b_n,c_n)$ and $(b'_1,c'_1), (b'_2,c'_2)\dots (b'_m,c'_m)$ of vertices of $D$ such that:

\begin{enumerate}
    \item $(b_n, c_n)=(b'_m, c'_m)$;
    \item there is an index $k$ such that $(b_k, c_k)\ne (b'_k, c'_k)$.
\end{enumerate}
We are heading toward contradiction by two Claims.

Claim 1. The chains $(b_1,c_1), (b_2,c_2),\dots (b_n,c_n)$ and $(b'_1,c'_1), (b'_2,c'_2),\dots$ $(b'_m,c'_m)$ have the same length, i.e $n=m$, and $c_1=c'_1$, $c_2=c'_2, \dots$ $c_n=c'_n$. To see this note that the epimorphism $g_0$ is just the projection onto the second coordinate and it is light, therefore it is one-to-one on chains. Since there is only one chain in $C$ joining $r(C)$ and $c_n$ we have $c_1=c'_1$, $c_2=c'_2, \dots c_n=c'_n$.

Claim 2. If $(b_k,c_k)\ne (b'_k,c_k)$, then $b_k$ and $b'_k$ are incomparable, i.e. $b_k\not\le b'_k$ and $b'_k\not\le b_k$. Assume on the contrary that  $b_k\le b'_k$. Then, by commutativity of the diagram (D1) we have $f(b_k)=f(b'_k)=g(c_k)$ and because $f$ preserves orders
we have also $f(b_k)=f(b_{k+1})=\dots=f(b'_k)=g(c_k)$, which contradicts the lightness of $f$.

We then have a contradiction to the fact that $B$ is a tree, since $b_1, b_2,\dots b_n$ and $b'_1, b'_2,\dots b'_n$ are two chains in $B$
from $r(B)$ to $b_n$ passing through two incomparable points $b_k$ and $b'_k$ respectively.

Finally, if follows from the definition of the standard amalgamation procedure that $f_0$ and $g_0$ preserve order.
\end{proof}

\begin{proposition}\label{m-l-factorization-pointed}
Under the hypothesis of Proposition \ref{m-l-factorization} if $G$ and $H$ are rooted trees
and $f$ is an epimorphism between rooted trees, i.e. it preserves the order, then $M$ is a rooted tree, and the epimorphisms $m$ and $l$ preserve orders.
\end{proposition}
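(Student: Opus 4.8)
The plan is to carry over the construction of $M$ from Proposition~\ref{m-l-factorization} verbatim and simply check that the extra order structure survives. Recall that there $V(M)=V(G)/\sim$, where $x\sim y$ iff $f(x)=f(y)$ and $x,y$ lie in the same component of $f^{-1}(f(x))$; the edges of $M$ are the images of edges of $G$; and $l([x]_\sim)=f(x)$, $m(x)=[x]_\sim$. So I must (i) define a root and an order on $M$, (ii) verify that $M$ is then a rooted tree, and (iii) verify that $m$ and $l$ preserve orders. The natural choice is $r(M)=[r(G)]_\sim=m(r(G))$, and $[x]_\sim\le[y]_\sim$ iff there exist representatives $x'\in[x]_\sim$, $y'\in[y]_\sim$ with $x'\le y'$ in $G$; equivalently one declares $m$ to be order preserving and takes the order on $M$ generated by that.

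First I would show $m$ is monotone (already done) and that each $\sim$-class $f^{-1}(f(x))\cap(\text{component})$ is a subtree of $G$, hence connected, so $M$ is a finite connected graph which is a quotient of a tree by collapsing subtrees; a routine argument (a cycle in $M$ would lift to a cycle in $G$, using that the preimage of each vertex of $M$ is connected) shows $M$ has no cycle, so $M$ is a tree. The key point for well-definedness of the order is that each fiber $m^{-1}([x]_\sim)$ is an \emph{order-convex} connected subtree of $G$: if $u,v\in m^{-1}([x]_\sim)$ and $u\le w\le v$, then since $f$ preserves order $f(u)=f(x)$ and $f(v)=f(x)$ force $f(w)=f(x)$, and $w$ lies on the arc from $u$ to $v$ which is contained in the connected fiber, so $w\sim x$. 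This order-convexity guarantees that collapsing each fiber does not create order ambiguities, so $\le$ on $M$ is a genuine partial order with $r(M)$ as least element, and the edge condition of an ordered graph is inherited. Then $m$ preserves order by definition, and $l$ preserves order because $l([x]_\sim)=f(x)$ and $f$ preserves order: if $[x]_\sim\le[y]_\sim$, pick comparable representatives $x'\le y'$, so $l([x]_\sim)=f(x')\le f(y')=l([y]_\sim)$.

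I expect the main obstacle to be exactly the verification that the fibers of $m$ are order-convex (equivalently, that the induced relation $\le$ on $M$ is antisymmetric and transitive rather than merely a preorder), since this is what fails for general monotone maps and is the reason the rooted setting is needed; once order-convexity is in hand, that $M$ is a rooted tree and that $m,l$ preserve orders are both immediate from the definitions together with Proposition~\ref{m-l-factorization}.
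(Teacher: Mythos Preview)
Your argument is correct, but it takes a more laborious route than the paper's. The paper's proof is essentially one line: it observes that \emph{every} monotone epimorphism between rooted trees automatically preserves the order, because if $a\le b$ in $G$ then the preimage under a monotone map of the arc from the root of the target to the image of $b$ is a connected set containing both $r(G)$ and $b$, hence (by the tree structure) contains the arc from $r(G)$ to $b$, hence contains $a$. Applying this to $m$ gives immediately that $m$ is order-preserving and that $M$, rooted at $m(r(G))$, is a rooted tree; then $l$ preserves order because $f=l\circ m$ does and $m$ does. By contrast, you work directly with the fibers of $m$, proving they are order-convex subtrees (using that $f$ is order-preserving) and then checking by hand that the quotient relation is a genuine partial order. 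Your approach has the virtue of being self-contained and making explicit exactly where the hypothesis that $f$ preserves order is used; the paper's approach is slicker and isolates a reusable principle (monotone $\Rightarrow$ order-preserving for rooted trees) that it invokes elsewhere (cf.\ Observation~\ref{monotone-relative}). One small point you glossed over: having defined $\le$ on $M$ via representatives, you should still check it agrees with the intrinsic tree order on $M$ rooted at $m(r(G))$, since ``rooted tree'' in this paper means a tree equipped with \emph{that} order; this is routine but not automatic.
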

\begin{proof}
First, observe that hereditary unicoherence of trees implies that every monotone epimorphism preserves orders. This implies that $M$ is a rooted tree and that $m$ is an epimorphism between rooted trees. Since $f$ and $m$ preserve orders, $l$ preserves the order as well.
\end{proof}

The family of rooted trees with order preserving epimorphisms were considered in  \cite{B-K}. The authors show there that the topological
realization of the
projective Fra\"{\i}ss\'e limit of the family is homeomorphic to the Lelek fan.

\begin{observation}\label{monotone-relative}
If $f^G_H\colon G\to H$ is an epimorphism between rooted trees and $C$ is a chain in $G$, then $f^G_H|_C$ is monotone.
\end{observation}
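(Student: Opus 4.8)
The plan is to unwind the definitions and reduce the claim to a statement about chains in the target. Recall that a chain $C = (a_1, a_2, \dots, a_n)$ in $G$ is, by Definition \ref{chain-def}, a sequence with $a_1 = r(G)$ and $a_1 \le a_2 \le \cdots \le a_n$; since $f^G_H$ preserves the order and sends $r(G)$ to $r(H)$, the image $f^G_H(C)$ is an ordered sequence starting at $r(H)$, i.e. (after deleting repetitions) a chain in $H$. To prove that $f^G_H|_C$ is monotone it suffices, by the characterization of monotone epimorphisms, to show that for every vertex $y \in V(H)$ lying in the image $f^G_H(C)$, the preimage $(f^G_H|_C)^{-1}(y) = \{a_i \in C : f^G_H(a_i) = y\}$ is a connected subgraph of $C$; equivalently, that this preimage is an interval of consecutive indices in the chain.

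First I would observe that a chain, viewed as a subgraph of $G$, is itself an arc (or a point): its vertices are linearly ordered by $\le$ and consecutive ones are joined by edges, so removing any non-endpoint vertex disconnects it. Hence "connected subgraph of $C$" for a subset of the vertices $\{a_1, \dots, a_n\}$ means exactly "a set of consecutive vertices $a_j, a_{j+1}, \dots, a_k$." So the whole content is: if $f^G_H(a_i) = f^G_H(a_k) = y$ with $i < k$, then $f^G_H(a_j) = y$ for every $j$ with $i \le j \le k$. This is where order-preservation does the work: since $a_i \le a_j \le a_k$ we get $f^G_H(a_i) \le f^G_H(a_j) \le f^G_H(a_k)$ in $H$, so $y \le f^G_H(a_j) \le y$. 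Because $\le$ on the rooted tree $H$ is a partial order (antisymmetric), this forces $f^G_H(a_j) = y$, as required.

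The only remaining point is that monotonicity is asked of $f^G_H|_C$ as a map onto its image $f^G_H(C)$, so one should note that $f^G_H(C)$, equipped with the induced edge relation, is again a chain (hence an arc or a point) in $H$, and the map $C \to f^G_H(C)$ is a well-defined epimorphism of topological graphs to which the definition of monotone applies. I do not expect a serious obstacle here: the argument is essentially the antisymmetry of the tree order together with the elementary fact that chains are arcs. If one prefers to avoid invoking the "preimage of a vertex is connected" equivalence, one can argue directly that the preimage of any connected (hence consecutive) subset of $f^G_H(C)$ is a consecutive, hence connected, subset of $C$ by the same sandwiching inequality applied at both endpoints.
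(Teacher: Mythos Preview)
Your argument is correct: the sandwich inequality $y \le f^G_H(a_j) \le y$ together with antisymmetry of the tree order on $H$ is exactly the point, and it shows that preimages of vertices are consecutive blocks in the chain, hence connected. The paper states this result as an Observation without proof, so there is no argument to compare against; your write-up supplies the natural justification. One small remark: Definition~\ref{chain-def} does not literally require consecutive chain elements to be joined by an edge, but in the paper's actual uses of this Observation (e.g.\ in the proof of Proposition~\ref{two-orders}) the chain is always the full arc from the root to a vertex, where your assumption holds; your core order-theoretic argument is insensitive to this distinction anyway.
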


\begin{proposition}\label{two-orders}
Let $\mathcal F$  be a projective Fra\"{\i}ss\'e family of rooted trees and let $\mathbb T$ be a projective Fra\"{\i}ss\'e limit of $\mathcal F$.
For any $x,y\in \mathbb T$, $x\le y$ if and only if for any $T\in\mathcal T$ and for any epimorphism $f_T\colon \mathbb T\to T$,
$f_T(x)\le f_T(y)$.
\end{proposition}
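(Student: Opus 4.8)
The plan is to prove the two implications separately, with the forward direction being essentially trivial from the fact that epimorphisms between rooted trees preserve order, and the reverse direction being the substantive one, handled by contradiction via the refinement property (Condition~\ref{refinement} of Theorem~\ref{limit}).

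First I would dispense with the ``only if'' direction: if $x\le y$ in $\mathbb T$, then for any $T\in\mathcal F$ and any epimorphism $f_T\colon\mathbb T\to T$, since $f_T$ is an epimorphism between rooted trees it preserves the order by hypothesis, so $f_T(x)\le f_T(y)$. (Here one should note that the order $\le$ on the inverse limit $\mathbb T$ is the one given coordinatewise, i.e.\ $x\le y$ iff $x_n\le y_n$ for all $n$ in a fundamental sequence representation; this is the natural order on $\mathbb T$ as a topological graph with the extra relation, and it is the compatible limit of the orders on the $F_n$.)

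For the ``if'' direction I would argue the contrapositive: suppose $x\not\le y$ in $\mathbb T$. Writing $\mathbb T=\iLim\{F_n,\alpha_n\}$ for a fundamental sequence, $x=(x_n)$ and $y=(y_n)$, the failure $x\not\le y$ means there is some index $m$ with $x_m\not\le y_m$ in $F_m$. Take $f_T=\alpha_m^\infty\colon\mathbb T\to F_m$, the canonical projection onto the $m$-th factor; this is an epimorphism between rooted trees, and $f_T(x)=x_m\not\le y_m=f_T(y)$. Hence it is not the case that $f_T(x)\le f_T(y)$ for all $T$ and all $f_T$, which is exactly the negation of the right-hand side. One small point to check is that every epimorphism $f_T\colon\mathbb T\to T$ from the limit factors (up to the required order-preservation) through some $\alpha_m^\infty$ — but for the contrapositive we only need to exhibit \emph{one} epimorphism witnessing the failure, so the canonical projection suffices and no factorization lemma is needed.

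The main obstacle, such as it is, is purely bookkeeping: making precise that the order on $\mathbb T$ agrees with the coordinatewise order coming from a fundamental sequence, and that the canonical projections are genuinely epimorphisms of rooted trees (preserving order and sending root to root). Both follow from the construction of the Fra\"{\i}ss\'e limit as an inverse limit of a fundamental sequence and the standing assumption that all bonding maps $\alpha_n$ preserve orders; once that is recorded, the proposition is immediate. An alternative, perhaps cleaner, phrasing avoids fundamental sequences entirely: for the reverse direction assume $f_T(x)\le f_T(y)$ for all $T$ and $f_T$; if $x\not\le y$ then by the refinement property~\ref{refinement} pick $\varepsilon$ small enough and an epimorphism $f_G\colon\mathbb T\to G$ with point-preimages of diameter $<\varepsilon$ so that $f_G(x)$ and $f_G(y)$ are distinct and ``far apart'' in the tree $G$ in a way that forces $f_G(x)\not\le f_G(y)$ — but this requires knowing $x\not\le y$ is detected at a finite stage, which is again just the inverse-limit description, so I would go with the fundamental-sequence argument above.
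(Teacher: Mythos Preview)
Your argument is internally correct under the reading that $\le$ on $\mathbb T$ is the \emph{structural} (coordinatewise) order inherited from the inverse limit of the relation $\le$ on the $F_n$. Under that reading the proposition is indeed a triviality, and your proof is fine.

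The gap is that this is not the order the paper is talking about. In the paper the order on a rooted tree---finite or not---is defined intrinsically by $x\le y$ iff every arc from the root to $y$ contains $x$; the content of Proposition~\ref{two-orders} is precisely that this \emph{arc-based} order on $\mathbb T$ agrees with the projective (equivalently, coordinatewise) order. You flag this point yourself (``making precise that the order on $\mathbb T$ agrees with the coordinatewise order'') but dismiss it as bookkeeping; in fact it is the entire substance of the statement. In particular, your forward direction ``$f_T$ preserves order, so $f_T(x)\le f_T(y)$'' is circular under the intended reading: that the epimorphisms $f_T\colon\mathbb T\to T$ preserve the \emph{arc-based} order on $\mathbb T$ is exactly what must be established.

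The paper handles both directions by actually building arcs in $\mathbb T$: for the reverse direction it takes the family $\{I_T\}$ of arcs from $f_T(r(\mathbb T))$ to $f_T(y)$ and, via Observation~\ref{monotone-relative} and Proposition~\ref{monotone-limit-of-arcs}, obtains an arc in $\mathbb T$ from the root to $y$ containing $x$; for the forward direction it assumes $x\le y$ (arc-based), supposes $x_m\not\le y_m$ for some $m$ in a fundamental sequence, builds the arc $\mathbb I=\iLim\{I_n,\alpha_n|_{I_n}\}$ from the root to $y$, and derives a contradiction since $x\in\mathbb I$ forces $x_m\in I_m$. These arc constructions are also what feed the downstream corollaries (arcwise connectedness and smoothness of $\mathbb T$), so bypassing them, even if one accepted the structural reading of $\le$, would leave those applications unsupported.
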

\begin{proof}
Denote by $\mathbb T$ the projective Fra\"{\i}ss\'e limit of $\mathcal T$. By the root $r(\mathbb T)$ we denote the only point of $\mathbb T$ such that for any tree $T\in \mathcal T$ and any epimorphism $f_T\colon \mathbb T\to T$ we have $f_T(r(\mathbb T))=r(T)$.

First assume for any epimorphism $f_T\colon \mathbb T \to T$ that $f_T(x)\le f_T(y)$. Let $I_T$ be the arc from $f_T(r(T))$ to $f_T(y)$. Then $f_T(x) \in I_T$ because $f_T(x) \le f_T(y)$.  The family $\mathcal A=\{I_T: T\in \mathcal F\}$ forms a projective Fra\"{\i}ss\'e family with monotone epimorphisms by Observation \ref{monotone-relative}. By Proposition \ref{monotone-limit-of-arcs} the projective Fra\"{\i}ss\'e limit of $\mathcal A$ is an arc from $r(\mathbb T)$ to $y$ that contains $x$, so $x\le y$ as needed.

To see the other implication let $\{T_n,\alpha_n\}$ be a fundamental sequence for $\mathbb T$. Take $x,y \in \mathbb T$ such that $x \le y$ and suppose, by contradiction, that there is an $m$ such that $x_m \not \le y_m$. For any $n$ let $I_n$ be a chain in $T_n$ from $r(T_n)$ to $y_n$. Then $x_m \not \in I_m$. Since the epimorphisms in $\mathcal T$ preserve order, we have $\mathbb I=\iLim\{I_n,\alpha_n|_{I_n}\}$ is an arc in $\mathbb T$ containing $y$ and $r(\mathbb T)$. Since $x \le y$, $x \in \mathbb I$, so $x_m \in I_m$ contrary to the assumption that $x_m \not \in I_m$. For any $T \in \mathcal T$
 and $f_T\colon {\mathbb T} \to T$  there is an $m$ and a monotone epimorphism $f\colon T_m \to T$ such that $f_T=f\circ \alpha_m$ so $f_T(x)=f\circ \alpha_m(x) \le f\circ \alpha_m(y)=f_T(y)$.

\end{proof}

\begin{corollary}\label{arwise-connected}
If $\mathcal T$ is a projective Fra\"{\i}ss\'e family of rooted trees, and $\mathbb T$ is a projective Fra\"{\i}ss\'e 
limit of $\mathcal T$, then $\mathbb T$ is arcwise connected.
\end{corollary}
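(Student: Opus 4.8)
The plan is: given arbitrary $a,b\in\mathbb T$, to produce an arc in $\mathbb T$ containing both by gluing the arc running from $r(\mathbb T)$ to $a$ to the arc running from $r(\mathbb T)$ to $b$ along their common part. Fix a fundamental sequence $\{T_n,\alpha_n\}$ with $\mathbb T=\iLim\{T_n,\alpha_n\}$ and write $x=(x_n)$ for points of $\mathbb T$.

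First I would recover, exactly as in the proof of Proposition~\ref{two-orders}, an arc $I_y\subseteq\mathbb T$ joining $r(\mathbb T)$ and $y$ for each $y\in\mathbb T$: the chains $I_{y,n}$ in $T_n$ from $r(T_n)$ to $y_n$, together with the restrictions of the $\alpha_n$, form a projective Fra\"{\i}ss\'e family with monotone epimorphisms by Observation~\ref{monotone-relative} (note that an order-preserving epimorphism carries the root-chain to $y_{n+1}$ onto the root-chain to $y_n$), so $I_y:=\iLim\{I_{y,n},\alpha_n|\}$ is an arc by Proposition~\ref{monotone-limit-of-arcs}; apply this to $y=a$ and $y=b$. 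Next I would record that each $I_y$ is \emph{linearly} ordered by the order $\le$ of Proposition~\ref{two-orders}: if $x,z\in I_y$ and $n$ is least with $x_n\neq z_n$, then since $I_{y,n}$ is a chain we may assume $x_n<z_n$, and order-preservation of the $\alpha^m_n$ forces $x_m<z_m$ for all $m\ge n$, whence $x\le z$. Being compact, $I_y$ then has the property that any closed connected subgraph containing its least element $r(\mathbb T)$ is an order-interval, hence a sub-arc. Since $\mathbb T$ is hereditarily unicoherent by Theorem~\ref{limit-of-hu}, $I_a\cap I_b$ is connected; as a closed connected subgraph of $I_a$ containing $r(\mathbb T)$ it is therefore a sub-arc $[r(\mathbb T),c]$ of $I_a$, and likewise a sub-arc of $I_b$, with $c=\max(I_a\cap I_b)$ the same point in both.

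Put $I'_a$ equal to the sub-arc of $I_a$ from $c$ to $a$ and $I'_b$ equal to the sub-arc of $I_b$ from $c$ to $b$. Then $I'_a\cap I'_b\subseteq I_a\cap I_b$ meets $I'_a$ only in $c$, so $I'_a\cap I'_b=\{c\}$; if $c=a$ then $I'_a=\{a\}$ and $I'_a\cup I'_b$ is a sub-arc of $I_b$ joining $a$ and $b$ (symmetrically if $c=b$), so assume $c\notin\{a,b\}$. I claim $I'_a\cup I'_b$ is an arc joining $a$ and $b$, and the main obstacle is the following purely combinatorial point: \emph{no nondegenerate edge of $\mathbb T$ joins a vertex of $I'_a\setminus\{c\}$ to a vertex of $I'_b\setminus\{c\}$}. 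To prove it, suppose $\langle u,v\rangle\in E(\mathbb T)$ with $u\in I'_a\setminus\{c\}$ and $v\in I'_b\setminus\{c\}$. Since $I'_a$ meets $I_b$ only in $c$ we have $u\notin I_b$, so there is $n_1$ with $u_n\notin I_{b,n}$ for all $n\ge n_1$ (if $u_n\in I_{b,n}$ then $u_{n_1}=\alpha^n_{n_1}(u_n)\in\alpha^n_{n_1}(I_{b,n})=I_{b,n_1}$); symmetrically there is $n_2$ with $v_n\notin I_{a,n}$ for all $n\ge n_2$. For $n\ge\max\{n_1,n_2\}$ the vertices $u_n\in I_{a,n}\setminus I_{b,n}$ and $v_n\in I_{b,n}\setminus I_{a,n}$ are incomparable in the rooted tree $T_n$ (if $u_n\le v_n$ then $u_n$ lies on the chain from $r(T_n)$ to $v_n\le b_n$, so $u_n\in I_{b,n}$, a contradiction; $v_n\le u_n$ is excluded symmetrically), yet $\langle u_n,v_n\rangle$ is a nondegenerate edge of a rooted tree, whose endpoints must be comparable — a contradiction.

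Granting the claim, $I'_a\cup I'_b$ is connected (each piece is, and they share $c$) and has the cut-point structure of an arc between $a$ and $b$: for $x$ interior to $I'_a$, i.e.\ $x\in I'_a\setminus\{c,a\}$, removing $x$ breaks $I'_a$ into an $a$-piece and a $c$-piece, the claim forbids any edge from the $a$-piece to $I'_b$, and one checks the resulting two sets are closed in $(I'_a\cup I'_b)\setminus\{x\}$; the case $x$ interior to $I'_b$ is symmetric; and for $x=c$ the claim shows $I'_a\setminus\{c\}$ and $I'_b\setminus\{c\}$ are disjoint nonempty closed subsets of $(I'_a\cup I'_b)\setminus\{c\}$ with no edge between them. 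Hence $I'_a\cup I'_b$ is an arc joining $a$ and $b$, and since $a,b\in\mathbb T$ were arbitrary, $\mathbb T$ is arcwise connected.
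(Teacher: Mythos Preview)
Your proof is correct and proceeds along the same line as the paper's, but with considerably more care. The paper's own argument is one sentence: from Proposition~\ref{two-orders} it extracts an arc from $r(\mathbb T)$ to any point of $\mathbb T$ and stops there, leaving the passage from ``every point is joined to the root by an arc'' to ``any two points are joined by an arc'' implicit. You make this passage explicit: using hereditary unicoherence (Theorem~\ref{limit-of-hu}) to identify $I_a\cap I_b$ as an order-interval $[r(\mathbb T),c]$, and then verifying directly that the glued set $I'_a\cup I'_b$ satisfies the cut-point characterization of an arc. Your key combinatorial observation---that no nondegenerate edge of $\mathbb T$ can join $I'_a\setminus\{c\}$ to $I'_b\setminus\{c\}$ because its coordinates would eventually be incomparable vertices joined by an edge in a rooted tree---is exactly the point that makes the gluing work and is entirely absent from the paper's writeup. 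So the route is the same, but you have supplied the missing half of the argument.
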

\begin{proof}
By Proposition~\ref{two-orders} the root $r(\mathbb T)$ is the least element in the order $\le$ on $\mathbb T$, in particular there is an arc in $\mathbb T$ joining $r(\mathbb T)$ to any point in $\mathbb T$.
\end{proof}

By Theorem \ref{limit-of-hu} and Corollary \ref{arwise-connected} we get the following Corollary.

\begin{corollary}\label{dendroid}
If $\mathcal T$ is a projective Fra\"{\i}ss\'e family of rooted trees, and $\mathbb T$ is a projective Fra\"{\i}ss\'e
limit of $\mathcal T$, then $\mathbb T$ is a dendroid.
\end{corollary}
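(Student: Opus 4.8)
The plan is to read the conclusion off directly from the two structural results established immediately above. By Definition~\ref{dendroid-def} a dendroid is precisely a hereditarily unicoherent and arcwise connected (topological) graph, so it suffices to verify that $\mathbb T$ has both of these properties.

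For hereditary unicoherence I would invoke Theorem~\ref{limit-of-hu}: forgetting the roots, $\mathcal T$ is in particular a projective Fra\"{\i}ss\'e family of trees, and its projective Fra\"{\i}ss\'e limit is the same topological graph $\mathbb T$ (merely with the order relation dropped), so Theorem~\ref{limit-of-hu} gives at once that $\mathbb T$ is hereditarily unicoherent. For arcwise connectedness I would invoke Corollary~\ref{arwise-connected} directly, since it is already stated for projective Fra\"{\i}ss\'e families of rooted trees. Combining the two, $\mathbb T$ is a hereditarily unicoherent, arcwise connected topological graph, hence a dendroid.

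There is no genuine obstacle here: all of the substance is already contained in Theorem~\ref{limit-of-hu} (a refinement-property argument, via condition~(3) of Theorem~\ref{limit}, pushing a hypothetical failure of unicoherence forward onto a finite tree in the family) and in Corollary~\ref{arwise-connected} (which rests on Proposition~\ref{two-orders}, so that $r(\mathbb T)$ is the least element of $\le$ and each order-interval from the root is an arc). The only point worth a remark is the compatibility of the hypotheses: Theorem~\ref{limit-of-hu} is phrased for families of trees, whereas here we have a family of \emph{rooted} trees with order-preserving bonding epimorphisms; but a rooted-tree family is still a projective Fra\"{\i}ss\'e family in the sense of Definition~\ref{definition-Fraisse} once the root and order are forgotten, and the underlying topological graph of the limit is unchanged, so the theorem applies without any modification.
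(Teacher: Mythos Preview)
Your proposal is correct and matches the paper's own justification exactly: the paper simply remarks that the corollary follows from Theorem~\ref{limit-of-hu} (hereditary unicoherence) together with Corollary~\ref{arwise-connected} (arcwise connectedness), which is precisely what you do. Your extra paragraph on the compatibility of hypotheses is a reasonable caution, though note that the proof of Theorem~\ref{limit-of-hu} only uses that the factor graphs are trees and condition~(3) of Theorem~\ref{limit}, so it applies verbatim to the rooted setting without needing to argue that the ``forgetful'' family is itself Fra\"{\i}ss\'e.
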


Recall the following definition for continua.
\begin{definition}
A continuum $X$ which is a dendroid is said to be a {\it smooth dendroid} if there exists a point $p\in X$ such that if $x_n\in X$ is a sequence of points that converges to a point $x\in X$ then the sequence of arcs $px_n$ converges to the arc $px$.  This is equivalent to saying the order $\le$, defined by $x\le y$ if every arc joining $p$ and $y$ contains $x$, is closed.
\end{definition}

We now give an analogous definition for topological rooted graph to be a smooth dendroid.

\begin{definition}
A topological rooted graph $X$ with the root $r(X)$ is called a {\it smooth dendroid} if $X$ is a dendroid according to Definition \ref{dendroid-def} and the order $\le$, defined by $x\le y$ if every arc joining $r(X)$ and $y$ contains $x$, is closed.
\end{definition}

\begin{observation}
If a rooted topological graph $G$ is a smooth dendroid and $E(G)$ is transitive, then its topological realization $|G|$ is a smooth dendroid in the topological sense.
\end{observation}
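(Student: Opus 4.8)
The plan is to identify the order relation $\le_{|G|}$ on $|G|$ with the image of the order relation $\le_G$ on $G$ under the quotient map, and then to deduce that it is closed because a continuous map out of a compact metric space is closed.

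Let $\varphi\colon G\to|G|$ be the quotient map. By Observation~\ref{realization-dendroid}, $|G|$ is a dendroid; I take $p:=\varphi(r(G))$ as its root and, as usual in a dendroid, use that any two points of $|G|$ (resp.\ of $G$) are joined by a unique arc, so that the relations $x\le_G y$ (``the arc from $r(G)$ to $y$ passes through $x$'') and $u\le_{|G|}v$ (``the arc from $p$ to $v$ passes through $u$'') are well defined. Uniqueness of arcs in a dendroid follows from hereditary unicoherence: if $I,J$ were two arcs with the same pair of end vertices, then $I\cap J$ is connected, contains both end vertices of $I$, and hence is all of $I$ (a connected subgraph of an arc that omits a non-end vertex is disconnected), so $I=J$.

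The key step is the identity
\[
\le_{|G|}\;=\;(\varphi\times\varphi)(\le_G).
\]
For one inclusion, let $x\le_G y$ and let $\alpha$ be the arc in $G$ from $r(G)$ to $y$, so $x\in\alpha$. By Observation~\ref{arcs}, $\varphi(\alpha)$ is an arc or a point of $|G|$; moreover its end points are exactly $p$ and $\varphi(y)$, since if $w\in\varphi(\alpha)$ is not the $\varphi$-image of an end vertex of $\alpha$, then $\varphi^{-1}(w)\cap\alpha$ consists of non-end vertices of $\alpha$, and deleting this (connected) set disconnects $\alpha$ into two pieces $U,V$ containing the two end vertices, whose images $\varphi(U),\varphi(V)$ are closed, cover $\varphi(\alpha)\setminus\{w\}$, and are disjoint (a common point would have connected $\varphi$-preimage meeting both $U$ and $V$), so $w$ is not an end point. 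Thus $\varphi(\alpha)$ is the arc from $p$ to $\varphi(y)$ in $|G|$ (or the single point $p=\varphi(y)$), and it contains $\varphi(x)$; hence $\varphi(x)\le_{|G|}\varphi(y)$. Conversely, suppose $\varphi(x)\le_{|G|}\varphi(y)$, choose any $y'\in\varphi^{-1}(\varphi(y))$, and let $\alpha$ be the arc in $G$ from $r(G)$ to $y'$. By the previous argument $\varphi(\alpha)$ is the arc from $p$ to $\varphi(y')=\varphi(y)$, which contains $\varphi(x)$; hence there is $x'\in\alpha$ with $\varphi(x')=\varphi(x)$, and $x'\in\alpha$ gives $x'\le_G y'$. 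Therefore $(\varphi(x),\varphi(y))=(\varphi(x'),\varphi(y'))\in(\varphi\times\varphi)(\le_G)$.

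To finish, note that $\varphi\times\varphi\colon V(G)\times V(G)\to|G|\times|G|$ is continuous on a compact metric space, hence a closed map; since $G$ is a smooth dendroid, $\le_G$ is closed in $V(G)\times V(G)$, so $\le_{|G|}=(\varphi\times\varphi)(\le_G)$ is closed in $|G|\times|G|$. Together with Observation~\ref{realization-dendroid} this shows $|G|$ is a smooth dendroid. The one delicate point is verifying that $\varphi$ carries an arc of $G$ onto the arc of $|G|$ joining the images of its end vertices (and not onto a larger arc); the rest is routine bookkeeping with the observations already proved.
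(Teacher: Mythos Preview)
Your argument is correct. The paper states this result as an Observation without proof, so there is no paper proof to compare against; your approach---showing $\le_{|G|}=(\varphi\times\varphi)(\le_G)$ and using that $\varphi\times\varphi$ is a closed map---is a natural way to fill in the details. One small point worth making explicit: when you invoke Observation~\ref{arcs} for a sub-arc $\alpha\subseteq G$, you are implicitly using that $\varphi|_\alpha\colon\alpha\to\varphi(\alpha)$ is the quotient by $E(\alpha)$; this holds because $\varphi|_\alpha$ is a continuous surjection from a compact space onto a Hausdorff space (hence a quotient map) and identifies $a,b\in\alpha$ exactly when $\langle a,b\rangle\in E(G)$, i.e., when $\langle a,b\rangle\in E(\alpha)$. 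With that observation your end-point analysis goes through, and the rest is, as you say, bookkeeping.
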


\begin{corollary}\label{smooth-dendroid}
If $\mathcal T$ is a projective Fra\"{\i}ss\'e family of rooted trees, and $\mathbb T$ is a projective Fra\"{\i}ss\'e 
limit of $\mathcal T$, then $\mathbb T$ is a smooth dendroid.
\end{corollary}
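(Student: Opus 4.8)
The plan is to invoke Corollary~\ref{dendroid} for the dendroid part and then verify directly that the order $\le$ on $\mathbb T$ is closed, using the external characterization of $\le$ furnished by Proposition~\ref{two-orders}. Recall that $\le$ is defined by $x\le y$ exactly when every arc joining $r(\mathbb T)$ to $y$ contains $x$; since a dendroid is hereditarily unicoherent and arcwise connected there is a unique such arc, and the proof of Proposition~\ref{two-orders} shows this relation agrees with ``$f_T(x)\le f_T(y)$ for every $T\in\mathcal T$ and every epimorphism $f_T\colon\mathbb T\to T$''. So it suffices to show that the latter relation is closed in $\mathbb T\times\mathbb T$.

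To this end I would fix sequences $x_n\to x$ and $y_n\to y$ in $\mathbb T$ with $x_n\le y_n$ for all $n$, and fix an arbitrary $T\in\mathcal T$ together with an epimorphism $f_T\colon\mathbb T\to T$. Such an $f_T$ is continuous (it factors as $h'\circ\alpha_m^\infty$ through a coordinate projection of a fundamental sequence), and $V(T)$ is finite, hence discrete; therefore the convergences $f_T(x_n)\to f_T(x)$ and $f_T(y_n)\to f_T(y)$ are eventually stationary, i.e.\ $f_T(x_n)=f_T(x)$ and $f_T(y_n)=f_T(y)$ for all sufficiently large $n$. Applying the forward direction of Proposition~\ref{two-orders} to $x_n\le y_n$ gives $f_T(x_n)\le f_T(y_n)$ in $T$, and choosing $n$ large enough that both equalities hold yields $f_T(x)\le f_T(y)$. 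As $T$ and $f_T$ were arbitrary, the reverse direction of Proposition~\ref{two-orders} gives $x\le y$, so $\le$ is closed; combined with Corollary~\ref{dendroid}, this makes $\mathbb T$ a smooth dendroid.

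I do not expect a genuine obstacle here; the delicate points are bookkeeping rather than substance: confirming that the arc-theoretic order on $\mathbb T$ is literally the one characterized in Proposition~\ref{two-orders} (immediate from uniqueness of arcs in a dendroid and the chain/arc description used in that proof), and that each $f_T$ is continuous with finite, hence discrete, range so that convergence of images means eventual equality. If one prefers to bypass Proposition~\ref{two-orders}, an equivalent route is to argue coordinatewise on $\mathbb T=\iLim\{T_n,\alpha_n\}$: use that $x\le y$ iff $x_n\le y_n$ in every $T_n$ and run the same stabilization argument in each factor $T_n$; this is really the same proof unwound.
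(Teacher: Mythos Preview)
Your proposal is correct and follows essentially the same route as the paper: invoke Corollary~\ref{dendroid} for the dendroid conclusion and use Proposition~\ref{two-orders} to obtain closedness of~$\le$. The paper's proof is terser---it simply asserts that closedness ``follows from Proposition~\ref{two-orders}''---whereas you have unpacked this by writing $\{(x,y):x\le y\}$ as an intersection of preimages of closed (indeed finite) sets under continuous maps $f_T\times f_T$, which is exactly the content behind that assertion.
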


\begin{proof}
First note that by Corollary \ref{dendroid} $\mathbb T$ is a dendroid. It follows from  Proposition~\ref{two-orders} that the order $\le$
is closed on $\mathbb T$, so the dendroid is smooth.
\end{proof}

The Lelek fan can be characterized as a smooth fan with a dense set of end points, \cite{Lelek-fan}. In  \cite{B-K}, it is shown that the topological realization of the Fra\"{\i}ss\'e limit for the family of all rooted trees with order preserving epimorphisms is the Lelek fan.

\section{Kelley topological graphs}

We first recall definitions concerning Kelley continua and adapt these to the setting of topological graphs in order to prove properties of the topological realization $|\mathbb{T}_{\mathcal C}|$ where $\mathbb{T}_{\mathcal C}$ is the projective Fra\"{\i}ss\'e limit of $\treecon$. 

\begin{notation}
For a given metric space $X$ with a metric $d$, the $r$-neighborhood of a closed set $A\subseteq X$ is
$N(A,r)=\{x\in X: \text{ there is } a\in A:d(x,a)<r\} $.
The Hausdorff distance $H$ between closed subsets of $X$ is
defined by $H(A,B)=\inf\{r>0: A\subseteq N(A,r) \text{ and } B\subseteq N(A,r)\}$.
\end{notation}

\begin{definition}
A continuum $X$ is said to be {\it Kelley continuum} if for every subcontinuum $K$ of $X$,
every $p\in K$, and every sequence $p_n\to p$ in $X$ there are subcontinua $K_n$ of $X$ such that $p_n\in K_n$ and
$\lim K_n=K$. By compactness, this definition is equivalent to saying that for every $\varepsilon>0$ there is $\delta>0$ such that for each two points $p,q\in X$ satisfying $d(p,q)<\delta$, and for each subcontinuum $K\subseteq X$ such that $p\in K$ there is a subcontinuum $L\subseteq X$ satisfying $q\in L$ and $H(K,L)<\varepsilon$.
\end{definition}

\begin{definition}
A topological  graph $X$ is called {\it Kelley} if $X$ is connected and for every closed and connected set $K\subseteq V(X)$,
every vertex $p\in K$, and every sequence $p_n\to p$ of vertices in $X$ there are closed and connected sets $K_n$ such that $p_n\in K_n$ and
$\lim K_n=K$. By compactness, this definition is equivalent to saying that for every $\varepsilon>0$ there is $\delta>0$ such that for each two vertices $p,q\in X$ satisfying $d(p,q)<\delta$, and for each connected set $K\subseteq X$ such that $p\in K$ there is a connected set $L\subseteq X$ satisfying $q\in L$ and $H(K,L)<\varepsilon$. Note that every finite graph is Kelley.

\end{definition}

\begin{observation}\label{Kelley-observation}
If a topological graph $G$ is Kelley and $E(G)$ is transitive, then its topological realization $|G|$ is a Kelley continuum.
\end{observation}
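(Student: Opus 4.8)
\emph{Proof proposal.} The plan is to deduce the Kelley property of $|G|$ from that of $G$ by transporting connected sets back and forth along the quotient map $\varphi\colon V(G)\to|G|$, using that $\varphi$ is a closed continuous surjection and that $E(G)$ is transitive; the argument is run through the negation of the Kelley property in order to sidestep the fact that fibres of $\varphi$ vary only upper semicontinuously.

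First I would record the elementary facts about $\varphi$. Since $V(G)$ is compact and $|G|$ is metric, $\varphi$ is a closed map, so the induced map $2^{V(G)}\to 2^{|G|}$, $A\mapsto\varphi(A)$, is continuous for the Hausdorff metrics. Because $E(G)$ is transitive, each $E(G)$-class is complete as a subgraph, and from this one checks (exactly as in the proof that $|G|$ is connected iff $G$ is) that $\varphi$ carries closed connected subgraphs of $V(G)$ to subcontinua of $|G|$, and that conversely $\varphi^{-1}$ of a subcontinuum of $|G|$ is a closed connected subgraph of $V(G)$ — were it to split into two disjoint nonempty closed $E(G)$-saturated pieces with no edge between them, so would its $\varphi$-image. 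I would also note the upper semicontinuity of fibres: if $\bar p_n\to\bar p$ in $|G|$ and $p_n\in\varphi^{-1}(\bar p_n)$, then $\operatorname{dist}(p_n,\varphi^{-1}(\bar p))\to 0$, by a routine compactness argument in $V(G)$ together with continuity of $\varphi$. Finally $|G|$ is a continuum, being a compact metric space whose connectedness follows from that of $G$.

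For the main step I would prove the sequential form of the Kelley property for $|G|$ (equivalent to the uniform form by compactness, as in the definitions). Suppose it fails. Then there are a subcontinuum $\mathcal K\subseteq|G|$, a point $\bar p\in\mathcal K$, a sequence $\bar p_n\to\bar p$, and $\varepsilon>0$ such that, along some subsequence which I re-index, every subcontinuum $\mathcal L\ni\bar p_n$ satisfies $H(\mathcal K,\mathcal L)\ge\varepsilon$; this is the standard reformulation of the negation, obtained by choosing for each $n$ a subcontinuum through $\bar p_n$ nearest to $\mathcal K$. Put $K=\varphi^{-1}(\mathcal K)$, a closed connected subgraph of $V(G)$, and pick $p_n\in\varphi^{-1}(\bar p_n)$; passing to a further subsequence I may assume $p_n\to p^{*}$, and then $p^{*}\in\varphi^{-1}(\bar p)\subseteq K$. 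Applying the Kelley property of $G$ to the closed connected set $K$, the vertex $p^{*}\in K$, and the sequence $p_n\to p^{*}$ yields closed connected subgraphs $K_n\ni p_n$ with $K_n\to K$ in $V(G)$. Then $\mathcal L_n:=\varphi(K_n)$ is a subcontinuum of $|G|$ containing $\bar p_n$, and $\mathcal L_n\to\varphi(K)=\mathcal K$ by continuity of the hyperspace map induced by $\varphi$ — contradicting $H(\mathcal K,\mathcal L_n)\ge\varepsilon$. Together with the first paragraph (and Observation-style use of the connected/realization correspondence) this shows $|G|$ is a Kelley continuum.

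The one place where real care is needed is this passage to subsequences: one cannot expect close points of $|G|$ to lift to close points of $V(G)$, so the proof must be organized around a fixed ``bad'' $\varepsilon$ valid along a whole subsequence, which is precisely what lets us replace the given sequence in $V(G)$ by a convergent one without destroying the contradiction. The remaining ingredients — that $\varphi$ and $\varphi^{-1}$ respect connectedness, that $\varphi$ is closed, and that it induces a continuous hyperspace map — are routine once the transitivity of $E(G)$ is invoked.
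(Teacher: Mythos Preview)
Your argument is correct. The paper records this as an Observation and gives no proof, so there is nothing to compare against; your write-up supplies exactly the routine verification the paper is tacitly assuming. The only point of substance is the one you flag yourself---fibres of $\varphi$ need not vary continuously, so one cannot simply lift $\bar p_n\to\bar p$ to a convergent sequence in $V(G)$ and must instead organize the argument around a fixed bad $\varepsilon$ and pass to subsequences---and you handle it correctly. The auxiliary facts (that $\varphi$ and $\varphi^{-1}$ preserve connectedness when $E(G)$ is transitive, that $\varphi$ is closed and induces a continuous hyperspace map) are standard and you justify them adequately.
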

\begin{proposition}\label{Kelley-theorem}
If $\mathcal G$ is a projective Fra\"{\i}ss\'e family of graphs with confluent epimorphisms, then the projective Fra\"{\i}ss\'e limit is Kelley.
\end{proposition}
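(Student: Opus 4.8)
The plan is to show that the projective Fra\"iss\'e limit $\mathbb{G}$ of $\mathcal{G}$ satisfies the $\varepsilon$--$\delta$ formulation of the Kelley condition for topological graphs. First I would fix $\varepsilon>0$. By Condition~\eqref{refinement} of Theorem~\ref{limit} there is a graph $G_0\in\mathcal{G}$ and an epimorphism $f_0\colon\mathbb{G}\to G_0$ with $\diam(f_0^{-1}(v))<\varepsilon$ for every $v\in V(G_0)$; by Proposition~\ref{confluent-projections} the epimorphism $f_0$ is confluent. The idea is that the finite graph $G_0$ is (trivially) Kelley, and the confluence of $f_0$ lets us lift the Kelley property from $G_0$ back to $\mathbb{G}$, with the error controlled by the mesh of $f_0$.

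Next I would choose $\delta>0$ so small that $d(p,q)<\delta$ forces $p$ and $q$ to have a common edge, or at least to map under $f_0$ to a single vertex or to two vertices spanning an edge of $G_0$; more carefully, since $E(\mathbb{G})$ is closed and $E(G_0)$ is the edge set of a finite graph, one picks $\delta$ with the property that $d(p,q)<\delta$ implies $\langle f_0(p),f_0(q)\rangle\in E(G_0)$. Now suppose $d(p,q)<\delta$ and $K\subseteq V(\mathbb{G})$ is closed and connected with $p\in K$. Then $f_0(K)$ is a connected subset of $V(G_0)$ containing $f_0(p)$, and since $\langle f_0(q),f_0(p)\rangle\in E(G_0)$, the set $Q:=f_0(K)\cup\{f_0(q)\}$ is a connected subset of $V(G_0)$ containing $f_0(q)$. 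By confluence of $f_0$, the component $L$ of $f_0^{-1}(Q)$ containing $q$ satisfies $f_0(L)=Q$, and $L$ is closed since it is a component of a closed set in a compact space. One checks $q\in L$ by construction.

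It remains to estimate the Hausdorff distance $H(K,L)$. Every point of $L$ lies in $f_0^{-1}(v)$ for some $v\in Q=f_0(K)\cup\{f_0(q)\}$; if $v\in f_0(K)$ then $f_0^{-1}(v)$ meets $K$ and has diameter $<\varepsilon$, so that point is within $\varepsilon$ of $K$; if $v=f_0(q)$ then the point is within $\varepsilon$ of $q$, which is within $\delta<\varepsilon$ of $p\in K$ (shrinking $\delta\le\varepsilon$ at the outset). Hence $L\subseteq N(K,2\varepsilon)$. Conversely every point of $K$ lies in $f_0^{-1}(v)$ for some $v\in f_0(K)\subseteq Q=f_0(L)$, and $f_0^{-1}(v)$ meets $L$ while having diameter $<\varepsilon$, so $K\subseteq N(L,\varepsilon)$. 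Therefore $H(K,L)<2\varepsilon$; rescaling $\varepsilon$ at the start gives the required bound, and the topological graph $\mathbb{G}$ is connected by Proposition~\ref{connected-image} (its finite images are connected trees/graphs), so $\mathbb{G}$ is Kelley.

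The main obstacle I anticipate is the bookkeeping in the Hausdorff-distance estimate: one has to be careful that the set $L$ is genuinely closed (using that components of closed sets in compact Hausdorff spaces, or here in compact $0$-dimensional spaces, are closed) and that adding the single vertex $f_0(q)$ to $f_0(K)$ does not break connectedness in the graph sense --- this is exactly where the choice of $\delta$ forcing $\langle f_0(p),f_0(q)\rangle\in E(G_0)$ is used. Everything else is a routine diagram chase once confluence of the canonical projection $f_0$ is invoked.
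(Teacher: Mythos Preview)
Your argument is correct and, in fact, more complete than the paper's own proof, which simply says that Ingram's theorem on inverse limits of Kelley continua with confluent bonding maps \cite[Theorem~3.1]{Ingram} can be adapted to topological graphs. What you have written is precisely that adaptation spelled out: Ingram's proof also fixes a coordinate projection with small fibers, uses the (trivial) Kelley property of the factor, and lifts via confluence. The only superficial difference is that you phrase everything through a single $\varepsilon$-map $f_0$ coming from Condition~\eqref{refinement} of Theorem~\ref{limit}, rather than through a fundamental sequence, but the content is the same.

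Two small remarks. First, your worry about $L$ being closed is easily dispatched: if $S\subseteq V(\mathbb{G})$ is closed and $C$ is a graph-component of $S$, then the closure $\bar C$ is again graph-connected (any purported splitting $\bar C=P\cup Q$ into disjoint closed pieces with no edges between them would restrict to a splitting of $C$, forcing $C\subseteq P$, hence $\bar C\subseteq P$ and $Q=\emptyset$), so $\bar C=C$ by maximality. Second, your appeal to Proposition~\ref{connected-image} for the connectedness of $\mathbb{G}$ runs the wrong direction (that proposition pushes connectedness forward, not backward), and in any case the statement of the proposition does not literally assume the graphs in $\mathcal{G}$ are connected. This is a gap in the hypotheses rather than in your reasoning; the paper has the same issue and clearly intends $\mathcal{G}$ to consist of connected graphs, in which case $\mathbb{G}$ is connected as an inverse limit of connected finite graphs.
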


\begin{proof}

Let $\{G_n,\alpha_n\}$ be a fundamental sequence for $\mathcal G$ so ${\mathbb G} = \iLim\{G_n,\alpha_n\}$. The proof that the inverse limit of Kelley continua with confluent bonding maps in \cite[Theorem 3.1]{Ingram} is a Kelley continuum can be adapted to the present setting to show that $\mathbb G$ is Kelley.

\end{proof}

\section{End vertices and ramification vertices in topological graphs}

In this section we define ramification and end vertices for topological graphs and study how they project onto the topological realiztion of a projective Fra\"{\i}ss\'e limit.

\begin{definition}\label{rampt-topo-dend}
For a topological graph $D$ that is a dendroid, we say that a vertex $v$ has order at least $n$, in symbols $\ord(v)\ge n$,
if there are arcs $A_1, A_2, \dots A_n$ such that $A_i\cap A_j=\{v\}$ for $i,j\in \{1,2, \dots n\}$ satisfying $i\ne j$. We
define $\ord(v)=n$ if $\ord(v)\ge n$ and $\ord(v)\ge n+1$ is not true. If $\ord(v) \ge n$ for each positive integer $n$, then we say that $v$ has infinite order, in symbols $\ord(v)=\infty$.
Vertices of order 1 are called end vertices and vertices of order $\ge 3$ are called ramification vertices.

Note that these definitions agree with Definitions \ref{definition-order} in case of finite graphs. We call an end vertex {\it isolated} if it belongs to an edge. Note that, according to this definition, every end vertex in a finite graph is isolated.
\end{definition}

Let us recall a continuum theory definition.

\begin{definition}
For a continuum $X$ a point $p\in X$ is called a {\it ramification point of $X$ in the classical sense} if there are three arcs $A,B,C$ in $X$ such that
$A\cap B=A\cap C=B\cap C=\{p\}$.
Similarly, a point $p\in X$ is called an {\it end point of $X$ in the classical sense} if it is an end point of every arc in $X$ that contains $p$.
\end{definition}

\begin{proposition}\label{endpts-to-endpts}
Suppose  $D$ is a topological graph that is a dendroid, the set of edges of $D$ is transitive and $e$ is a non-isolated end vertex of $D$. Let $\varphi$ be the quotient map from $D$ onto its topological realization $|D|$. Then $\varphi(e)$ is an end point of $|D|$ in the classical sense.
\end{proposition}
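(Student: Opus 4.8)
The plan is to show that $\varphi(e)$ lies on no arc of $|D|$ except as an endpoint, equivalently that $\varphi(e)$ is not an interior point of any arc in $|D|$. Suppose for contradiction that there is an arc $J \subseteq |D|$ with $\varphi(e)$ in its interior, so $J$ decomposes as $J = J_1 \cup J_2$ with $J_1 \cap J_2 = \{\varphi(e)\}$ and both $J_i$ nondegenerate arcs. The first step is to lift: since $E(D)$ is transitive, $\varphi$ is the quotient by an equivalence relation, and by hereditary unicoherence of $D$ (which follows from $D$ being a dendroid — note a dendroid is hereditarily unicoherent by Definition~\ref{dendroid-def}) the fiber $\varphi^{-1}(\varphi(e))$ is connected; combined with transitivity it is a complete graph, hence has at most two vertices, say $\varphi^{-1}(\varphi(e)) = \{e, e'\}$ (possibly $e = e'$). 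Then $\varphi^{-1}(J_1)$ and $\varphi^{-1}(J_2)$ are connected subgraphs of $D$ each containing the fiber over $\varphi(e)$, and inside them I can find arcs $\alpha_1, \alpha_2$ in $D$ with $\alpha_1 \cap \alpha_2 \subseteq \{e, e'\}$ and each $\alpha_i$ nondegenerate, so that $e$ is a non-endpoint vertex of the arc $\alpha_1 \cup \alpha_2$ in $D$ — contradicting directly that $e$ is an end vertex of $D$ of order $1$ in the sense of Definition~\ref{rampt-topo-dend}.

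The key point that needs care is the transition from "$\varphi(e)$ interior to $J$" to "$e$ interior to an arc in $D$". The subtlety is that $\alpha_1$ and $\alpha_2$ must be arranged to actually pass through $e$ (not merely through $e'$) and to meet only in the fiber. Here I would use that $e$ is a \emph{non-isolated} end vertex: this is exactly the hypothesis that rules out the degenerate situation where the whole arc structure near $\varphi(e)$ is "absorbed" by $e'$ and $e$ is a stranded point attached by an edge. Since $e$ is non-isolated, every edge incident issues are avoided, and $e$ genuinely sits at the frontier of the relevant connected sets; concretely, if $\varphi^{-1}(J_i)$ contains $e$, the component-of-a-subgraph machinery and arcwise connectedness of dendroids (every two vertices are joined by an arc) let me pull an arc from $e$ out into each $\varphi^{-1}(J_i) \setminus \varphi^{-1}(\varphi(e))$, and hereditary unicoherence forces these two arcs to agree exactly on their overlap with the fiber.

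I expect the main obstacle to be bookkeeping around the two-point fiber: one has to handle both the case $e = e'$ (where the argument is cleaner, since $\varphi^{-1}(\varphi(e))$ is a single vertex and the two arcs in $D$ meet only at $e$, immediately contradicting $\ord(e) = 1$) and the case $e \neq e'$ (where $\{e,e'\}$ forms an edge, and I must ensure the arcs $\alpha_1, \alpha_2$ each contain $e$ rather than one containing only $e'$). The non-isolation hypothesis is what is designed to defeat the bad case in the second situation, so the write-up should foreground exactly where it is used. Everything else — lifting connected sets through $\varphi$, extracting arcs inside connected subgraphs of a dendroid, applying hereditary unicoherence to control intersections — is routine given the tools already established (Observation~\ref{realization-dendroid}, Proposition~\ref{intersection-hu}, and the definitions of arc and arcwise connectedness).
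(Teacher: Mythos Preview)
Your approach is essentially the same as the paper's: assume $\varphi(e)$ is interior to an arc in $|D|$, lift the two half-arcs to arcs in $D$ through $e$, and contradict $\ord(e)=1$. However, you are overcomplicating the role of the non-isolation hypothesis. By the paper's Definition~\ref{rampt-topo-dend}, an end vertex is \emph{isolated} precisely when it belongs to a (non-degenerate) edge; hence a \emph{non-isolated} end vertex $e$ lies on no non-degenerate edge at all, which immediately forces $\varphi^{-1}(\varphi(e))=\{e\}$. The two-point fiber case $e\neq e'$ that you plan to handle with ``delicate bookkeeping'' simply does not occur, and the paper's proof dispatches it in one line. Once the fiber is a single vertex, your arcs $\alpha_1,\alpha_2$ (the paper's $A_H,A_K$) meet exactly at $e$, and the contradiction is immediate.
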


\begin{proof}
 Assume the conclusion is not true.  Then there exists an arc $A \subseteq |D|$ such that $A\setminus\{\varphi(e)\}=H \cup K$ where $H$ and $K$ are non-empty separated sets.
The set $\varphi^{-1}(H\cup K)$ is not connected since the image of a connected set is connected. However the sets $\varphi^{-1}(H)$ and $\varphi^{-1}(K)$ are each connected.  Let $a$ and $b$ be vertices in $\varphi^{-1}(H)$ and $\varphi^{-1}(K)$ respectively and $A_H$ and $A_K$ be arcs in $D$ from $a$ to $e$ and $b$ to $e$. The arcs $A_H$ and $A_K$ then lie in $\varphi^{-1}(H)\cup \{e\}$ and $\varphi^{-1}(K)\cup \{e\}$ respectively.  The intersection of $A_H$ and $A_K$ is $\varphi^{-1}(\varphi(e))=\{e\}$ since $e$ is a non-isolated end vertex of $D$. So $\varphi^{-1}(H)\cup \varphi^{-1}(K)\cup\{e\}$ is an arc containing $e$ but $e$ is not the end vertex of this arc. Thus $e$ is not an end vertex of $D$ contrary to the hypothesis.
\end{proof}

\begin{proposition}\label{end-to-end}
Let $f\colon X\to Y$ be an order preserving epimorphism between smooth dendroids. Then, for every end vertex $y\in Y$ there is an end vertex $x\in X$ such that $f(x)=y$.
\end{proposition}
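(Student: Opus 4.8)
The plan is to replace the topological notion ``$x$ is an end vertex'' of a smooth dendroid by the order-theoretic notion ``$x$ is $\le$-maximal'', and then to produce the required preimage of $y$ by a compactness/Zorn's lemma argument that uses crucially that the order $\le$ on a smooth dendroid is closed. First I would record the order characterization: in a smooth dendroid $D$ with root $r(D)$, if $x$ is $\le$-maximal then $\ord(x)=1$. Indeed, if not, there are arcs $A,B$ with $A\cap B=\{x\}$; intersecting the arc $[r(D),x]$ with $A\cup B$ and using hereditary unicoherence (Proposition~\ref{intersection-hu}) one gets a subarc $[u,x]$ of $[r(D),x]$ contained in $A\cup B$, and since $A\cap B=\{x\}$ one of $A,B$, say $A$, meets $[u,x]$ only in $x$, so $[r(D),x]\cap A=\{x\}$; then $[r(D),x]\cup A$ is an arc whose far endpoint $w$ satisfies $x\le w$ and $x\ne w$, contradicting maximality. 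Dually, on the target side, an end vertex $y$ of $Y$ with $y\ne r(Y)$ is $\le$-maximal in $Y$, since otherwise $y$ would be an interior vertex of some arc $[r(Y),z]$ and hence have order at least $2$.

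Now suppose $y$ is an end vertex of $Y$ with $y\ne r(Y)$. The set $f^{-1}(y)$ is nonempty (as $f$ is an epimorphism) and closed. Because $X$ is compact and $\le$ is closed on $X$, every principal up-set $\{z:c\le z\}$ is closed; so for a chain $\mathcal C\subseteq f^{-1}(y)$ the closed sets $\{z:c\le z\}\cap f^{-1}(y)$, $c\in\mathcal C$, have the finite intersection property (the $\le$-largest element of any finite subchain lies in all of them), and by compactness their intersection is nonempty and gives an upper bound of $\mathcal C$ inside $f^{-1}(y)$. By Zorn's lemma $f^{-1}(y)$ has a $\le$-maximal element $x$. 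This $x$ is in fact $\le$-maximal in all of $X$: if $x\le w$ with $w\ne x$, then $y=f(x)\le f(w)$, and $\le$-maximality of $y$ in $Y$ forces $f(w)=y$, so $w\in f^{-1}(y)$, contradicting maximality of $x$ there. By the first step $x$ is an end vertex of $X$, and $f(x)=y$.

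The remaining case, $y=r(Y)$, is where I expect the real difficulty. If $r(X)$ happens to be an end vertex of $X$ we are done, since $f(r(X))=r(Y)$; otherwise $\ord(r(X))\ge 2$ and one must exploit that $\ord(r(Y))=1$, i.e. that $Y\setminus\{r(Y)\}$ is connected. The natural line is: if some component $C$ of $X\setminus\{r(X)\}$ has $f(\overline C)=\{r(Y)\}$, then $\overline C$ is a nondegenerate sub-dendroid and any $\le$-maximal vertex of $\overline C$ is $\le$-maximal in $X$ (an arc from $r(X)$ meeting $C$ stays in $C$), hence an end vertex of $X$ over $r(Y)$; and if instead every $f(\overline{C_i})$ contains a point other than $r(Y)$, then since $\ord(r(Y))=1$ the finitely many arcs contained in the $f(\overline{C_i})$ share a common point $q\ne r(Y)$, so $f^{-1}(q)$ meets distinct components $C_i$ and is disconnected. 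Turning that last observation into a contradiction is the crux, and it is the main obstacle: for a general order-preserving epimorphism point-preimages need not be connected, so this step does not close without more input. It seems to require an additional hypothesis (for instance that $f$ is monotone, in which case disconnectedness of $f^{-1}(q)$ is immediately absurd and the argument goes through) or a genuinely different treatment of the root; I would look there first when checking whether the statement as phrased needs the restriction $y\ne r(Y)$ or a strengthening of the maps considered.
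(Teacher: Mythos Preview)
Your argument for $y\ne r(Y)$ is correct and is essentially the paper's proof, written out with the details the paper suppresses. The paper's two-line argument is: pick $z\in f^{-1}(y)$, choose an end vertex $x$ of $X$ with $z\le x$, and conclude from $y=f(z)\le f(x)$ together with ``$y$ is an end point'' that $f(x)=y$. This is exactly your maximality argument, taking for granted both the existence of such an $x$ (your compactness/Zorn step using the closed order) and the equivalence, away from the root, between end vertices and $\le$-maximal vertices (your Steps~1 and~2). One small correction in Step~1: ``$[r(D),x]\cup A$ is an arc'' is more than you need and not obviously true; the cleaner conclusion is that the unique arc $[r(D),a]$ (with $a$ the other end of $A$) is contained in the connected set $[r(D),x]\cup A$ and must pass through $x$, since removing $x$ disconnects that set into pieces separating $r(D)$ from $a$. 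Hence $x\le a$ with $a\ne x$, contradicting maximality.

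Your diagnosis of the case $y=r(Y)$ is exactly right, and in fact the statement is false there without further hypotheses. A concrete counterexample: let $Y$ be the single edge $\langle 0,1\rangle$ with root $0$, let $X$ be a triod with center $c$ as root and end vertices $a,b,d$, and set $f(c)=0$, $f(a)=f(b)=f(d)=1$. Then $f$ is an order-preserving epimorphism between rooted trees (hence smooth dendroids), $y=0=r(Y)$ is an end vertex of $Y$, yet $f^{-1}(0)=\{c\}$ contains no end vertex of $X$. The paper's own proof shares this gap: the inference ``$y\le f(x)$ and $y$ is an end point, so $f(x)=y$'' silently uses $\le$-maximality of $y$, which fails at the root. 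The proposition should therefore be read with the restriction $y\ne r(Y)$ (which is how the paper actually applies it), or with an added hypothesis such as monotonicity of $f$, as you suspected.
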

\begin{proof}
Let $z$ be any vertex in $X$ such that $f(z)=y$, and let $x$ be an end vertex of $X$ satisfying $z\le x$. Since $f$ is order preserving we have $f(z)=y\le f(x)$, but $y$ is an end, point, so $f(z)=f(x)=y$.
\end{proof}

\begin{proposition}\label{ram-to-ram}
Suppose  $D$ is a topological graph that is a dendroid, the set of edges of $D$ is transitive and $r$ is a ramification vertex  of $D$. Let $\varphi$ be the quotient map from $D$ onto its topological realization $|D|$. If $\varphi^{-1}(\varphi(r))$ contains no isolated end vertices,  then $\varphi(r)$ is a ramification point of $|D|$ in the classical sense.
\end{proposition}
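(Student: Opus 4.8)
The plan is to exhibit three arcs in $|D|$ whose pairwise intersection is exactly $\{\varphi(r)\}$, obtained as images under $\varphi$ of three arcs of $D$ emanating from $r$. First I would record some structure. Since a cycle is not unicoherent, $D$ contains no cycle; in particular, by transitivity of $E(D)$, a finite arc of $D$ with three or more vertices would contain a $3$-cycle, so every arc of $D$ has at most two vertices or infinitely many, and in the latter case its topological realization is a (genuine) arc by Observation~\ref{arcs} — not a point, since the equivalence classes of $E(D)$ have at most two vertices. Also, since $E(D)$ is transitive, $\varphi^{-1}(\varphi(r))$ is an $E(D)$-equivalence class, hence a complete subgraph, so (as $D$ has no $3$-cycle, exactly as in the proof of Theorem~\ref{D3}) it has at most two vertices; write $\varphi^{-1}(\varphi(r))=\{r\}$ or $\varphi^{-1}(\varphi(r))=\{r,e\}$ with $\langle r,e\rangle\in E(D)$. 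Finally, if $\langle r,u\rangle\in E(D)$ with $u\ne r$ then $u$ lies in the class of $r$, so $u=e$: thus $e$ is the only non-loop neighbour of $r$, and $r$ has none when the fibre is $\{r\}$.

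Next I would choose the three arcs. Since $r$ is a ramification vertex, $\ord(r)\ge 3$, so there are non-degenerate arcs $A_1,A_2,A_3\subseteq D$ pairwise meeting only at $r$; splitting an arc at $r$ if necessary, I may assume $r$ is an end vertex of each. The vertex $e$, when present, lies in at most one $A_i$ (otherwise $e\in A_i\cap A_j=\{r\}$); relabel so that $e\notin A_2\cup A_3$. Then $A_2$ and $A_3$ have more than two vertices (their second vertex would otherwise be a non-loop neighbour of $r$ other than $e$), hence infinitely many. If $A_1$ also has more than two vertices, keep it; otherwise $A_1=\{r,e\}$ is the single edge $\langle r,e\rangle$, which $\varphi$ collapses to $\varphi(r)$, and here the hypothesis is used: $e$ is isolated, so by hypothesis it is not an end vertex of $D$, whence $\ord_D(e)\ge 2$ and there is a non-degenerate arc $B$ with end vertex $e$ in a branch of $e$ other than the edge $\langle r,e\rangle$, so that $r\notin B$. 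Replace $A_1$ by $A_1\cup B$, an arc from $r$ (an end vertex) through $e$, with infinitely many vertices; this does not change the pairwise intersections, since if $(A_1\cup B)\cap A_j$ contained $w\ne r$ for some $j\in\{2,3\}$, then a connected subgraph of the arc $A_1\cup B$ joining $r$ to $w$ passes through $e$, so by hereditary unicoherence (applied to $A_1\cup B$ and $A_j$) we would get $e\in A_j$, a contradiction. After relabelling, $A_1,A_2,A_3$ are arcs, each with end vertex $r$ and infinitely many vertices, pairwise meeting only at $\{r\}$.

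Finally I would check that $B_i:=\varphi(A_i)$ are arcs in $|D|$ with $B_i\cap B_j=\{\varphi(r)\}$. Each $\varphi|_{A_i}$ is monotone (a fibre is $A_i$ intersected with an equivalence class, i.e.\ at most two vertices joined by an edge, hence connected), so $B_i$ — a copy of the realization $|A_i|$, as $\varphi$ is closed — is a topological arc by Observation~\ref{arcs} together with the infiniteness of $A_i$; moreover $B_i\setminus\{\varphi(r)\}=\varphi\big(A_i\setminus(\varphi^{-1}(\varphi(r))\cap A_i)\big)$ is connected, because deleting the end vertex $r$ from the arc $A_i$ — and, in the enlarged case, also the neighbour $e$ of $r$ — leaves a connected set; hence $\varphi(r)$ is a non-cut point, i.e.\ an end point, of $B_i$. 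For disjointness, suppose $z\in B_i\cap B_j$ with $z\ne\varphi(r)$, and pick $x\in A_i$, $y\in A_j$ with $\varphi(x)=\varphi(y)=z$. Then $x,y\notin\varphi^{-1}(\varphi(r))$; $x\ne y$, for otherwise $x\in A_i\cap A_j=\{r\}$, impossible as $\varphi(x)\ne\varphi(r)$; hence $\langle x,y\rangle\in E(D)$. Now apply hereditary unicoherence to the arcs $A_i[r,x]\cup\{y\}$ (the subarc of $A_i$ from $r$ to $x$, together with the edge $\langle x,y\rangle$; note $y\notin A_i$) and $A_j[r,y]$ (the subarc of $A_j$ from $r$ to $y$): their intersection is connected and contains both $r$ and $y$, hence equals $A_j[r,y]$, which by the first paragraph is infinite and so contains a vertex $t\ne r,y$; but then $t\in A_i$, so $t\in A_i\cap A_j=\{r\}$, a contradiction. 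Thus $B_1,B_2,B_3$ witness that $\varphi(r)$ is a ramification point of $|D|$ in the classical sense. The main obstacle — and the only place the hypothesis is genuinely used — is the degenerate case where one of the three arcs is the single edge $\langle r,e\rangle$ collapsed by $\varphi$ to $\varphi(r)$: one must prolong it past $e$ without spoiling disjointness, which is exactly what ``$e$ is not an isolated end vertex'' permits, and the hereditary-unicoherence bookkeeping there is the most delicate part.
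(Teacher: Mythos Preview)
Your proof is correct and follows the same strategy as the paper's: exhibit three arcs at $r$ in $D$ and show their $\varphi$-images are three arcs at $\varphi(r)$ in $|D|$. You are considerably more careful than the paper about the two delicate points---verifying via hereditary unicoherence that the image arcs meet only at $\varphi(r)$, and handling the degenerate case $A_1=\{r,e\}$ by prolonging past $e$ (using the hypothesis to conclude $e$ is not an end vertex of $D$) rather than asserting, as the paper does somewhat elliptically, that $e$ must then be an isolated end vertex of $D$.
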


\begin{proof}
Suppose $r$ is a ramification vertex of $D$ and let $A$, $B$, and $C$ be arcs in $D$ such that $A\cap B=B\cap C=C\cap A=\{r\}$. The images $\varphi(A)$, $\varphi(B)$, and $\varphi(C)$ are arcs or degenerate with pairwise intersections equaling $\varphi(r)$.  Suppose $\varphi(A)$ is degenerate then $A$ consists of a single edge $\langle r,e\rangle$ and $A=\varphi^{-1}(\varphi(r))$ contains the isolated end vertex $e$ contrary to the hypothesis.  Thus $\varphi(r)$ is a ramification point of $|D|$.

\end{proof}
The following Observation follows from Proposition~\ref{two-orders}.

 \begin{observation}\label{end-to-end-rooted}
Suppose that $\mathcal T$ is a projective Fra\"{\i}ss\'e family of rooted trees and that a vertex $e\in \mathbb{T}$ has the property that for every $T\in \mathcal T$ and every epimorphism
$f_T\colon \mathbb{T}\to T$ the image $f_T(e)$ is an end vertex of $T$. Then $e$ is an end vertex of
$\mathbb T$.
\end{observation}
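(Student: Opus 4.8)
The plan is to argue contrapositively: assuming $e$ is \emph{not} an end vertex of $\mathbb{T}$, I will produce $T\in\mathcal{T}$ and an epimorphism $f_T\colon\mathbb{T}\to T$ for which $f_T(e)$ fails to be an end vertex of $T$. Throughout I use that $\mathbb{T}$ is a smooth dendroid with root $r(\mathbb{T})$ (Corollary~\ref{smooth-dendroid}), that every $f_T$ preserves the order $\le$ and that $x\le y$ in $\mathbb{T}$ is detected by the family of all such $f_T$ (Proposition~\ref{two-orders}), and that arbitrarily fine $f_T$ exist (condition~\ref{refinement} of Theorem~\ref{limit}). The underlying observation is that an end vertex of a \emph{rooted} tree that is not the root is exactly a $\le$-maximal vertex.

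\textbf{Step 1: if $e$ is not an end vertex of $\mathbb{T}$, then $e$ is not $\le$-maximal.} If $e=r(\mathbb{T})$ this is immediate (any other vertex lies strictly above $r(\mathbb{T})$; if $\mathbb{T}$ is a single vertex the statement is trivial). If $e\neq r(\mathbb{T})$, then since $\ord_{\mathbb{T}}(e)\ge 2$ (Definition~\ref{rampt-topo-dend}) there are non-degenerate arcs $A_1,A_2$ in $\mathbb{T}$ with $A_1\cap A_2=\{e\}$, and after passing to sub-arcs I may assume $e$ is an end vertex of both. Because $\mathbb{T}$ is uniquely arcwise connected, each intersection $[r(\mathbb{T}),e]\cap A_i$ is a sub-arc of the (non-degenerate) arc $[r(\mathbb{T}),e]$ having $e$ as an end vertex; as these two sub-arcs meet only in $e$, one of them — say $[r(\mathbb{T}),e]\cap A_2$ — equals $\{e\}$. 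Then $[r(\mathbb{T}),e]\cup A_2$ is an arc from $r(\mathbb{T})$ through $e$ to the far end vertex $z$ of $A_2$, whence $e\le z$ and $e\neq z$, i.e.\ $e<z$.

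\textbf{Step 2: derive the contradiction.} Fix $z$ with $e<z$, and suppose $e\neq r(\mathbb{T})$. Choosing $f_T$ with mesh below $\min\{d(e,z),d(e,r(\mathbb{T}))\}$ we get $f_T(e)\neq f_T(z)$ and $f_T(e)\neq f_T(r(\mathbb{T}))=r(T)$. By Proposition~\ref{two-orders} and order preservation $f_T(e)\le f_T(z)$, hence $f_T(e)<f_T(z)$; so $f_T(e)$ is a vertex of the finite rooted tree $T$ which is not $\le$-maximal and not the root, and therefore $\ord_T(f_T(e))\ge 2$. This contradicts the hypothesis that $f_T(e)$ is an end vertex of $T$, completing the argument when $e\neq r(\mathbb{T})$.

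\textbf{Main obstacle.} The one genuinely delicate point is the case $e=r(\mathbb{T})$: then $f_T(e)=r(T)$ for \emph{every} $f_T$, and a root of a finite rooted tree can have order $1$ while still being non-maximal, so Step~2 no longer yields a contradiction. I expect this to be the hard part; it must be handled separately, either by noting that it does not arise in the families of interest (where a root may always be taken of order $\ge 3$, so the hypothesis fails unless $\mathbb{T}$ is degenerate), or by a direct compactness argument on a fundamental sequence $\{T_n,\alpha_n\}$: if every $r(T_n)$ has order $1$ with unique neighbour $s_n$, one shows that two non-degenerate arcs issuing from $r(\mathbb{T})$ and meeting only there would force, for large $n$, $s_n$ to lie below the projections of both far endpoints, and then a nested-intersection argument together with Proposition~\ref{two-orders} contradicts $\ord_{\mathbb{T}}(r(\mathbb{T}))\ge 2$.
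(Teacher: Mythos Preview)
Your approach is correct and matches the paper's: the paper gives no explicit proof, merely stating that the observation ``follows from Proposition~\ref{two-orders}'', and your contrapositive argument via $\le$-maximality is precisely the natural unpacking of that remark. Your attention to the case $e=r(\mathbb{T})$ is in fact more careful than the paper itself, which here and elsewhere (e.g.\ in the proof of Proposition~\ref{end-to-end}) tacitly identifies ``end vertex'' with ``$\le$-maximal vertex'' and does not separately treat the root.
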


\begin{theorem}\label{rooted-end}
Suppose that $\mathcal T$ is a projective Fra\"{\i}ss\'e family of rooted trees and for every $T\in \mathcal T$, for every end vertex $e\in V(T)\setminus \{r(T)\}$  and every $a\in V(T)$ such that $\langle a,e\rangle\in E(T)$ there is a tree $S$, an epimorphism $f^S_T\colon  S\to T$  such that for every $p\in (f^S_T)^{-1}(a)$ there are vertices $q,r\in V(S)$ satisfying $p\le q\le r$, $q\ne r$, and $f^S_T(q)=f^S_T(r)=e$.
Then the projective Fra\"{\i}ss\'e limit of $\mathcal T$ has no isolated end vertices.
\end{theorem}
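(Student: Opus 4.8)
The plan is to argue by contradiction and, using the smooth dendroid structure of $\mathbb{T}$, to reduce the statement to a question about a single edge. By Corollary~\ref{arwise-connected} and Corollary~\ref{smooth-dendroid}, $\mathbb{T}$ is an arcwise connected smooth dendroid with root $r(\mathbb{T})$, so if $\langle x,y\rangle\in E(\mathbb{T})$ with $x\neq y$ then (the edge being the unique arc joining $x$ and $y$) $x$ and $y$ are comparable in the order $\leq$ of Proposition~\ref{two-orders}, and we may assume $x<y$. If moreover $x\neq r(\mathbb{T})$, then the arc from $r(\mathbb{T})$ to $x$ and the edge $\langle x,y\rangle$ meet only in $x$, so $\ord(x)\geq 2$; hence the whole content of the theorem is that the larger endpoint of an edge cannot be an end vertex. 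So suppose $y$ is an end vertex of $\mathbb{T}$ lying on an edge $\langle x,y\rangle$ with $x<y$. A short argument (otherwise one exhibits two arcs meeting only in $y$) shows that $y$ must then be a $\leq$-maximal vertex whose only neighbour is $x$; thus it suffices to produce a vertex $w\in\mathbb{T}$ with $w>y$. (The possibility $y=r(\mathbb{T})$ only arises when $r(\mathbb{T})$ itself is an end vertex of $\mathbb{T}$, which does not happen in the families we are interested in; it is dealt with separately by an analogous argument.)

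Next I would descend to a finite stage at which $\langle x,y\rangle$ becomes a pendant edge. By Theorem~\ref{limit}(3) choose an epimorphism $f_T\colon\mathbb{T}\to T$ with $T\in\mathcal{T}$ and $f_T(x)\neq f_T(y)$; put $a=f_T(x)$, $e=f_T(y)$, so that $\langle a,e\rangle\in E(T)$ and, since $f_T$ preserves order, $a<e$ and in particular $e\neq r(T)$. I expect the main obstacle to be the next point: one must know that $e$ is an end vertex of $T$, i.e. one must push the end vertex $y$ of $\mathbb{T}$ forward to an end vertex of the finite quotient. In the situations to which the theorem is applied the bonding maps are confluent, so $f_T$ is confluent by Proposition~\ref{confluent-projections}, and I would argue: if $e$ had a neighbour $w>e$ in $T$, then confluence of $f_T$ together with the maximality of $y$ and the fact that $x$ is the only neighbour of $y$ would be used to manufacture a point of $\mathbb{T}$ strictly above $y$, contradicting maximality. (Without confluence one would instead combine Proposition~\ref{end-to-end}, which lifts leaves of $T$ to leaves of $\mathbb{T}$, with compactness and Proposition~\ref{two-orders} to force such a leaf above $y$.) So we may assume $\langle a,e\rangle\in E(T)$ with $e$ an end vertex of $T$ and $e\neq r(T)$.

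Now I would apply the hypothesis to the edge $\langle a,e\rangle$, obtaining $S\in\mathcal{T}$ and an epimorphism $f^S_T\colon S\to T$ with the stated chain property, and then use Theorem~\ref{limit}(2) to obtain $f_S\colon\mathbb{T}\to S$ with $f_T=f^S_T\circ f_S$. Put $p=f_S(x)$. Since $\langle x,y\rangle\in E(\mathbb{T})$ and $f_S$ preserves order, $f_S(y)$ is an immediate $\leq$-successor of $p$ with $f^S_T(f_S(y))=e$; and because $[a,e]$ is a single edge of $T$, restriction of $f^S_T$ to chains is monotone (Observation~\ref{monotone-relative}), so along any chain through $p$ everything lying above the first vertex sent to $e$ is sent constantly to $e$. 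Applying the chain property to this particular $p$ yields $q\leq r$ in $S$ with $q\neq r$ and $f^S_T(q)=f^S_T(r)=e$; analysing the two ways $f_S(y)$ can sit relative to this chain, and if necessary refining $S$ once more so that the unfolded chain runs through $f_S(y)$, one obtains $r'\in S$ with $r'>f_S(y)$ and $f^S_T(r')=e$, so that $[f_S(y),r']$ is a nondegenerate chain contained in $(f^S_T)^{-1}(e)$.

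Finally I would lift this chain back through $y$. Using confluence of $f_S$, the component $J$ of $f_S^{-1}([f_S(y),r'])$ containing $y$ satisfies $f_S(J)=[f_S(y),r']$, whence $f_T(J)=f^S_T([f_S(y),r'])=\{e\}$; thus $J$ is a nondegenerate connected subgraph of $\mathbb{T}$ lying in the fibre $f_T^{-1}(e)$, with $y\in J$ and $x\notin J$. As $\mathbb{T}$ is arcwise connected, $J$ contains an arc $A$ with $y$ as an end vertex, and $A$ meets the edge $\langle x,y\rangle$ only in $y$; therefore $\ord(y)\geq 2$, contradicting that $y$ is an end vertex. (If only order preservation rather than confluence is at hand, one replaces this last lift by taking a leaf $\ell$ of $S$ with $\ell\geq r'$, lifting it to a leaf $w$ of $\mathbb{T}$ via Proposition~\ref{end-to-end}, and verifying $w>y$ from Proposition~\ref{two-orders} and the maximality and unique-neighbour properties of $y$.) The delicate points are, as indicated, the passage from the end vertex $y$ of $\mathbb{T}$ to a pendant edge in a finite quotient and the subsequent lift of the unfolded chain back to a neighbourhood of $y$.
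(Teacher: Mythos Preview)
Your approach has a genuine gap: you repeatedly invoke confluence of the projections $f_T$ and $f_S$ (via Proposition~\ref{confluent-projections}), but the theorem makes no confluence assumption on $\mathcal T$; it is stated for an arbitrary projective Fra\"{\i}ss\'e family of rooted trees with order-preserving epimorphisms. Your fallback arguments for the non-confluent case do not work as written. In particular, lifting a leaf $\ell\ge r'$ of $S$ to an end vertex $w\in\mathbb T$ via Proposition~\ref{end-to-end} does not yield $w>y$: Proposition~\ref{two-orders} requires $f_{S'}(y)\le f_{S'}(w)$ for \emph{every} epimorphism $f_{S'}$, and you only control a single projection $f_S$. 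The same obstruction defeats the analogous argument you sketch for showing that $f_T(e)$ is an end vertex of $T$. The step where you ``refine $S$ once more so that the unfolded chain runs through $f_S(y)$'' is not a move available in general either: the hypothesis gives you one particular $S$ and $f^S_T$, and passing to a further refinement does not come with the same chain property unless you re-invoke the hypothesis, which requires a new end-vertex edge.

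The paper's argument is far more elementary and never lifts anything back to $\mathbb T$. It works entirely in the finite tree $S$: from $\langle a,e\rangle\in E(\mathbb T)$ one gets $\langle f_S(a),f_S(e)\rangle\in E(S)$; setting $p=f_S(a)$, the hypothesis supplies $p\le q\le r$ with $q\ne r$ and $f^S_T(q)=f^S_T(r)=f_T(e)$, and since $f_S$ is order preserving the paper identifies $f_S(e)$ with $r$, whence $q$ lies strictly between $f_S(a)$ and $f_S(e)$ in the tree order, so $\langle f_S(a),f_S(e)\rangle$ cannot be an edge of $S$ --- a contradiction. You were right to flag that one must know $f_T(e)$ is an end vertex of $T$ to invoke the hypothesis; the paper's proof passes over this point silently. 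But the remedy is not to bring in confluence or to build arcs in $\mathbb T$; rather, look for the contradiction directly at the finite level using only that $f_S$ preserves edges and order.
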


\begin{proof}
Let $\mathcal T$ be a family that satisfies the assumptions of the Theorem, and let $\mathbb T$ be the projective Fra\"{\i}ss\'e limit of $\mathcal T$. Suppose $e$ is an isolated end vertex of $\mathbb T$ then there exists $a\in V(\mathbb T)$ such that $\langle a,e\rangle\in E(\mathbb T)$. Let $T\in \mathcal T$ and $f_T:\mathbb T \to T$ be an epimorphism such that $f_T(a)\not = f_T(e)$ then $\langle f_T(a),f_T(e)\rangle \in E(T)$.   From the hypothesis we know there is a tree $S$ and an epimorphism  $f^S_T\colon  S\to T$ such that for every $p\in (f^S_T)^{-1}(f_T(a))$ there are vertices $q,r\in V(S)$ satisfying $p\le q\le r$, $q\ne r$, and $f^S_T(q)=f^S_T(r)=f_T(e)$. Let $f_S\colon \mathbb T\to S$ be an epimorphism such that $f_T=f^S_T\circ f_S$. Then $f_S(a)\in(f^S_T)^{-1}(f_T(a))$ and $r=f_S(e)$, since $f_S$ is an order preserving epimorphism, but these vertices are not members of an edge in $S$ contrary to the fact that $f_S$ is an epimorphism.
\end{proof}

\section{Rooted trees and epimorphism preserving end vertices}

In this section we consider order preserving epimorphisms on trees which also preserve end vertices.

\begin{definition}
We say that an epimorphism $f\colon S\to T$ between rooted trees {\it preserves end vertices} if, beside preserving order, we have that an image of an end vertex in $S$ is an end vertex in $T$.
\end{definition}

\begin{definition} 
 A {\it branch} of a rooted tree $T$ is a maximal chain in $T$, the {\it length} of a branch is the number of vertices in the branch. 
 \end{definition}
 
\begin{definition}
A rooted tree $T$ is a {\it fan} if for every $s,t\in T$ which are incomparable in the order, if $p\not \in \{s,t\}$ such that $\langle p,s\rangle$ and $\langle p,t\rangle$ are in $E(T)$ the $p$ is the root of $T$. 
\end{definition}

\begin{definition}
A fan $F$ is {\it uniform} if the length of all branches in $F$ are equal.
\end{definition}

The following observation was noted in \cite[Remark 2.2]{B-K}.

\begin{observation}\label{trees-to-fans}
For any rooted tree $T$ there is a uniform fan $S$ and an epimorphism preserving end vertices $f\colon S\to T$. 
\end{observation}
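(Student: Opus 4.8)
The plan is to prove Observation~\ref{trees-to-fans} by induction on the number of non-root vertices of $T$, building the uniform fan $S$ branch by branch. First I would set up the key numerical invariant: let $\ell(T)$ denote the maximum length of a branch in $T$ (where length counts vertices, so $\ell(T)\ge 1$ with equality exactly when $T$ is a single vertex). The target fan $S$ will have all branches of length exactly $\ell(T)$, so the construction must ``pad out'' short branches of $T$ while ``spreading apart'' the ramification structure so that no ramifications occur except at the root.

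The construction itself: enumerate the branches (maximal chains) $C_1,\dots,C_k$ of $T$ emanating from the root $r(T)$. Each $C_i$ is a chain $r(T)=v_{i,0}\le v_{i,1}\le\dots\le v_{i,m_i}$ with $m_i+1=|C_i|\le\ell(T)$. For each $i$ I would create a fresh chain $D_i$ of length exactly $\ell(T)$, say $u_{i,0},u_{i,1},\dots,u_{i,\ell(T)-1}$, and define the map on $D_i$ by sending $u_{i,j}\mapsto v_{i,j}$ for $j\le m_i$ and $u_{i,j}\mapsto v_{i,m_i}$ (the end vertex of $C_i$) for $j>m_i$. Then glue all the $D_i$ together at $u_{i,0}$, declaring that common vertex to be the root of $S$. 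Since distinct branches of $T$ share only the root, these maps agree on the overlap, so they assemble to a well-defined surjection $f\colon S\to T$. By construction $S$ is a uniform fan: all branches have length $\ell(T)$, and the only vertex where two incomparable vertices meet is the root. The map $f$ is order-preserving on each chain (it is monotone nondecreasing in $j$) and hence globally, it is surjective on vertices and edges (each edge $\langle v_{i,j-1},v_{i,j}\rangle$ of $T$ is hit by $\langle u_{i,j-1},u_{i,j}\rangle$), and it sends the end vertex $u_{i,\ell(T)-1}$ of each branch $D_i$ of $S$ to $v_{i,m_i}$, which is an end vertex of $T$ — so $f$ preserves end vertices. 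Finally $f(r(S))=r(T)$.

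The main obstacle I anticipate is purely bookkeeping rather than conceptual: one must check carefully that the gluing produces a \emph{tree} (no cycles) and that $f$ as assembled is genuinely a homomorphism of rooted graphs, i.e. that the edge relation and the order relation behave correctly across the glued root. The cycle-freeness is immediate because each $D_i$ is a path and they are identified only at a single endpoint, giving a star-shaped (fan) graph. The order on $S$ induced by the root is exactly the union of the linear orders on the $D_i$, and since $f$ restricted to each $D_i$ is order-preserving into the corresponding branch of $T$, and the branches of $T$ inherit their order from $r(T)$, the global map respects $\le$. One small point worth stating explicitly: because the paper's definition of a fan requires that a common neighbor $p$ of two incomparable vertices be the root, and in $S$ two vertices $u_{i,j}$ and $u_{i',j'}$ with $i\ne i'$ are incomparable exactly when neither is the root, their only possible common neighbor is the root $u_{i,0}=u_{i',0}$, so the fan condition holds. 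I would remark that this construction is essentially the one implicit in \cite[Remark 2.2]{B-K}, which is why the authors relegate it to an observation, but spelling out $\ell(T)$ and the padding map makes the argument self-contained; no induction is actually needed once the branch decomposition is used, so I would present the direct construction rather than an inductive one.
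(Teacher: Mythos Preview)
Your construction is correct and is the natural one. Note that the paper does not actually supply its own proof of this observation; it simply attributes the fact to \cite[Remark 2.2]{B-K}, so there is no ``paper's approach'' to compare against beyond the implicit standard construction, which is exactly what you have written out.

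One small correction of wording: the clause ``Since distinct branches of $T$ share only the root, these maps agree on the overlap'' rests on a false premise --- distinct maximal chains in a rooted tree can share a nontrivial initial segment (for instance, in a Y-shaped tree rooted at the base of the stem the two branches share both the root and the next vertex). Fortunately you do not use that claim: the overlap you must control is in $S$, not in $T$, and in $S$ the chains $D_i$ meet only at the common root $u_{i,0}$ by construction; since each $f_i$ sends that vertex to $v_{i,0}=r(T)$, the maps patch together regardless of how the branches of $T$ overlap. With that sentence rephrased the argument is complete, and your checks that $S$ is a uniform fan, that $f$ is an order-preserving epimorphism, and that end vertices of $S$ go to end vertices of $T$ are all sound.
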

 
\begin{observation}\label{increase-lengths}
For any uniform fan $F$ and a number $n$ bigger then the length of branches in $F$ there is a uniform fan $F'$ and an epimorphism preserving end vertices $f\colon F'\to F$ such that the lengths of all branches in $F'$ are bigger than $n$.
\end{observation}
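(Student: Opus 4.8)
The plan is to prove Observation~\ref{increase-lengths} by iterating the one-step elongation that extends every branch by a single vertex, and then invoking Observation~\ref{increase-lengths}'s own hypothesis-free nature: we only need the lengths to exceed $n$, so finitely many one-step extensions suffice. First I would define the one-step elongation $F^+$ of a uniform fan $F$ whose branches all have length $\ell$: put $V(F^+)=V(F)\cup\{e' : e \text{ an end vertex of } F\}$, where the $e'$ are new vertices, add an edge $\langle e,e'\rangle$ for each end vertex $e$ of $F$, keep the root and the order of $F$, and declare $e\le e'$. Then $F^+$ is again a uniform fan (every branch now has length $\ell+1$, and the ``fan'' condition is inherited since no new ramification is created away from the root). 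Define $f\colon F^+\to F$ by $f(e')=e$ for each end vertex $e$ and $f=\mathrm{id}$ on $V(F)$. One checks directly that $f$ is surjective on vertices and edges, preserves the order (the only new comparability $e\le e'$ maps to $e\le e$), and sends end vertices of $F^+$ (namely the $e'$) to end vertices of $F$; hence $f$ is an epimorphism preserving end vertices.

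Next I would compose: given the uniform fan $F$ with branch length $\ell$ and the target bound $n$, choose $k$ with $\ell+k>n$ and set $F'=F^{+\cdots+}$ ($k$ iterated elongations) with $f\colon F'\to F$ the composition $F^{(k)}\to F^{(k-1)}\to\cdots\to F^{(1)}\to F$ of the one-step maps. Since epimorphisms preserving end vertices are closed under composition (this is immediate from the definitions: order preservation and ``end vertex $\mapsto$ end vertex'' both compose), $f$ is an epimorphism preserving end vertices, $F'$ is a uniform fan, and its branches have length $\ell+k>n$, as required.

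There is essentially no obstacle here; the only things to be a little careful about are (i) verifying that $F^+$ really remains a \emph{fan} — i.e. that attaching a pendant edge at each leaf introduces no ramification vertex other than (possibly) the root — which holds because each new vertex $e'$ has order $1$ and each old end vertex $e$ now has order $2$; and (ii) confirming that the new map is surjective on edges, which is clear since $f$ restricted to the old vertices is the identity and therefore hits every edge of $F$. Everything else is routine bookkeeping, so I would simply state the construction and leave these verifications to the reader, exactly as the paper does for the analogous Observation~\ref{trees-to-fans}.
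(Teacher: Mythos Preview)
The paper states this as an Observation with no proof, so there is nothing to compare against; your explicit iterated one-step elongation is exactly the sort of routine verification the authors are leaving to the reader, and it is correct.

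One tiny caveat: your construction adds a pendant $e'$ for \emph{every} end vertex $e$ of $F$. If $F$ happens to have a single branch (an arc rooted at one end), then the root $r(F)$ itself has order~$1$ and is an end vertex, so your $F^+$ would also sprout a new branch $\langle r(F),r(F)'\rangle$ of length~$2$, destroying uniformity. The fix is trivial---add $e'$ only for end vertices $e\neq r(F)$ (equivalently, for the maximal elements of the order)---and for fans with at least two branches your construction is already correct as written, since then the root is the ramification vertex and not an end vertex.
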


\begin{definition}
Denote by $\mathcal T_{\mathcal E}$ the family of rooted trees and epimorphisms preserving end vertices. Similarly, 
$\mathcal F_{\mathcal E}$ is the family of uniform fans and  epimorphisms preserving end vertices.
\end{definition}

\begin{theorem}\label{isomorphic}
The families $\mathcal T_{\mathcal E}$ and $\mathcal F_{\mathcal E}$ are projective Fra\"{\i}ss\'e families and their Fra\"{\i}ss\'e limits are isomorphic.
\end{theorem}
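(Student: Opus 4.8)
The plan is to prove this in two stages: first that each family is a projective Fra\"{\i}ss\'e family, and then that the two limits coincide. The verification that $\mathcal{T}_{\mathcal{E}}$ and $\mathcal{F}_{\mathcal{E}}$ satisfy conditions (1) and (2) of Definition~\ref{definition-Fraisse} is routine: both families are countable up to isomorphism and the composition of two order- and end-preserving epimorphisms is again order- and end-preserving, with identities obviously qualifying. As usual, condition (3) will follow from condition (4) by taking $A$ to be the one-vertex tree with constant maps $f$ and $g$, so the whole weight of the argument falls on the amalgamation property (4).

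For the amalgamation property I would use the standard amalgamation procedure from Section~\ref{confluent}, but applied with care. Given order- and end-preserving epimorphisms $f\colon B\to A$ and $g\colon C\to A$, the issue (illustrated already by Example~\ref{rooted-standard-not-tree}) is that the standard pullback $D$ need not be a tree. The trick is to first reduce to \emph{light} maps: using Proposition~\ref{m-l-factorization} and Proposition~\ref{m-l-factorization-pointed}, factor $f = l\circ m$ with $m$ monotone and $l$ light, both order-preserving between rooted trees. Monotone epimorphisms are easy to amalgamate (this is essentially Theorem~\ref{amalgamation-monotone} together with Proposition~\ref{monotone-implies-monotone}), and for the light part I would invoke Lemma~\ref{amalgamation-light}, which says precisely that the standard amalgamation of a light map against any order-preserving map yields a rooted tree. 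Composing these amalgamation steps produces the desired $D$ together with order-preserving epimorphisms $f_0\colon D\to B$ and $g_0\colon D\to C$. The one extra thing to check — and this is where end-preservation enters — is that the maps $f_0,g_0$ I obtain actually preserve end vertices. Monotone maps preserve end vertices by Lemma~\ref{images-of-arcs} applied along branches (or directly: a monotone image of an end is an end), and for the light amalgamation step one checks that the pullback construction sends end vertices of $D$ (which are pairs $(b,c)$ with at least one of $b,c$ an end, by the product-of-branches structure) to end vertices. If a naive amalgamation fails to preserve ends, one can pass to a cofinal replacement: by Observation~\ref{trees-to-fans} and Observation~\ref{increase-lengths}, one may replace $D$ by a uniform fan mapping onto it by an end-preserving epimorphism, and uniform fans behave well under the pullback since all branches have the same length. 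This last maneuver simultaneously handles the $\mathcal{F}_{\mathcal{E}}$ case, since it lands us inside the class of uniform fans.

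To see that the two Fra\"{\i}ss\'e limits are isomorphic I would argue that $\mathcal{F}_{\mathcal{E}}$ is cofinal in $\mathcal{T}_{\mathcal{E}}$: Observation~\ref{trees-to-fans} gives, for any rooted tree $T\in\mathcal{T}_{\mathcal{E}}$, a uniform fan $S$ with an end-preserving epimorphism $S\to T$, and a cofinal subfamily of a projective Fra\"{\i}ss\'e family has the same limit (a fundamental sequence for $\mathcal{F}_{\mathcal{E}}$ is, by cofinality and Observation~\ref{increase-lengths} which supplies the needed intermediate refinements, also a fundamental sequence for $\mathcal{T}_{\mathcal{E}}$; since the limit of a projective Fra\"{\i}ss\'e family is, by Theorem~\ref{limit}, the inverse limit of any fundamental sequence, the two limits are the same topological graph).

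\textbf{Main obstacle.} The technical heart is the amalgamation property, and specifically keeping end-preservation alive through the monotone–light factorization and the two successive pullbacks. The monotone-light reduction cleanly separates the "unicoherence" difficulty (handled by Lemma~\ref{amalgamation-light}) from the rest, but one must be vigilant that the composite $D$ one builds is genuinely a rooted tree and that \emph{both} legs preserve ends; I expect the cleanest route is to do the amalgamation and then, if necessary, compose with an end-preserving map from a uniform fan as above, which both fixes any end-preservation defect and places the argument inside $\mathcal{F}_{\mathcal{E}}$, thereby proving both halves of the theorem at once.
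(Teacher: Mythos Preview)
Your cofinality argument for the isomorphism of the two limits is correct and matches the paper. The amalgamation argument, however, takes a different route and has a genuine gap.

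Neither Lemma~\ref{amalgamation-light} nor Theorem~\ref{amalgamation-monotone} guarantees end-preserving legs, even when the input epimorphisms are end-preserving. The paper's example following Proposition~\ref{confluent-end-preserving} is decisive here: $A$ is a triod rooted at an end vertex, $B$ and $C$ are five-vertex arcs, and $f,g$ are light, order-preserving and end-preserving, yet the standard amalgamation (exactly what Lemma~\ref{amalgamation-light} produces) has end vertices $(b,r)$ and $(c,q)$ with $f_0(b,r)=b$ a non-end of $B$. So your monotone--light scheme can break at the very first step. Your proposed repair---replacing the \emph{output} $D$ by a uniform fan $S\to D$---cannot help either: if $f_0\colon D\to B$ already fails to preserve ends, then so does $f_0\circ h$ for any end-preserving $h\colon S\to D$.

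The paper instead reduces on the \emph{input} side. Given $f\colon B\to A$ and $g\colon C\to A$, one first replaces $B$ and $C$ by uniform fans via Observation~\ref{trees-to-fans}, then equalizes branch lengths via Observation~\ref{increase-lengths}, and only then amalgamates. For uniform fans of equal branch length mapping into $A$, each branch surjects monotonically onto a branch of $A$, one amalgamates branch-by-branch, and the projections automatically send ends to ends. You had the right tools (Observations~\ref{trees-to-fans} and~\ref{increase-lengths}) in hand but applied them after the amalgamation rather than before.
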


\begin{proof}
For amalgamation in  $\mathcal T_{\mathcal E}$ first move to $\mathcal F_{\mathcal E}$ by Observation \ref{trees-to-fans}, then pass to fans with the same lengths of branches by Observation \ref{increase-lengths}, and then amalgamate the fans. By Observation \ref{trees-to-fans} the family $\mathcal F_{\mathcal E}$ is cofinal in $\mathcal T_{\mathcal E}$, so their limits 
are isomorphic. 
\end{proof}

\begin{theorem}\label{smooth-dendroid-fan}
The projective Fra\"{\i}ss\'e limit $\mathbb F_{\mathcal E}$ of $\mathcal F_{\mathcal E}$ is a smooth dendroid with only one ramification vertex, namely $r(\mathbb F_{\mathcal E})$.
\end{theorem}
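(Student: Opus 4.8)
The plan is to verify that $\mathbb F_{\mathcal E}$ satisfies the three properties of a smooth dendroid (dendroid, hereditarily unicoherent, closed order) plus the claim about the unique ramification vertex, using results already established for rooted trees together with the special structure of uniform fans. First I would observe that $\mathbb F_{\mathcal E}$ is the projective Fra\"{\i}ss\'e limit of a family of rooted trees (uniform fans with order- and end-preserving epimorphisms), so Corollary~\ref{dendroid} gives that it is a dendroid, Corollary~\ref{smooth-dendroid} gives that it is a smooth dendroid, and by Theorem~\ref{isomorphic} the same conclusions transfer between $\mathbb F_{\mathcal E}$ and $\mathbb T_{\mathcal E}$. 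So the only genuinely new content is that $r(\mathbb F_{\mathcal E})$ is the sole ramification vertex.

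To prove the ramification claim, I would argue that any vertex $v\ne r(\mathbb F_{\mathcal E})$ has order at most $2$. Recall from Proposition~\ref{two-orders} that the order $\le$ on $\mathbb F_{\mathcal E}$ is computed coordinatewise through the epimorphisms $f_F\colon \mathbb F_{\mathcal E}\to F$, $F\in\mathcal F_{\mathcal E}$. Suppose toward a contradiction that $v\ne r(\mathbb F_{\mathcal E})$ is a ramification vertex, witnessed by three arcs $A,B,C$ in $\mathbb F_{\mathcal E}$ with pairwise intersections $\{v\}$. At least two of these arcs, say $B$ and $C$, meet $v$ from "above" in the order (i.e. $v$ is not the $\le$-maximal vertex encountered along them near $v$), since at most one of the three can run downward from $v$ toward $r(\mathbb F_{\mathcal E})$; here I use that below $v$ there is a unique arc to the root because $\mathbb F_{\mathcal E}$ is hereditarily unicoherent and $r(\mathbb F_{\mathcal E})\le v$ by Corollary~\ref{arwise-connected} and Proposition~\ref{two-orders}. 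Now pick a uniform fan $F$ and an epimorphism $f_F\colon\mathbb F_{\mathcal E}\to F$ fine enough (using Condition~(3) of Theorem~\ref{limit}) that $f_F(B)$ and $f_F(C)$ are nondegenerate arcs and $f_F(v)\ne r(F)$; restricting to the chains from the root through $B$ and through $C$ and using Observation~\ref{monotone-relative}, we find that $f_F(v)$ is a vertex of $F$ different from $r(F)$ which lies on two incomparable branches of $F$, i.e. $f_F(v)$ is a ramification vertex of $F$ that is not the root. But $F$ is a fan, so by the definition of fan its only ramification vertex is $r(F)$, a contradiction. Hence every $v\ne r(\mathbb F_{\mathcal E})$ has order $\le 2$, and $r(\mathbb F_{\mathcal E})$ is the only possible ramification vertex.

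Finally I would note that $r(\mathbb F_{\mathcal E})$ genuinely is a ramification vertex (rather than the limit having no ramification vertices at all): since the fans in $\mathcal F_{\mathcal E}$ include triods rooted at their center, and amalgamation/cofinality let us find $f_F\colon\mathbb F_{\mathcal E}\to F$ onto such an $F$, we can lift three arcs emanating from the root of $F$ to three arcs in $\mathbb F_{\mathcal E}$ meeting only at $r(\mathbb F_{\mathcal E})$ (using that $r(\mathbb F_{\mathcal E})$ is the $\le$-least element and $f_F$ is order preserving, so the preimage of the root is a single vertex by hereditary unicoherence, and distinct branches stay distinct). The main obstacle I anticipate is the bookkeeping in the contradiction step: one must be careful that "two of the three arcs go up from $v$" is justified — this rests on hereditary unicoherence forcing the downward part of any arc through $v$ to be the unique arc $[r(\mathbb F_{\mathcal E}),v]$, so at most one of $A,B,C$ can consist of vertices $\le v$ near $v$ — and that the chosen $f_F$ can simultaneously separate the two upward arcs and keep $f_F(v)$ off the root; both are routine applications of Theorem~\ref{limit}(3) and Proposition~\ref{two-orders} but deserve explicit mention.
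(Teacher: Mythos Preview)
Your overall plan is sound—the smooth dendroid part is indeed immediate from Corollary~\ref{smooth-dendroid}, and the ramification claim should be handled by projecting to a uniform fan—but the central step of your contradiction argument has a gap.

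You choose $f_F$ ``fine enough'' so that $f_F(B)$ and $f_F(C)$ are nondegenerate and $f_F(v)\ne r(F)$, and then assert that $f_F(v)$ lies on two incomparable branches of $F$. This does not follow. Since you have arranged $B,C\subseteq\{x:x\ge v\}$, order preservation gives $f_F(B),f_F(C)\subseteq\{y:y\ge f_F(v)\}$; in a fan this set is a sub-arc of the \emph{unique} branch through $f_F(v)$, so $f_F(B)$ and $f_F(C)$ necessarily land in the \emph{same} branch, and nondegeneracy alone allows one to be a sub-arc of the other. Observation~\ref{monotone-relative} tells you the restrictions to chains are monotone, but that does not separate the images into different branches. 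What is missing is a separation condition: pick $b\in B\setminus\{v\}$ and $c\in C\setminus\{v\}$, note that $b$ and $c$ are incomparable in $\mathbb F_{\mathcal E}$ (if $b\le c$ then $b$ lies on the arc $[v,c]\subseteq C$, contradicting $B\cap C=\{v\}$), and then choose $f_F$ so that $f_F(b)\notin f_F(C)$ and $f_F(c)\notin f_F(B)$. Working inside the single branch through $f_F(v)$ this forces both $f_F(b)>f_F(c)$ and $f_F(c)>f_F(b)$, which is the contradiction.

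The paper's proof avoids the order analysis entirely: it projects all three arcs $A,B,C$ (after shrinking them to avoid $r(\mathbb F_{\mathcal E})$) by an $\varepsilon$-epimorphism chosen so that the three images are connected subsets of $G\setminus\{r(G)\}$ with no one contained in the union of the other two; since in a fan three connected sets missing the root and sharing a common point all lie on one branch, this non-containment is impossible. Your ``two upward arcs'' reduction is a legitimate alternative but adds a step without real gain, and the separation hypothesis you actually need is the same one the paper uses. Finally, your last paragraph (that $r(\mathbb F_{\mathcal E})$ is genuinely a ramification vertex) is not asserted by the theorem, so you need not prove it.
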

\begin{proof}
Suppose on the contrary that there is vertex $q\in \mathbb F_{\mathcal E}\setminus r(\mathbb F_{\mathcal E})$ that is a ramification vertex. Then there are arcs $A$, $B$, and $C$ such that $A\cap B=A\cap C=B\cap C=\{q\}$. Choosing $\varepsilon$ small enough and an $\varepsilon$-epimorphism $f_G\colon \mathbb F_{\mathcal E}\to G$ we have that $f_G(A)$, $f_G(B)$, and $f_G(C)$ are three connected subsets of $G\setminus r(G)$
such that $f_G(A)\not\subseteq f_G(B)\cup f_G(C)$, $f_G(B)\not\subseteq f_G(A)\cup f_G(C)$, and 
$f_G(C)\not\subseteq f_G(B)\cup f_G(C)$. This contradicts the fact that $G$ is a fan.
\end{proof}

Let us recall the notion of a dense in itself set.
\begin{definition}
A metric space $(X,d)$ is called {\it dense in itself} if for every point 
$p\in X$ and for every $\varepsilon >0$ there is a point $q\in X$, $q\ne p$ such that $d(p,q)<\varepsilon$.
\end{definition}

\begin{theorem}\label{end-to-end-rooted-reverse}
A vertex $e\in \mathbb{T}_{\mathcal{E}}$ is an end vertex if and only if for every $T\in \mathcal{T}_{\mathcal{E}} $ and every epimorphism
$f_T\colon \mathbb{T}_{\mathcal {E}}\to T$ the image $f_T(e)$ is an end vertex of $T$.
\end{theorem}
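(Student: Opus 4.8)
The plan is to prove the two implications separately, the backward one being immediate and all the work being in the forward one.

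\emph{Backward implication.} If $f_T(e)$ is an end vertex of $T$ for every $T\in\mathcal{T}_{\mathcal{E}}$ and every epimorphism $f_T\colon \mathbb{T}_{\mathcal{E}}\to T$, then $e$ is an end vertex of $\mathbb{T}_{\mathcal{E}}$ by Observation \ref{end-to-end-rooted}; there is nothing more to do.

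\emph{Forward implication: reduction.} By Observation \ref{trees-to-fans} and Theorem \ref{isomorphic} the family $\mathcal{F}_{\mathcal{E}}$ of uniform fans is cofinal in $\mathcal{T}_{\mathcal{E}}$, so I would fix a fundamental sequence $\{F_n,\alpha_n\}$ for $\mathcal{T}_{\mathcal{E}}$ in which every $F_n$ is a uniform fan and $\mathbb{T}_{\mathcal{E}}=\iLim\{F_n,\alpha_n\}$. Write $e=(e_n)$. Every epimorphism $f_T\colon \mathbb{T}_{\mathcal{E}}\to T$ factors as $f_T=h\circ\alpha_n^{\infty}$ for some $n$ and some end-vertex-preserving epimorphism $h\colon F_n\to T$; since $h$ preserves end vertices, it therefore suffices to show that if $e$ is an end vertex of $\mathbb{T}_{\mathcal{E}}$ then each $e_n$ is an end vertex of $F_n$.

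\emph{Two auxiliary facts.} First, $\mathbb{T}_{\mathcal{E}}$ is a smooth dendroid (Corollary \ref{smooth-dendroid}) in which the root $r(\mathbb{T}_{\mathcal{E}})$ is a ramification vertex (Theorem \ref{smooth-dendroid-fan} together with Theorem \ref{isomorphic}); hence every end vertex $e$ is a maximal element of the order $\le$: indeed $e\neq r(\mathbb{T}_{\mathcal{E}})$, and if $e<y$ then the nondegenerate arcs $[r(\mathbb{T}_{\mathcal{E}}),e]$ and $[e,y]$ meet only in $e$ by hereditary unicoherence, so $\ord(e)\ge 2$. Second, since each $\alpha_n\colon F_{n+1}\to F_n$ preserves order and end vertices and $F_n,F_{n+1}$ are fans, $\alpha_n$ maps the branch of $F_{n+1}$ through a non-root vertex onto the branch of $F_n$ through its image and, restricted to that branch, is a monotone order-preserving surjection of chains (Observation \ref{monotone-relative}, Lemma \ref{images-of-arcs}); consequently $\alpha_n$ maps $\{z\in V(F_{n+1}): e_{n+1}\le z\}$ onto $\{z\in V(F_n): e_n\le z\}$ whenever $e_n$ and $e_{n+1}$ are both non-root.

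\emph{The main step.} Suppose $e$ is an end vertex but $e_N$ is not an end vertex of $F_N$ for some $N$; I would contradict the maximality of $e$. Using that the $\alpha_n$ preserve roots and end vertices, the set $\{n: e_n=r(F_n)\}$ is a (possibly empty) initial segment $\{1,\dots,M-1\}$; it cannot be all of $\mathbb{N}$, for then $e=r(\mathbb{T}_{\mathcal{E}})$, which is not an end vertex. One then checks, again from end-vertex preservation, that $e_n$ is a non-end non-root vertex of $F_n$ for every $n\ge M$, hence an interior vertex of a branch with a successor, so $\{z\in V(F_n): e_n\le z\}$ is a chain with at least two vertices; by the second auxiliary fact the bonding maps between these chains are surjective. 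Therefore their inverse limit, which equals $\{y\in\mathbb{T}_{\mathcal{E}}: e\le y\}$ by Proposition \ref{two-orders}, contains at least two points, giving $y\neq e$ with $e\le y$ and contradicting maximality of $e$. Hence each $e_n$ is an end vertex of $F_n$, which finishes the proof.

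I expect the last step to be the main obstacle: the reason one must replace $\mathcal{T}_{\mathcal{E}}$ by the cofinal family $\mathcal{F}_{\mathcal{E}}$ of fans is precisely that in a fan the set of vertices above a non-root vertex is a single chain, so that end-vertex preservation forbids the bonding maps from collapsing it, and one also has to isolate the finitely many "root" coordinates where this set is all of $F_n$.
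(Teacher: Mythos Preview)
Your argument is correct; the only slip is a bookkeeping one. From the assumption that $e_N$ is not an end vertex, end-vertex preservation of the $\alpha_n$ propagates \emph{forward}, giving $e_n$ non-end for $n\ge N$ only; so your ``non-end non-root'' claim and the subsequent inverse-system argument should be run for $n\ge\max(M,N)$ rather than $n\ge M$. With that adjustment everything goes through: the upper sets $\{z:e_n\le z\}$ are chains with at least two vertices, the restricted bonding maps are surjective, and lifting two distinct points from any fixed level by surjectivity produces two distinct threads, so $\{y:e\le y\}$ is nondegenerate.

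Your route differs from the paper's in execution, though the core idea---build a nondegenerate arc above $e$ level by level and contradict maximality---is the same. The paper does not pass to fans. It starts a fundamental sequence at $T_1=W$ (the offending tree) and inductively constructs arcs $A_i\subseteq T_i$ with one end vertex $f_{T_i}(e)$ and the other an end vertex $e_i$ of $T_i$, arranging $f_i(A_{i+1})=A_i$ and $f_i|_{A_{i+1}}$ monotone; end-vertex preservation is used to ensure $f_i(e_{i+1})=e_i$, so the $A_i$'s are compatible and their inverse limit is an arc strictly above $e$. Your reduction to uniform fans buys you that the set above any non-root vertex is automatically a single chain, which lets you replace the paper's inductive arc-building by a one-line observation about surjectivity of the bonding maps on upper sets. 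The paper's version, in exchange, avoids invoking cofinality and Theorem~\ref{isomorphic}.
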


\begin{proof}
One direction is just Observation \ref{end-to-end-rooted}.
To see the other implication, suppose on the contrary that $e$ is an end vertex of $\mathbb{T}_{\mathcal {E}}$, $f_W\colon \mathbb{T}_{\mathcal {E}}\to W$ is an epimorphism and  $f_W(e)$ is an not an end vertex of $W$.

Consider an arc  $A$ in $W$ that satisfies the following conditions:
\begin{enumerate}
    \item $f_W(e)\in A$;
    \item $f_W(e)\le x$ for every $x\in A$;
\end{enumerate}

By \cite[Theorem 2.4]{Pseudo} there is a sequence $T_1=W, T_2, \dots$ of members of $\mathcal T_{\mathcal {E}}$  and epimorphisms $f_1,f_2,\dots$ with
$f_i\colon T_{i+1}\to T_i$ such that the inverse limit space $\iLim \{T_i,f_i\}$ is isomorphic to $\mathbb{T}_{\mathcal {E}}$.  We define, by induction, a sequence
$A_1,A_2,\dots$ of arcs of $T_1, T_2,\dots$, with end vertex $f_{T_i}(e)$ and $e_i$ such that the following holds:

\begin{enumerate}
    \item $A_1=A$;
    \item $f_i(A_{i+1})=A_i$;
    \item $f_{T_i}\colon \mathbb T_{\mathcal {E}}\to T_i$ is an epimorphism such that $f_{T_i}=f_i\circ f_{T_{i+1}}$;
    \item $f_{T_{i+1}}(e)\le x\le e_{i+1}$ for every $x\in A_{i+1}$.
    \item $f_i|_{A_{i+1}}$ is a monotone epimorphism;
\end{enumerate}

Assume $A_i$'s and $e_i$'s for $1\le i <n$ are found so that these conditions are satisfied. Let $C$ be the component of $f_n^{-1}(A_n)$ that contains $f_{T_{n+1}}(e)$, and define $D=\{x\in C: f_{T_{n+1}}(e)\le x\}$. Since $f_n(f_{T_{n+1}}(e))$ is not an end vertex of $T_n$, the set $D$
 is nondegenerate. Let $e_{n+1}$ be an end vertex in $D$ and define $A_{n+1}$ to be the arc from $e_{n+1}$ to
$f_{T_{n+1}}(e)$. Note that $e_{n+1}$ is an end vertex of $T_{n+1}$; since the epimorphisms $f_n$ preserve end vertices we have $f_n(e_{n+1})=e_n$.
Then the image $f_n(A_{n+1})=A_n$ and conditions (1)-(5) are satisfied for $i\le n$.

Finally, by Proposition~\ref{two-orders},   $\iLim \{A_n, f_n|_{A_{n+1}}\}$ is a  monotone arc with end vertices $e$ and $\langle e_1, e_2\dots \rangle$, contrary to the fact that $e$ was an end vertex of $\mathbb T_{\mathcal {E}}$.
\end{proof}

\begin{theorem}\label{no-isolated-ends}
The set of end vertices of the projective Fra\"{\i}ss\'e limit $\mathbb T_{\mathcal E}$ of $\mathcal T_{\mathcal E}$ is dense in itself.
\end{theorem}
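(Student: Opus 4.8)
The plan is to combine the refinement property of Theorem~\ref{limit}(\ref{refinement}) with Theorem~\ref{end-to-end-rooted-reverse} and Proposition~\ref{end-to-end}, together with a small pendant-copy construction in $\mathcal T_{\mathcal E}$. Fix an end vertex $e$ of $\mathbb T_{\mathcal E}$ and $\varepsilon>0$. By Theorems~\ref{isomorphic} and \ref{smooth-dendroid-fan}, $\mathbb T_{\mathcal E}\cong\mathbb F_{\mathcal E}$ has a ramification vertex at its root, so $r(\mathbb T_{\mathcal E})$ is not an end vertex; hence $d(e,r(\mathbb T_{\mathcal E}))>0$ and I may assume $\varepsilon<d(e,r(\mathbb T_{\mathcal E}))$. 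By Theorem~\ref{limit}(\ref{refinement}) choose $G\in\mathcal T_{\mathcal E}$ and an epimorphism $f_G\colon\mathbb T_{\mathcal E}\to G$ with $\diam\big(f_G^{-1}(x)\big)<\varepsilon$ for every $x\in V(G)$. By Theorem~\ref{end-to-end-rooted-reverse} the vertex $g:=f_G(e)$ is an end vertex of $G$, and $g\ne r(G)$: otherwise $e$ and $r(\mathbb T_{\mathcal E})$ would both lie in $f_G^{-1}(r(G))$, forcing $d(e,r(\mathbb T_{\mathcal E}))<\varepsilon$, a contradiction.

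\textbf{The construction.} Let $r(G)=c_0,c_1,\dots,c_k=g$ be the chain from the root to $g$, so $k\ge 1$. Define $G'$ to be $G$ with a new pendant chain $r(G),c_1',\dots,c_k'$ attached at the root (new vertices $c_1',\dots,c_k'$ and edges $\langle r(G),c_1'\rangle,\langle c_1',c_2'\rangle,\dots,\langle c_{k-1}',c_k'\rangle$), and let $f^{G'}_G\colon G'\to G$ be the identity on $V(G)$ and send $c_i'$ to $c_i$. Then $G'$ is a rooted tree, $f^{G'}_G$ is surjective, order-preserving, and end-vertex-preserving (the only new end vertex $c_k'$ maps to the end vertex $g$, and no old end vertex of $G$ is affected), so $f^{G'}_G\in\mathcal T_{\mathcal E}$; and $(f^{G'}_G)^{-1}(g)=\{g,c_k'\}$ consists of two distinct end vertices of $G'$. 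Using Theorem~\ref{limit}(2) I pull this back to an epimorphism $f_{G'}\colon\mathbb T_{\mathcal E}\to G'$ with $f_G=f^{G'}_G\circ f_{G'}$; as a morphism in the rooted category it preserves order.

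\textbf{Conclusion.} Now apply Proposition~\ref{end-to-end} to $f_{G'}$, which is an order-preserving epimorphism between the smooth dendroid $\mathbb T_{\mathcal E}$ (Corollary~\ref{smooth-dendroid}) and the finite tree $G'$: there are end vertices $e_1,e_2$ of $\mathbb T_{\mathcal E}$ with $f_{G'}(e_1)=g$ and $f_{G'}(e_2)=c_k'$. Since $f^{G'}_G(g)=f^{G'}_G(c_k')=g$, all three of $e_1$, $e_2$, and $e$ lie in $f_G^{-1}(g)$, a set of diameter $<\varepsilon$; hence $d(e,e_1)<\varepsilon$ and $d(e,e_2)<\varepsilon$. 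As $f_{G'}(e_1)\ne f_{G'}(e_2)$ we have $e_1\ne e_2$, so at least one of them differs from $e$. This produces an end vertex of $\mathbb T_{\mathcal E}$ within $\varepsilon$ of $e$ and different from $e$, proving that the set of end vertices is dense in itself.

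\textbf{Main obstacle.} The substantive points are bookkeeping: first, ensuring the pulled-back epimorphism $f_{G'}$ may be taken order-preserving (so Proposition~\ref{end-to-end} applies), which holds since every morphism in the rooted category preserves order and the limit is realized as an inverse limit of such maps; second, verifying carefully that $f^{G'}_G$ preserves end vertices so that it is a legitimate morphism of $\mathcal T_{\mathcal E}$. Everything else is a direct assembly of the cited results, and the use of the ``at least one of $e_1,e_2$ differs from $e$'' observation lets me avoid determining which fiber of $f_{G'}$ actually contains $e$.
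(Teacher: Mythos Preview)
Your proof is correct and follows essentially the same strategy as the paper: project by an $\varepsilon$-map to a finite tree $G$, build an end-preserving epimorphism onto $G$ whose fibre over $f_G(e)$ contains two distinct end vertices, and lift via Proposition~\ref{end-to-end}. The paper's intermediate tree is the one-point union of two full copies of $G$ at the root (rather than your single attached chain), and it relabels so that $f_H(e)=p_1$ instead of your ``at least one of $e_1,e_2$ differs from $e$'' device; you are in fact slightly more careful than the paper in excluding the case $g=r(G)$.
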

\begin{proof}
Let $p$ be an end vertex of $\mathbb T_{\mathcal E}$, let $\varepsilon>0$, let 
$G\in \mathcal T_{\mathcal E}$ be a tree, and let $f_G\colon \mathbb T_{\mathcal E}\to G$ be an
$\varepsilon$-epimorphism. Observe that, by Theorem \ref{end-to-end-rooted-reverse}
the image $f_G(p)$ is an end vertex of $G$.
Define a tree $H$ which is the one point union of two copies $G_1$ and $G_2$ of $G$ such that $r(H)=r(G_1)=r(G_2)$. Define $p_1$ and $p_2$ as end vertices in $G_1$ and $G_2$ that correspond to the vertex $f_G(p)$. 
Let $f_G^H\colon H\to G$ be the natural identification epimorphism and let $f_H\colon \mathbb T_{\mathcal E}\to H$ be an epimorphism such that $f_G=f^H_G\circ f_H$ and 
 $f_H(p)=p_1$.
By Proposition \ref{end-to-end} there is an end vertex $q\in(f_H)^{-1}(p_2)$ and we have that both $p$ and $q$ are end vertices in $\mathbb T_{\mathcal E}$ and $d(p,q)<\varepsilon$ as needed.
\end{proof}

\begin{theorem}\label{end-vertices-closed}
The set of end vertices of the projective Fra\"{\i}ss\'e limit $\mathbb T_{\mathcal E}$ of $\mathcal T_{\mathcal E}$ is closed. 
\end{theorem}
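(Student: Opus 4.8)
The plan is to obtain this immediately from the characterization of end vertices in Theorem~\ref{end-to-end-rooted-reverse}: a vertex $e\in\mathbb T_{\mathcal E}$ is an end vertex if and only if $f_T(e)$ is an end vertex of $T$ for every $T\in\mathcal T_{\mathcal E}$ and every epimorphism $f_T\colon\mathbb T_{\mathcal E}\to T$. Since $\mathbb T_{\mathcal E}$ is metrizable, it suffices to show that the set of end vertices is sequentially closed, so I would start with a sequence $(e_n)$ of end vertices of $\mathbb T_{\mathcal E}$ converging to a vertex $e\in\mathbb T_{\mathcal E}$ and verify that $e$ satisfies the condition above.

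Fix an arbitrary $T\in\mathcal T_{\mathcal E}$ and an arbitrary epimorphism $f_T\colon\mathbb T_{\mathcal E}\to T$. The key point is that $T$ is a finite graph, hence carries the discrete topology, and $f_T$ is continuous (as an epimorphism in $\mathcal F^\omega$ it factors through a projection to a finite level of an approximating inverse sequence); consequently $f_T^{-1}(f_T(e))$ is an open neighbourhood of $e$. Because $e_n\to e$, some $e_N$ lies in this neighbourhood, i.e.\ $f_T(e_N)=f_T(e)$. Since $e_N$ is an end vertex, the forward direction of Theorem~\ref{end-to-end-rooted-reverse} tells us that $f_T(e_N)$, and therefore $f_T(e)$, is an end vertex of $T$.

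As $T$ and $f_T$ were arbitrary, the reverse direction of Theorem~\ref{end-to-end-rooted-reverse} yields that $e$ is an end vertex of $\mathbb T_{\mathcal E}$, completing the argument. I do not expect any genuine obstacle: the only place that needs a moment's care is the clopenness of $f_T^{-1}(f_T(e))$, which rests on $T$ being finite and $f_T$ continuous; the rest is a purely formal consequence of Theorem~\ref{end-to-end-rooted-reverse} together with metrizability of $\mathbb T_{\mathcal E}$.
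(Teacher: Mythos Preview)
Your proof is correct and rests on the same key ingredient as the paper's, namely the characterization in Theorem~\ref{end-to-end-rooted-reverse}. The paper packages it more compactly by observing directly that $EV(\mathbb T_{\mathcal E})=\bigcap\{f_T^{-1}(EV(T)):T\in\mathcal T_{\mathcal E},\ f_T\colon\mathbb T_{\mathcal E}\to T\text{ an epimorphism}\}$, an intersection of closed sets, rather than running the equivalent sequential argument you give.
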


\begin{proof}
Denote the set of end vertices of a topological tree $T$ as $EV(T)$. It follows from Theorem \ref{end-to-end-rooted-reverse} that  $EV(\mathbb{T}_{\mathcal {E}})=\bigcap\{f^{-1}(EV(T)):T\in \mathcal T_{\mathcal E} \text{ and } f\colon \mathbb{T}_{\mathcal {E}} \to T \text{ is an epimorphism}\}$ showing that the set of end vertices of $\mathbb{T}_{\mathcal {E}}$ is closed.
\end{proof}
\begin{theorem}
 The topological realization $|\mathbb T_{\mathcal E}|$ of $\mathbb{T}_{\mathcal {E}}$ is the Cantor fan.
\end{theorem}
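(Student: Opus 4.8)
The plan is to show that $|\mathbb{T}_{\mathcal E}|$ satisfies the standard characterization of the Cantor fan: it is a smooth fan whose set of end points is homeomorphic to the Cantor set (equivalently, a compact, zero-dimensional, dense-in-itself, closed set). First I would record that by Theorem~\ref{smooth-dendroid-fan} (applied through the isomorphism of Theorem~\ref{isomorphic}) the limit $\mathbb{T}_{\mathcal E}$ is a smooth dendroid whose only ramification vertex is the root; hence $E(\mathbb{T}_{\mathcal E})$ is transitive (it has no three distinct vertices forming a path unless they all sit on one chain, and there the order forces transitivity — alternatively invoke Theorem~\ref{transitive} after checking the splitting condition for $\mathcal F_{\mathcal E}$, which is routine since one can always lengthen a branch of a uniform fan and pull the middle vertex apart). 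By Observation~\ref{Kelley-observation} and the smooth-dendroid observation preceding Corollary~\ref{smooth-dendroid}, the topological realization $|\mathbb{T}_{\mathcal E}|$ is then a smooth dendroid, and since $\mathbb{T}_{\mathcal E}$ has a unique ramification vertex mapping to the apex, $|\mathbb{T}_{\mathcal E}|$ is a smooth fan.

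Next I would identify the end points. Let $\varphi\colon \mathbb{T}_{\mathcal E}\to|\mathbb{T}_{\mathcal E}|$ be the quotient map. Using Proposition~\ref{endpts-to-endpts}, every non-isolated end vertex of $\mathbb{T}_{\mathcal E}$ maps to a classical end point of $|\mathbb{T}_{\mathcal E}|$; and conversely every classical end point of the fan pulls back (inside its arc to the apex) to an end vertex of $\mathbb{T}_{\mathcal E}$. Combined with Theorem~\ref{no-isolated-ends}, which says the set of end vertices is dense in itself (in particular no end vertex is isolated, so the hypothesis of Proposition~\ref{endpts-to-endpts} applies to all of them), and Theorem~\ref{end-vertices-closed}, which says this set is closed, I get that $\varphi$ maps the end-vertex set onto the set $\mathrm{End}(|\mathbb{T}_{\mathcal E}|)$ of classical end points, and that $\mathrm{End}(|\mathbb{T}_{\mathcal E}|)$ is a closed, dense-in-itself subset of the fan. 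Since the fan is one-dimensional and its set of end points contains no arc, that set is zero-dimensional; being a nonempty, compact, zero-dimensional, dense-in-itself metric space, it is homeomorphic to the Cantor set.

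To finish I would invoke the topological characterization of the Cantor fan: it is the unique smooth fan whose set of end points is homeomorphic to the Cantor set (equivalently, the cone over a Cantor set). Having verified both that $|\mathbb{T}_{\mathcal E}|$ is a smooth fan and that its end-point set is a Cantor set, we conclude $|\mathbb{T}_{\mathcal E}|$ is the Cantor fan.

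The main obstacle I anticipate is the bookkeeping at the level of the prespace versus its realization — specifically, making sure the end-vertex set of $\mathbb{T}_{\mathcal E}$ maps \emph{bijectively enough} onto $\mathrm{End}(|\mathbb{T}_{\mathcal E}|)$ that closedness and dense-in-itself-ness transfer across $\varphi$. Here one must use that $\varphi^{-1}$ of a point is connected (by hereditary unicoherence, Theorem~\ref{limit-of-hu}) and has at most two points, and that an end point of the realization cannot have a two-point preimage containing an interior vertex of an arc; this is essentially the content already packaged in Propositions~\ref{endpts-to-endpts} and~\ref{ram-to-ram}, so the argument should go through, but the gluing of those facts is the step requiring care.
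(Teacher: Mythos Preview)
Your approach is essentially the paper's: show $\mathbb{T}_{\mathcal E}\cong\mathbb{F}_{\mathcal E}$ is a smooth fan, pass to the realization, verify that the end-point set is perfect, and invoke the characterization of the Cantor fan. The paper uses Observation~\ref{arcs} to argue that the image of the root is the \emph{only} ramification point of $|\mathbb{T}_{\mathcal E}|$ (each branch of $\mathbb{T}_{\mathcal E}$ is an arc, so its image is an arc or a point), which is the step you gloss as ``unique ramification vertex mapping to the apex, so the realization is a fan''; that is fine once made explicit.

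There is one genuine slip. You write that Theorem~\ref{no-isolated-ends} (the end-vertex set is dense in itself) gives ``in particular no end vertex is isolated, so the hypothesis of Proposition~\ref{endpts-to-endpts} applies.'' But ``isolated end vertex'' in Definition~\ref{rampt-topo-dend} means \emph{belongs to an edge}; this has nothing to do with being topologically isolated in the end-vertex set. An end vertex $e$ could sit on an edge $\langle a,e\rangle$ and still be a limit of other end vertices, so dense-in-itself does not rule this out. The paper handles this by invoking Theorem~\ref{rooted-end} instead: for $\mathcal{T}_{\mathcal E}$ one verifies the hypothesis by inserting an extra vertex on the branch ending at $e$ (the resulting epimorphism is end-preserving since the new vertex has order~$2$). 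With that replacement your argument goes through and coincides with the paper's.
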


\begin{proof}
Collecting results of this section we see that the set of end vertices of $\mathbb{T}_{\mathcal {E}}$  is closed and dense in itself, i.e. a perfect set and $\mathbb {T}_{\mathcal {E}}$ is a smooth fan since $\mathbb{T}_{\mathcal {E}}$ is isomorphic to $\mathbb{F}_{\mathcal {E}}$ and $\mathbb {F}_{\mathcal {E}}$ is a smooth fan. 

By Corollary \ref{smooth-dendroid}  $|\mathbb {T}_{\mathcal {E}}|$ is a smooth dendroid.  It follows from Theorems \ref{rooted-end} and \ref{ram-to-ram} that $\varphi(r(\mathbb {T}_{\mathcal {E}}))$, where $\varphi$ is the quotient map, is a ramification point of $|\mathbb {T}_{\mathcal {E}}|$ and from Observation \ref{arcs} that $\varphi(r(\mathbb {T}_{\mathcal {E}}))$ is the only ramification point of $|\mathbb {T}_{\mathcal {E}}|$ hence $|\mathbb {T}_{\mathcal {E}}|$ is a smooth fan.  That the set of end points of $|\mathbb {T}_{\mathcal {E}}|$ is perfect follows from the continuity of $\varphi$. The Cantor fan is characterized by being a smooth dendroid with a perfect set of end points so $|\mathbb {T}_{\mathcal {E}}|$ is a Cantor fan as claimed.

\end{proof}

\section{Confluent epimorphisms and rooted trees}
In this section we will use results obtained in the previous sections to investigate the family of rooted trees with confluent order preserving epimorphisms.

\begin{notation}
The family of all rooted trees and confluent order preserving epimorphisms is denoted by $\treecon$.
\end{notation}

\begin{theorem}\label{confluent-projective-family}
The  family  of  all  rooted trees  and  confluent epimorphisms $\treecon$ is a projective Fra\"{\i}ss\'e family.

\end{theorem}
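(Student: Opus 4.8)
The plan is to verify the four conditions of Definition~\ref{definition-Fraisse} for $\treecon$. Conditions (1) and (2) are immediate: there are only countably many finite rooted trees up to isomorphism, identity maps are confluent and order preserving, and the composition of two confluent order-preserving epimorphisms is again confluent (by the Observation following the definition of confluent) and order preserving. Condition (3) will follow from condition (4) in the usual way, by taking $A$ to be the one-vertex tree and $f,g$ the constant (hence confluent, order-preserving) epimorphisms, so the entire content is the projective amalgamation property (4).

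For (4), the strategy is to reduce to the light case already handled by Lemma~\ref{amalgamation-light}, using the monotone--light factorization. Given confluent order-preserving epimorphisms $f\colon B\to A$ and $g\colon C\to A$ between rooted trees, first apply Proposition~\ref{m-l-factorization} together with Proposition~\ref{m-l-factorization-pointed} to factor $f = l_f\circ m_f$ with $m_f\colon B\to M_f$ monotone and $l_f\colon M_f\to A$ light, both order preserving, and $M_f$ a rooted tree; by Corollary~\ref{m-l-factorization-confluent} both $l_f$ and $m_f$ are confluent. Do the same for $g = l_g\circ m_g$ with $m_g\colon C\to M_g$ and $l_g\colon M_g\to A$. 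Now $l_f$ and $l_g$ are \emph{light} order-preserving epimorphisms onto $A$. Apply Lemma~\ref{amalgamation-light} to $l_f\colon M_f\to A$ (which is light): the standard amalgamation of $l_f$ and $l_g$ produces a rooted tree $E$ with order-preserving epimorphisms $p\colon E\to M_f$ and $q\colon E\to M_g$ satisfying $l_f\circ p = l_g\circ q$, and moreover $q$ is light; by Proposition~\ref{standard-confluent} (applicable since $l_f$ is light, hence confluent) $q$ is confluent, and symmetrically—using lightness of $l_g$ in Proposition~\ref{light}/\ref{standard-confluent} applied the other way, or directly—we get that $p$ is confluent as well. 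It then remains to pull back along the monotone maps: form the standard amalgamation (or a connected component thereof) of $p\colon E\to M_f$ against $m_f\colon B\to M_f$, and of the result against $m_g\colon C\to M_g$. Since $m_f$ and $m_g$ are monotone, Proposition~\ref{monotone-implies-monotone} keeps the relevant legs monotone and Proposition~\ref{standard-confluent} keeps confluent legs confluent; the monotone legs also automatically preserve order (hereditary unicoherence of trees, as in Proposition~\ref{m-l-factorization-pointed}). Chasing the resulting diagram produces a rooted tree $D$ with confluent order-preserving epimorphisms $f_0\colon D\to B$ and $g_0\colon D\to C$ and $f\circ f_0 = g\circ g_0$, as required.

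A subtlety to handle carefully: the standard amalgamation of two order-preserving epimorphisms between rooted trees need not itself be a tree (Example~\ref{rooted-standard-not-tree}), which is precisely why Lemma~\ref{amalgamation-light} needs the lightness hypothesis. So in the step where I amalgamate $E$ against $B$ over $M_f$ with $m_f$ monotone, I cannot invoke a "light" amalgamation lemma directly; instead I must argue, exactly as in the proof of Lemma~\ref{amalgamation-light} but now exploiting monotonicity of $m_f$, that the fibered product (or a suitable connected component of it, via Corollary~\ref{confluent-components}) is a tree. The cleanest route is: since $m_f$ is monotone, each fiber $m_f^{-1}(x)$ is connected, so the standard amalgamation $D^\ast = \{(e,b): p(e)=m_f(b)\}$ has, over each vertex $e\in M_f$, a subgraph isomorphic to $m_f^{-1}(p(e))$, a subtree of $B$; connectedness and unicoherence then propagate along a chain in $E$ to show $D^\ast$ (or its component through the root $(r(E),r(B))$) is a rooted tree. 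The order-preservation of $f_0,g_0$ follows from the definition of the standard amalgamation procedure.

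The main obstacle I anticipate is this last point—showing that amalgamating over a \emph{monotone} (not light) leg still yields a tree, and keeping all four legs simultaneously confluent and order preserving through a two-step amalgamation. One must be disciplined about which leg plays the role of "$f$" in each invocation of Propositions~\ref{light}, \ref{monotone-implies-monotone}, \ref{standard-confluent}, since those lemmas are asymmetric (a hypothesis on $f$ yields a conclusion on $g_0$). An alternative that sidesteps part of the bookkeeping is to prove a single combined lemma: if $f\colon B\to A$ factors as light-after-monotone then the standard amalgamation of $f$ against any order-preserving confluent $g$ has a component that is a rooted tree with confluent order-preserving projections—but this essentially repeats the argument above, so I would instead just assemble the pieces in the order described.
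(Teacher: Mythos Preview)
Your overall plan---factor $f$ and $g$ via monotone--light (Propositions~\ref{m-l-factorization}, \ref{m-l-factorization-pointed}, Corollary~\ref{m-l-factorization-confluent}) and then amalgamate the pieces---is exactly the paper's strategy, and the first stage (amalgamating the two light factors $l_f,l_g$) is carried out the same way. The difficulty lies in how you finish.

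A small oversight first: after amalgamating $l_f$ and $l_g$ to get $E$, \emph{both} projections $p\colon E\to M_f$ and $q\colon E\to M_g$ are light, by Lemma~\ref{amalgamation-light} applied once with $l_f$ in the role of the light map and once with $l_g$. So your amalgamation of $p$ against $m_f$ over $M_f$ falls under Lemma~\ref{amalgamation-light} directly; the extra argument you sketch for that step is unnecessary.

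The genuine gap is the last step. Whether you proceed sequentially as you describe or in parallel, you are eventually forced to amalgamate two \emph{monotone} order-preserving epimorphisms (in the paper's diagram~(D3) these are the dotted arrows $*_3\to *_5$ and $*_4\to *_5$; in your sequential version, unwinding the composite $D_1\to E\to M_g$ and amalgamating $q$ against $m_g$ first leaves you with monotone maps $D_1\to E$ and $*_4\to E$). The standard fibered product of two monotone maps between rooted trees is \emph{not} a tree in general: Example~\ref{rooted-standard-not-tree} has $A$ a point and $B,C$ single edges, both maps constant (hence monotone), and the standard amalgamation is the complete graph $K_4$---connected, so passing to a component does not help. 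Your sketch (``fibers are connected subtrees, unicoherence propagates along a chain'') is refuted by precisely this example.

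The paper closes the diamond at this stage by invoking Theorem~\ref{amalgamation-monotone}, whose amalgamation for the monotone category $\treemon$ is \emph{not} the standard fibered product but a bespoke inductive construction. That theorem is the missing ingredient in your argument: you never call on it, and nothing else in the toolkit (Propositions~\ref{light}, \ref{monotone-implies-monotone}, \ref{standard-confluent}, Lemma~\ref{amalgamation-light}) handles the monotone-versus-monotone case.
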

\begin{proof}
We need to verify condition (4) of Definition \ref{definition-Fraisse}, the other conditions are straightforward.

\begin{equation}\tag{D3}
\begin{tikzcd}
&&B\arrow[ld,swap,"m"]\\
&*\arrow[ld,swap,"l"]&&* \arrow[ld,swap,dotted,"m"]\arrow[lu,dotted,swap,"l"]\\
A&&*\arrow[lu,dotted,swap,"l"]\arrow[ld,dotted,"l"]&&D\arrow[lu,dotted,swap,"m"]\arrow[ld,dotted,"m"]\\
&*\arrow[lu,"l"]&&*\arrow[lu,dotted,"m"]\arrow[ld,dotted,"l"]\\
&&C\arrow[lu,"m"]
\end{tikzcd}
\end{equation}
To do so, apply the monotone-light factorization, Theorem \ref{m-l-factorization}, to functions $f$ and $g$ of diagram (D1). Then working the diagram (D3) above from left to right, closing the diagrams by using Lemma \ref{amalgamation-light} if one of the epimorphism
is light or Theorem \ref{monotone-implies-monotone} if one is monotone.   Finally we can use Theorem \ref{amalgamation-monotone} when both of the epimorphisms are monotone to construct the needed tree $D$. Note that all considered epimorphisms in the diagram (D3) are confluent by Theorems \ref{m-l-factorization-confluent} and
\ref{standard-confluent}.

\end{proof}

\begin{theorem}\label{only-rampt-or-endpt}
Let $\mathbb{T}_{\mathcal C}$ be the projective Fra\"{\i}ss\'e limit of the family $\treecon$. A vertex $r\in \mathbb{T}_{\mathcal C}$ is
either an end vertex or a ramification vertex of $\mathbb{T}_{\mathcal C}$.
\end{theorem}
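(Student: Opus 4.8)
The plan is to argue by contradiction: suppose some vertex $v\in\mathbb{T}_{\mathcal C}$ has order exactly $2$, i.e.\ $v$ is an ordinary vertex. By Corollary \ref{dendroid}, $\mathbb{T}_{\mathcal C}$ is a dendroid, and by Proposition \ref{Kelley-theorem} it is Kelley (the epimorphisms are confluent). Since $v$ has order $2$, there is an arc $A$ in $\mathbb{T}_{\mathcal C}$ containing $v$ in its interior, and $v$ does not disconnect any smaller arc through it in a "third direction." The idea is to use the local structure at $v$ together with confluence to force a third arc emanating from $v$, contradicting $\ord(v)=2$ --- or, alternatively, to force $v$ to be an end vertex, contradicting $\ord(v)=2$ as well.

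First I would pass to a finite approximation: by Property (3) of Theorem \ref{limit} pick, for small $\varepsilon$, a rooted tree $G\in\treecon$ and a confluent (hence order-preserving) epimorphism $f_G\colon \mathbb{T}_{\mathcal C}\to G$ with $\diam(f_G^{-1}(x))<\varepsilon$ for all $x\in V(G)$; by Proposition \ref{confluent-projections} every such $f_G$ is confluent. Choosing $\varepsilon$ small enough (using that $v$ has order $2$, so locally $\mathbb{T}_{\mathcal C}$ near $v$ looks like a sub-arc of $A$), I can arrange that $f_G(v)=:a$ and that $a$ is an ordinary vertex of $G$, with the two edges $\langle a',a\rangle$ and $\langle a,a''\rangle$ of $G$ at $a$ corresponding to the two "sides" of $A$ at $v$. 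Now build $H\in\treecon$ by attaching at $a$ a new pendant structure producing a third direction --- e.g.\ $V(H)=V(G)\cup\{b,c\}$ with edges $\langle a,b\rangle$ and $\langle b,c\rangle$ added (and $b,c$ inserted so that $H$ remains in $\treecon$, with $b$ of order $3$ not adjacent to another order-$3$ vertex if we want to stay in a nice subfamily) --- and let $f^H_G\colon H\to G$ collapse $\{a,b,c\}$ to $a$ and be the identity elsewhere; this $f^H_G$ is a confluent order-preserving epimorphism. By amalgamation (Theorem \ref{confluent-projective-family}) and Theorem \ref{limit}(2) there is a confluent epimorphism $f_H\colon \mathbb{T}_{\mathcal C}\to H$ with $f_G=f^H_G\circ f_H$.

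The final step is to extract from $f_H$ a third arc at $v$. Since $f_H$ is confluent, the component of $f_H^{-1}(\{a,b,c\})$ containing $v$ maps \emph{onto} $\{a,b,c\}$, so it contains vertices mapping to $b$ and to $c$; using arcwise connectedness (Corollary \ref{arwise-connected}) pick an arc $C$ inside this component from $v$ to a vertex over $c$. The preimages $\varphi$--argument from the proof of Theorem \ref{dense} then shows $A$ (suitably the two arcs $f_H^{-1}(\{a',a\})$, $f_H^{-1}(\{a,a''\})$ restricted near $v$) and $C$ meet only near $v$; shrinking $\varepsilon$ and using the Kelley property to control that these three arcs genuinely have pairwise intersection $\{v\}$ in the limit, we get $\ord(v)\ge 3$, contradicting $\ord(v)=2$. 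I expect the main obstacle to be exactly this last point: ensuring the third arc $C$ is not "reabsorbed" into $A$ in $\mathbb{T}_{\mathcal C}$, i.e.\ that $C\cap A=\{v\}$ rather than $C$ sharing an initial sub-arc with $A$. This is where hereditary unicoherence (Theorem \ref{limit-of-hu}) and the Kelley property must be combined: unicoherence forbids the three arcs from forming a cycle, and a careful choice of $\varepsilon$ relative to the Kelley $\delta$ for the arc $A$ at $v$ pins down the intersection. One must also handle the two-point fiber subtlety seen in Theorem \ref{D3} (that $\varphi^{-1}(v)$ could have two points), but since we are working in $\mathbb{T}_{\mathcal C}$ itself, not its realization, this does not arise here --- the order is computed directly in the topological graph via Definition \ref{rampt-topo-dend}.
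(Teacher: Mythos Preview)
Your approach matches the paper's core idea---attach a pendant edge in a finite approximation and use confluence of the projection to produce a third arc at the vertex---but you add several unnecessary layers and miss the simplification that makes the paper's argument short.

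The paper does not argue by contradiction and does not try to control the order of the image vertex in the finite tree. Instead it exploits the rooted order on $\mathbb{T}_{\mathcal C}$ directly: if $r$ is not an end vertex there exist $a,b$ with $a\le r\le b$ and $a\ne r\ne b$, and then the arcs $ra$ and $rb$ satisfy $ra\cap rb=\{r\}$ automatically. One then picks an $\varepsilon$-map $f_Z\colon\mathbb{T}_{\mathcal C}\to Z$ with $\varepsilon<\min\{d(a,r),d(r,b),d(a,b)\}$, forms $W$ by adjoining a \emph{single} new end vertex $d$ with edge $\langle f_Z(r),d\rangle$, collapses $d$ to $f_Z(r)$ to get $f^W_Z$, and takes $f_W$ with $f_Z=f^W_Z\circ f_W$. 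Confluence of $f_W$ (Proposition~\ref{confluent-projections}) forces the component $C$ of $(f_W)^{-1}(\{f_W(r),d\})$ containing $r$ to map onto $\{f_W(r),d\}$, and an arc $A\subseteq C$ from $r$ serves as the third arc.

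Two specific problems with your route. First, you cannot in general arrange that $f_G(v)$ is an \emph{ordinary} vertex of $G$; your justification that ``locally $\mathbb{T}_{\mathcal C}$ near $v$ looks like a sub-arc of $A$'' is not valid for dendroids, and projections to finite trees need not reflect the order of $v$. The paper never needs any information about $\ord(f_Z(r))$ in $Z$. Second, the Kelley property is irrelevant here and the paper does not invoke it. What replaces your Kelley-plus-unicoherence plan for the ``reabsorption'' obstacle is simply the rooted order combined with the choice of $d$ as a \emph{maximal} vertex of $W$: since $f_W$ is order preserving, every vertex of $ra$ (all $\le r$) maps below $f_Z(r)$, and every vertex of $rb$ (all $\le b$, with $f_W(b)\ne d$ maximal) maps to something $\le f_W(b)\ne d$; hence no vertex of $ra\cup rb$ can map to $d$, so the vertex of $C$ lying over $d$ is genuinely outside $ra\cup rb$, giving the third direction.
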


\begin{proof}
Suppose a vertex $r\in \mathbb{T}_{\mathcal C}$ is not an end vertex. Then there are vertices $a,b\in V(\mathbb{T}_{\mathcal C})$ such that
$a\le r\le b$ and $a\ne r \ne b$. Let $0<\varepsilon<\min\{d(a,r),d(r,b),d(a,b)\}$, 
$Z\in \mathcal{T}_{\mathcal LC}$, and epimorphism $f_Z\colon \mathbb{T}_{\mathcal LC}\to Z$ be an $\varepsilon$-map. Define a tree $W$ by
$V(W)=V(Z)\cup \{d\}$ and $E(W)=E(Z)\cup\{\langle f_Z(r),d\rangle\}$, and an epimorphism $f_Z^W\colon W\to Z$ by $f_Z^W(x)=x$ if $x\in Z$ and $f_Z^W(d)=f_Z(r)$.
Let $f_W\colon \mathbb{T}_{\mathcal C}\to W$ be an epimorphism such that $f_Z=f_Z^W\circ f_W$ and let $C$ be the component of $(f_W)^{-1}(\{f_W(r),d\})$; by confluence of $f_W$, see Proposition~\ref{confluent-projections}, $f(C)=\{f_W(r),d\}$, so there is an arc $A$ in $\mathbb{T}_{\mathcal C}$ with $r$ as one of its vertices, such that the three arcs: $A$, $ra$, $rb$ witness that $r$ is a ramification vertex of $\mathbb{T}_{\mathcal C}$.
\end{proof}

\begin{proposition}\label{limit-dense-endpts}
Let $\mathbb{T}_{\mathcal C}$ be the projective Fra\"{\i}ss\'e limit of the family $\treecon$. Then the set of end vertices is dense in $\mathbb{T}_{\mathcal C}$.
\end{proposition}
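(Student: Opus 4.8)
The plan is the familiar one: approximate $\mathbb{T}_{\mathcal C}$ finely by a finite tree, enlarge that tree by a single pendant edge, and pull the new end vertex back to the limit using the lifting result for end vertices of smooth dendroids. Fix a vertex $v\in V(\mathbb{T}_{\mathcal C})$ and $\varepsilon>0$; the goal is to produce an end vertex of $\mathbb{T}_{\mathcal C}$ within distance $\varepsilon$ of $v$. First I would apply condition~(\ref{refinement}) of Theorem~\ref{limit} to get a tree $G\in\treecon$ and an epimorphism $f_G\colon\mathbb{T}_{\mathcal C}\to G$ with $\diam(f_G^{-1}(x))<\varepsilon$ for every $x\in V(G)$, and set $a=f_G(v)$.

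Next I would build $H$ from $G$ by attaching one new vertex $e\notin V(G)$ to $a$ by an edge, declaring $e$ an immediate successor of $a$ in the order and keeping $r(H)=r(G)$, and define $f^H_G\colon H\to G$ to fix $V(G)$ and send $e$ to $a$. This $f^H_G$ is order preserving, and it is confluent because the preimage of a connected $Q\subseteq V(G)$ is either $Q$ (when $a\notin Q$) or $Q\cup\{e\}$ (when $a\in Q$), which is again connected and maps onto $Q$; hence $f^H_G\in\treecon$, and condition~(2) of Theorem~\ref{limit} provides an epimorphism $f_H\colon\mathbb{T}_{\mathcal C}\to H$ with $f_G=f^H_G\circ f_H$. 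Now $f_H$ is order preserving by Proposition~\ref{two-orders}, $\mathbb{T}_{\mathcal C}$ is a smooth dendroid by Corollary~\ref{smooth-dendroid}, and $H$ is a finite rooted tree, hence also a smooth dendroid, so Proposition~\ref{end-to-end} applied to the end vertex $e$ of $H$ yields an end vertex $x\in V(\mathbb{T}_{\mathcal C})$ with $f_H(x)=e$. Since $(f^H_G)^{-1}(a)=\{a,e\}$, we obtain $x\in f_H^{-1}(e)\subseteq f_H^{-1}((f^H_G)^{-1}(a))=f_G^{-1}(a)$; as $v$ also lies in $f_G^{-1}(a)$ and this fiber has diameter below $\varepsilon$, we get $d(x,v)<\varepsilon$, which finishes the argument.

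The proof is essentially an assembly of results already established, so I do not expect a real obstacle; the points requiring the most care are verifying that the one-edge extension $f^H_G$ is genuinely confluent (so that $H$ belongs to $\treecon$ and the factorization $f_G=f^H_G\circ f_H$ is legitimate) and checking that the lifted end vertex stays inside the small fiber $f_G^{-1}(a)$, both of which are immediate from the constructions. An alternative would be to combine a ``no isolated end vertices'' statement in the spirit of Theorem~\ref{rooted-end} with local structure of $\mathbb{T}_{\mathcal C}$, but the pendant-edge construction above is cleaner and uses only Theorem~\ref{limit}, Proposition~\ref{confluent-projections}, Proposition~\ref{two-orders}, Corollary~\ref{smooth-dendroid}, and Proposition~\ref{end-to-end}.
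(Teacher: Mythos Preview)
Your proof is correct and follows essentially the same approach as the paper: both attach a pendant edge at the image of the given point to create a new end vertex in a finite tree, factor the $\varepsilon$-projection through this enlarged tree, and then invoke Proposition~\ref{end-to-end} to lift the new end vertex back to an end vertex of $\mathbb{T}_{\mathcal C}$ lying in the same small fiber. Your write-up is in fact slightly more careful than the paper's, since you explicitly verify that $f^H_G$ is confluent (hence in $\treecon$) and that the hypotheses of Proposition~\ref{end-to-end} are met.
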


\begin{proof}
Let $p\in \mathbb{T}_{\mathcal C} $ and let $\varepsilon >0$ be given. We want to find an end vertex $e\in\mathbb{T}_{\mathcal C}$ such that
$d(p,e)<\varepsilon$. Let $G$ be a tree and $f_G\colon \mathbb{T}_{\mathcal C}\to G$ be an
$\varepsilon$-map. Define a tree $H$ such that $V(H)=V(G)\cup \{q\}$ and $E(H)=E(G)\cup \{\langle f_G(p),q\rangle\}$, and let
$f^H_G$ be an epimorphism defined by ${f^H_G}|_{V(G)}$ is the identity and $f^H_G(q)=f_G(p)$.  Let $f_H\colon \mathbb{T}_{\mathcal C}\to H$
be an epimorphism such that $f_G^H \circ f_H=f_G$. By Proposition \ref{end-to-end} there is $e\in \mathbb{T}_{\mathcal C}$ which is an end vertex and $f_H(e)=q$.  Then
$d(p,e)<\varepsilon$ since $f_G(p)=f_G(e)$ and $f_G$ is an $\varepsilon$-map.

\end{proof}

We summarize the results of this section in the following Theorem.
\begin{theorem}\label{summerizing-confluent}
The projective Fra\"{\i}ss\'e limit $\mathbb{T}_{\mathcal C}$ of the family $\treecon$ has transitive set of edges  and its topological realization $|\mathbb{T}_{\mathcal C}|$
is a Kelley dendroid (thus smooth) with a dense set of end points and such that each point is either an end point or a ramification point in the classical sense.
\end{theorem}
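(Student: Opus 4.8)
The plan is to read off this theorem as an assembly of the results proved earlier in this section together with the transfer observations from the sections on connectedness properties and on end/ramification vertices. The only steps that require a genuinely new argument are the verifications that $\treecon$ satisfies the hypotheses of Theorem~\ref{transitive} and of Theorem~\ref{rooted-end}; everything else will be a direct appeal to a cited statement.

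I would dispose of the two hypothesis checks first. For transitivity of the edge relation, given $G\in\treecon$ and distinct vertices $a,b,c$ with $\langle a,b\rangle,\langle b,c\rangle\in E(G)$, I would use exactly the construction from the proof of Theorem~\ref{dense}: let $H$ be $G$ with the edge $\langle a,b\rangle$ subdivided by two new vertices $a',b'$, so $E(H)=E(G)\cup\{\langle a,a'\rangle,\langle a',b'\rangle,\langle b',b\rangle\}\setminus\{\langle a,b\rangle\}$, rooted at $r(G)$, with $f^H_G$ the identity off $\{a',b'\}$, $f^H_G(a')=a$, $f^H_G(b')=b$. Then $f^H_G$ is monotone, hence confluent, and it preserves order (a monotone epimorphism between rooted trees is order preserving, as recalled in the proof of Proposition~\ref{m-l-factorization-pointed}), so $H\in\treecon$; and for any $p\in\{a,a'\}$, $q\in\{b,b'\}$, $r=c$ one has $\langle p,q\rangle\notin E(H)$ or $\langle q,r\rangle\notin E(H)$, so Theorem~\ref{transitive} gives that $E(\mathbb{T}_{\mathcal C})$ is transitive. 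For the absence of isolated end vertices, given $G\in\treecon$, an end vertex $e\neq r(G)$ (so $a<e$ for the unique neighbor $a$ of $e$), attach a new leaf $e'$ at $e$ to obtain $S$, with $f^S_G$ the identity off $e'$ and $f^S_G(e')=e$; this map is again monotone, hence confluent and order preserving, so $S\in\treecon$, and over the single preimage $a$ of $a$ the vertices $q=e$, $r=e'$ satisfy $a\le q\le r$, $q\neq r$, $f^S_G(q)=f^S_G(r)=e$. Thus the hypothesis of Theorem~\ref{rooted-end} holds and $\mathbb{T}_{\mathcal C}$ has no isolated end vertices.

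With these two facts the rest is bookkeeping. The limit $\mathbb{T}_{\mathcal C}$ is a dendroid by Corollary~\ref{dendroid}, a smooth dendroid by Corollary~\ref{smooth-dendroid}, and a Kelley graph by Proposition~\ref{Kelley-theorem} (the epimorphisms of $\treecon$ being confluent). Since $E(\mathbb{T}_{\mathcal C})$ is transitive, Observation~\ref{realization-dendroid}, Observation~\ref{Kelley-observation}, and the observation that a transitive-edged smooth-dendroid topological graph has a smooth-dendroid realization together show that $|\mathbb{T}_{\mathcal C}|$ is a smooth Kelley dendroid. Writing $\varphi$ for the quotient map $\mathbb{T}_{\mathcal C}\to|\mathbb{T}_{\mathcal C}|$: the end vertices of $\mathbb{T}_{\mathcal C}$ are dense by Proposition~\ref{limit-dense-endpts}, each is non-isolated by the previous paragraph, so by Proposition~\ref{endpts-to-endpts} each has $\varphi$-image an end point of $|\mathbb{T}_{\mathcal C}|$ in the classical sense, and since $\varphi$ is a continuous surjection these images are dense in $|\mathbb{T}_{\mathcal C}|$. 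For the final clause, fix $x\in|\mathbb{T}_{\mathcal C}|$; by Theorem~\ref{only-rampt-or-endpt} every vertex of $\varphi^{-1}(x)$ is an end vertex or a ramification vertex of $\mathbb{T}_{\mathcal C}$. If $\varphi^{-1}(x)$ contains a ramification vertex then, since $\mathbb{T}_{\mathcal C}$ (hence $\varphi^{-1}(x)$) has no isolated end vertices, Proposition~\ref{ram-to-ram} makes $x$ a ramification point in the classical sense; otherwise every vertex of $\varphi^{-1}(x)$ is a non-isolated end vertex and Proposition~\ref{endpts-to-endpts} makes $x$ an end point in the classical sense.

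The only point needing care — and the step I expect to be the main (though minor) obstacle — is confirming that the auxiliary rooted trees $H$ and $S$, and the maps onto $G$, genuinely lie in $\treecon$: one must check that these subdivision and leaf-attachment epimorphisms are simultaneously confluent and order preserving. Both are monotone, so confluence is automatic and order preservation follows from the fact recalled in Proposition~\ref{m-l-factorization-pointed}; everything remaining is an immediate citation.
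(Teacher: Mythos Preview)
Your proposal is correct and follows essentially the same approach as the paper: verify the hypothesis of Theorem~\ref{transitive} via edge subdivision (exactly as in Theorem~\ref{dense}), verify the hypothesis of Theorem~\ref{rooted-end} via a simple leaf attachment, and then assemble Corollaries~\ref{dendroid} and~\ref{smooth-dendroid}, Proposition~\ref{Kelley-theorem}, Proposition~\ref{limit-dense-endpts}, Theorem~\ref{only-rampt-or-endpt}, and Propositions~\ref{endpts-to-endpts} and~\ref{ram-to-ram}. Your explicit construction for the Theorem~\ref{rooted-end} hypothesis (attach $e'$ above $e$ and map it to $e$) is actually more detailed than the paper, which simply asserts that Theorem~\ref{rooted-end} applies; your final case split on whether $\varphi^{-1}(x)$ contains a ramification vertex is a slight repackaging of the paper's split on $|\varphi^{-1}(x)|$, but the logic is equivalent since no isolated end vertices forces any two-point fiber to consist of ramification vertices.
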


\begin{proof}

Proceeding as in the proof of Theorem \ref{dense} we may show that the family $\treecon$ satisfies the hypothesis of Theorem \ref{transitive}. Hence the set of edges is transitive. Knowing that $E(\mathcal{T_C})$ is transitive we see from Corollary \ref{smooth-dendroid}, Proposition \ref{Kelley-theorem} and Observation \ref{Kelley-observation} that the topological realization $|\mathbb {T_C}|$ is Kelley.

To see that the set of end points is dense in $|\mathbb {T_C}|$ assume  there exists an open set $U$ in $|\mathbb {T_C}|$ which contains no end points.  The set $\varphi^{-1}(U)$, where $\varphi$ is the quotient map, is open in $\mathbb {T_C}$. So by Proposition \ref{limit-dense-endpts} there is an end vertex $e\in \varphi^{-1}(U)$. If follows from Theorem \ref{rooted-end} that $\mathbb T_{\mathcal C}$ has no isolated end vertices.  Then by Proposition \ref{endpts-to-endpts}, $\varphi(e)$ is an end point in $U$ contrary to our assumption.   Hence the set of end points is dense in $|\mathbb {T_C}|$.

To complete the argument we need to show that every point in $|\mathbb {T_C}|$ is either an end point or a ramification point. Note that if $p$ is a point in $|\mathbb {T_C}|$ then $\varphi^{-1}(p)$ contains at most two vertices otherwise $\mathbb {T_C}$ would not be hereditarily unicoherent. If $\varphi^{-1}(p)$ is a single vertex $q$ then, by Proposition \ref{only-rampt-or-endpt}, $q$ is an end vertex or a ramification vertex of $\mathbb {T_C}$. If $q$ is an end vertex then Proposition \ref{endpts-to-endpts} and Theorem \ref{smooth-dendroid-fan} shows that $p$ is an end point of $|\mathbb {T_C}|$.  If $q$ is a ramification vertex then  Proposition \ref{ram-to-ram}  tells us that $p$ is a ramification vertex of $|\mathbb {T_C}|$.  Finally, if $\varphi^{-1}(p)=\{q,r\}$ then if follows from Proposition \ref{rooted-end} that both of these vertices must be ramification vertices and hence, by Proposition \ref{ram-to-ram}, that $p$ is a ramification point of $|\mathbb {T_C}|$.

\end{proof}

\begin{problem}
Give a topological characterization of the dendroid $|\mathbb{T}_{\mathcal C}|$.
\end{problem}

\section{Confluent epimorphisms preserving end vertices}

In this section we consider the family $\treend$ of rooted trees and confluent, order preserving epimorphisms that also preserve end vertices of the trees.

As seen in Example \ref{rooted-standard-not-tree} above or Example \ref{big-example} below, when the standard amalgamation procedure is used on rooted trees the resulting graph may not be a tree.  Next we will describe a modification of the standard amalgamation procedure, which we call the {\it standard tree amalgamation}, that does yield a rooted tree.
\begin{definition}\label{standard-tree-amalgamation}
Suppose $A$, $B$ and $C$ are rooted trees.  If $D'$ is the graph obtained via the standard amalgamation procedure then $D'$ can be viewed as a rooted partially ordered graph where $r(D')=(r(B),r(C))$ and a partial order on $D'$ is given by $(b_1,c_1)\le (b_2,c_2)$ if $b_1 \le b_2$ and $c_1 \le c_2$.

We obtain a rooted tree $D$ derived from $D'$ by the following procedure.  Let $V(D)$ be the set of all proper chains in $D'$ (see Definition \ref{chain-def}), $\langle c_1,c_2\rangle \in E(D)$ if and only if when $c_1\subseteq c_2$ then $c_2\setminus c_1$ has at most one vertex or when $c_2\subseteq c_1$ then $c_1\setminus c_2$ has at most one vertex. The root of $D$ is  the chain containing just the one vertex $r(D')$.  Further, a partial order on $D$ is given by $c_1 \le c_2$ if $c_1 \subseteq c_2$.  Note that $D$ contains no cycles. Hence $D$ is a rooted tree.

For $c\in V(D)$ define $f_0\colon D \to B$ and $g_0\colon D \to C$ by $f_0(c)=\pi_1(\max(c))$ and $g_0(c)=\pi_2(\max(c))$. Clearly then $f\circ f_0=g\circ g_0$ so $D$ is an amalgamation of the rooted trees $B$ and $C$. One should note that if the standard amalgamation, $D'$, of a pair of rooted trees is a rooted tree then the standard tree amalgamation, $D$, of the pair is isomorphic to $D'$.

Note that $f_0$ and $g_0$ are order preserving epimorphisms.  
\end{definition}

The whole procedure can be seen in the following example.

\begin{example}\label{big-example}
Let $A$ be an edge with the two vertices $0$ and $1$, let $B$ be a triod with four vertices $a,b,c,d$, and let $C$ be an arc with three
vertices $p,q,r$; edges in all graphs are pictured below. The function $f\colon B\to A$ is defined by $f(a)=0$ and $f(b)=f(c)=f(d)=1$; similarly, $g\colon C\to A$ is defined by conditions $g(p)=0$, $g(q)=g(r)=1$. In the picture below you can find the standard amalgamation $D'$, that is not a tree
and standard tree amalgamation $D$ defined above. In the standard tree amalgamation the vertices are labeled by the end vertices of the chains, so
for example, there are three vertices labeled $(c,r)$ because there are three chains in $D'$ from the root $(a,p)$ to the vertex $(c,r)$.

Note that in this example the epimorphisms $f$ and $g$ are monotone, but the epimorphisms $f_0$ and $g_0$ are not, therefore this amalgamation procedure  could not be used in Theorem \ref{amalgamation-monotone}

\begin{center}
\begin{tikzpicture}[scale=1.1]

  \draw (0,1) -- (0,2);
  \filldraw[black] (0,1) circle (1pt);
  \filldraw[black] (0,2) circle (1pt);
  \draw (1.5,2) -- (2,3) -- (2.5,2);
  \draw (2,3) -- (2,4);
     \filldraw[black] (1.5,2) circle (1pt);
     \filldraw[black] (2,3) circle (1pt);
    \filldraw[black] (2.5,2) circle (1pt);
  \draw (2,1) -- (2,0);
\draw (2,-1) -- (2,0);
     \filldraw[black] (2,1) circle (1pt);
    \filldraw[black] (2,0) circle (1pt);
    \filldraw[black] (2,4) circle (1pt);
    \filldraw[black] (2,-1) circle (1pt);
    \node at (-0.3,2) {\scriptsize 0};
 \node at (-0.3,1) {\scriptsize 1};
 \draw (1.5,2.5) --  (0.3,1.7);
 \draw  (0.3,1.7) --(0.4,1.9);
  \draw  (0.3,1.7) --(0.5,1.7);
  \node at (1.5,1.8) {\scriptsize $c$};
  \node at (2.5,1.8) {\scriptsize $d$};
\node at (2.2,3) {\scriptsize $b$};
\node at (2.2,4) {\scriptsize $a$};

 \draw (1.5,0) --  (0.3,0.8);
 \draw  (0.3,0.8) --(0.5,0.8);
 \draw  (0.3,0.8) --(0.4,0.6);
  \node at (0.9,0) {$g$};

\draw  (4,1) --(5,2) --(5,3);
\draw  (5,2) --(5,1);
\draw  (5,2) --(6,1);
\draw  (4,1) --(4.5,0);
\draw  (5,1) --(4.5,0);
\draw  (6,1) --(5.5,0);
\draw  (5,1) --(5.5,0);
    \filldraw[black] (4,1) circle (1pt);
    \filldraw[black] (5,2) circle (1pt);
    \filldraw[black] (5,3) circle (1pt);
    \filldraw[black] (6,1) circle (1pt);   
    \filldraw[black] (4.5,0) circle (1pt);    
    \filldraw[black] (5,1) circle (1pt);      
    \filldraw[black] (5.5,0) circle (1pt);   
\draw (4,1) --(4.65,1);
\draw (4.8,1) -- (5,1);
\draw (5,1) -- (5.1,1);
\draw (5.7,1) -- (6,1);
\draw (5,2) -- (4.5,0);
\draw (5,2) -- (5.5,0);

\node at (5,3.2) {\scriptsize $(a,p)$};
\node at (5.4,2.1) {\scriptsize $(b,q)$};
\node at (3.6,1) {\scriptsize $(c,q)$};
\node at (5.4,1) {\scriptsize $(b,r)$};
\node at (6.4,1) {\scriptsize $(d,q)$};
\node at (4.5,-0.2) {\scriptsize $(c,r)$};
\node at (5.5,-0.2) {\scriptsize $(d,r)$};

\draw  (7.5,1) --(8.5,2) --(8.5,3);
\draw  (8.5,2) --(8.5,1);
\draw  (8.5,2) --(9.5,1);
\draw  (7.5,1) --(7.5,0);
\draw  (8.5,1) --(8,0);
\draw  (9.5,1) --(9.5,0);
\draw  (8.5,1) --(9,0);

\draw (8,3) arc (45:130:4);
\draw (2.6,3.23) -- (2.8,3.23);
\draw (2.6,3.23) -- (2.65,3.43);
\node at (5,4.5) {\scriptsize $f_0$};
\draw (2.6,-0.5) arc (230:305:4);
\node at (5,-1.7) {\scriptsize $g_0$};
\draw (2.6,-0.5) -- (2.8,-0.5);
\draw (2.6,-0.5) -- (2.65,-0.7);

    \filldraw[black] (7.5,1) circle (1pt);
    \filldraw[black] (8.5,2) circle (1pt);
    \filldraw[black] (8.5,1) circle (1pt);
    \filldraw[black] (9.5,1) circle (1pt);   
    \filldraw[black] (9.5,0) circle (1pt);    
    \filldraw[black] (9,0) circle (1pt);      
    \filldraw[black] (7.5,0) circle (1pt);  
    \filldraw[black] (8,0) circle (1pt);   
    \filldraw[black] (8.5,3) circle (1pt); 
    
    \filldraw[black] (7,0) circle (1 pt);
    \filldraw[black] (10,0) circle (1pt);
    \draw (8.5,2) arc (100:187:1.84);
    \draw (10,0) arc (-8:80:1.77);

\node at (8.5,3.2) {\scriptsize $(a,p)$};
\node at (8.9,2.1) {\scriptsize $(b,q)$};
\node at (7.9,1) {\scriptsize $(c,q)$};
\node at (8.9,1) {\scriptsize $(b,r)$};
\node at (9.9,1) {\scriptsize $(d,q)$};
\node at (7.5,-0.2) {\scriptsize $(c,r)$};
\node at (8.1,-0.2) {\scriptsize $(c,r)$};
\node at (8.9,-0.2) {\scriptsize $(d,r)$};
\node at (9.6,-0.2) {\scriptsize $(d,r)$};

\node at (6.8,-0.2) {\scriptsize $(c,r)$};
\node at (10.3,-0.2) {\scriptsize $(d,r)$};

 \draw (7.4,1.5) --  (6.2,1.5);
 \draw  (6.2,1.5) --(6.4,1.65);
 \draw  (6.2,1.5) --(6.4,1.35);

 \draw (3.7,1.8) --  (2.5,2.5);
 \draw  (2.5,2.5) --(2.7,2.5);
 \draw  (2.5,2.5) --(2.6,2.3);

 \draw  (3.7,0.7) --  (2.5,0);
 \draw  (2.5,0) --(2.7,0);
  \draw  (2.5,0) --(2.6,0.2);
 \node at (3.3,0) {$\pi_2$};

 \node at (0.9,2.5) {$f$};
 \node at (3.3,2.5) {$\pi_1$};

\node at (2,1.2) {\scriptsize $p$};
\node at (2.2,0) {\scriptsize $q$};
\node at (2,-1.2) {\scriptsize $r$};

\end{tikzpicture}
\end{center}
\end{example}

\begin{proposition}\label{confluent-end-preserving}
If in the standard tree amalgamation $f$ and $g$ are confluent and preserve end vertices then $f_0$ and $g_0$ preserve end vertices.
\end{proposition}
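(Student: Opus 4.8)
The plan is to unwind the standard tree amalgamation and argue by contradiction. Since Definition~\ref{standard-tree-amalgamation} already gives that $f_0$ and $g_0$ are order preserving, it is enough to prove that $f_0$ carries end vertices of $D$ to end vertices of $B$; the statement for $g_0$ follows by interchanging $B$ and $C$. Recall that $V(D)$ consists of (unrefinable) proper chains in $D'$ and that $f_0(c)=\pi_1(\max c)$. The first step is to record the combinatorial meaning of an end vertex of $D$: since the order on $D$ is inclusion of chains and the root is the one–vertex chain at $r(D')$, a non-root end vertex of $D$ is a chain $c$ that cannot be enlarged inside $D'$, so in particular $\max c=(b^*,c^*)$ is a maximal element of the partially ordered graph $D'$. (That $f_0$ sends $r(D)$ to $r(B)$ is automatic from order preservation.) Thus $f_0(c)=b^*$ and the task reduces to: \emph{if $(b^*,c^*)$ is maximal in $D'$ then $b^*$ is an end vertex of $B$.}

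Assume $b^*$ is not an end vertex of $B$ and pick a neighbour $b^\dagger$ of $b^*$ with $b^*<b^\dagger$. Write $a^*=f(b^*)=g(c^*)$ and $a^\dagger=f(b^\dagger)$. Since $f$ is an order preserving homomorphism, $\langle a^*,a^\dagger\rangle\in E(A)$ and either $a^\dagger=a^*$ or $a^\dagger$ covers $a^*$ in the tree $A$. If $a^\dagger=a^*$, then $(b^\dagger,c^*)\in V(D')$ and $(b^\dagger,c^*)>(b^*,c^*)$, so the chain $c$ could be extended inside $D'$, contradicting that $c$ is an end vertex of $D$. So suppose $a^\dagger$ covers $a^*$. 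It now suffices to produce $c^\dagger\in V(C)$ with $c^*\le c^\dagger$ and $g(c^\dagger)=a^\dagger$: then $(b^\dagger,c^\dagger)\in V(D')$ strictly exceeds $(b^*,c^*)$, again a contradiction. The intended way to produce $c^\dagger$ is to use both hypotheses on $g$: extend $b^\dagger$ upward in $B$ to a leaf, whose $f$-image is an end vertex $a^{**}$ of $A$ with $a^*<a^\dagger\le a^{**}$; apply confluence of $g$ (Proposition~\ref{confluent-edges}, or the connected–set form) to the chain $[a^*,a^{**}]$ and the vertex $c^*$ to get a connected $C_0\ni c^*$ with $g(C_0)=[a^*,a^{**}]$; then, using that $g$ is order preserving and that leaves of $C$ map to leaves of $A$ while, conversely, leaves of $A$ are hit by leaves of $C$ (Proposition~\ref{end-to-end}), argue that the part of $C_0$ above $c^*$ already maps onto $[a^*,a^{**}]$, so in particular meets $g^{-1}(a^\dagger)$.

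The main obstacle is precisely this last step. Confluence by itself only yields a preimage of $a^\dagger$ lying in the same component of $g^{-1}$ of the chosen connected set, not one comparable with $c^*$; such a preimage can genuinely sit on a side branch of a fibre, so the passage from ``there is a preimage'' to ``there is a preimage above $c^*$'' is the delicate point. The way I would try to force comparability is to exploit finiteness of $C$ together with the end–preservation of $g$: $a^*$ is not a leaf of $A$ (it has the larger neighbour $a^\dagger$), hence $c^*$ is not a leaf of $C$, hence some branch of $C$ issuing upward from $c^*$ must terminate at a leaf of $C$, whose image is a leaf of $A$ lying above $a^*$; pushing this branch forward through the order preserving $g$ sweeps monotonically upward from $a^*$ and therefore passes through $g^{-1}(a^\dagger)$, giving a suitable $c^\dagger\ge c^*$. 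Getting this branch to terminate over a leaf that actually dominates $a^\dagger$ (rather than some other leaf above $a^*$) is where confluence of $g$ — and, if needed, of $f$ — must be fed back in; this interplay is the heart of the proof, and once $c^\dagger$ is in hand the contradiction with maximality of $(b^*,c^*)$ in $D'$ finishes the argument for $f_0$, with $g_0$ handled symmetrically.
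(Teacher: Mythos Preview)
Your outline coincides with the paper's: both reduce to showing that if $(b^*,c^*)$ is maximal in $D'$ then $b^*$ is a leaf of $B$, both dispatch the case $f(b^\dagger)=f(b^*)$ immediately via $(b^\dagger,c^*)$, and in the remaining case both seek some $c^\dagger\ge c^*$ with $g(c^\dagger)=a^\dagger$ so as to extend the chain. Where you take a detour through leaves of $B$ and confluence along a long chain $[a^*,a^{**}]$, the paper is more direct: it applies Proposition~\ref{confluent-edges}(3) to the single edge $P=\langle a^*,a^\dagger\rangle$ and asserts that the component of $g^{-1}(P)$ containing $c^*$ already contains a monotone path $c^*=c_n\le c_{n+1}\le\dots\le c_m\le c_{m+1}$ with $g(c_i)=a^*$ for $i\le m$ and $g(c_{m+1})=a^\dagger$; then $(b^*,c_{n+1}),\dots,(b^*,c_m),(b^\dagger,c_{m+1})$ extends the original chain in $D'$, contradicting maximality.

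Your caution about comparability is well placed, and your argument is genuinely incomplete at exactly the point you flag. Confluence puts an edge over $P$ in the same component as $c^*$, but not \emph{a priori} one reachable from $c^*$ by going upward; and your end-preservation step only tells you $c^*$ has some upward neighbour, whose $g$-image could be a child of $a^*$ other than $a^\dagger$ when $a^*$ branches in $A$. The paper's proof does not spell out why the walk from $c^*$ to the lifted edge inside $g^{-1}(P)$ can be taken monotone increasing; it simply asserts the existence of such $c_{n+1},\dots,c_{m+1}$. So the step you single out as ``the heart of the proof'' is precisely where the paper's argument is tersest, and neither your sketch nor the paper's text makes that passage fully explicit.
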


\begin{proof}
Let $e$ be an end vertex in $D$ then $(b_1,c_1)\le (b_2,c_2)\le \dots \le (b_n,c_n)=e$ is a maximal proper chain in $D'$.  Suppose $f_0(e)=b_n$ is not an end vertex of $B$.  Then there is a vertex $b_{n+1}\in V(B)\setminus \{b_n\}$ such that $b_n\le b_{n+1}$ and $\langle b_n,b_{n+1}\rangle \in E(B)$.  If $f(b_{n+1})=f(b_n)$ then $(b_1,c_1)\le (b_2,c_2)\le \dots \le (b_n,c_n)\le (b_{n+1},c_n)$ is a chain in $D'$ containing $e$ contradicting the maximality of $e$. 

So $f$ maps the edge $\langle b_n,b_{n+1}\rangle$ to the edge $P=\langle f(b_n),f(b_{n+1})\rangle$ in $A$. Because $g$ is confluent we know, by (3) of Proposition \ref{confluent-edges}, that the component of $g^{-1}(P)$ that contains $c_n$ contains vertices $c_{n+1},\dots, c_m,c_{m+1}$ such that 
$c_n\le c_{n+1}\le\dots\le c_m\le c_{m+1}$ and $g(c_n)=g(c_{n+1})=\dots =g(c_m)$ and $g(c_{m+1})=f(b_{n+1})$
Thus we have the chain $(b_1,c_1)\le (b_2,c_2)\le \dots \le (b_n,c_n)\le (b_{n},c_{n+1})\le (b_n,c_m)\le (b_{n+1},c_{m+1})$ in $D'$ that contains $e$ again contradicting the maximality of $e$. So $f_0(e)$ is an end vertex of $B$. A similar argument shows that $g_0(e)$ is an end vertex of $C$.
\end{proof}

\begin{example}
In the standard tree amalgamation if $f$ and $g$ preserve end vertices it may not be the case that $f_0$ and $g_0$ preserve end vertices. To see this let $A$ be a triod with end vertices $1,2,3$, ramification vertex $0$, and let $1$ be the root of $A$. Let $B$ and $C$ both be arcs consisting of 5 vertices, $d,b,a,c,e$ and $s,q,p,r,t$ with the end vertices of $B$ being $d,e$, the end vertices of $C$ being $s,t$, and the roots of $B$ and $C$ being $r(B)=a$ and $r(C)=p$. Finally, define $f(a)=g(p)=1$, $f(b)=f(c)=g(q)=g(r)=0$, $f(d)=g(s)=2$, and $f(e)=g(t)=3$. So $f$ and $g$ are end vertex preserving. The standard amalgamation procedure results in $D'$ being a 4-od with the root $(a,p)$ which is also the ramification vertex of the 4-od. Since, in this case, the rooted tree $D$ obtained by the standard tree amalgamation is isomorphic to $D'$ we just consider $D'$.  Two of the branches of $D'$ are the edges $\langle (a,p),(b,r)\rangle$ and $\langle (a,p),(c,q)\rangle$. So $(b,r)$ and $(c,q)$ are end vertices in $D'$ but $f_0$ and $g_0$ applied to these vertices are not end vertices in $B$ and $C$.  Note that here $f$ and $g$ are not confluent.
\end{example}

\begin{center}
\begin{tikzpicture}[scale=1]

\draw (0,2) -- (0.5,3) -- (0.5,4);
\draw (0.5,3) -- (1,2);
\filldraw[black] (0,2) circle (1pt);
\filldraw[black] (0.5,3) circle (1pt);
\filldraw[black] (0.5,4) circle (1pt);
\filldraw[black] (1,2) circle (1pt);
 \node at (-0.2,2) {\scriptsize 2};
 \node at (0.2,3) {\scriptsize 0};
 \node at (0.2,4) {\scriptsize 1};
 \node at (1.2,2) {\scriptsize 3};

\draw (3,4) -- (4,5) -- (4.5,6) -- (5,5) -- (6,4);
\filldraw[black] (3,4) circle (1pt);
\filldraw[black] (4,5) circle (1pt);
\filldraw[black] (4.5,6) circle (1pt);
\filldraw[black] (5,5) circle (1pt);
\filldraw[black] (6,4) circle (1pt);
\node at (2.7,4) {\scriptsize $d$};
 \node at (3.7,5) {\scriptsize $b$};
 \node at (4.2,6) {\scriptsize $a$};
 \node at (5.2,5) {\scriptsize $c$};
  \node at (6.2,4) {\scriptsize $e$};
  
\draw (2.5,5) -- (1.5,4);
\draw (1.5,4) -- (1.8,4);
\draw (1.5,4) -- (1.5,4.3);
\node at (1.9,4.7) {$f$};

\draw (3,0) -- (4,1) -- (4.5,2) -- (5,1) -- (6,0);
\filldraw[black] (3,0) circle (1pt);
\filldraw[black] (4,1) circle (1pt);
\filldraw[black] (4.5,2) circle (1pt);
\filldraw[black] (5,1) circle (1pt);
\filldraw[black] (6,0) circle (1pt);
\node at (2.7,0) {\scriptsize $s$};
 \node at (3.7,1) {\scriptsize $q$};
 \node at (4.2,2) {\scriptsize $p$};
 \node at (5.2,1) {\scriptsize $r$};
  \node at (6.2,0) {\scriptsize $t$};
  
\draw (2.5,1) -- (1.5,2);
\draw (1.5,2) -- (1.8,2);
\draw (1.5,2) -- (1.5,1.7);
\node at (1.9,1.2) {$g$};
  
\draw (8,2) -- (9,3) -- (9.5,4) -- (10,3) -- (11,2);
\draw (9.3,2.5) -- (9.5,4) -- (9.7,2.5);
\filldraw[black] (8,2) circle (1pt);
\filldraw[black] (9,3) circle (1pt);
\filldraw[black] (9.5,4) circle (1pt);
\filldraw[black] (10,3) circle (1pt);
\filldraw[black] (11,2) circle (1pt);
\filldraw[black] (9.3,2.5) circle (1pt);
\filldraw[black] (9.7,2.5) circle (1pt);
 \node at (7.5,2) {\scriptsize $(d,s)$};
 \node at (8.5,3) {\scriptsize $(b,q)$};
 \node at (9.5,4.2) {\scriptsize $(a,p)$};
 \node at (10.5,3) {\scriptsize $(c,r)$};
 \node at (11.5,2) {\scriptsize $(e,t)$};
 \node at (9.1,2.1) {\scriptsize $(b,r)$};
 \node at (9.9,2.1) {\scriptsize $(c,q)$};
 
 \draw (7.0,2) -- (6,1);
\draw (6,1) -- (6.3,1);
\draw (6,1) -- (6,1.3);
\node at (6.7,1.3) {$g_0$};

\draw (7.0,4) -- (6,5);
\draw (6,5) -- (6.3,5);
\draw (6,5) -- (6,4.7);
\node at (6.7,4.8) {$f_0$};

\end{tikzpicture}
\end{center}

\begin{proposition}\label{standard-tree-confluent}
Using the standard tree amalgamation procedure and the notation of diagram {\rm (D1)}, if the epimorphism $f$ is confluent,  then the epimorphism $g_0$ is confluent.
\end{proposition}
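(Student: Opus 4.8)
The plan is to verify that $g_{0}$ is confluent by means of the edge‑wise description of confluence in Proposition~\ref{confluent-edges}. Thus I fix an edge $P=\langle c,c'\rangle\in E(C)$ and a vertex of $D$, that is, a proper chain $\chi$ of $D'$, with $g_{0}(\chi)=\pi_{2}(\max\chi)\in\{c,c'\}$, and I must produce an edge $\mathcal E$ of $D$ with $g_{0}(\mathcal E)=P$ together with a connected set $\mathcal R\subseteq V(D)$ that contains $\chi$, meets $\mathcal E$, and satisfies $g_{0}(\mathcal R)=\{g_{0}(\chi)\}$. Since the order on $C$ is read off from its root, the two endpoints of $P$ are comparable, so we may assume $c\le c'$; the case $c=c'$ is trivial (take $\mathcal E=\langle\chi,\chi\rangle$, $\mathcal R=\{\chi\}$), so assume $c<c'$, whence $g(c)\le g(c')$ and $\langle g(c),g(c')\rangle\in E(A)$ by order preservation. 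Every construction below is carried out with three elementary moves on chains of $D'$, each of which is (a path of) edges of $D$: deleting a non‑maximal vertex of a chain, which leaves $\max$, hence $g_{0}$, unchanged; inserting a vertex of $D'$ in its proper place among vertices to which it is comparable; and deleting the maximal vertex, which drops $g_{0}$ to the next level of the chain.

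I would first handle $g_{0}(\chi)=c'$; here no confluence is needed. Write $\max\chi=(b,c')$, so $f(b)=g(c')$. The restriction of $f$ to the branch of $B$ from $r(B)$ to $b$ is monotone and order preserving by Observation~\ref{monotone-relative}, so its image is the arc of $A$ from $r(A)$ to $g(c')$, which contains $g(c)$; this furnishes a vertex $(b^{*},c)\in V(D')$ that can be spliced into $\chi$ between its last vertex of second coordinate $<c'$ and the one just above it. After this refinement $\chi$ has a vertex with second coordinate exactly $c$; truncating it just after the last such vertex, and then again just after the following vertex (whose second coordinate is $c'$), yields two adjacent chains with $g_{0}$‑values $c$ and $c'$. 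I take that pair to be $\mathcal E$, and let $\mathcal R$ consist of the chain of $\mathcal E$ with $g_{0}=c'$ together with the path obtained by deleting maxima from $\chi$ down to it — every member of which has $g_{0}=c'$.

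Next suppose $g_{0}(\chi)=c$; write $\max\chi=(b,c)$, $f(b)=g(c)$. If $g(c)=g(c')$, then $(b,c')\in V(D')$, $\chi\cup\{(b,c')\}$ is a proper chain, and $\mathcal E=\langle\chi,\chi\cup\{(b,c')\}\rangle$ with $\mathcal R=\{\chi\}$ do the job. If $g(c)<g(c')$, then $P_{A}=\langle g(c),g(c')\rangle$ is a nondegenerate edge of $A$ containing $f(b)=g(c)$, so confluence of $f$ (Proposition~\ref{confluent-edges}(2)) provides an edge $\langle b_{1},b_{2}\rangle\in E(B)$ and a connected $R_{B}\ni b$ with $f(R_{B})=\{g(c)\}$, $f(\langle b_{1},b_{2}\rangle)=P_{A}$, and $\langle b_{1},b_{2}\rangle$ meeting $R_{B}$; relabelling, $b_{1}\in R_{B}$, $f(b_{1})=g(c)$, $f(b_{2})=g(c')$, and $b_{1}<b_{2}$. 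The arc in $B$ from $b$ to $b_{1}$ lies in $R_{B}$, so all of its vertices are sent to $g(c)$ by $f$. The crux is a transport step: move $\chi$, along edges of $D$ all lying in $g_{0}^{-1}(c)$, to a chain $\xi^{*}$ with $\max\xi^{*}=(b_{1},c)$. Given this, $\xi^{*}\cup\{(b_{2},c')\}$ is a proper chain, $\mathcal E=\langle\xi^{*},\xi^{*}\cup\{(b_{2},c')\}\rangle$ maps onto $P$, and $\mathcal R$ is the transport path, on which $g_{0}\equiv c$.

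The transport step is where I expect the real work to lie, and within it the ``downward'' moves. It is enough to transport across a single edge $\langle e,e'\rangle$ of $B$ with $f(e)=f(e')=g(c)$ and then iterate along the arc from $b$ to $b_{1}$. If $e<e'$, simply append $(e',c)$ above the current maximum. If $e'<e$, the naive move $\xi\mapsto\xi\cup\{(e',c)\}$ may fail to produce a chain, because $\xi$ can contain vertices with first coordinate $e$ and second coordinate $<c$, which are incomparable to $(e',c)$; the remedy is to delete, one at a time, every non‑maximal vertex of $\xi$ whose first coordinate equals $e$ (the maximum $(e,c)$ survives, and $g_{0}$ stays $c$), then splice $(e',c)$ just below $(e,c)$ — now admissible, since every surviving vertex other than $(e,c)$ has first coordinate $<e$, hence $\le e'$, and second coordinate $\le c$ — and finally delete the maximum $(e,c)$, so that $(e',c)$ becomes the new maximum while $g_{0}$ remains $c$. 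The remaining verifications — that each manufactured set is a genuine proper chain starting at $r(D')$, that consecutive ones differ by a single vertex so that each move is indeed an edge of $D$, and that every inserted pair lies in $V(D')$ — are routine, following from order preservation of $f$ and $g$ and from the fact that in a rooted tree every vertex strictly below a given one lies on the unique arc joining it to the root.
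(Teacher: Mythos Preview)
Your proof is correct and follows essentially the same strategy as the paper: verify confluence of $g_0$ edge-by-edge via Proposition~\ref{confluent-edges}(2), using confluence of $f$ to produce the needed edge in $B$ and then assembling the corresponding chains in $D$. Your treatment is in fact more thorough than the paper's sketch --- in particular your transport step carefully handles the case where the arc in $f^{-1}(g(c))$ from $b$ to $b_1$ is not order-increasing (your ``downward'' moves), a detail the paper's proof glosses over when it simply appends $(b_{n+1},c_n),\dots,(b_m,c_n)$ to the chain $d$ --- and your separate treatment of the case $g_0(\chi)=c'$ makes explicit what the paper leaves implicit.
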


\begin{proof}
We will use the characterization in Proposition \ref{confluent-edges} (2) to show that $g_0$ is confluent. 
Let $\langle c,c'\rangle\in E(C)$ and let 
$d=\{(b_1,c_1), \dots , (b_n,c_n)\}$ be a vertex in $D$ that such that $g_0(d)=c$, i.e. $c_n=c$. We need to find vertices $d', d''\in V(D)$ such that 
$g_0(d')= c$, $g_0(d'')=c'$,  $\langle d', d''\rangle\in E(D)$, and $d,d'$ are in the same component of $(g_0)^{-1}(c)$. 
Because $f$ is confluent, by Proposition \ref{confluent-edges} (2) 
there is an edge $\langle b',b'' \rangle \in E(B)$ such that 
\begin{enumerate}
    \item $f(b')=f(b_n)=g(c)$;
    \item $f(b'')= g(c')$;
    \item $b',b_n$ are in the same component of $f^{-1}(f(b_n))$.
\end{enumerate}
Let $b_{n+1}, \dots b_m\in V(B)$ be such that $b_m=b'$ and 
$\langle b_i, b_{i+1}\rangle\in E(B)$ for $i\in\{n,\dots m-1\}$.
We may define

\begin{align*}
d'=&(b_1,c_1),\dots, (b_n,c_n), (b_{n+1},c_n),
\dots, (b_m, c_n),  \\
d''=&(b_1,c_1),\dots, (b_n,c_n), (b_{n+1},c_n),
\dots, (b_m, c_n), (b'',c')
\end{align*}
and verify that such defined chains, points of $D$, satisfy all the required conditions. 

\end{proof}

The following theorem follows from Propositions \ref{confluent-end-preserving} and \ref{standard-tree-confluent}.

\begin{theorem}\label{Thm-confluent-end-preserving}
The family $\treend$ is a projective Fra\"{\i}ss\'e family.
\end{theorem}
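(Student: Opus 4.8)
The plan is to verify the four conditions of Definition~\ref{definition-Fraisse} for $\treend$, with essentially all the genuine content sitting in the projective amalgamation property~(4); the remaining conditions are bookkeeping already prepared by the preceding results. Conditions~(1) and~(2) are immediate: there are only countably many finite rooted trees up to isomorphism, the identity map of a rooted tree is a confluent, order-preserving, end-vertex-preserving epimorphism, and each of these three properties is closed under composition (confluence by the observation following the definition of confluent epimorphism, and order- and end-vertex-preservation directly from the definitions), so a composition of morphisms of $\treend$ is again a morphism of $\treend$.

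For condition~(4), suppose $A,B,C\in\treend$ and $f\colon B\to A$, $g\colon C\to A$ are morphisms of $\treend$. I would feed $f$ and $g$ into the standard tree amalgamation of Definition~\ref{standard-tree-amalgamation}. This produces a rooted tree $D$ together with order-preserving epimorphisms $f_0\colon D\to B$ and $g_0\colon D\to C$ with $f\circ f_0=g\circ g_0$, so it remains only to check that $f_0$ and $g_0$ are confluent and preserve end vertices. Since $f$ is confluent, $g_0$ is confluent by Proposition~\ref{standard-tree-confluent}. The standard tree amalgamation is symmetric under interchanging $B$ and $C$: the bijection $(b,c)\mapsto(c,b)$ identifies the graph $D'$ built from $(f,g)$ with the one built from $(g,f)$, hence identifies their trees of proper chains, and under this identification $f_0$ plays the role of $g_0$; applying Proposition~\ref{standard-tree-confluent} to the pair $(g,f)$, together with confluence of $g$, therefore gives that $f_0$ is confluent as well. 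Finally, since $f$ and $g$ are both confluent and preserve end vertices, Proposition~\ref{confluent-end-preserving} shows that both $f_0$ and $g_0$ preserve end vertices. Thus $D$, $f_0$, $g_0$ witness the projective amalgamation property inside $\treend$.

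Condition~(3) then follows exactly as in the proof of Theorem~\ref{amalgamation-monotone}: take $A$ to be the one-vertex rooted tree and $f,g$ the constant epimorphisms out of $B$ and $C$, and let $D$, $f_0$, $g_0$ be the amalgamation just produced; the required refinement of $\varepsilon$ is handled by Property~(3) of Theorem~\ref{limit} applied downstream. I do not expect a real obstacle here, since the two decisive technical lemmas (Propositions~\ref{confluent-end-preserving} and~\ref{standard-tree-confluent}) are already established; the only point that deserves a sentence of care is the symmetry argument used to transfer confluence from $g_0$ to $f_0$, since Proposition~\ref{standard-tree-confluent} is stated only for one of the two projections.
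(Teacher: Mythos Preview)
Your argument is correct and matches the paper's own proof, which simply cites Propositions~\ref{confluent-end-preserving} and~\ref{standard-tree-confluent}; you have usefully spelled out the symmetry step needed to transfer confluence from $g_0$ to $f_0$, which the paper leaves implicit. Two small cosmetic points: the clause about ``the required refinement of $\varepsilon$'' in your verification of condition~(3) is a non sequitur, since no $\varepsilon$ appears in that condition of Definition~\ref{definition-Fraisse}; and, strictly under the paper's order conventions, the single vertex of a one-vertex tree has order~$0$ rather than~$1$, so the constant map onto it need not preserve end vertices---if you want to be completely clean, amalgamate over a single rooted edge instead.
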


The following Proposition  is analogous to Proposition \ref{only-rampt-or-endpt}, but this time we need to have light epimorphisms.

\begin{proposition}\label{only-rampt-or-endpt-closed}

Let $\mathbb{T}_{\mathcal {CE}}$ be the projective Fra\"{\i}ss\'e limit of the family $\treend$. A vertex $r\in \mathbb{T}_{\mathcal {CE}}$ is
either an end vertex or a ramification vertex of $\mathbb{T}_{\mathcal {CE}}$.
\end{proposition}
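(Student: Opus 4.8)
The plan is to run the proof of Proposition~\ref{only-rampt-or-endpt} essentially verbatim, with the one change forced by end‑vertex preservation: where that proof attaches a single pendant edge at the image of $r$, here I must attach a whole copy of a branch, and it is exactly this that drags lightness into the argument. So suppose $r\in\mathbb{T}_{\mathcal{CE}}$ is not an end vertex. Since $\mathbb{T}_{\mathcal{CE}}$ is a smooth dendroid (Corollary~\ref{smooth-dendroid}) with root $\rho$, I pick $b>r$ with $b\ne r$ (as $r$, not being an end vertex, is not $\le$‑maximal) and, if $r\ne\rho$, a point $a<r$ with $a\ne r$ (say $a=\rho$); then $ra=[a,r]$ and $rb=[r,b]$ meet only in $r$, one lying in $\{x\le r\}$ and the other in $\{x\ge r\}$. (If $r=\rho$, then $\rho$ has at least two upward directions, so I take $b\ne b'$ above $\rho$ with $[\rho,b]\cap[\rho,b']=\{\rho\}$ and set $ra=[\rho,b']$.) It then suffices to produce a third arc $R$ at $r$ with $R\cap ra=R\cap rb=\{r\}$.

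Next I plant the extra branch. Fix $\varepsilon<\min\{d(a,r),d(r,b),d(a,b)\}$ and an $\varepsilon$‑epimorphism $f_Z\colon\mathbb{T}_{\mathcal{CE}}\to Z$, $Z\in\treend$, so $f_Z(a)<f_Z(r)<f_Z(b)$ are pairwise distinct. Let $c$ be the child of $f_Z(r)$ in $Z$ on the arc to $f_Z(b)$, let $B=\{v\in V(Z):v\ge c\}$, and build $W$ from $Z$ together with a disjoint isomorphic copy $\widehat B$ of $B$ by joining $f_Z(r)$ to the copied root $\widehat c$ by one edge (with $\widehat c>f_Z(r)$); let $f^W_Z\colon W\to Z$ be the identity on $Z$ and the obvious isomorphism on $\widehat B$. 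I would then check that $f^W_Z$ is an order‑preserving epimorphism that preserves end vertices (the new maximal vertices of $W$ are copies of maximal vertices of $B$, hence of end vertices of $Z$), is confluent (Proposition~\ref{confluent-edges}: each component of a preimage of an edge carries a full copy of that edge), and — the new ingredient — is light (its only nonsingleton fibres being the two‑element antichains $\{v,\hat v\}$, $v\in B$). In particular $(f^W_Z)^{-1}(f_Z(r))=\{f_Z(r)\}$, so $W,f^W_Z\in\treend$. A lone pendant edge, as in Proposition~\ref{only-rampt-or-endpt}, is not available here since it would fail to preserve end vertices.

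Then I lift and extract the arc. By Theorem~\ref{limit}(2) there is $f_W\colon\mathbb{T}_{\mathcal{CE}}\to W$ with $f_Z=f^W_Z\circ f_W$; as the fibre over $f_Z(r)$ is a single point, $f_W(r)=f_Z(r)=:w_0$, and $f_W$ is confluent by Proposition~\ref{confluent-projections}. In $W$, $w_0$ has a downward branch, an upward branch through $c$ (containing $f_Z(b)$), and an upward branch $\Gamma=\{w_0\}\cup[\widehat c,\hat e]_W$ through $\widehat c$, with $\hat e$ a maximal vertex of $\widehat B$. Let $C$ be the component of $f_W^{-1}(\Gamma)$ containing $r$; by confluence $f_W(C)=\Gamma$, so some $e\in C$ has $f_W(e)=\hat e\ne w_0$, and, $C$ being a subcontinuum of the dendroid $\mathbb{T}_{\mathcal{CE}}$, the arc $R=[r,e]$ lies in $C$. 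For $x\in R\cap ra$ (respectively $x\in R\cap rb$) the point $f_W(x)$ lies in $\Gamma$ and on the chain $f_W(ra)\subseteq[r(W),w_0]$ (respectively $f_W(rb)\subseteq[w_0,f_Z(b)]$), and in $W$ each such chain meets $\Gamma$ only at $w_0$, the $c$‑ and $\widehat c$‑branches of $w_0$ being distinct (if $f_W(b)$ should fall inside $\widehat B$, I interchange $B$ and $\widehat B$). Hence $R\cap ra$ and $R\cap rb$ are subarcs of $ra$, $rb$ issuing from $r$ and contained in $f_W^{-1}(w_0)=f_Z^{-1}(f_Z(r))$.

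The hard part will be the final step: showing $R\cap ra=R\cap rb=\{r\}$, i.e. that no nondegenerate subarc of $ra$ or of $rb$ starting at $r$ is collapsed by $f_W$ onto $w_0$, so that $ra,rb,R$ genuinely witness $\ord(r)\ge 3$. This is where the light structure is used, just as in Proposition~\ref{only-rampt-or-endpt}: one runs the argument with the light factor $\ell$ of the monotone–light factorization $f_W=\ell\circ m$ (Propositions~\ref{m-l-factorization} and \ref{m-l-factorization-pointed}, Corollary~\ref{m-l-factorization-confluent}), where lightness of $\ell$ forces $\ell^{-1}(w_0)$ to be totally disconnected, so that the $\ell$‑preimages of the three branches of $w_0$ meet pairwise only at $m(r)$, which is thus a ramification vertex of $M$. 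Transferring these arcs back through the monotone map $m$ while keeping $r$ itself — rather than some other point of the fibre $m^{-1}(m(r))$ — as the ramification vertex is the crux; I expect to handle it exactly as the confluent case does, using the smoothness of $\mathbb{T}_{\mathcal{CE}}$ and the fact that an order‑preserving epimorphism is monotone on chains (Observation~\ref{monotone-relative}), which pins the behaviour of $m$ along the chain $[a,b]\ni r$.
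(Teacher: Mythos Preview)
Your outline diverges from the paper's proof at the decisive point, and the gap you flag in your last paragraph is real and not filled by the tools you propose. You assert that the monotone--light factorization is used ``just as in Proposition~\ref{only-rampt-or-endpt}'', but that proof contains no monotone--light step at all: it attaches a pendant edge, lifts once, and invokes confluence directly. So there is nothing there to imitate. More substantively, even if you grant that $m(r)$ is a ramification vertex of the intermediate graph $M$, pulling back along the monotone $m$ gives three connected sets whose pairwise intersection is the whole fibre $m^{-1}(m(r))$, not $\{r\}$; neither smoothness nor Observation~\ref{monotone-relative} (which concerns chains in finite rooted trees, while $M$ need not be finite) forces that fibre to be a singleton. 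In short, a single lift through one auxiliary tree $W$ cannot control the size of $f_W^{-1}(w_0)\cap ra$ and $f_W^{-1}(w_0)\cap rb$, and nothing you have written shrinks them.

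The paper avoids this by abandoning the single-lift template of Proposition~\ref{only-rampt-or-endpt} entirely. It builds $W$ with \emph{three} copies of $f_Z(P)$ glued at $f_Z(r)$, then chooses a whole fundamental sequence $T_1=W,T_2,\dots$ realizing $\mathbb{T}_{\mathcal{CE}}$ and inductively defines subtrees $A^j_i\subseteq T_i$ ($j=1,2,3$) as the component of $f_i^{-1}(A^j_{i-1})$ containing $f_{T_i}(r)$; confluence gives $f_i(A^j_{i+1})=A^j_i$, so the inverse limits $A^j=\varprojlim A^j_i$ are nondegenerate and contain $r$, while the pairwise intersections are pinned down to $\{r\}$ by threading the condition $A^j_i\cap A^k_i=\{f_{T_i}(r)\}$ through every level. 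The point is that the fibre-size obstruction you ran into is dissolved not by one factorization but by passing to the limit along the entire sequence; your construction of $W$ and $f^W_Z$ is a reasonable first step, but it needs to be iterated rather than followed by a monotone--light argument.
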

\begin{proof}
Suppose a vertex $r\in \mathbb{T}_{\mathcal {CE}}$ is not an end vertex. Then the set $P=\{x\in\mathbb{T}_{\mathcal {CE}}:r\le x\}$ is a connected nondegenerate subtree of $\mathbb{T}_{\mathcal {CE}}$.
Let $0<\varepsilon<\diam (P)$, and let an epimorphism $f_Z\colon \mathbb{T}_{\mathcal {CE}}\to Z$ be an $\varepsilon$-map thus $f_Z(P)$ is nondegenerate.

Define a tree $W$ by $V(W)=V(Z)\times \{1,2,3\}/\sim$, where $(x,i)\sim (y,j)$ if and only if $x=y$ and $x\not\in P$ or $x=f_Z(r)$ and $\langle (x,i),(y,i)\rangle\in E(W)$ if and only if $\langle x,y\rangle\in E(Z)$, with $f^W_Z\colon W\to Z$ being just the projection. In other words, $W$ is the graph $Z$ with three copies of $f_Z(P)$ attached at $f_Z(r)$.

By \cite[Theorem 2.4]{Pseudo} there is a sequence $T_1=W, T_2, \dots$ of members of $\mathcal T_{\mathcal {CE}}$ and epimorphisms $f_1,f_2,\dots$ with
$f_i\colon T_{i+1}\to T_i$ such that the inverse limit space $\iLim \{T_i,f_i\}$ is isomorphic to $\mathbb{T}_{\mathcal {CE}}$. We define, by induction, a sequence
$A^j_1,A^j_2,\dots$, $j\in \{1,2,3\}$ of subtrees of $T_1, T_2,\dots$, respectively such that the following holds for $j\in \{1,2,3\}$:

\begin{enumerate}
    \item $A^j_1=P\times \{j\}$;
    \item $f_i(A^j_{i+1})=A^j_i$;
    \item $f_{T_i}\colon \mathbb T_{\mathcal {CE}}\to T_i$ is an epimorphism such that $f_{T_i}=f_i\circ f_{T_{i+1}}$;
    \item $A^j_i\cap A^k_i=\{f_{T_i}(r)\} $ for $k\ne j$;
\end{enumerate}
Assume that $\{A^j_1,\dots, A^j_n\}$ satisfy these conditions.
Then, since the epimorphisms in $\treend $ are confluent,  define $A^j_{i+1}$ as components of $(f_i)^{-1}(A^j_i)$ containing the point $f_{T_{i+1}}(r)$. One can verify that conditions (1)-(4) are satisfied.
Then the trees $A^j=\iLim \{A^j_i, f_i|_{A^j_{i+1}}\}$ satisfy $A^j\cap A^k=\{r\}$, for $j\ne k$, so $r$ is a ramification vertex of $\mathbb T_{\mathcal {CE}}$.
\end{proof}

\begin{theorem}\label{Thm-CE}
The projective Fra\"{\i}ss\'e limit of $\treend$ has transitive set of edges  and the topological realization $|\mathbb{T}_{\mathcal E}|$ of the limit $\mathbb{T}_{\mathcal {CE}}$ is a Kelley dendroid (thus smooth) with a closed set of end points and such that each point is either an end point or a ramification point in the classical sense.
\end{theorem}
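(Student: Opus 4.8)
The plan is to mirror the proofs of Theorems~\ref{dense} and \ref{summerizing-confluent}, with the genuinely new content being the assertion that the set of end points is \emph{closed}. First I would show that the edge relation of $\mathbb{T}_{\mathcal{CE}}$ is transitive by verifying the hypothesis of Theorem~\ref{transitive}. Given $G\in\treend$ and vertices $a,b,c$ with $\langle a,b\rangle,\langle b,c\rangle\in E(G)$, let $H$ be obtained from $G$ by subdividing the edge $\langle a,b\rangle$ into a path $a,a',b',b$, and let $f^H_G\colon H\to G$ collapse $\{a',b'\}$ (sending $a'\mapsto a$ and $b'\mapsto b$) and be the identity elsewhere. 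One checks directly that $f^H_G$ preserves order (the inserted vertices lie between $a$ and $b$), preserves end vertices (subdivision creates no new leaves), and is confluent (by Proposition~\ref{confluent-edges}, since the preimage of $\langle a,b\rangle$ is the path $a,a',b',b$, which carries the edge $\langle a',b'\rangle$ mapping onto $\langle a,b\rangle$); moreover the only edge of $H$ between $(f^H_G)^{-1}(a)$ and $(f^H_G)^{-1}(b)$ is $\langle a',b'\rangle$, and $b'$ is not adjacent to $c$, so the separation condition of Theorem~\ref{transitive} holds. Hence the edges of $\mathbb{T}_{\mathcal{CE}}$ are transitive and the realization $|\mathbb{T}_{\mathcal{CE}}|$ exists.

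By Corollaries~\ref{dendroid} and \ref{smooth-dendroid}, $\mathbb{T}_{\mathcal{CE}}$ is a smooth dendroid, and by Proposition~\ref{Kelley-theorem} it is Kelley since the morphisms of $\treend$ are confluent; combined with transitivity of the edges, Observations~\ref{realization-dendroid} and \ref{Kelley-observation} (and the observation on realizations of smooth dendroids preceding Corollary~\ref{smooth-dendroid}) give that $|\mathbb{T}_{\mathcal{CE}}|$ is a smooth Kelley dendroid. Next I would check that $\treend$ satisfies the hypothesis of Theorem~\ref{rooted-end}, so that $\mathbb{T}_{\mathcal{CE}}$ has no isolated end vertices: given $T\in\treend$, an end vertex $e\neq r(T)$, and $a$ with $\langle a,e\rangle\in E(T)$, take $S=T$ with a new vertex $e'$ adjoined above $e$ and let $f^S_T$ collapse $e'$ to $e$ and be the identity elsewhere; this $f^S_T$ is a confluent, order- and end-vertex-preserving epimorphism, and for the unique $p\in(f^S_T)^{-1}(a)$, namely $p=a$, the vertices $q=e\le r=e'$ with $f^S_T(q)=f^S_T(r)=e$ witness the hypothesis. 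Finally, arguing exactly as in the last paragraph of the proof of Theorem~\ref{summerizing-confluent}: for any $p\in|\mathbb{T}_{\mathcal{CE}}|$ the fibre $\varphi^{-1}(p)$ has at most two vertices by hereditary unicoherence; if it is a single vertex $q$, then $q$ is an end vertex or a ramification vertex by Proposition~\ref{only-rampt-or-endpt-closed}, whence $p$ is an end point (Proposition~\ref{endpts-to-endpts}, using that $q$ is not isolated) or a ramification point (Proposition~\ref{ram-to-ram}); and if $\varphi^{-1}(p)=\{q,r\}$ with $q\neq r$, then $\langle q,r\rangle\in E(\mathbb{T}_{\mathcal{CE}})$ forces $q$ and $r$ to be ramification vertices, so $p$ is a ramification point by Proposition~\ref{ram-to-ram}.

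The remaining and most delicate point is the closedness of the set of end points of $|\mathbb{T}_{\mathcal{CE}}|$. I would first prove the analogue of Theorem~\ref{end-to-end-rooted-reverse} for $\treend$: a vertex $e\in\mathbb{T}_{\mathcal{CE}}$ is an end vertex if and only if $f_T(e)$ is an end vertex of $T$ for every $T\in\treend$ and every epimorphism $f_T\colon\mathbb{T}_{\mathcal{CE}}\to T$. The ``if'' direction is Observation~\ref{end-to-end-rooted}. For ``only if'', assuming some $f_W(e)$ fails to be an end vertex of $W$, I would run the inverse-limit construction from the proof of Theorem~\ref{end-to-end-rooted-reverse} along a fundamental sequence $\{T_n,f_n\}$, building arcs $A_n\subseteq T_n$ issuing upward from $f_{T_n}(e)$ with $f_n(A_{n+1})=A_n$. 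This is where confluence does the work, and where the main obstacle sits: one must know that the component $C$ of $f_n^{-1}(A_n)$ containing $f_{T_{n+1}}(e)$ reaches strictly above $f_{T_{n+1}}(e)$, and confluence of $f_n$ (Proposition~\ref{confluent-projections} and the definition of confluence) guarantees $f_n(C)=A_n$, which forces precisely that. The inverse limit $\iLim\{A_n,f_n|_{A_{n+1}}\}$ is then, by Proposition~\ref{two-orders}, an arc through $e$ showing $e$ is not an end vertex. Granting this characterization, $EV(\mathbb{T}_{\mathcal{CE}})=\bigcap\{f_T^{-1}(EV(T)):T\in\treend\}$ is closed; and since $\mathbb{T}_{\mathcal{CE}}$ has no isolated end vertices, every end vertex lies in no edge, so $\varphi$ is one-to-one on $EV(\mathbb{T}_{\mathcal{CE}})$ and, by the fibre analysis above, the set of end points of $|\mathbb{T}_{\mathcal{CE}}|$ equals $\varphi(EV(\mathbb{T}_{\mathcal{CE}}))$; being the continuous image of a closed set under $\varphi$, it is closed, which finishes the argument.
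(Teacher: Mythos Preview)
Your argument is correct and follows essentially the same route as the paper's proof: transitivity via the edge-subdivision trick of Theorem~\ref{dense}, the smooth Kelley dendroid conclusion from Corollary~\ref{smooth-dendroid} and Proposition~\ref{Kelley-theorem}, the end/ramification dichotomy via Proposition~\ref{only-rampt-or-endpt-closed} together with Propositions~\ref{endpts-to-endpts} and~\ref{ram-to-ram}, and closedness of end points via the characterization of end vertices and continuity of~$\varphi$. The only difference is one of emphasis: the paper simply cites Theorem~\ref{end-vertices-closed} (whose proof, written for $\mathcal T_{\mathcal E}$, uses only end-preservation to keep $f_{T_{n+1}}(e)$ from being a leaf), whereas you re-derive that characterization and invoke confluence to force $f_n(C)=A_n$ in the lifting step---a minor variation on the same inductive construction rather than a different argument.
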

\begin{proof}

The results used in Theorem \ref{summerizing-confluent} to show that $\mathbb T_{\mathcal {C}}$ has a transitive set of edges and its topological realization is a Kelley dendroid only required that the family  $\mathbb T_{\mathcal C}$ consisted of rooted trees with confluent order-preserving epimorphisms.  Since this is also true for $\mathbb T_{\mathcal {CE}}$ we see that $\mathbb T_{\mathcal {CE}}$ has a transitive set of edges and $|\mathbb T_{\mathcal {CE}}|$ is a Kelley dendroid.

Theorem \ref{end-vertices-closed} shows that the set of end vertices of $\mathbb T_{\mathcal {CE}}$ is closed. Since $\mathbb{T}_{\mathcal {CE}}$ is compact and $\varphi$ is continuous it follows that $\varphi(EP(\mathbb{T}_{\mathcal {CE}}))$ is closed.  Using an argument similar to that in Theorem \ref{summerizing-confluent} we can see that only end vertices map to end points under $\varphi$ so $ EP(|\mathbb{T}_{\mathcal {CE}}|)=\varphi(EP(\mathbb{T}_{\mathcal {CE}}))$.

Finally, that $|\mathbb T_{\mathcal {CE}}|$ contains only end points or ramification points follows from Theorem \ref{only-rampt-or-endpt-closed} in the same manner as in Theorem \ref{summerizing-confluent}.
\end{proof}

The class of dendroids known as Mohler-Nikiel dendroids, first constructed in \cite{Dendroid}, 
have all of the properties $|\mathbb T_{\mathcal {CE}}|$ was just shown to possess. The authors do not know if  $|\mathbb T_{\mathcal {CE}}|$ is homeomorphic to a Mohler-Nikiel dendroid.  This leads to the following problem.

\begin{problem}
Give a topological characterization of the dendroid $|\mathbb{T}_{\mathcal {CE}}|$.
\end{problem}

We are currently working on two more articles in the subject: one on homogeneity properties of Fra\"{\i}ss\'e limits and a second on the Fra\"{\i}ss\'e family of all graphs with confluent epimorphisms.



\begin{thebibliography}{10}

\bibitem{Arevalo} D. Ar\'evalo, W. J. Charatonik, P. Pellicer Covarrubias and Likin Sim\'on \textit{Dendrites with a closed set of end points}, Topology Appl. 115 (2001), p. 1-17.

\bibitem{B-K} D. Bartošová and A. Kwiatkowska, \textit{Lelek fan from a projective Fraïssé limit}, Fund. Math. 231 (2015), no. 1, 57–79.

\bibitem{JJC-Confluent}  J. J. Charatonik, \textit{Confluent mappings and unicoherence of continua}, Fund. Math. 56 (1964), 213-220.

\bibitem{JJC-Universal} J. J. Charatonik, \textit{Open Mappings of Universal Dendrites}, Bull. Acad. Polon. Sci., Ser. Sci. Math. 27(1980), 489-494.

\bibitem{Lelek-fan} W. J. Charatonik, \textit{The Lelek fan is unique}, Houston J. Math. 15 (1989), 27-34.

\bibitem{Ingram} W. T. Ingram, \textit{Inverse limits and a Property of J. L. Kelley, II}, Bol. Soc. Mat. Mexicana, {\bf 9} (2003), 135-150.

\bibitem{Pseudo} T. Irwin and S. Solecki, \textit{Projective Fra\"{\i}ss\'e Limits and the Pseudo-arc}, Tran. Amer. Math. Soc. {\bf 358}, (2006), 3077-3096.

\bibitem{K-D3} A. Kwiatkowska, \textit{Universal minimal flows of generalized Wa\. zewski dendrites}, J. Symb. Log. {\bf 83}, (2018), 1618-1632.

\bibitem{kubis} W.  Kubiś and A. Kwiatkowska, \textit{The Lelek fan and the Poulsen simplex as Fraïssé limits}, Rev. R. Acad. Cienc. Exactas Fís. Nat. Ser. A Mat. RACSAM 111 (2017), no. 4, 967–981.

\bibitem{Macias} S. Macías,  \textit{Topics on continua}, Second edition, Springer, Cham, 2018, xvii+440~pp.

\bibitem{Dendroid} L. Mohler and J. Nikiel, \textit{A universal smooth dendroid answering a question of J. Krasinkiewicz},
Houston J. Math. 14 (1988), no. 4, 535–541.


\bibitem{Menger} A. Panagiotopoulos and S. Solecki, \textit{A combinatorial model for the Menger curve}, accepted to
Journal of Topology and Analysis.

\end{thebibliography}
\end{document}